\documentclass[10pt]{amsart}
\usepackage{amsmath}
\usepackage{amsthm}
\usepackage{amssymb}
\usepackage{amscd}
\usepackage{verbatim}
\usepackage[plainpages=false,colorlinks,hyperindex,pdfpagemode=None,bookmarksopen,linkcolor=red,citecolor=blue,urlcolor=blue]{hyperref}
\usepackage[curve]{xypic}
\usepackage{pdflscape}

\swapnumbers
\theoremstyle{plain}
\newtheorem{theorem}[subsubsection]{Theorem}
\newtheorem{axiom}[subsubsection]{Axiom}
\newtheorem{statement}[subsubsection]{Statement}

\newtheorem*{theorem*}{Theorem}
\newtheorem{proposition}[subsubsection]{Proposition}
\newtheorem{lemma}[subsubsection]{Lemma}
\newtheorem*{lemma*}{Lemma}
\newtheorem{corollary}[subsubsection]{Corollary}

\theoremstyle{definition}
\newtheorem{definition}[subsubsection]{Definition}
\theoremstyle{remark}
\newtheorem*{remark}{Remark}
\newtheorem*{remarks}{Remarks}
\newtheorem{example}[subsubsection]{Example}

\renewcommand{\descriptionlabel}[1]%
         {\hspace{\labelsep}\normalfont{#1}}

\renewcommand{\AA}{\mathbf{A}}
\newcommand{\BB}{\mathbf{B}}
\newcommand{\CC}{\mathbb{C}}
\newcommand{\DD}{\mathbf{D}}
\newcommand{\FF}{\mathbb{F}}
\newcommand{\Ff}{\mathcal{F}}
\newcommand{\GG}{\mathbf{G}}
\newcommand{\HH}{\mathbf{H}}
\newcommand{\LL}{\mathbf{L}}
\newcommand{\MM}{\mathbf{M}}

\newcommand{\PP}{\mathbf{P}}
\newcommand{\RR}{\mathbb{R}}
\renewcommand{\SS}{\mathbf{S}}
\newcommand{\TT}{\mathbf{T}}
\newcommand{\UU}{\mathbf{U}}
\newcommand{\VV}{\mathbf{V}}
\newcommand{\XX}{\mathbf{X}}
\newcommand{\YY}{\mathbf{Y}}
\newcommand{\ZZ}{\mathbf{Z}}
\newcommand{\QQ}{\mathbb{Q}}
\newcommand{\Ad}{\mathbb{A}_F}

\newcommand{\cind}{\operatorname{c-Ind}}
\newcommand{\Ind}{\operatorname{Ind}}
\newcommand{\Hom}{\operatorname{Hom}}
\newcommand{\End}{\operatorname{End}}
\newcommand{\Aut}{\operatorname{Aut}}
\newcommand{\varchi}{\mathcal{X}}
\newcommand{\Gm}{\mathbb{G}_m}
\newcommand{\Ga}{\mathbb{G}_a}
\newcommand{\supp}{\operatorname{supp}}
\newcommand{\GGL}{\operatorname{\mathbf{GL}}}
\newcommand{\GL}{\operatorname{GL}}
\newcommand{\PPGL}{\operatorname{\mathbf{\mathbf{PGL}}}}
\newcommand{\PGL}{\operatorname{PGL}}
\newcommand{\SSL}{\operatorname{\mathbf{SL}}}
\newcommand{\SL}{\operatorname{SL}}
\newcommand{\Sp}{\operatorname{Sp}}
\newcommand{\SSp}{\operatorname{\mathbf{Sp}}}
\newcommand{\SO}{{\operatorname{SO}}}
\newcommand{\Spin}{\operatorname{Spin}}
\newcommand{\SSpin}{\operatorname{\mathbf{Spin}}}
\newcommand{\SSO}{\operatorname{\mathbf{SO}}}
\newcommand{\geom}{{\operatorname{geom}}}

\newcommand{\tr}{\operatorname{tr}}

\newcommand{\spec}{\operatorname{spec}}
\newcommand{\Vol}{\operatorname{Vol}}
\newcommand{\diag}{{\operatorname{diag}}}
\newcommand{\ev}{\operatorname{ev}}

\newcommand{\temp}{{\operatorname{temp}}}
\newcommand{\Tam}{{\operatorname{Tam}}}
\newcommand{\rk}{{\operatorname{rk}}}
\newcommand{\inv}{\operatorname{inv}}
\newcommand{\Adj}{\operatorname{Ad}}

\numberwithin{equation}{section}

\begin{document}
\setcounter{tocdepth}{1}
\title{Spherical functions on spherical varieties}
\author{Yiannis Sakellaridis}
\date{}
\keywords{Spherical varieties, distinguished representations, spherical functions, Plancherel formula, period integrals}
\email{sakellar@rutgers.edu}

\address{Department of Mathematics and Computer Science, Rutgers University at Newark, 101 Warren Street, Smith Hall 216, Newark, NJ 07102, USA.}

\subjclass[2000]{22E50 (Primary); 11S40, 43A90, 14M17 (Secondary)}

\begin{abstract} Let $\XX=\HH\backslash\GG$ be a homogeneous spherical variety for a split reductive group $\GG$ over the integers $\mathfrak o$ of a $p$-adic field $k$, and $K=\GG(\mathfrak o)$ a hyperspecial maximal compact subgroup of $G=\GG(k)$. We compute eigenfunctions (``spherical functions'') on $X=\XX(k)$ under the action of the unramified (or spherical) Hecke algebra of $G$, generalizing many classical results of ``Casselman-Shalika'' type. Under some additional assumptions on $\XX$ we also prove a variant of the formula which involves a certain quotient of $L$-values, and we present several applications such as: (1) a statement on ``good test vectors'' in the multiplicity-free case (namely, that an $H$-invariant functional on an irreducible unramified representation $\pi$ is non-zero on $\pi^K$), (2) the unramified Plancherel formula for $X$, including a formula for the ``Tamagawa measure'' of $\XX(\mathfrak o)$, and (3) a computation of the most continuous part of $\HH$-period integrals of 
principal Eisenstein series.
\end{abstract}

\maketitle
\tableofcontents
%***********************************************************************
\newpage

\section{Introduction}\label{secintro} 

\subsection{The problem} \label{ssfirst}
Let $\GG$ be a split reductive group over the ring of integers $\mathfrak o$ of a local non-archimedean field $k$ in characteristic zero (with prime ideal $\mathfrak p$ and residual degree $q$, a power of the prime number $p$) and let $\XX$ be a homogeneous spherical scheme for $\GG$ over $\mathfrak o$ (i.e.\ a homogeneous $\GG$-scheme on which the Borel subgroup $\BB$ -- which we fix -- has an open orbit; this includes, but is not limited to, symmetric spaces). We make throughout certain assumptions on $\XX$ (cf.\ \S \ref{ssnotation}), including that $\XX$ is quasi-affine, which cause no serious harm to generality. Then $\XX=\HH\backslash\GG$, where $\HH$ is the stabilizer of an $\mathfrak o$-point $x_0$ in the open Borel orbit ($\HH\BB$ is open in $\GG$). We denote by regular font the corresponding sets of $k$-points of each of the groups. We also denote $K=\GG(\mathfrak o)$ -- it is a (hyperspecial) maximal compact subgroup. To avoid complicated formulas, we will assume that $k$ is unramified over the 
field $\QQ_p$; the modifications 
needed to remove this assumption are trivial. 

We consider the right regular representation of $G$ on $V=C^\infty(X)$ (if $G$ has a unique orbit on $X$, this is just the induced representation $\Ind_H^G(1)$). The unramified  Hecke algebra $\mathcal H(G,K)$ (in the literature, the term ``spherical'' is often used for ``unramified'', but to avoid confusion I will reserve this term for the notion of spherical varieties) acts on $V$, and the goal of this article is to compute an explicit formula for its eigenvectors (with nonzero eigencharacter -- this will be implicit throughout) expressed in terms of the geometry of $\XX$. More generally, if $\HH$ contains the unipotent radical $\UU_P$ of a proper parabolic, $\Lambda: \UU_P\to \Ga$ is a homomorphism which is fixed under $\HH$-conjugation and $\psi:k\to\CC^\times$ is a character, then we can also consider the space $V=C^\infty(X, \mathcal L_\Psi)$, where $\Psi=\psi\circ\Lambda$ considered as a character of $H$ and $\mathcal L_\Psi$ is the 
complex line bundle defined by it. (This space is just $\Ind_H^G(\Psi)$ if $G$ has unique orbit on $X$.) 
For an explanation of how this case can be understood geometrically, cf.\ \cite[\S 5.5]{Sa2}. Though most of the results carry over verbatim to this ``twisted'' case, for notational simplicity we do not discuss it in the introduction.

The problem at hand is of both harmonic-analytic and arithmetic interest, and there is a long history of particular examples which have been computed \cite[to mention just a few]{MD71, C, CS, HS1, HS2, Hi1, Hi2, KMS, Of, Sa1}. In the first part of this paper, we compute a very general formula, covering all previously established cases (when the group $\GG$ is split) and many more. Besides its case-specific arithmetic applications, which have been the motivation for most of the literature, the computation of such a formula is relevant to the $\HH$-period integrals of principal Eisenstein series (which we undertake in \S \ref{secEisenstein}), and conjecturally also to the $\HH$-period integrals of other automorphic forms, via the local Plancherel formula (which we develop in \S \ref{secPlancherel}). These applications, presented in the third part of the paper, require an improved version of the general formula, which we develop in the second part and which, in particular, involves a certain quotient of local 
$L$-values of the unramified representation in question. A table with many examples of spherical varieties and the related quotients of $L$-values appears in the Appendix.

Before we proceed to a more detailed description of the results, let us put this work in a more general context, under the perspective of automorphic forms. The study of period integrals of automorphic forms (a major stream of which is related to the relative trace formula of Jacquet -- see \cite{Lapid} for a presentation) has revealed relationships between the non-vanishing of certain period integrals of automorphic forms (typically, over spherical subgroups), on one hand, and functorial lifts, on the other, and also between the values of these period integrals and $L$-functions or special values thereof. Though no general theory or conjectures exist to describe these phenomena, it has recently started to become clear that a general and systematic approach should be possible. The ``dual group'' attached by Gaitsgory and Nadler \cite{GN}, in the context of the Geometric Langlands program, to any spherical variety $\XX$ was shown in \cite{Sa2} to be related to unramified representations in the spectrum of $X$ 
and is, more generally, conjectured in \cite{SV} to be describing $X$-distinguished representations, in a certain sense. Moreover, under certain assumptions period integrals are (roughly speaking) conjectured to be ``Eulerian'' with local Euler factors equal to ``the $H$-invariant functionals which appear in the local Plancherel formula for $X$''. The last step is to relate these $H$-invariant functionals to special values of $L$-functions, and this is part of what we accomplish here, when the local group is split and the representation is unramified.

\subsection{The general formula}

To formulate the general formula, we will for simplicity assume in the introduction that $B=\BB(k)$ has a \emph{unique} open orbit on $X$. This includes, in particular, the \emph{multiplicity-free} case, i.e.\ the case when the eigenspaces for the unramified Hecke algebra are one-dimensional. Then, under mild assumptions, a generalized Cartan decomposition holds \cite{Sa3}: orbits of $K$ on $X$ are in bijection $x_{\check\lambda}\leftrightarrow \check\lambda$ with anti-dominant, in a suitable sense, cocharacters into $\AA_X$, where $\AA_X$ is a quotient of a suitable maximal torus $\AA$ of $\GG$ identified with the $\AA$-orbit of a chosen point $x_0\in\XX(\mathfrak o)$.

Let $A^*$ denote the ``Langlands dual'' complex torus of $\AA$ (the torus of unramified complex characters of $A$) and let $A_X^*$ denote the ``Langlands dual'' of $\AA_X$. Naturally (under the assumption of a unique open $B$-orbit) $A_X^*\subset A^*$. There is a surjective map $A^*\to\spec_M\mathcal H(G,K)$ (where by $\spec_M$ we denote the maximal spectrum of a ring) with generic fiber finite of order $|W|$, where $W$ is the Weyl group of $G$. The support of $V^K$ as a module for $\mathcal H(G,K)$ coincides with the image of a translate $\delta_{(X)}^\frac{1}{2} A_X^*$ of $A_X^*$, see \cite{Sa2}. The torus $A_X^*$ (and its translate $\delta_{(X)}^\frac{1}{2} A_X^*$) carries a natural action of a finite reflection group $W_X\subset W$, called the ``little Weyl group'' of $X$, and the map $\delta_{(X)}^\frac{1}{2} A_X^*\to \spec_M\mathcal H(G,K)$ factors through the topological quotient $\delta_{(X)}^\frac{1}{2} A_X^*/W_X$. We can now formulate our general formula:

\begin{theorem}[cf.\ Theorems \ref{maintheorem}, \ref{fetheorem}, see also (\ref{formulawithBw})] \label{intromaintheorem}
For an open dense subset of points $\eta\in\spec_M \mathcal H(G,K)$ in the support of $V^K$, the corresponding eigenspace in $V^K$ admits a basis consisting of the functions:
\begin{equation}
\Omega_{\delta_{(X)}^\frac{1}{2} \chi}(x_{\check\lambda}) = {e^{-{\check\lambda}}(\delta_{P(X)}^\frac{1}{2})}  \sum_{w\in W_X}  B_w(\chi) e^{{\check\lambda}}(^w\chi)
\end{equation}
where: $\delta_{(X)}^\frac{1}{2}\chi\in \delta_{(X)}^\frac{1}{2}A_X^*$ ranges over a set of representatives for the elements of $\delta_{(X)}^\frac{1}{2}A_X^*/W_X$ which map to $\eta\in\spec_M\mathcal H(G,K)$ and the coefficients $B_w(\chi)$ are certain cocycles: $W_X\to \CC(A_X^*)$ for the computation of which we give an explicit algorithm in terms of the geometry of $\XX$. For the definition of $\delta_{(X)}, \delta_{P(X)}$ cf.\ \S \ref{ssnotation}, \ref{ssinvariants}.
\end{theorem}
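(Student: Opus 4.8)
The plan is to follow the Casselman--Shalika method: express the Hecke action on $V^K$ through the geometry of the $K$-orbits, solve the resulting recursion on the ``deep'' part of the Cartan cone where it becomes the recursion for a horospherical variety, and then assemble the global answer by a Gindikin--Karpelevich--type bookkeeping over the little Weyl group $W_X$.

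\emph{Step 1 (the recursion).} By the generalized Cartan decomposition of \cite{Sa3}, $V^K=C^\infty(X)^K$ is the space of functions on the set of antidominant cocharacters $\check\lambda$ into $\AA_X$, and an eigenfunction for $\mathcal H(G,K)$ with eigenvalue $\eta\in\spec_M\mathcal H(G,K)$ corresponds, via Satake, to the $W_X$-orbit of a character $\delta_{(X)}^{1/2}\chi$ over $\eta$. The first task is to write the action of a Hecke operator $h\in\mathcal H(G,K)$ on such a function as a finite sum of translated values, the coefficients being measures of intersections of $K$-orbits with $G$-translates of $K$-orbits. The key geometric fact is that for $\check\lambda$ deep in the interior of the valuation cone --- away from its walls and from the boundary divisors of $\XX$ --- this recursion \emph{stabilizes}: it agrees with the analogous recursion on the most degenerate boundary degeneration $X_\emptyset$, which is horospherical and whose $K$-invariant eigenfunctions are the genuine exponentials $e^{\check\lambda}(\chi')$ twisted by the modulus factor $e^{-\check\lambda}(\delta_{P(X)}^{1/2})$. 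By the support theorem of \cite{Sa2} the characters $\chi'$ that occur are exactly the translates ${}^{w}\chi$, $w\in W_X$; hence on the deep region every eigenfunction has the shape $e^{-\check\lambda}(\delta_{P(X)}^{1/2})\sum_{w\in W_X}c_w(\chi)\,e^{\check\lambda}({}^{w}\chi)$.

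\emph{Step 2 (the coefficients and the functional equation).} To pin down the $c_w(\chi)$ I use the local structure theorem: each spherical root $\alpha$, equivalently each simple reflection $s_\alpha\in W_X$, produces as a slice (or a rank-one boundary degeneration) a spherical variety of $X$-rank one for a Levi-type subgroup, one of the finitely many rank-one cases classified by the type of $\alpha$. For each of these the spherical function is computed by hand, and its two-term asymptotics gives the ratio $c_{s_\alpha}(\chi)/c_1(\chi)$ as an explicit rational function of $\chi$; normalizing $c_1\equiv 1$ defines $B_{s_\alpha}\in\CC(A_X^*)$. One then checks that the $B_w$ built from reduced words are well defined: the braid relations reduce to rank-two subconfigurations, handled there by hand, so that $w\mapsto B_w$ is a cocycle, $B_{w_1w_2}(\chi)=B_{w_1}({}^{w_2}\chi)\,B_{w_2}(\chi)$ --- this is the functional-equation content of the statement, the cocycle identity being exactly the consistency of the two ways of resolving a codimension-two corner of the cone.

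\emph{Step 3 (exactness and the basis).} Since the Hecke recursion is triangular for the partial order on cocharacters, an eigenfunction is determined on all of $X$ by its values on the deep region together with the eigen-equation; it therefore suffices to verify that the closed formula $\Omega$, substituted into the recursion, is consistent on \emph{every} $K$-orbit, and this again localizes, through the structure theorem, to the rank-one varieties where it has been checked. Restricting $\eta$ to an open dense subset of $\spec_M\mathcal H(G,K)$ lets us avoid the poles of the $B_w$ and the walls on which the Satake fibre $\delta_{(X)}^{1/2}A_X^*\to\spec_M\mathcal H(G,K)$ degenerates; there the exponentials $e^{\check\lambda}({}^{w}\chi)$, indexed by $W_X/W_{X,\chi}$, are linearly independent in $\check\lambda$, so the $\Omega$'s are independent, and a count of the eigenspace --- one copy of $\CC[W_X/W_{X,\chi}]$ for each open $B$-orbit, in the general case --- shows they form a basis. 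The main obstacle is Step 1: the precise geometric analysis of the $K$-orbit structure of $X$ and of the intersection numbers entering the Hecke recursion, and the proof that this recursion stabilizes on the deep region and is triangular throughout --- the $p$-adic counterpart of the Gindikin--Karpelevich integral, and the place where the valuation cone, the colours, and the boundary degenerations of $\XX$ enter essentially. The rank-one computations of Step 2 are elementary but must be carried out case by case and shown to glue; once Step 1 is in place, however, the combinatorics of $W_X$ is forced.
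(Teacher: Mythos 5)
Your route is genuinely different from the paper's. The paper never writes down the Hecke recursion on $K$-orbits at all: it realizes each eigenfunction as $\Delta_{\tilde\chi}(\Phi_K)$, where $\Delta_{\tilde\chi}$ is the adjoint of the Mackey-type morphism $S_{\tilde\chi}:C_c^\infty(X)\to I(\chi)$ attached to the open $B$-orbit, computes $\Delta_{\tilde\chi}^Y(\Phi_J)$ for the Iwahori vector of small support by a one-line calculation (this is where Axiom \ref{Iwahori}, $xJ\subset xB_0$, enters, and it gives the answer at \emph{every} $x\in A_X^+$, walls included), constructs $\Phi_K$ explicitly as the alternating sum $\sum_{w\in W}\prod(-1)e^{\check\alpha}\,\mathcal F_w\Phi_J$ of equivariant Fourier transforms, and obtains the formula by linearity; the sum collapses from $W$ to $W_X$ because $\mathcal F_w\Delta_{\tilde\chi}$ is supported on $^wY$ and $\Phi_J$ evaluates to zero there unless $^w\mathring\XX=\mathring\XX$. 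In particular the cocycle identity for $B_w$ is free, coming from $\mathcal F_{w_1}\circ\mathcal F_{w_2}=\mathcal F_{w_1w_2}$, and no asymptotic or triangularity argument is needed anywhere.

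There are two genuine gaps in your plan. First, Step 1 rests on the assertion that the Hecke recursion on $X/K$ stabilizes, deep in the valuation cone, to the recursion on the horospherical degeneration $X_\emptyset$, and Step 3 rests on its triangularity for the partial order on $\Lambda_X^+$. Neither is available at the level of generality of this theorem: the only comparison with $X_\emptyset$ in the paper is the measure-preserving identification of $K$-orbits near infinity invoked in the Plancherel section via a criterion of Bernstein whose proof is explicitly deferred to \cite{SV}. As stated, your Steps 1 and 3 are the theorem you would need to prove, not a proof. Second, Step 2 computes the ratios $B_{s_\gamma}$ directly for simple reflections of $W_X$ from rank-one slices and then checks braid relations by reduction to rank two. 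That is precisely the program of Part 2 of the paper (Statement \ref{STATEMENT}, Theorem \ref{statementtrue} and the case-by-case gluing of Section \ref{secsimplespherical}), which the paper can only carry out under additional hypotheses (no type-$N$ roots, classical Levis $\LL_\gamma$, the combinatorial Statement \ref{posneg}); it is not how the general Theorem \ref{intromaintheorem} is proved. The general algorithm instead composes functional equations for simple reflections of $W$ (of $\GG$, sorted by the type of $(\YY,\alpha)$ under Knop's action) along paths in the weak order graph, with the consistency of different paths supplied by Brion's Proposition \ref{associates} rather than by braid relations among the $B_{s_\gamma}$. So even granting Step 1, your construction would deliver the theorem only under the restrictive hypotheses of Part 2, not in the stated generality.
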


The cocycles $B_w$ have the following conceptual meaning: It was proven in \cite{Sa2} that certain natural morphisms: $S_\chi:C_c^\infty(X)\to I(\chi)$, defined using Mackey theory, satisfy proportionality relations: $T_w \circ S_\chi \sim S_{^w\chi}$ for all $w\in W_X$, where $I(\chi)=\Ind_B^G(\chi\delta^\frac{1}{2})$ is an unramified principal series representation and $T_w$ are intertwining operators between principal series. Then the $B_w$ are a standard term times the coefficients of proportionality, when $T_w$ are normalized in a ``good'' way. This ``good'' normalization comes from the theory of equivariant Fourier transform on the basic affine space $U\backslash G$ \cite{BK}, which we review in \S \ref{ssSchwartz}. Use of the equivariant Fourier transform allows us to avoid complicated formulas and mysterious cancellations which are abundant in the relevant literature.

The method employed in the first part of the paper is based on the basic idea of Casselman and Shalika \cite{C, CS}, namely that instead of computing an unramified eigenfunction one should first compute an Iwahori-invariant function coming from a vector of small support in the principal series, and then use the fact that the space of Iwahori-invariant vectors has dimension equal to the number of intertwining operators between principal series. The representation-theoretic results of \cite{Sa2} allow us to reduce the computation to the case of $\SSL_2$, and there is only a small number of possibilities to consider there. 

\subsection{The formula with $L$-values}
Because of its inductive, algorithmic definition, the general formula may be difficult to use in applications. Therefore, in the second part of the paper we transform it (under additional assumptions, which are in particular satisfied by many \emph{affine} homogeneous spherical varieties) to a more useful one, expressed in terms of the ``dual group'' $\check G_X$ of $\XX$ -- more precisely the root datum of this group (cf.\ \S \ref{ssrootsystem}). This should be, conjecturally, isogenous to the subgroup of $\check G$ attached to $\XX$ by Gaitsgory and Nadler \cite{GN}.

\begin{theorem}[cf.\ Theorem \ref{improvedformula}]\label{introimprovedformula}
(Under additional assumptions.) We have:
\begin{equation}
 \frac{\Omega_{\delta_{(X)}^\frac{1}{2}\chi} (x_{\check\lambda})}{\beta({\tilde\chi})} =  \delta_{P(X)}^{-\frac{1}{2}}(x_{\check\lambda}) \prod_{\Theta^+} (1- \sigma_{\check\theta} q^{-r_{\check\theta}}T_{\check\theta}) s_{{\check\lambda}}(\chi)
\end{equation}
where $s_{{\check\lambda}} = \frac{\sum_{W_X} (-1)^w e^{\check\rho - w\check\rho + w{\check\lambda}}}{\prod_{\check\gamma>0} (1-e^{\check\gamma})}$ is the Schur polynomial \emph{indexed by lowest weight} (that is, if ${\check\lambda}$ is anti-dominant then $s_{\check\lambda}$ is the character of the irreducible representation of $\check G_X$ with lowest weight ${\check\lambda}$) and $T_{\check\theta}$ denotes the formal operator: $T_{\check\theta} s_{{\check\lambda}} = s_{{\check\theta}+{\check\lambda}}$.

Here 
\begin{equation}
 \beta({\tilde\chi}):= \frac{\prod_{\check\gamma\in\check\Phi_X^+} (1-e^{\check\gamma})}{\prod_{({\check\theta}, \sigma_{\check\theta}, r_{\check\theta})\in\Theta^+}(1- \sigma_{\check\theta} q^{-r_{\check\theta}} e^{\check\theta})}(\chi),
\end{equation}
and the triples $(\check\theta, \sigma_{\check\theta}, r_{\check\theta})\in\Theta^+$ consisting of a weight of $A_X^*$, a sign and a positive half-integer are given explicitly in terms of the geometry of $\XX$ (\S \ref{sscolors}). 

If $\XX$ is affine then, in particular:
\begin{equation}
\Omega_{\delta_{(X)}^\frac{1}{2}\chi} (x_0) = c \beta(\chi)
\end{equation}
with the constant:
\begin{equation}
 c = \beta(\delta_{P(X)}^\frac{1}{2})^{-1}.
\end{equation}
\end{theorem}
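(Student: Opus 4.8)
The plan is to deduce the "improved formula" of Theorem \ref{introimprovedformula} from the general algorithmic formula of Theorem \ref{intromaintheorem} by identifying the cocycle sum $\sum_{w\in W_X} B_w(\chi) e^{\check\lambda}({}^w\chi)$ with the product $\delta_{P(X)}^{-1/2}(x_{\check\lambda})\,\beta(\tilde\chi)\prod_{\Theta^+}(1-\sigma_{\check\theta}q^{-r_{\check\theta}}T_{\check\theta})\,s_{\check\lambda}(\chi)$. First I would expand the Schur polynomial $s_{\check\lambda}=\frac{\sum_{W_X}(-1)^w e^{\check\rho-w\check\rho+w\check\lambda}}{\prod_{\check\gamma>0}(1-e^{\check\gamma})}$ into a sum over $W_X$ (a Weyl-character-formula manipulation in the root datum of $\check G_X$), apply the operator $\prod_{\Theta^+}(1-\sigma_{\check\theta}q^{-r_{\check\theta}}T_{\check\theta})$ term by term using $T_{\check\theta}s_{\check\lambda}=s_{\check\theta+\check\lambda}$, and then multiply by $\beta(\tilde\chi)$ so that the denominator $\prod_{\check\gamma\in\check\Phi_X^+}(1-e^{\check\gamma})$ cancels against the Weyl denominator coming from the Schur polynomial. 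After this cancellation the right-hand side becomes a sum over $w\in W_X$ of the form $e^{\check\lambda}({}^w\chi)$ times an explicit rational function of $\chi$, and the strategy is to check that this explicit rational function agrees with $B_w(\chi)$.

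The key step is therefore a comparison of cocycles. By the cocycle property, it suffices to verify the identity for the simple reflections $w=s_\alpha\in W_X$, since both sides transform the same way under the $W_X$-action (the left side because $B_w$ is a cocycle, the right side because the displayed expression is manifestly a legitimate element of $\CC(A_X^*)$ on which $W_X$ acts by permuting the summands). For a simple reflection the computation reduces, via the $\SSL_2$-reduction emphasized in \S\ref{intromaintheorem} and established in \cite{Sa2}, to a rank-one calculation: one must match the rank-one cocycle $B_{s_\alpha}$ — computed through the equivariant Fourier transform normalization of the intertwining operator $T_{s_\alpha}$ and the proportionality constant in $T_{s_\alpha}\circ S_\chi\sim S_{{}^{s_\alpha}\chi}$ — against the corresponding rank-one factor of $\beta(\tilde\chi)\,\delta_{P(X)}^{-1/2}\prod_{\Theta^+}(1-\sigma_{\check\theta}q^{-r_{\check\theta}}T_{\check\theta})$. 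Here the triples $(\check\theta,\sigma_{\check\theta},r_{\check\theta})\in\Theta^+$ enter: they are exactly the "colors" data of \S\ref{sscolors} that record which rank-one spherical varieties occur, and the additional assumptions of the theorem are precisely what guarantee that each simple root of $W_X$ contributes one of the small number of rank-one cases for which the factor $(1-\sigma_{\check\theta}q^{-r_{\check\theta}}e^{\check\theta})$ is the correct local denominator.

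Once the functional identity $\Omega_{\delta_{(X)}^{1/2}\chi}(x_{\check\lambda})/\beta(\tilde\chi)=\delta_{P(X)}^{-1/2}(x_{\check\lambda})\prod_{\Theta^+}(1-\sigma_{\check\theta}q^{-r_{\check\theta}}T_{\check\theta})s_{\check\lambda}(\chi)$ is established, the affine specialization is immediate: when $\XX$ is affine the base point $x_0$ corresponds to $\check\lambda=0$, so $\delta_{P(X)}^{-1/2}(x_0)=1$ and $s_0=1$. The operator $\prod_{\Theta^+}(1-\sigma_{\check\theta}q^{-r_{\check\theta}}T_{\check\theta})$ applied to the constant $s_0=1$ produces $\sum_{S\subseteq\Theta^+}\prod_{\check\theta\in S}(-\sigma_{\check\theta}q^{-r_{\check\theta}})\,s_{\sum_{\check\theta\in S}\check\theta}(\chi)$, and I would observe that multiplying this by $\beta(\tilde\chi)$ and using the Weyl-denominator cancellation once more collapses the sum into $\beta(\chi)$ up to the scalar; evaluating the normalization at $\chi=\delta_{P(X)}^{1/2}$ (the point where $x_{\check\lambda}=x_0$ forces $e^{\check\lambda}\equiv 1$ and the whole $W_X$-sum degenerates) pins down $c=\beta(\delta_{P(X)}^{1/2})^{-1}$. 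The main obstacle I anticipate is the rank-one cocycle matching: keeping careful track of the half-integral exponents $r_{\check\theta}$, the signs $\sigma_{\check\theta}$, and the precise normalization of the intertwining operators coming from \cite{BK} — a single misplaced factor of $q^{1/2}$ or an incorrect sign there would propagate through the whole identity — so the bulk of the real work is a bookkeeping-heavy but finite case analysis of the rank-one spherical varieties.
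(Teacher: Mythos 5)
Your treatment of the functional identity (the expression for $\Omega_{\delta_{(X)}^{1/2}\chi}(x_{\check\lambda})/\beta(\tilde\chi)$ via the operators $T_{\check\theta}$ applied to $s_{\check\lambda}$) follows essentially the paper's route: one verifies $B_{w_\gamma}(\tilde\chi)=\beta(\tilde\chi)/\beta({}^{w_\gamma}\tilde\chi)$ for the reflections $w_\gamma$ attached to \emph{normalized spherical roots} $\gamma\in\Delta_X$ (note these are generally not simple reflections $w_\alpha$ of $W$; the rank-one input is Statement \ref{STATEMENT}, which itself rests on the Brion-path gluing and the case-by-case analysis of rank-one and rank-two varieties, and Statement \ref{posneg} is exactly what identifies $\{\check\theta\in\Theta^+ : w_\gamma\check\theta<0\}$ with the virtual colors belonging to $\gamma$), extends by the cocycle property, and then the passage to the Schur-polynomial expression is a formal Weyl-denominator manipulation. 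That part of your proposal is sound in outline.

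The genuine gap is in the affine specialization. You assert that $\prod_{\Theta^+}(1-\sigma_{\check\theta}q^{-r_{\check\theta}}T_{\check\theta})\,s_0=\sum_{S\subseteq\Theta^+}\prod_{\check\theta\in S}(-\sigma_{\check\theta}q^{-r_{\check\theta}})\,s_{\sum_{\check\theta\in S}\check\theta}$ ``collapses'' to a constant after multiplying by $\beta$; but multiplying by $\beta$ does nothing here --- the entire content of $\Omega_{\delta_{(X)}^{1/2}\chi}(x_0)=c\,\beta(\chi)$ is that this alternating sum of Schur polynomials is a \emph{constant}, and this is false for general quasi-affine $\XX$: in general one only obtains a $W_X$-invariant regular function $\omega(\tilde\chi)$, which is why the theorem restricts the constancy claim to the affine case. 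Establishing constancy requires a real argument: one must show that for every nonempty $I\subseteq\Theta^+$ and every $w\in W_X$ the weight $\check\rho-w\check\rho+w\sum_{\check\theta\in I}\check\theta$ lies in the nonzero part of the cone spanned by the color valuations (hence contributes no constant term), and this uses the geometric characterization of affineness --- that the valuation cone $\mathcal V$ and the cone spanned by the $\check v_D$ are separated by a hyperplane --- together with a lemma that a minimal weight of a non-constant symmetric polynomial survives multiplication by the Weyl denominator $\prod_{\check\gamma\in\check\Phi_X^+}(1-e^{\check\gamma})$. Without this input your argument yields nothing at $x_0$ beyond the general formula. Separately, pinning down $c=\beta(\delta_{P(X)}^{1/2})^{-1}$ is not a ``degeneration of the $W_X$-sum'' at $\chi=\delta_{P(X)}^{1/2}$: it requires knowing $\Omega_{\delta^{1/2}}\equiv 1$, which follows from the morphism at the trivial character factoring through the trivial representation (Lemma \ref{omegatrivial}) and uses the existence of a $G$-eigenmeasure on $X$.
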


Notice that the value of $\Omega_{\delta_{(X)}^\frac{1}{2}\chi}$ at $x_0$ is equal to ``half'' a quotient of local $L$-values for $\check G_X$ -- more precisely $L_X:=c^2\beta(\chi)\beta(\chi^{-1})$ is a quotient of $L$-values. As we will explain below, this $L$-value is related to the Plancherel measure for the unramified spectrum of $X$, and should (conjecturally) be related to $\HH$-period integrals of automorphic forms. It would certainly be desirable to have a more natural, geometric understanding of this quotient of $L$-values, rather than the combinatorial one which we provide here. 

The weights ${\check\theta}$ that appear in this formula are related to the \emph{colors} of the spherical variety, in other words the irreducible $\BB$-invariant divisors on $\XX$. Each of them induces a $\BB$-invariant valuation on $k(\XX)$ which, by restriction to $\BB$-eigenfunctions, defines a weight of $A_X^*$. Roughly speaking, in the affine case the weights $\check\theta$ contained in the above formula are those $W_X$-translates of the weights defined by colors which are contained in the positive span of the weights defined by colors.

A table with many examples of affine homogeneous spherical varieties and their related $L$-values appears in the Appendix.

\subsection{Representation-theoretic results}
A benefit of the last formula is that it allows us to draw certain representation-theoretic conclusions, in combination with the results of \cite{Sa2}. More precisely:

\begin{theorem}[The Hecke module of unramified functions, cf.\ Theorem \ref{modulestructure}.]
(Under the assumptions of the Theorem \ref{introimprovedformula}, with $\XX$ affine.)  There is an isomorphism: $C_c^\infty(X)^K \simeq \CC[\delta_{(X)}^\frac{1}{2}A_X^*]^{W_X}$, compatible with the $\mathcal H(G,K)$-structure and the Satake isomorphism. Under this isomorphism the characteristic function of ${X(\mathfrak o)}$ is mapped to the constant $1$.
\end{theorem}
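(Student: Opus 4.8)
The plan is to move $C_c^\infty(X)^K$ into $\CC[\delta_{(X)}^{\frac12}A_X^*]^{W_X}$ by an explicit spherical transform built from the eigenfunctions $\Omega$ of Theorem~\ref{introimprovedformula}, and to let that theorem carry the analysis; the crucial feature I would exploit is that in the improved formula every bit of the $\chi$-dependence that is not polynomial is concentrated in the single scalar $\beta(\tilde\chi)$, which does not depend on the $K$-orbit. By the generalized Cartan decomposition \cite{Sa3}, $X=\bigsqcup_{\check\lambda}Kx_{\check\lambda}$ with $\check\lambda$ running over the antidominant cocharacters of $\AA_X$, so the characteristic functions $\mathbf{1}_{Kx_{\check\lambda}}$ form a $\CC$-basis of $C_c^\infty(X)^K$ and $\mathbf{1}_{X(\mathfrak o)}=\mathbf{1}_{Kx_0}$ is the one with $\check\lambda=0$. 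I would normalize the invariant measure on $X$ by $\Vol(X(\mathfrak o))=1$, set $c=\beta(\delta_{P(X)}^{\frac12})^{-1}$ and $P_{\check\lambda}:=\prod_{\Theta^+}\bigl(1-\sigma_{\check\theta}q^{-r_{\check\theta}}T_{\check\theta}\bigr)\,s_{\check\lambda}$, and define
\[
 \Phi f \;:=\; \frac1c\sum_{\check\lambda} f(x_{\check\lambda})\,\Vol(Kx_{\check\lambda})\,\delta_{P(X)}^{-\frac12}(x_{\check\lambda})\,P_{\check\lambda}
 \qquad (f\in C_c^\infty(X)^K),
\]
a finite sum. Since each $P_{\check\lambda}$ is, by the definition of the operators $T_{\check\theta}$, a $\ZZ[q^{\pm1}]$-combination of Schur polynomials $s_{\check\mu}$, the image $\Phi f$ lies in $\CC[A_X^*]^{W_X}$, which I identify with $\CC[\delta_{(X)}^{\frac12}A_X^*]^{W_X}$ by translation by $\delta_{(X)}^{\frac12}$.

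Three properties of $\Phi$ are then essentially formal. First, by Theorem~\ref{introimprovedformula} itself, $\Phi f(\chi)=\bigl(c\,\beta(\tilde\chi)\bigr)^{-1}\int_X f(x)\,\Omega_{\delta_{(X)}^{\frac12}\chi}(x)\,dx$ for generic $\chi$. Second, taking $f=\mathbf{1}_{X(\mathfrak o)}=\mathbf{1}_{Kx_0}$ gives $\Phi(\mathbf{1}_{X(\mathfrak o)})=c^{-1}P_{\check\lambda}\big|_{\check\lambda=0}$, and the last assertion of Theorem~\ref{introimprovedformula} (that $\Omega_{\delta_{(X)}^{\frac12}\chi}(x_0)=c\,\beta(\chi)$ with $c=\beta(\delta_{P(X)}^{\frac12})^{-1}$), compared with the improved formula at $\check\lambda=0$, forces $P_{\check\lambda}\big|_{\check\lambda=0}=c$; hence $\Phi(\mathbf{1}_{X(\mathfrak o)})=1$. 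Third, $\Omega_{\delta_{(X)}^{\frac12}\chi}$ is a Hecke eigenfunction with eigencharacter the Satake parameter of $\delta_{(X)}^{\frac12}\chi$ (Theorem~\ref{intromaintheorem}), so by invariance of the measure $\int_X(h\star f)\,\Omega_{\delta_{(X)}^{\frac12}\chi}=\lambda_h(\chi)\int_X f\,\Omega_{\delta_{(X)}^{\frac12}\chi}$ for $h\in\mathcal H(G,K)$, where $\lambda_h$ is the restriction to $\delta_{(X)}^{\frac12}A_X^*$ of $h$ under the Satake isomorphism (with the normalization making the identification in the statement hold); dividing by the $h$-independent factor $c\,\beta(\tilde\chi)$, and noting that both sides are polynomials, $\Phi$ intertwines convolution by $h$ with multiplication by $\lambda_h$. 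Thus $\Phi$ is $\mathcal H(G,K)$-equivariant for the module structures in the statement and carries $\mathbf{1}_{X(\mathfrak o)}$ to $1$; it remains to show it is bijective.

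For bijectivity, note that $\Phi$ sends $\mathbf{1}_{Kx_{\check\lambda}}$ to a nonzero scalar times $P_{\check\lambda}$, so it is a linear isomorphism iff $\{P_{\check\lambda}\}$, with $\check\lambda$ over the antidominant cocharacters of $\AA_X$, is a $\CC$-basis of $\CC[A_X^*]^{W_X}$ — the same indexing set as the standard basis $\{s_{\check\mu}\}$ of Schur polynomials indexed by lowest weight. I would expand $\prod_{\Theta^+}\bigl(1-\sigma_{\check\theta}q^{-r_{\check\theta}}T_{\check\theta}\bigr)s_{\check\lambda}$ and apply the straightening law for these Schur polynomials — $s_{\check\mu}$ for $\check\mu$ not antidominant becomes $\pm s_{\check\mu^{\mathrm a}}$ or $0$, with $\check\mu^{\mathrm a}$ the antidominant representative of the $\check\rho$-shifted $W_X$-orbit of $\check\mu$ — to obtain
\[
 P_{\check\lambda}\;=\;s_{\check\lambda}\;+\!\!\sum_{\mathrm{ht}(\check\mu)<\mathrm{ht}(\check\lambda)}\!\! c_{\check\lambda,\check\mu}\,s_{\check\mu}
\]
for a suitable $W_X$-invariant ``height'' function: the $\check\mu$ that appear are of the form $(\check\lambda+\sum_{\check\theta\in S}\check\theta)^{\mathrm a}$ with $\emptyset\ne S\subseteq\Theta^+$, and that they are strictly lower should follow from the geometric description of $\Theta^+$ in \S\ref{sscolors} — its weights lie in the $\RR_{\ge0}$-cone spanned by the weights attached to the colors of $\XX$, a strictly convex cone lying on one side of the mirrors of $W_X$. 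Granting this, the transition between $\{P_{\check\lambda}\}$ and $\{s_{\check\mu}\}$ is unitriangular for the filtration of $\CC[A_X^*]^{W_X}$ by height, each graded piece being finite-dimensional; hence $\{P_{\check\lambda}\}$ is a basis, $\Phi$ is an isomorphism, and the theorem follows.

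I expect the hard part to be exactly this last unitriangularity: controlling the straightening of the Schur polynomials $s_{\check\lambda+\sum_S\check\theta}$ and ruling out that passing to the antidominant representative of a $\check\rho$-shifted orbit ever fails to lower the height strictly. That is where one must use not merely that the color weights are ``positive'' but the precise shape of the cone they span and its position with respect to the walls of $W_X$ — i.e.\ the content of \S\ref{sscolors} together with the definition of the little Weyl group. Everything else above is formal, resting only on Theorems~\ref{intromaintheorem}--\ref{introimprovedformula} and the Satake isomorphism.
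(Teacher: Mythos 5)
Your proposal is correct and follows essentially the same route as the paper's proof: there one sets $P_{\check\lambda}(\delta_{(X)}^{\frac12}\chi)=\Omega_{\delta_{(X)}^{\frac12}\chi}(x_{\check\lambda})/L_X^{\frac12}(\chi)$, notes $P_0=1$ and the Hecke-equivariance of the resulting spherical transform, and concludes because the $P_{\check\lambda}$ are $W_X$-invariant polynomials generating $\CC[\delta_{(X)}^{\frac12}A_X^*]^{W_X}$ (your map differs only by the harmless factors $\Vol(Kx_{\check\lambda})$ and a renormalization of the measure). The unitriangularity you single out as the hard point is exactly the content of the Remark following the proof of Theorem \ref{improvedformula}, namely $\prod_{\Theta^+}(1-\sigma_{\check\theta}q^{-r_{\check\theta}}T_{\check\theta})s_{\check\lambda}-\sum_{W_X}e^{w\check\lambda}\succ_2\check\lambda$, and the local finiteness needed to invert the triangular system comes from the separation of $\mathcal V$ from the cone spanned by the colors (the affineness hypothesis), so nothing essential is missing from your argument.
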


In the multiplicity-free case, i.e.\ when $\CC[\delta_{(X)}^\frac{1}{2}A_X^*]$ has fibers of dimension at most one over $A^*/W$, we deduce that a non-zero $H$-invariant functional on an irreducible unramified representation $\pi$ is non-zero on $\pi^K$, which has the following global corollary: if $\pi=\otimes_v'\pi_v$ is an irreducible representation of the points of a reductive group $\GG$ over the adele ring $\Ad$ of a global field $F$, and $\HH$ is a spherical subgroup over $F$ such that $\CC[\delta_{(X)}^\frac{1}{2}A_X^*]$ is multiplicity-free over $A^*/W$ and $(\XX)_{F_v}$ satisfies the assumptions of the above theorem for almost every place $v$ (where $\XX=\HH\backslash\GG$), then  $\Hom_{H_v}(\pi_v,1)\ne 0$ for every place $v$ implies $\Hom_{\HH(\Ad)}(\pi,1)\ne 0$.

\subsection{Unramified Plancherel formula}
Another application which we present is the Plancherel formula for the unramified part of the spectrum of $X$, i.e.\ for $L^2(X)^K$. More precisely, keeping the assumptions of Theorem \ref{introimprovedformula} with $\XX$ affine, we normalize the invariant measure on $X$ such that $\Vol(x_0J)=1$, where $J$ denotes the Iwahori subgroup of $G$. We denote by $A_X^{*,1}$ the maximal compact subgroup of $A_X^*$ (the subgroup of unitary characters.) Then the Plancherel formula restricted to $C_c^\infty(X)^K$ reads: 

\begin{theorem}[cf.\ Theorem \ref{Planchereltheorem}]
For every $\Phi\in C_c^\infty(X)^K$ we have:
\begin{equation}
 \Vert \Phi\Vert^2 = \frac{1}{Q\cdot|W_X|} \int_{A_X^{*,1}} \left|\left<\Phi, \Omega_{\delta_{(X)}^\frac{1}{2}\chi}\right>\right|^2  d\chi.
\end{equation}
where $d\chi$ is probability Haar measure on $A_X^{*,1}$ and $$Q=\frac{\Vol(K)}{\Vol(Jw_lJ)}=\prod_{\check\alpha\in\check\Phi^+} \frac{1-q^{-1-\left<\check\alpha,\rho\right>}}{1-q^{-\left<\check\alpha,\rho\right>}}.$$\end{theorem}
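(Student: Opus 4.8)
The plan is to deduce the Plancherel formula for $C_c^\infty(X)^K$ from the explicit description of the Hecke module, together with the standard (Macdonald--Matsumoto) Plancherel formula for the spherical function on the group side, transported through the map $\delta_{(X)}^\frac{1}{2}A_X^* \to \spec_M\mathcal H(G,K)$. More precisely, by Theorem~\ref{modulestructure} we have an $\mathcal H(G,K)$-equivariant isomorphism $C_c^\infty(X)^K \simeq \CC[\delta_{(X)}^\frac{1}{2}A_X^*]^{W_X}$ sending $\mathbf{1}_{X(\mathfrak o)}$ to $1$, and under this isomorphism the functional $\Phi \mapsto \langle \Phi, \Omega_{\delta_{(X)}^\frac12\chi}\rangle$ becomes (up to an explicit normalizing factor, read off from Theorem~\ref{introimprovedformula}, especially the identity $\Omega_{\delta_{(X)}^\frac12\chi}(x_0) = c\beta(\chi)$) the evaluation of an element of $\CC[\delta_{(X)}^\frac12 A_X^*]^{W_X}$ at the point $\chi$. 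So the claimed identity is, after this translation, an assertion that a certain measure on $\delta_{(X)}^\frac12 A_X^{*,1}/W_X$ is the Plancherel measure for $C_c^\infty(X)^K$ with respect to the inner product coming from the $L^2(X)$-norm.

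First I would pin down the inner product on $C_c^\infty(X)^K$ explicitly in terms of the Cartan-type decomposition $X = \bigsqcup_{\check\lambda} x_{\check\lambda}K$ of \cite{Sa3}: for $\Phi$ supported on finitely many $K$-orbits, $\Vert\Phi\Vert^2 = \sum_{\check\lambda} |\Phi(x_{\check\lambda})|^2 \Vol(x_{\check\lambda}K)$, and the normalization $\Vol(x_0 J) = 1$ together with the index computation $[\,K : (K\cap\text{stab})\,]$-type formulas fixes each $\Vol(x_{\check\lambda}K)$; the factor $Q = \Vol(K)/\Vol(Jw_lJ)$ enters precisely because we normalized with respect to $J$ rather than $K$, and its product formula is the classical evaluation of the Poincaré polynomial of $W$ at $q^{-1}$ weighted by the $\rho$-shift. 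Next I would compute the ``matrix coefficients'' $\langle \mathbf{1}_{x_{\check\mu}K}, \Omega_{\delta_{(X)}^\frac12\chi}\rangle$: this is just $\Omega_{\delta_{(X)}^\frac12\chi}(x_{\check\mu})\,\Vol(x_{\check\mu}K)$, which by Theorem~\ref{introimprovedformula} is $\beta(\tilde\chi)$ times $\delta_{P(X)}^{-\frac12}(x_{\check\mu})\Vol(x_{\check\mu}K)$ times a Hall--Littlewood/Macdonald-type symmetric function in $\chi$ for the root datum of $\check G_X$.

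The heart of the matter is then an orthogonality relation: one must show that the functions $\chi \mapsto \Omega_{\delta_{(X)}^\frac12\chi}(x_{\check\lambda})$, integrated against $|\beta(\chi)|^{-2}$-type weights over $A_X^{*,1}$, reproduce the discrete $\ell^2$-structure indexed by the $\check\lambda$'s with the volume weights above --- i.e.\ a Plancherel/orthogonality statement for the family $\{\Omega_{\delta_{(X)}^\frac12\chi}\}$. I would get this from Macdonald's formula: the product $\prod_{\Theta^+}(1-\sigma_{\check\theta}q^{-r_{\check\theta}}T_{\check\theta})$ applied to Schur functions produces exactly a deformation of the Weyl character formula whose associated orthogonality measure on the compact torus is $\frac{1}{|W_X|}\,\frac{d\chi}{\beta(\chi)\beta(\chi^{-1})} \cdot (\text{Weyl-denominator}\cdot\text{conjugate})$, and the $\beta$-factors in the numerator of $\Omega$ cancel against those in the denominator of the Plancherel density, leaving the probability measure $d\chi$ on $A_X^{*,1}$ up to the overall constant $\frac{1}{Q\cdot|W_X|}$. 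Concretely: expand $\Phi = \sum_{\check\mu} c_{\check\mu}\mathbf{1}_{x_{\check\mu}K}$, plug both sides into the claimed identity, and reduce to the statement $\int_{A_X^{*,1}} \Omega_{\delta_{(X)}^\frac12\chi}(x_{\check\lambda})\overline{\Omega_{\delta_{(X)}^\frac12\chi}(x_{\check\mu})}\,d\chi = Q\cdot|W_X|\cdot\delta_{\check\lambda,\check\mu}\,\Vol(x_{\check\lambda}K)^{-1}$, which is a finite computation in the representation ring of $\check G_X$ using the orthogonality of characters twisted by the Macdonald density.

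The main obstacle I expect is the bookkeeping of normalizations: matching the measure-theoretic constant $\frac{1}{Q\cdot|W_X|}$ requires simultaneously tracking (i) the $J$-vs-$K$ volume discrepancy $Q$, (ii) the factor $|W_X|$ coming from the $W_X$-fold cover $\delta_{(X)}^\frac12 A_X^{*,1} \to \delta_{(X)}^\frac12 A_X^{*,1}/W_X$, (iii) the constant $c = \beta(\delta_{P(X)}^\frac12)^{-1}$ relating $\Omega$ at $x_0$ to $\beta(\chi)$, and (iv) the normalization of Haar measure on the dual compact torus $A_X^{*,1}$ against which Macdonald's orthogonality is stated. A clean way to fix all constants at once is to evaluate both sides of the Plancherel formula on the single test function $\Phi = \mathbf{1}_{X(\mathfrak o)} = \mathbf{1}_{x_0 K}$: the left side is $\Vol(x_0 K) = \Vol(K)/\Vol(x_0 J)\cdot\Vol(x_0 J)=\Vol(x_0K)$, computable as $Q$ under our normalization since $\Vol(x_0J)=1$ and $x_0$'s stabilizer meets $K$ in all of $K\cap\HH$; while the right side becomes $\frac{1}{Q|W_X|}\int_{A_X^{*,1}}|c\beta(\chi)|^2 d\chi$, and the identity $\int_{A_X^{*,1}}|c\beta(\chi)|^2\,d\chi = Q^2|W_X|$ is then exactly the $L_X$-integral whose value is forced by Macdonald's formula (it is the ``mass'' of the Plancherel measure, equal to the index of the relevant Hecke-module lattice). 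Once this base case fixes the constant, the general case follows from the orthogonality computation above.
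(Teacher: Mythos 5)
Your overall shape (establish orthogonality for a candidate measure, then fix the constant by a normalization check) matches the paper's, but both key steps have gaps, and the second is fatal as stated. To fix the constant you evaluate both sides on $\Phi=1_{X(\mathfrak o)}=1_{x_0K}$ and assert the left side equals $Q$. But $\Vol(x_0K)$ is not independently known at this stage: with the normalization $\Vol(x_0J)=1$ it equals $Q\cdot\beta(\delta_{P(X)}^{1/2})=Q\,c^{-1}$, a quantity that the paper \emph{deduces from} the Plancherel formula (Theorem \ref{measuretheorem}), not one that feeds into it. Your claimed identity $\int_{A_X^{*,1}}|c\beta(\chi)|^2\,d\chi=Q^2|W_X|$ is also wrong: since $|c\beta(\chi)|^2=L_X(\chi)$ on the unitary torus, the constant-term computation gives $\int_{A_X^{*,1}}L_X(\chi)\,d\chi=c\,|W_X|$. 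Substituting the correct values, the test function $1_{x_0K}$ produces one equation in two unknowns (the volume $\Vol(x_0K)$ and the proportionality constant $t$), namely $\Vol(x_0K)=Q/(tc)$, which is satisfied for every $t$ and therefore fixes nothing. The paper needs a genuinely different input here: Bernstein's criterion, which matches the ``non-cross-term'' part of the candidate density against the Plancherel formula of the horospherical degeneration $X_\emptyset$ via the measure-preserving identification of $K$-orbits deep in the cone $\Lambda_X^+$; that is what pins the constant to $1$.

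For the orthogonality step, you assert that $\int_{A_X^{*,1}}\Omega_{\delta_{(X)}^{1/2}\chi}(x_{\check\lambda})\overline{\Omega_{\delta_{(X)}^{1/2}\chi}(x_{\check\mu})}\,d\chi\propto\delta_{\check\lambda,\check\mu}$ is a finite computation ``using orthogonality of characters twisted by the Macdonald density.'' The functions $P_{\check\lambda}(\chi)=\Omega'_{\delta_{(X)}^{1/2}\chi}(x_{\check\lambda})$ are not a priori the orthogonal polynomials for $L_X\,d\chi$ --- that they are orthogonal is essentially the content of the theorem. Expanding $P_{\check\lambda}\overline{P_{\check\mu}}L_X$ via $B_w=\beta/(\beta\circ w)$, the diagonal terms $w=w'$ do give $|W_X|\,\delta_{\check\lambda,\check\mu}$ by $W_X$-invariance of $L_X$, but the off-diagonal terms are rational functions whose constant terms must separately be shown to vanish, and this does not follow from character orthogonality. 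The paper sidesteps the full orthogonality: it proves only the single family of identities $\int P_{\check\lambda}\,L_X\,d\chi=0$ for $\check\lambda\ne0$ (a constant-term estimate using the cone ordering $\succ_2$), notes that the abstract Plancherel measure satisfies the corresponding vanishing because $\left<1_{x_0K},1_{x_{\check\lambda}K}\right>_{L^2(X)}=0$, and then invokes the uniqueness of a compactly supported real measure representing a given functional on $\CC[\delta_{(X)}^{1/2}A_X^*]^{W_X}$. You would need either to supply the off-diagonal vanishing directly or to adopt this uniqueness argument.
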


Equivalently, if the eigenfunctions $\Omega_{\delta_{(X)}^\frac{1}{2}\chi}$ were re-normalized to have value $1$ at $x_0$, then the corresponding Plancherel measure on $\delta_{(X)}^\frac{1}{2}A_X^{*,1}/W_X$ would be given by $Q^{-1}L_X(\chi) d\chi$.

This theorem also leads to a computation of $\Vol(\XX(\mathfrak o))$ (Theorem \ref{measuretheorem}). This volume is essentially (up to a factor $(1-q^{-1})^{\rk (A_X^*)}$) the local ``Tamagawa'' volume, by which we mean the volume with respect to an \emph{integral, residually non-vanishing} invariant volume form.

\subsection{Periods of automorphic forms}

Finally, as a direct application of our formula we compute in \S \ref{secEisenstein} the ``most continuous part'' of $H$-period integrals of principal Eisenstein series, when $\HH\backslash\GG$ is a spherical subgroup of a split group defined over a global field $F$ (and locally satisfying the assumptions of Theorem \ref{introimprovedformula}) and show that it is given, essentially, by the ``half $L$-value'' $L_X^\frac{1}{2}:=c\beta(\chi)$ of Theorem \ref{introimprovedformula}. More precisely, we consider a pseudo-Eisenstein series on $\GG(F)\backslash \GG(\Ad)$, which can be analyzed as an integral of Eisenstein series $E(f_\chi,g)$, and show (for simplicity: if the Eisenstein series are induced from absolute values of algebraic characters of $\BB(\Ad)$): 

\begin{theorem}[cf.\ Theorem \ref{period}]
 The period integral of:
\begin{equation}
 \sum_{\gamma \in {\BB}(F)\backslash \GG(F)} \Phi(\gamma g) = \int_{\exp(\kappa+i\mathfrak a_\RR^*)} E(f_\chi,g) d\chi
\end{equation}
over $\HH(F)\backslash\HH(\Ad)$ is equal to:
\begin{equation}\int_{\exp(\kappa+i\mathfrak a_{X,\RR}^*)} \left(L_X^{\frac{1}{2},S}(\chi)\right)^*   \sum_{\left[W/W_{(X)}\right]} \left(\tilde j_{w}^S(\chi) \prod_{v\in S} \Delta_{{^{w}\chi},v}^{Y,{\Tam}} (f_{{^{w}\chi},v})\right) d\chi
\end{equation}
plus terms which depend on the restriction of $f_\chi$, as a function of $\chi$, to a subvariety of smaller dimension. 
\end{theorem}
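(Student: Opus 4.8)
\emph{Sketch of the proof.} The plan is to unfold the period integral along the $\HH$-orbits on $\BB(F)\backslash\GG(F)$, to isolate the contribution of the open orbit, and to evaluate the unramified places of the resulting Euler product by means of the explicit spherical function of Theorem~\ref{introimprovedformula}. Writing $\theta_\Phi(g)=\sum_{\gamma\in\BB(F)\backslash\GG(F)}\Phi(\gamma g)$, and noting that $\Phi=\int_{\exp(\kappa+i\mathfrak a_\RR^*)}f_\chi\,d\chi$ is rapidly decreasing once $\kappa$ lies deep enough in the positive chamber, one may substitute this expression and interchange the period integral $\int_{\HH(F)\backslash\HH(\Ad)}$ with the sum over $\gamma$. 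Decomposing $\BB(F)\backslash\GG(F)$ into $\HH(F)$-orbits, the spherical hypothesis on $\XX$ furnishes an open orbit $\mathcal O_0$ (the one through $x_0$), with every other orbit of strictly smaller dimension. For a non-open orbit the corresponding summand, after the localization step described below, involves $f_\chi$ only through its restriction to a proper sub-torus of $\exp(i\mathfrak a_\RR^*)$ --- the ``dual torus'' attached to the rank of that orbit as a $\BB$-orbit, of dimension strictly less than $\dim\mathfrak a_X$ --- and these are exactly the asserted error terms; it is not necessary to evaluate them.

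For the open orbit fix a representative $\gamma_0\in\GG(F)$ with $\BB\gamma_0\HH$ open, and set $\HH_0=\HH\cap\gamma_0^{-1}\BB\gamma_0$. The $\mathcal O_0$-summand equals $\int_{\HH_0(F)\backslash\HH(\Ad)}\Phi(\gamma_0 h)\,dh$, which I would factor through $\HH_0(\Ad)\backslash\HH(\Ad)$. The inner integral over $\HH_0(F)\backslash\HH_0(\Ad)$ is an integral of $f_\chi$ against the character $h\mapsto(\chi\delta^{\frac12})(\gamma_0 h\gamma_0^{-1})$ of $\HH_0$ (twisted by $\Psi$ on the unipotent part of $\HH_0$ in the twisted case); as a distribution in $\chi$ it is, up to terms supported on sub-tori of smaller dimension, a transverse delta-mass along the sub-torus on which this character is trivial modulo $\HH_0(F)$, i.e.\ along a translate of $A_X^*$. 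Carrying out the $d\chi$-integral therefore collapses the contour $\exp(\kappa+i\mathfrak a_\RR^*)$ onto $\exp(\kappa+i\mathfrak a_{X,\RR}^*)$, and there remains the integral over $\HH_0(\Ad)\backslash\HH(\Ad)$ of $f_\chi(\gamma_0 h)$. By the restricted-product structure of this quotient and the factorization $f_\chi=\otimes_v f_{\chi,v}$ this last integral is an Euler product $\prod_v \Delta_{\chi,v}^{Y,\Tam}(f_{\chi,v})$ of the local $\HH$-period functionals, normalized by the local Tamagawa measures.

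It remains to evaluate the factors at $v\notin S$. There $f_{\chi,v}$ is the normalized spherical section, and $\Delta_{\chi,v}^{Y,\Tam}(f_{\chi,v})$ is, up to the chosen normalizations, the value $\Omega_{\delta_{(X)}^{\frac12}\chi}(x_0)$ of the spherical function at the base point; by Theorem~\ref{introimprovedformula} (applicable since $\XX$ is affine and satisfies the running assumptions) this equals $c_v\,\beta(\chi_v)$, the ``half $L$-value''. The product $\prod_{v\notin S}c_v\beta(\chi_v)$ converges for $\kappa$ in a suitable cone, continues meromorphically in $\chi$, and by the combinatorial description of $L_X$ in Theorem~\ref{introimprovedformula} it is the completed partial quotient $\bigl(L_X^{\frac12,S}(\chi)\bigr)^{*}$. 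Finally, the functional equations of the Eisenstein series identify the datum $f_\chi$ with its normalized translates in $I({}^{w}\chi)$, and the top-dimensional (``most continuous'') part of the period thus receives one contribution per coset in $W/W_{(X)}$, where $W_{(X)}$ is the stabilizer in $W$ of the sub-torus $\delta_{(X)}^{\frac12}A_X^*$ together with the relevant unramified data; the discrepancy between $f_\chi$ and the normalized $w$-translate produces the normalized global intertwining factor $\tilde j_w^S(\chi)$ (a ratio of completed $L$-functions away from $S$), leaving the local factors $\prod_{v\in S}\Delta_{{}^{w}\chi,v}^{Y,\Tam}(f_{{}^{w}\chi,v})$ at the bad places. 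Assembling these pieces, together with the lower-orbit error terms from the first paragraph, yields the stated identity.

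The main obstacle is analytic rather than combinatorial: the several interchanges of summation with integration, and the splitting of the period into orbit-by-orbit contributions, are only literally valid with $\kappa$ in a positive cone, where moreover the individual summands need not be separately convergent once one specializes to $\exp(\kappa+i\mathfrak a_{X,\RR}^*)$. Making this rigorous requires an Arthur-type truncation of $\theta_\Phi$ followed by a limiting argument, together with the meromorphic continuation and functional equations of the global intertwining operators. A secondary, purely book-keeping difficulty is to pin down $W_{(X)}$ as precisely the relevant stabilizer and to track all the measure- and $\Tam$-normalization constants so that the identity holds on the nose; this is exactly where the explicit shape of the spherical function provided by Theorems~\ref{intromaintheorem} and \ref{introimprovedformula} enters.
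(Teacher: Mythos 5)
Your overall unfolding strategy (pseudo-Eisenstein series, contour shift, decomposition into $(\BB(F),\HH(F))$-double cosets, collapse of the contour onto a translate of $i\mathfrak a_{X,\RR}^*$ via the inner integral over $\HH_\xi(F)\backslash\HH_\xi(\Ad)$, Euler factorization, and evaluation of the unramified factors by Theorem \ref{introimprovedformula}) is the same as the paper's. But there is a genuine gap in how you account for the sum over $[W/W_{(X)}]$. You assert that every non-open orbit contributes only to the error terms, because its contribution "involves $f_\chi$ only through its restriction to a proper sub-torus of dimension strictly less than $\dim\mathfrak a_X$." That is false: the orbits that are negligible are those of non-maximal \emph{rank}, not the non-open ones. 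A $\BB$-orbit $\YY$ of maximal rank but positive codimension has $\mathfrak a_Y^*$ of the \emph{same} dimension as $\mathfrak a_X^*$ (the two are related by an element $w\in W(\YY)^{-1}$), so its contribution lives on a torus of full dimension and belongs to the most continuous part. These maximal-rank orbits are a torsor under Knop's action of $W/W_{(X)}$, and they are exactly the source of the sum over $[W/W_{(X)}]$ in the statement: for each such orbit one writes the unramified vector as $f^0_{\chi,v}=j_{w,v}^{-1}({}^{w^{-1}}\chi)\,T_w f^0_{{}^{w^{-1}}\chi,v}$ and uses $\Delta^{Y}_{\chi,v}\circ T_w=\Delta_{{}^{w^{-1}}\chi,v}$ to reduce to the open-orbit functional; this is where the factors $\tilde j_w^S(\chi)$ come from. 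Your attempt to instead produce the $w$-sum "from the functional equations of the Eisenstein series" is a different mechanism that is not available here: once you have unfolded the pseudo-Eisenstein series geometrically, the functional equation plays no role, and invoking it on top of the orbit decomposition would either fail to generate the missing terms or double count them. As written, your argument yields only the $w=1$ summand of the stated formula.

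A secondary, less serious point: the "Arthur-type truncation and limiting argument" you flag as the main analytic obstacle is not needed in this setup. Because $\Phi$ is compactly supported modulo $\AA(\Ad)^1\UU(\Ad)$, its Mellin transform is entire and the theta series is rapidly decreasing on $\GG(F)\backslash\GG(\Ad)$, so all interchanges are justified by shifting $\kappa$ (and then $\kappa_Y$) deep into the simultaneous domain of convergence of the Eisenstein sum, the intertwining operators $T_w$, and the adelic functionals $\prod_v\Delta^{Y}_{\chi,v}$; the paper verifies this simultaneous convergence using the cone $\mathcal T^\vee$ and the birationality of $\BB w^{-1}\BB\times^{\BB}\YY\to\XX$, together with a Schwartz--Harish-Chandra argument for the order of integration. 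You would also need the explicit comparison $\Delta^{\Tam}_{\chi,v}=Q_v\Delta_{\chi,v}$ to match normalizations, which you gesture at but do not carry out.
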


For the notation, see \S \ref{secEisenstein}.
The same $L$-values should show up in the $H$-period integral of cusp forms, according to an idea of A.\ Venkatesh which we formulate as an (almost) precise conjecture in \cite{SV}.

\subsection{Assumptions and notation} \label{ssnotation} All our schemes, subgroups etc.\ are over $\mathfrak o$ unless otherwise specified. However, when we talk about ``varieties'' we mean ``over $k$'', i.e.\ the fibers of the schemes over $\spec k$, unless otherwise specified. The group $\GG$ is assumed reductive, split and with connected fibers over $\spec \mathfrak o$. We also assume that the derived group of $\GG$ is simply connected (which causes no harm to generality, since we can always replace the action of a reductive group on a given variety by the action of a finite cover whose derived group is simply connected). We fix throughout a Borel subgroup $\BB$ with unipotent radical $\UU$ and a maximal split torus $\AA\subset\BB$. The choice of $\AA$ will be explained in \S\ref{torus}. The group $\AA(\mathfrak o)$ will also be denoted by $A_0$, and similarly $B_0=\BB(\mathfrak o)$, $U_0=\UU(\mathfrak o)$. The complex torus of unramified characters of $A$ is denoted by $A^*$. 

The Weyl group is denoted by $W$, by $W_P$ we denote the Weyl group of the Levi quotient of a standard parabolic $\PP$, roots are generally denoted by small Greek letters $\alpha,\beta$ etc.\ and the corresponding co-roots by $\check\alpha, \check\beta$ etc. Similarly, $\rho$ denotes half the sum of positive roots and $\check\rho$ denotes half the sum of positive co-roots. We will be using exponential notation for characters of tori, e.g.\ $e^{\alpha}$. The real part $\Re(\chi)$ of the character $\chi=e^\theta$ of a torus is, by definition, the real part of $\theta$. We denote by $\Phi$, $\Phi^+$ and $\Delta$ the sets of roots, positive roots, and simple positive roots, respectively (for our choice of Borel). The standard parabolic corresponding to a set of simple positive roots $\{\alpha,\beta,\dots\}$ will be denoted by $\PP_{\alpha\beta\dots}$, and its standard Levi by $\LL_{\alpha,\beta,\dots}$. The one-parameter additive subgroup corresponding to a root $\alpha$ will be denoted by $\UU_\alpha$. 

We denote by $\Gm$, $\Ga$ the multiplicative, resp.\ additive group and by $\mathbb A^n$ the $n$-dimensional affine space, sometimes with an index denoting chosen coordinates for this space. For a subgroup $\MM$ we will denote by $\MM'$ or $[\MM,\MM]$ its derived group, by $\mathcal Z(\MM)$ its center, by $\mathcal N(\MM)$ (or 
$\mathcal N_{\GG}(\MM)$, when the ambient group $\GG$ is not clear from the context) its normalizer, by $\mathcal R(\MM)$ its radical, by $\UU_M$ its unipotent radical and by $\mathfrak d_M$ the modular character of its normalizer (the quotient of a right- by a left-invariant volume form); that is:
$$ \mathfrak d_M: \mathcal N(\MM) \to \Gm.$$ We denote by $\delta_M$ the absolute value of $\mathfrak d_M$, and $\delta_B$ will simply be denoted by $\delta$. In \ref{ssinvariants} we will introduce a Levi subgroup $\LL(\XX)$; for notational simplicity, the modular character of $\LL(\XX)\cap \BB$ will be denoted by $\mathfrak d_{(X)}$ (resp.\ $\delta_{(X)}$ for its absolute value).
We denote $K=\GG(\mathfrak o)$, and $J=$ the standard Iwahori subgroup (the inverse image of $\BB(\FF_q)$ under the reduction map $K\to\GG(\FF_q)$). A representation $\pi$ of $G$ will be called ``unramified'' if $\pi^K\ne 0$.

We give ourselves a quasi-affine $\GG$-scheme $\XX$ over $\mathfrak o$ with an open $\BB_k$-orbit $\mathring \XX_k$ on $\XX_k$ such that: the complement $\mathring\XX$ of the closure in $\XX$ of the complement of $\mathring\XX_k$ is smooth and surjective over $\spec\mathfrak o$, and its special fiber is homogeneous under the special fiber of $\BB$. We assume that $X$ admits a $G$-eigenmeasure and $\mathring X$ admits a $B$-invariant measure. This can always be achieved by a trivial modification, see \cite[\S 3.8]{Sa2}. We also assume that $\XX$ satisfies Axioms \ref{integralityaxiom}, \ref{orbitstheorem} and \ref{Iwahori}, which is always the case at almost every place if $\XX$ and $\GG$ are defined over a global field (with $\GG$ split). The stabilizer of a point $\xi$ on a $\GG$-space will be denoted by $\GG_\xi$.

The quotient of a scheme $\VV$ (resp.\ a topological space $V$) by the action of a group scheme $\mathbf\Gamma$ (resp.\ a topological group $\Gamma$), whenever it exists (in the sense of geometric quotient for schemes, or topological quotient in the topological setting), will be denoted by $\VV/\mathbf\Gamma$ (resp.\ $V/\Gamma$). Thus, the reader should not be confused by expressions of the form $G/\Gamma$, where $G$ is a group and $\Gamma$ is another group acting on it, though not a subgroup of $G$.
We fix throughout a complex character $\psi$ of the additive group $k$, whose conductor is equal to the ring of integers $\mathfrak o$; this character will be used for our Fourier transforms, but also when we consider line bundles $\mathcal L_\Psi$ as described in \S \ref{ssfirst}. 

Finally, we will always work in the category of smooth representations of $p$-adic groups, except in the brief general discussion of $L^2(X)$ in section \ref{secPlancherel}. We denote the space of smooth (i.e.\ locally constant), compactly supported measures on the $k$-points of a smooth variety $\YY$ invariably by $C_c^\infty(Y)$ or $\mathcal S(Y)$; the space of smooth measures will be denoted by $M_c^\infty(Y)$. The characteristic function of an open subset $S\subset Y$ will be denoted by $1_S$.

\subsection{Acknowledgements}

I would like to thank Joseph Bernstein and Akshay Venkatesh for very useful and motivating discussions. I also thank the referee for a very diligent work that led to numerous little improvements and corrections. The first part of this work was conducted when I was supported by the Marie Curie Research Training Network in Arithmetic Algebraic Geometry, EU Contract MRTN-CT-2003-504917.

%*************************************************************************************************************************

\part{The sum formula}

%*******************************************************************************************************************
\section{Invariants and orbits}\label{secorbits}

\subsection{Invariants associated to spherical varieties} \label{ssinvariants} We start with the description of certain invariants associated to a spherical variety. For a variety $\YY$ with a $\BB$-action, we denote by $k(\YY)^{(\BB)}$ the multiplicative group of non-zero rational $\BB$-eigenfunctions on $\YY$ and by $\varchi(\YY)$ the group of $\BB$-eigencharacters on $k(\YY)^{(\BB)}$. If $\YY$ has a dense $\BB$-orbit, then we have a short exact sequence: $0\to k^\times \to k(\YY)^{(\BB)} \to \varchi(\YY) \to 0$. The \emph{rank} of a $\BB$-variety $\YY$ is, by definition, the rank of the abelian group $\varchi(\YY)$.

Our assumption that $\mathring\XX$ is smooth and surjective over $\mathfrak o$ implies that there is a point $x_0\in \mathring \XX(\mathfrak o)$; indeed, by the argument of \cite[Proposition 3.2.1]{Sa2} there is a point in $\XX(\FF_q)$, and by smoothness it can be lifted to an $\mathfrak o$-point.
We make once and for all a choice of a point $x_0\in \mathring\XX(\mathfrak o)$ and let $\HH$ denote its stabilizer; hence  $\XX=\HH\backslash\GG$ and $\HH\BB$ is open in $\GG$. 

Given a spherical variety $\XX$ with open Borel orbit $\mathring \XX$, the \emph{associated parabolic} is the standard parabolic $\PP(\XX):= \{ p\in \GG | \mathring\XX \cdot p = \mathring\XX\}$. We will denote by $\Delta(\XX)$ the set of simple roots to which it corresponds. It is known that the stabilizer $\HH$ of $x_0$ contains the derived group of a Levi subgroup $\LL(\XX)$ of $\PP(\XX)$ \cite[Th\'eor\`eme 3.4]{BLV}. To choose such a Levi subgroup, one can pick an element $f\in \mathfrak o[\XX]$ whose set-theoretic zero locus is the complement of $\mathring \XX$. Its differential $df$ is an element of the coadjoint representation, and its stabilizer is such a subgroup $\LL(\XX)$ (which for a symmetric variety is what is called a $\theta$-stable Levi of a minimal $\theta$-split parabolic subgroup, where $\theta$ is the involution defined by the choice of point $x_0$). 
We define
\label{torus} $\AA_X:= \LL(\XX)/(\LL(\XX)\cap\HH)$; it is a torus, but it can also be considered as a subset of $\XX$ by identifying it with the orbit of $\LL(\XX)$ through $x_0$. We also fix a maximal torus $\AA\subset\BB\cap\LL(\XX)$ (so that $\AA_X$ is a quotient of $\AA$ as well). Let $\Lambda_X$ be the coweight lattice of $\AA_X$; it can be naturally identified with $\Hom(\varchi(\XX), \mathbb Z)$. Let $\Lambda_X^+$ denote the monoid of $\GG$-invariant ($\mathbb Z$-valued, trivial on $k$) valuations on $k(\XX)$; it can be considered as a submonoid of $\Lambda_X$ by restriction to $k(\XX)^{(\BB)}$. (Indeed, no non-trivial $\GG$-invariant valuation vanishes on $k(\XX)^{(\BB)}$, cf.\ \cite[Corollary 1.8]{KnLV}.) Let $\mathcal Q = \Hom(\varchi(\XX),\mathbb Q)$, and let $\mathcal V\subset \mathcal Q$ be the cone spanned by $\Lambda_X^+$ in $\mathcal Q$. There is a natural quotient map: $\Hom (\varchi(\AA),\mathbb Q)\to \Hom(\varchi(\XX),\mathbb Q)$, and it is known that $\mathcal V$ contains the image of the 
\emph{
negative} Weyl chamber. In many cases (notably, symmetric varieties) it coincides with it \cite[\S 5]{KnLV}. We have a natural bijection: $\Lambda_X\simeq \AA_X(k)/\AA_X(\mathfrak o)$, induced from ${\check\lambda}\mapsto e^{\check\lambda}(\varpi)$. Elements of $A_X$ which are mapped to elements of $\Lambda_X^+$ under this bijection will be called ``$X$-anti-dominant'', and the set of those will be denoted by $A_X^+$.

We denote by $A_X^*$ the (complex) dual torus of $A_X$ -- equivalently, the torus of its unramified characters. The quotient map $\AA\to \AA_X$ induces a morphism $A_X^*\to A^*$ whose image we will denote by $\overline{A_X^*}$.

\subsection{Knop's action}
F.\ Knop has defined an action of the Weyl group of $\GG$ on the set of Borel orbits over $\bar k$ (cf.\ \cite{KnOrbits}). We review it briefly: If $\YY_{\bar k}$ is a $\BB_{\bar k}$-orbit on $\XX_{\bar k}$ and $\alpha$ is a simple root, then $(\YY\PP_\alpha/\mathcal R(\PP_\alpha))_{\bar k}$ is a homogeneous spherical variety for $(\PP_\alpha/\mathcal R(\PP_\alpha))_{\bar k}\simeq \PPGL_2$, hence has one of the following types over $\bar k$: Type $G$ ($\PPGL_2\backslash\PPGL_2$), type $U$ ($\SS\UU\backslash\PPGL_2$ where $\UU$ a maximal unipotent subgroup and $\SS\subset\mathcal N(\UU)$), type $T$ ($\TT\backslash\PPGL_2$, where $\TT$ is a non-trivial torus) or type $N$ ($\mathcal N(\TT)\backslash\PPGL_2$). We say, correspondingly, that the pair $(\YY_{\bar k},\alpha)$ is of type $G$, $U$, $T$ or $N$. Since we are working over a non-algebraically closed field, if $\YY_{\bar k}$ is defined over $k$ and $(\YY_{\bar k},\alpha)$ is of type $T$ then we will distinguish two sub-cases, called ``split'' and ``non-
split'', according to whether $\TT_k$ has the corresponding property. 

Knop defines an action of the Weyl group on the set of Borel orbits (over $\bar k$), characterized by the fact that for every orbit $\YY_{\bar k}$ the simple reflection $w_\alpha$ fixes the orbit of maximal rank in $(\YY\PP_\alpha)_{\bar k}$, unless the pair $(\YY_{\bar k},\alpha)$ is of type $U$, in which case there are two orbits of maximal rank and $w_\alpha$ interchanges them. The action defined this way is transitive on the set of Borel orbits of maximal rank (which includes the open $\BB$-orbit). The stabilizer of the open orbit is $W_{(X)}:=W_X\ltimes W_{P(X)}$, where $W_X$ is a canonical subgroup of $W$ called the ``little Weyl group'' of $\XX$. The group $W_{(X)}$ normalizes $\varchi(\XX)$ and acts on it through the quotient $W_X$. It is known that the dual action of $W_X$ on $\mathcal Q$ is faithful, generated by reflections, and admits as a fundamental domain the cone $\mathcal V$ defined previously. (More on this action will be recalled in \S \ref{ssrootsystem}.)

Since we are also discussing non-trivial line bundles $\mathcal L_\Psi$ as were described in \S \ref{ssfirst}, there is also a fifth case, called $(U,\psi)$. For a discussion of how this fits into the same setting (more precisely, into Knop's extension of his action to non-spherical varieties) we refer the reader to \cite[\S 5.5]{Sa2}. We caution the reader that, while the little Weyl group is still well-defined in this case, it does not coincide with the little Weyl group of the spherical variety. In this case the action of Knop is not transitive on all orbits of maximal rank, but only on some, called ``admissible''.

Notice that all $\BB_{\bar k}$-orbits of maximal rank are defined (and have a point) over $k$ \cite[Proposition 3.6.1]{Sa1}. We require that the same isomorphisms hold for orbits of maximal rank over $\mathfrak o$, which is clearly the case at almost every place if $\XX$ is defined globally:
\begin{axiom}\label{integralityaxiom}
 For every $\BB_k$-orbit $\YY_k\subset \XX_k$ of maximal rank and every simple root $\alpha$, the $\mathfrak o$-scheme $\YY\PP_\alpha/\mathcal R(\PP_\alpha)$ is isomorphic to one of: $\PPGL_2\backslash\PPGL_2$, $\SS\UU\backslash\PPGL_2$ (where $\SS\subset\mathcal N(\UU)$), $\TT\backslash\PPGL_2$, where $\TT$ is a non-trivial torus over $\mathfrak o$) or $\mathcal N(\TT)\backslash\PPGL_2$. Moreover, the complement of $\YY\PP_\alpha$ is the closure of the complement of $(\YY\PP_\alpha)_k$.
\end{axiom}

The scheme-theoretic structure on $\YY$ (and hence on $\YY\PP_\alpha$) implicit in the formulation of the axiom is defined as follows: $\YY$ is the complement of the closure of the complement of $\YY_k$ in its closure. It follows by the assumed properties of $\mathring\XX$ and an inductive application of this axiom that for every orbit $\YY_k$ of maximal rank (over $k$), the scheme $\YY$ is smooth and surjective over $\mathfrak o$ and its fibers are homogeneous under the fibers of $\BB$. For example, let $\alpha$ be a simple root such that $\mathring \XX\PP_\alpha/\mathcal R(\PP_\alpha)$ is of type $U$, and let $\YY_k$ be the complement of $\mathring\XX_k$ in $(\mathring\XX\PP_\alpha)_k$. Then by the axiom it follows that $\YY/\mathcal R(\PP_\alpha)$, is smooth, surjective and with $\BB$-homogeneous fibers over $\spec \mathfrak o$, hence the same holds for $\YY$. Since every orbit $\YY$ of maximal rank can be obtained from the open orbit by successive Weyl reflections of type $U$, the same conclusion holds 
for every orbit of maximal rank. In particular, Borel orbits of maximal rank over $k$ are in bijection with orbits of maximal rank over the residue field $\FF_q$ and always contain $\mathfrak o$-points.

\subsection{Brion's description of spherical reflections.} \label{ssBrion}

Consider the oriented graph $\mathfrak G$ (to be called ``weak order graph'') whose vertices are the $\BB_{\bar k}$-orbits on $\XX_{\bar k}$ (we will omit the subscript $\bar k$ for the rest of this section) and whose edges are labelled by the set $\Delta$ of simple roots, where an orbit $\YY$ is joined to an orbit $\ZZ$ by an edge labelled $\alpha$ if $\alpha$ raises $\YY$ to $\ZZ$ (that is, $\ZZ$ is the open $\BB$-orbit in $\YY\PP_\alpha$). There is also a variant of this graph, where two vertices corresponding to two orbits $\ZZ$ and $\YY$ (possibly the same) are joined by an edge labelled $\alpha$ (again, $\alpha$ is a simple root of $\GG$) if ${^{w_\alpha}\YY}=\ZZ$ under Knop's action; the connected component of this graph consisting of all orbits of maximal rank will be called ``Knop's graph''; this will only appear in some diagrams in Section \ref{secsimplespherical}, where we are not interested in depicting orbits of lower rank. 

Let $\YY$ be a $\BB$-orbit. Following Brion \cite{BrO}, for every oriented path $\gamma$ from $\YY$ to $\mathring \XX$ in the weak order graph we denote by $w(\gamma)$ the Weyl group element corresponding to the path, i.e.\ the element $w_{\alpha_n}\cdots w_{\alpha_1}$, where $\alpha_1,\dots,\alpha_n$ are the consecutive labels for the edges in the oriented path; in particular, if $\YY$ is of maximal rank then $^{w(\gamma)}\YY=\mathring\XX$ under Knop's action. We also denote by $\mathfrak G(\YY)$ the set of such paths and by $W(\YY)$ the set of all $w(\gamma)$, $\gamma\in \mathfrak G(\YY)$. Then we have:

\begin{proposition}[\cite{BrO}, Propositions 2 and 4]\label{associates}
Given $w,w'\in W(\YY)$ there is a sequence of elements $w=w_0,w_1,\dots, w_n=w'$ in $W(\YY)$ such that any consecutive $w_i,w_{i+1}$ can be written as $w_i=u w_\alpha v$, $w_{i+1} = u w_\beta v$ with $l(w_i)=l(w_{i+1})= l(u)+l(v)+1$ and $\alpha, \beta$ two mutually orthogonal simple roots.
\end{proposition}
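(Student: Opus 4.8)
The plan is to translate the statement into one about paths in the weak order graph, to exploit the rigidity of the weak order on orbits of maximal rank so as to see that all the relevant Weyl elements are given by reduced words, and then to reduce the needed moves to a local analysis in semisimple rank $\le 2$. First I would record the rigidity: since $\YY$ has maximal rank, the dense $\BB$-orbit of $\YY\PP_\alpha$ again has maximal rank for every simple root $\alpha$, so every path in $\mathfrak G(\YY)$ stays among orbits of maximal rank; and among those, as follows from Knop's description, the only orbit-changing raisings are type-$U$ ones, each strictly increasing the dimension by one. Hence every $\gamma\in\mathfrak G(\YY)$ has the same number of edges $d=\dim\mathring\XX-\dim\YY$, and -- this is the ``reducedness'' input, essentially Brion's Proposition 2 -- one has $l(w(\gamma))=d$, so that $w(\gamma)$ is a reduced product of the simple reflections occurring as labels of $\gamma$ and $W(\YY)$ consists of elements of length $d$. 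The statement to prove then becomes: the reduced words $w(\gamma)$, $\gamma\in\mathfrak G(\YY)$, are linked by ``orthogonal one-slot swaps'' $u\,w_\alpha\,v\rightsquigarrow u\,w_\beta\,v$ with $l(u)+l(v)+1=d$.

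Next I would introduce elementary moves on paths: two paths $\gamma,\gamma'\in\mathfrak G(\YY)$ are elementarily linked if they coincide outside a window of one or two consecutive edges on which they either (i) form a commutative polygon inside a rank-$2$ piece $\YY'\PP_{\alpha\beta}/\mathcal R(\PP_{\alpha\beta})$, or (ii) differ on a single edge, i.e.\ $\YY'\xrightarrow{\alpha}\ZZ$ and $\YY'\xrightarrow{\beta}\ZZ$ are both edges with $\alpha\ne\beta$. In case (i) the braid relations in $\langle w_\alpha,w_\beta\rangle$ force $w(\gamma)=w(\gamma')$, so nothing is to be proved. In case (ii), writing $u$ and $v$ for the products of reflections along the common left and right parts of the paths, we have $w(\gamma)=u\,w_\alpha\,v$ and $w(\gamma')=u\,w_\beta\,v$, and the first paragraph gives $l(u)+l(v)+1=d=l(w(\gamma))=l(w(\gamma'))$; what remains is to check that two distinct simple roots raising a common orbit to a common orbit are necessarily orthogonal. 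That is a local statement about the $\le 2$-dimensional $\BB$-variety $\YY'\PP_{\alpha\beta}/\mathcal R(\PP_{\alpha\beta})$ for the semisimple-rank-$2$ group $\PP_{\alpha\beta}/\mathcal R(\PP_{\alpha\beta})$, to be verified by running through the root systems $A_1\times A_1$, $A_2$, $B_2$, $G_2$ and the finitely many homogeneous spherical $\BB$-pictures occurring for each.

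It then remains to show that any $\gamma,\gamma'\in\mathfrak G(\YY)$ are connected by a chain of elementary moves, which I would do by induction on $d$. Compare the first edges $\YY\xrightarrow{\alpha}\YY_1$ of $\gamma$ and $\YY\xrightarrow{\beta}\YY_1'$ of $\gamma'$. If $\alpha=\beta$, then $\YY_1=\YY_1'$ is the dense orbit of $\YY\PP_\alpha$ and the inductive hypothesis applied to the two tails, which lie in $\mathfrak G(\YY_1)$, concludes. If $\alpha\ne\beta$, I would pass to the weak order of the rank-$\le 2$ variety $\YY\PP_{\alpha\beta}/\mathcal R(\PP_{\alpha\beta})$: from its explicit structure one produces a path, elementarily linked to $\gamma$, that begins with the label $\beta$ -- or, more generally, one that meets, past the rank-$2$ window, an orbit through which both $\gamma$ and $\gamma'$ can be completed -- after which the inductive hypothesis applies to the tails. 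This is precisely Brion's Propositions 2 and 4 carried out for the rank-$2$ building blocks, the general case being obtained by patching these local moves.

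The step I expect to be the main obstacle is exactly this rank-$\le 2$ analysis: the explicit enumeration of the weak order graphs of homogeneous spherical spaces for groups of semisimple rank $2$, together with the verifications that parallel edges carry orthogonal labels and that any two shortest paths between fixed orbits of such a variety are linked by the allowed moves. Everything else is formal bookkeeping with reduced words once this local input -- which is what the proposition imports from Brion -- is in place.
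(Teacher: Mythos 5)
The paper does not prove this proposition at all: it is imported verbatim from Brion's paper \cite{BrO} (Propositions 2 and 4), so there is no in-paper argument to compare yours against. Your outline is the natural one and, as far as I can tell, follows the same broad strategy as Brion's: establish that every path in $\mathfrak G(\YY)$ has exactly $\operatorname{codim}\YY$ edges and yields a reduced word, then reduce the connectivity of $W(\YY)$ under the allowed swaps to a local analysis inside the minimal parabolics $\PP_{\alpha\beta}$ of semisimple rank two. You also correctly identify where the real content lives, namely in the rank-two enumeration, which you defer.

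Two points in the written argument are genuinely problematic as stated. First, your case (i) claim that a ``commutative polygon inside a rank-$2$ piece'' forces $w(\gamma)=w(\gamma')$ via braid relations is false in the generality you assert it: two reduced words of the same length $\ell$ in $w_\alpha,w_\beta$ that begin with different letters represent \emph{distinct} group elements unless $\ell$ equals the order $m_{\alpha\beta}$ of $w_\alpha w_\beta$. So a two-edge diamond with non-orthogonal labels $\alpha,\beta$ and distinct intermediate orbits would produce two elements of $W(\YY)$ related neither by equality nor by an orthogonal swap; the proposition is only saved because such configurations do not occur in the weak order graph of a spherical variety, and ruling them out is part of the local verification rather than a consequence of braid relations. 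Your deferred rank-two analysis therefore has to do more than check that parallel single edges carry orthogonal labels; it must also exclude (or otherwise handle) longer parallel subpaths with non-orthogonal labels and unequal products. Second, your opening paragraph silently assumes $\YY$ has maximal rank, whereas the proposition (and its use in Theorem \ref{Brionsdescription}, where paths may pass through orbits reached by type $T$ or $N$ raisings) is stated for an arbitrary $\BB$-orbit; the codimension count $l(w(\gamma))=\operatorname{codim}\YY$ and the reduction to rank two still go through in that generality, but the claim that all raisings along the path are of type $U$ does not, so the argument needs to be phrased without it.
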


Based on this, Brion proves:

\begin{theorem}[\cite{BrO}, Theorem 4]\label{Brionsdescription}
 A set of generators of $W_{(X)}$ consists of the elements $w_\alpha$, $\alpha\in\Delta(\XX)$, and elements $w$ with the following property: There is a decomposition $w=w_1^{-1} w_2 w_1$ such that:
\begin{itemize}
 \item $^{w_1}\mathring \XX =: \YY$ with $\operatorname{codim}(\YY)= l(w_1)$ (i.e.\ $w_1^{-1}\in W(\YY)$.)
 \item $w_2$ is either of the following two:
\begin{enumerate}
 \item equal to $w_\alpha$, where $\alpha$ is a simple root such that $(\YY,\alpha)$ is of type $T$ or $N$, or
 \item equal to $w_\alpha w_\beta$, where $\alpha,\beta$ are two orthogonal simple roots which raise the same orbit of maximal rank $\YY'$ to $\YY$.
\end{enumerate}
\end{itemize} 
\end{theorem}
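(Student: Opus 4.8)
\emph{Proof strategy.} The plan is to treat the two kinds of generators separately. The subgroup $W_{P(X)}$ is the parabolic subgroup of $W$ generated by the $w_\alpha$ with $\alpha\in\Delta(\XX)$, and each such $w_\alpha$ already lies in $W_{(X)}$: since $\mathring\XX\PP_\alpha=\mathring\XX$, the pair $(\mathring\XX,\alpha)$ is of type $G$, so $w_\alpha$ fixes $\mathring\XX$ under Knop's action. In view of the decomposition $W_{(X)}=W_X\ltimes W_{P(X)}$ it therefore suffices to produce generators of $W_X$ among the listed elements. As recalled above, $W_X$ acts on $\mathcal Q$ as a finite reflection group with fundamental chamber the valuation cone $\mathcal V$, hence is generated by the reflections in the walls of $\mathcal V$; so the statement reduces to: (i) every element of the second type lies in $W_{(X)}$; and (ii) for each wall of $\mathcal V$, the corresponding reflection $r$ admits a representative in $W_{(X)}$ of the second type.

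Part (i) is a short orbit computation. Given $w=w_1^{-1}w_2w_1$ as in the statement, we have ${}^{w_1}\mathring\XX=\YY$. In case (1) the pair $(\YY,\alpha)$ is of type $T$ or $N$, so $w_\alpha$ fixes the unique maximal-rank orbit of $\YY\PP_\alpha$, namely $\YY$. In case (2), $\alpha$ and $\beta$ each raise the same maximal-rank orbit $\YY'$ to $\YY$, so by the definition of Knop's action each of $w_\alpha,w_\beta$ interchanges $\YY$ and $\YY'$, whence $w_\alpha w_\beta$ fixes $\YY$. Either way ${}^{w_2}\YY=\YY$, so ${}^{w}\mathring\XX=\mathring\XX$ and $w\in W_{(X)}$.

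Part (ii) is the substantive point. Fix a wall of $\mathcal V$ with reflection $r$. The plan is to descend from $\mathring\XX$ along a reduced path in the weak-order graph to a maximal-rank orbit $\YY$ at which the rank-one mechanism producing $r$ is already ``visible'' in a single $\PGL_2$-localization, or in a $\PGL_2\times\PGL_2$-localization spanned by two orthogonal simple roots. Reading such a path backwards produces $w_1\in W$ with ${}^{w_1}\mathring\XX=\YY$; since each raising step raises the dimension by exactly one, $l(w_1)=\operatorname{codim}\YY$, i.e.\ $w_1^{-1}\in W(\YY)$. By the classification of rank-one spherical $\PGL_2$-varieties and Knop's description of which raising/lowering configurations produce a genuine reflection of $W_X$, the possibilities --- after discarding type $G$ (which only yields simple roots of $\Delta(\XX)$) and individual type-$U$ steps (which do not lie in $W_{(X)}$) --- are exactly $w_2=w_\alpha$ with $(\YY,\alpha)$ of type $T$ or $N$, and $w_2=w_\alpha w_\beta$ with $\alpha\perp\beta$ both raising some common maximal-rank orbit $\YY'$ (necessarily of codimension $\operatorname{codim}\YY+1$) to $\YY$, the latter occurring exactly when the two type-$U$ steps induce the same element of $\varchi(\XX)$. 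Transporting $w_2$ back by $w_1$ yields a representative $w_1^{-1}w_2w_1$ of $r$ of the required shape, which lies in $W_{(X)}$ by part (i). That the construction does not depend on the chosen reduced descent --- so that the conjugacy type of $w_1^{-1}w_2w_1$ is well defined --- follows from Brion's Proposition~\ref{associates}: any two elements of $W(\YY)$ differ by a sequence of moves $uw_\alpha v\leftrightarrow uw_\beta v$ with $\alpha,\beta$ orthogonal simple roots and $l(uw_\alpha v)=l(u)+l(v)+1$.

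The main obstacle is (ii), and within it the claim that \emph{every} wall-reflection of $\mathcal V$ is captured by one of these two rank-one shapes at a suitable maximal-rank orbit. Establishing this combines Knop's theorem that $W_X$ is a reflection group (used as a black box), the classification of rank-one spherical $\PGL_2$-varieties together with the bookkeeping of which configurations contribute a reflection (types $T$, $N$, and paired orthogonal type-$U$ steps), and the combinatorial control of the weak order supplied by Proposition~\ref{associates}, which is precisely what allows one to replace an arbitrary descent to $\YY$ by a reduced one and to guarantee the well-definedness above. The remaining points --- that a reduced descent has length $\operatorname{codim}\YY$, and the orbit computations of part (i) --- are routine.
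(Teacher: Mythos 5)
The paper does not actually prove this statement: it is quoted from Brion (\cite{BrO}, Theorem 4), with Proposition \ref{associates} recalled immediately beforehand as the tool on which Brion's proof is ``based''. So the comparison here is really between your sketch and Brion's argument, and judged on those terms your proposal has a genuine gap. The reductions you make are fine and routine: the orbit computations in your part (i) correctly show that the listed elements stabilize $\mathring\XX$ under Knop's action, and the passage via $W_{(X)}=W_X\ltimes W_{P(X)}$ and the faithfulness of the $W_X$-action on $\mathcal Q$ legitimately reduces generation to realizing the wall-reflections of $\mathcal V$. But the entire content of the theorem sits in your part (ii), and there the key claim is asserted rather than proved. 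The sentence appealing to ``the classification of rank-one spherical $\PGL_2$-varieties and Knop's description of which raising/lowering configurations produce a genuine reflection of $W_X$'' is circular: that every wall-reflection of $\mathcal V$ becomes visible, at the end of some reduced descent to a maximal-rank orbit $\YY$, as a type-$T$/$N$ reflection or a pair of orthogonal type-$U$ edges is precisely what Brion's theorem establishes, and it does not follow formally from the rank-one classification. Your sketch gives no reason why, for a given wall, a suitable orbit $\YY$ and configuration exist, nor why the resulting $w_1^{-1}w_2w_1$ induces that particular reflection on $\mathcal Q$. Note also that the paper's own Proposition \ref{Brionapplies}, which produces a Brion path for each spherical root by reducing to rank $\le 2$ and invoking the Akhiezer--Wasserman classification, takes Theorem \ref{Brionsdescription} as an input, so you cannot lean on it without circularity.

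There is also a structural mismatch with Brion's route. His argument runs in the opposite direction: one starts from an arbitrary element of $W_{(X)}$, reads a reduced decomposition as a path in the weak order graph, and uses the exchange moves of Proposition \ref{associates} to locate a ``turning point'' and peel off a factor of the stated form, inducting on length. In particular, Brion does not need to identify in advance which elements of $W_X$ are the simple (wall) reflections, whereas your approach makes that identification the crux. In your sketch Proposition \ref{associates} is invoked only for the well-definedness of a construction, which is not where its force is needed. To complete your route you would need either the full weak-order factorization argument or a reduction to rank $\le 2$ spherical varieties plus their classification --- substantive work that the phrase ``routine bookkeeping'' does not cover.
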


(This description does not apply to the case of a non-trivial $\mathcal L_\Psi$ and the extension of Knop's action there.)

It follows from a theorem that we will recall later (Theorem \ref{thmSa2}) that the generators $w$ of this theorem which are not of the form $w_\alpha,\alpha\in\Delta(\XX)$ all belong to the little Weyl group $W_X$. We will also see in \S \ref{ssrootsystem} that the ``canonical'' generators for $W_X$, the simple reflections corresponding to ``spherical roots'', admit a description of this form.

\subsection{Hyperspecial and Iwahori orbits} 
As mentioned, we assume that $\GG$ possesses a smooth model over $\mathfrak o$, so that $K=\GG(\mathfrak o)$ is a hyperspecial maximal compact subgroup. We also set $J$ for the Iwahori subgroup of $K$, that is, the inverse image of the standard Borel subgroup under the map $K\to \GG(\FF_q)$. We will assume throughout the following axioms pertaining to $K$- and $J$-orbits on $X$.

\begin{axiom} \label{orbitstheorem}
The set $A_X^+$ contains a complete set of representatives for $K$-orbits on $X$; elements of $A_X^+$ which map to distinct elements of $\Lambda_X^+$ belong to different $K$-orbits.
\end{axiom}

\begin{axiom} \label{Iwahori}
For $x \in A_X^+$ we have $xJ \subset x\BB(\mathfrak o)$.
\end{axiom}

The first axiom generalizes the Iwasawa (for $\XX=\UU\backslash\GG$) and Cartan (for $\XX=\GG'$, $\GG=\GG'\times\GG'$) decompositions. It was proven by Luna and Vust \cite{LV} in the case $\mathfrak o=\CC[[t]]$. An alternate proof was given by Gaitsgory and Nadler in \cite{GN}, which was adapted to the $p$-adic case, under assumptions which hold at almost every place if the variety is defined over a global field, in \cite{Sa3}. In the case of a symmetric variety, alternate proofs of similar statements have recently been presented by Benoist and Oh \cite{BO}, Delorme and S\'echerre \cite{DS}.

The second axiom was also proven in \cite{Sa3} under similar assumptions.

\begin{remark}
From the first axiom it follows that we have surjective maps: $A_X^+/A_0\to X/K \to \Lambda_X^+$, but in general these maps are not bijective. An example is: $\GG=\Gm$, $\XX=\{\pm 1\}\backslash\GG$ where the first map is bijective but not the second (since the residue field has non-trivial square classes), and another is $\XX=\mathbf O_2\backslash\GGL_2$, the space of non-degenerate quadratic forms (assume that the residue characteristic is not two), where the first map is also not bijective since the set $\AA_X(\mathfrak o)/\AA(\mathfrak o)$ has four elements, but there are only two classes of non-degenerate quadratic forms over the residue field, and hence these four elements are contained in only two orbits of $K$ on $X$. (We certainly expect that the second map is bijective when $\HH$ is connected.)
\end{remark}

%*****************************************************************************************************************

\section{Intertwining operators}\label{secintertwining}

\subsection{Overview} The goal of this paper is to compute eigenfunctions of the Hecke algebra on $C^\infty(X)$ or, more generally, on $C^\infty(X,\mathcal L_\Psi)$, when $L_\Psi$ is as in \S \ref{ssfirst}. The case of $\mathcal L_\Psi$ will generally be suppressed from our notation, except when it needs to be treated separately. Unless otherwise stated, all our statements hold -- with obvious modifications -- in that case, as well.

We let $I(\chi)= \Ind_B^G(\chi\delta^\frac{1}{2})$, a principal series representation. (For normalized induction from another parabolic subgroup $P$ we will be using the notation $I_P$.) Then a Hecke eigenfunction on $C^\infty(X)$ is the image of a $K$-invariant vector $\phi_{K,\chi} \in I(\chi)$ via an intertwining operator $I(\chi)\to C^\infty(X)$ (for some $\chi$).

The details of the present section are quite technical, therefore we give here an overview of its contents. We will introduce a $G$-eigenmeasure $|\omega_X|$ on $X$ to set up a duality:
\begin{eqnarray}\label{duality}
  C_c^\infty(X)\otimes \nu \otimes C^\infty(X)\to \CC \\
  \phi \otimes |\omega_X| \otimes f \mapsto \int_X \phi f |\omega_X| \nonumber
\end{eqnarray}
(where $\nu$ is the character of $|\omega_X|$). As notation suggests, $|\omega_X|$ is the absolute value of a volume eigen-form on $\XX$, the eigencharacter of which will be denoted by $\mathfrak n$ (hence $\nu=|\mathfrak n|$).

For every $B$-orbit $Y$ on $X$ (assuming for now that $Y=\YY(k)$ is a single $B$-orbit) we will introduce morphisms:
$$S_\chi^Y: C_c^\infty(X) \to I(\chi)$$
given (in some domain of convergence for $\chi$) by the formula:
$$ S_\chi^Y(\phi) (1)= \int_{Y} \phi \mu_\chi^Y$$
where $\mu_\chi^Y$ is a suitable $B$-eigenmeasure on $Y$. For $\YY=\mathring \XX$ we will sometimes omit the exponent $^Y$ from the notation.

We view $I(\chi)$ as a subspace of the space of smooth functions on $U\backslash G$, and through the duality (\ref{duality}) we have an adjoint for $S_{\chi^{-1}\nu^{-1}}^Y$ (again, by fixing an invariant measure on $U\backslash G$):
$$ \Delta_\chi^Y: \mathcal S(U\backslash G)\to C^\infty(X).$$
The space $\mathcal S(U\backslash G)$ is a larger space than $C_c^\infty(U\backslash G)$ suitable for the theory of Fourier transforms, called the \emph{Schwartz space of $\overline{U\backslash G}^{\rm{aff}}$}. (The exponent $\rm{aff}$ denotes the \emph{affine closure} of $U\backslash G$; however we will allow ourselves to abuse language and notation, write $\mathcal S(U\backslash G)$ and say ``Schwartz space of $U\backslash G$''.)

We will show that $\Delta_\chi^Y$ can be expressed (in a suitable domain of convergence for $\chi$) by a formula:
$$ \Delta_\chi^Y(\Phi) (\xi)= \int_{U\backslash G} \Phi \mu'^Y_\chi$$
where the point of evaluation $\xi$ belongs to $Y$ and $\mu'^Y_\chi$ is a suitable $A\times G_\xi$-eigenmeasure on $U\backslash G$. (Notice that $U\backslash G$ carries a natural action of $A\times G$.) 

While our goal is to compute $\Delta_\chi^Y (\Phi_K)$, with $\Phi_K \in \mathcal S(X)^K$, this cannot be done directly. Rather, we first compute $\Delta_\chi^Y (\Phi_J)$, where $\Phi_J$ is a suitable function of small support in $\mathcal S(U\backslash G)^J$, and explain the steps needed to deduce from this the computation of $\Delta_\chi^Y (\Phi_K)$. These steps will be undertaken in the following two sections.

The biggest part of this section is devoted to choosing eigenmeasures in compatible ways and rigorously proving the formula for $\Delta_\chi^Y$. A major complication that arises is that $Y=\YY(k)$ will, in general, consist of several $B$-orbits -- in this case the morphisms $\Delta_\chi^Y$ are not uniquely defined, and we must instead introduce variants, denoted $\Delta_{\tilde\chi}^Y$. These are defined by suitable eigenmeasures on $Y$, and the correct way to choose eigenmeasures is that these be absolute values of differential eigenforms. Then we need to compare differential forms on different $\BB$-orbits with each other, or with differential forms on $\UU\backslash \GG$. The most technical parts of this section can be skipped at first reading.

\subsection{Morphisms and multiplicity}\label{ssmorphisms}
Let $\YY$ be a $\BB$-orbit on $\XX$ with a distinguished $k$-point $y_0$ (we will later discuss how to choose $y_0$), and let $\AA_Y$ denote the quotient of $\AA$ by the stabilizer modulo $\UU$ of $y_0$. Fix a top-degree $\BB$-eigenform $\omega_Y$ on $\YY$, whose character we will denote by $\mathfrak c$, and let $\tilde \chi'$ vary over all characters of $A_Y$ which are unramified when restricted to the image of $A\to A_Y$. Through our choice of $y_0$ we get an identification: $\YY/\UU\simeq \AA_Y$, and we consider $\tilde\chi'$ as a function on $Y$. In \cite{Sa2} we defined a morphism $S^Y_{\tilde\chi}: C_c^\infty(X)\to I(\chi)$ which, composed with evaluation at ``1'' is given by the rational continuation of the integral:

\begin{equation}\label{defofS}
\ev_1\circ S_{\tilde\chi}^Y(\phi)= \int_Y \phi(y) \tilde\chi'^{-1}(y) |\omega_Y|(y).
\end{equation}
This integral converges and represents a rational function for a certain domain of the parameter $\chi$.

\begin{definition}
 An eigenmeasure on $Y$ of the form $\tilde\chi'^{-1} |\omega_Y|$ will be called \emph{pseudo-rational}.
\end{definition}

We explain now what the index $\tilde\chi$ which appears in the notation is: Consider the map: $\AA(\bar k)\to \AA_Y(\bar k)$ and let $R$ denote the preimage of $\AA_Y(k)$. The expression (\ref{defofS}) provides a complex character $\tilde\chi$ of $R$ defined as: $\tilde\chi=\delta^{-\frac{1}{2}}\cdot \tilde\chi'\cdot |\mathfrak c|^{-1}$. (Notice that unramified characters such as $\delta^{-\frac{1}{2}}$ and $|\mathfrak c|^{-1}$ make sense on the whole of $\AA(\bar k)$ -- the reason is that there are canonical algebraic characters, of which these unramified characters are a power of the absolute value.) The characters obtained this way are a torsor for the group of characters of $A_Y$ which are unramified on the image of $A$. If $\chi$ denotes the restriction of $\tilde\chi$ to $A$, the above expression (\ref{defofS}) defines a $(B,\chi\delta^\frac{1}{2})$-equivariant functional on $C_c^\infty(X)$, and by Frobenius reciprocity a morphism: $C_c^\infty(X)\to I(\chi)$. When no confusion arises, we will denote 
by $S_{\tilde\chi}^Y$ both the functional and the corresponding morphism into $I(\chi)$. 

For the principal series $I(\chi)$ to admit such a morphism, the character $\chi$ must satisfy the condition:
\begin{equation}\label{cond1}
 \chi^{-1}\delta^{\frac{1}{2}}|_{B_y} = \delta_{B_y}
\end{equation}
for a (any) point $y\in Y$. By abuse of language we will often say that ``$\tilde\chi$ is a character of $A_Y$ which extends $\chi$'', although neither $\tilde\chi$ nor $\chi$ are, in general, characters of $A_Y$.

A main result of \cite{Sa2} was:

\begin{theorem}\label{thmmult}
 Let $\pi$ be an irreducible unramified representation of $G$. Then a necessary condition for $\pi$ to be $X$-distinguished (i.e.\ $\Hom_G(\pi,C^\infty(X))\ne 0$) is that $\pi$ is the unramified subquotient of $I_{P(X)}(\chi)$ for some $\chi\in \overline{A_X^*}$. For a generic $\chi\in \overline{A_X^*}$, if $\pi$ is the irreducible unramified subquotient of $I_{P(X)}(\chi)$ then:
$$ \dim\Hom_G(\pi, C^\infty(X)) = g\cdot a,$$
where $g$ is the degree of the generic fiber of the map: $\overline{A_X^*}/W_X\to A^*/W$ and $a$ is the number of open $B$-orbits.
\end{theorem}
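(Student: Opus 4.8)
The plan is to compute $\Hom_G(\pi, C^\infty(X))$ by dualizing: since $\pi$ is admissible, $\Hom_G(\pi, C^\infty(X)) = \Hom_G(\pi, \Ind_H^G 1)$ fits, via the chain of adjunctions $C^\infty(X) = (C_c^\infty(X))^\vee$ and Frobenius reciprocity, into the study of $G$-maps out of $C_c^\infty(X)$ and into $\pi^\vee$. Concretely, I would factor the problem through the principal series. First, by the geometric/Mackey-theoretic analysis of the $B$-orbit stratification of $X$, the morphisms $S_{\tilde\chi}^Y \colon C_c^\infty(X) \to I(\chi)$ of \eqref{defofS} span (for generic $\chi$) the space of all $G$-morphisms from $C_c^\infty(X)$ to $I(\chi)$: running over the open $B$-orbits $Y$ and over the characters $\tilde\chi$ of $A_Y$ extending a fixed $\chi$ gives, generically, a basis, because lower-dimensional orbits contribute nothing to $\Hom$ at generic $\chi$ (their eigencharacter conditions \eqref{cond1} cut out proper subvarieties of $A^*$), and the open orbits each contribute a one-dimensional space of such functionals for each admissible extension $\tilde\chi$. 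This is where the factor $a$ (number of open $B$-orbits) enters.

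Next I would combine this with the result quoted just above the theorem (the support of $V^K$, i.e. the statement from \cite{Sa2} that the unramified spectrum of $X$ lies in the image of $\delta_{(X)}^{1/2}A_X^*$) to pin down which $\chi$ are relevant: $\pi$ is $X$-distinguished only if $\pi \hookrightarrow I(\chi)$ (or is a subquotient) for $\chi \in \overline{A_X^*}$ up to the twist, and since $\pi$ is generated by its Iwahori-fixed — indeed $K$-fixed — vectors, $\pi$ being the unramified subquotient of $I_{P(X)}(\chi)$ is forced. For generic such $\chi$ the principal series $I(\chi)$ is irreducible, so $I(\chi) \cong \pi$, and then $\Hom_G(\pi, C^\infty(X)) = \Hom_G(I(\chi), C^\infty(X))$, which by the duality \eqref{duality} is dual to $\Hom_G(C_c^\infty(X), I(\chi^{-1}\nu^{-1}))$ — and this last space I have just shown (generically) has dimension $a \cdot |\{\tilde\chi \text{ extending } \chi\}|$ per character $\chi$, but different $\chi$ in the same $W$-orbit give the \emph{same} irreducible $\pi$. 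Counting how many $\chi\in\overline{A_X^*}$ (modulo $W_X$, which acts trivially on the isomorphism class) lie over a fixed point of $A^*/W$ is exactly the degree $g$ of the generic fibre of $\overline{A_X^*}/W_X \to A^*/W$; summing the contributions gives $g\cdot a$.

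The main obstacle is the genericity bookkeeping: I must show that for $\chi$ outside a proper Zariski-closed subset of $\overline{A_X^*}$ (i) $I(\chi)$ and all its $W$-conjugates are irreducible and mutually isomorphic iff $W$-conjugate, (ii) the functionals $S_{\tilde\chi}^Y$ attached to the open orbits are linearly independent and exhaust $\Hom_G(C_c^\infty(X),I(\chi))$ with no extra contribution from non-open orbits, and (iii) the rational continuation defining $S_{\tilde\chi}^Y$ is regular and nonzero at $\chi$. Parts (i) and (iii) are standard (Bernstein–Zelevinsky / Casselman reducibility loci, plus the rationality established in the construction of $S_{\tilde\chi}^Y$). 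Part (ii) is the substantive point: linear independence should follow by comparing the leading asymptotics of the functionals along the different open orbits (they are supported, to leading order, on disjoint pieces of $X$), and exhaustion follows from a dimension count — $\dim\Hom_G(C_c^\infty(X), I(\chi))$ equals the multiplicity of $I(\chi^{\vee}\text{-data})$ in the Mackey filtration of $C_c^\infty(X)$, which for generic $\chi$ picks out precisely the open-orbit strata. I would finally remark that the passage from $I_{P(X)}$ to $I$ (Bruhat induction in stages) and the identification of the relevant parameter torus with $\overline{A_X^*}$ rather than $A_X^*$ is exactly accounted for by the modular twist $\delta_{(X)}^{1/2}$, so no discrepancy arises in the final count.
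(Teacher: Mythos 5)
You should first be aware that the paper does not prove Theorem \ref{thmmult} at all: it is quoted verbatim from \cite{Sa2} (``A main result of \cite{Sa2} was\dots''), so there is no in-paper argument to compare against. The sentence immediately following the theorem does, however, record the shape of the proof there: the space of morphisms is spanned by the adjoints of the open-orbit morphisms $S_{\tilde\chi}$ \emph{together with their composites with the standard intertwining operators} $T_w$. Your outline (dualize to $\Hom_G(C_c^\infty(X)\otimes\nu,\tilde\pi)$, run the Mackey filtration over $\BB$-orbits, impose the eigencharacter condition (\ref{cond1}), count generically) is the right strategy and essentially the one of \cite{Sa2}, but your multiplicity accounting contains a genuine error. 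You assert, both in your first paragraph and again as the ``exhaustion'' step (ii), that for generic $\chi$ only the open-orbit strata contribute to $\Hom_G(C_c^\infty(X),I(\chi))$ because the other orbits' eigencharacter conditions cut out proper subvarieties of $A^*$. That would cap the dimension at $a$ and contradicts the answer $g\cdot a$. The relevant genericity is genericity \emph{inside} the translated subtorus $\delta_{(X)}^{\frac{1}{2}}\overline{A_X^*}$ (condition (\ref{cond1}) for the open orbit already confines $\chi$ there), and within that subtorus the conditions attached to the non-open maximal-rank orbits ${}^w\mathring\XX$, for $w$ stabilizing the subtorus, hold \emph{identically}: ``lower-dimensional'' is not ``lower-rank''. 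Those orbits — equivalently the composites $T_w\circ S_\chi$ with $w\notin W_{(X)}$ — are exactly where the factor $g$ lives at a fixed parameter. Your alternative route to $g$ (summing open-orbit contributions over the $g$ points of the fiber of $\overline{A_X^*}/W_X\to A^*/W$, transported through $I(\chi)\cong I({}^w\chi)$) is the same count in disguise, since the transported functionals are precisely those supported on the other maximal-rank orbits; but you cannot both claim exhaustion by the open orbit at a fixed $\chi$ and then sum over the fiber — as written your two paragraphs give incompatible totals ($a$ versus $g\cdot a$), and the linear independence of the $g$ transported families (via distinctness of the exponents ${}^w\chi$) is not addressed.

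There is a second, smaller bookkeeping error in the factor $a$. There is a single open $\BB$-orbit $\mathring\XX$; its $k$-points decompose into $a$ orbits of $B=\BB(k)$, and the space of $(B,\chi\delta^{\frac{1}{2}})$-equivariant functionals on $C_c^\infty(\mathring X)$ is $a$-dimensional. It has two natural spanning sets: functionals supported on the individual open $B$-orbits, and the integrals $S_{\tilde\chi}$ indexed by the extensions $\tilde\chi$ of $\chi$ (these are related by a finite Fourier transform on $A_X(k)/\mathrm{im}\,A(k)$, and the two index sets have the same cardinality $a$). Running ``over the open $B$-orbits $Y$ \emph{and} over the characters $\tilde\chi$ extending $\chi$,'' as you propose, double-indexes this single $a$-dimensional space; taken literally it would produce $a^2$ per contributing orbit. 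Once these two points are repaired — $g$ counts the maximal-rank orbits whose subtorus coincides with $\delta_{(X)}^{\frac{1}{2}}\overline{A_X^*}$ (equivalently the fiber of $\overline{A_X^*}/W_X\to A^*/W$), and $a$ is the per-orbit dimension counted once — the rest of your sketch (genericity of irreducibility, regularity and nonvanishing of the rational continuations, induction in stages through $P(X)$) is sound.
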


Moreover, it was shown that the space of such morphisms is spanned by the adjoints of the above morphisms defined using the open orbits, and their composites with standard intertwining operators (see also Theorem \ref{thmSa2}). The factor $g$ should be thought of as a ``geometric'' multiplicity factor, while the factor $a$ should be thought of as an ``arithmetic'' (or Galois-cohomological) factor.

Notice that until now our definition of the family $S_{\tilde\chi}^Y$ depends on the choice of $y_0$ and of $\omega_Y$. This was enough for the purposes of \cite{Sa2}, but here we need to be more careful and to normalize the morphisms $S_{\tilde\chi}^Y$ in a precise way.

\subsection{Compatible choices of eigenforms and $B_0$-orbits}\label{eigenforms} Our goal now is to normalize the morphisms $S_{\tilde\chi}^Y$ of (\ref{defofS}) in compatible ways, for all $\BB$-orbits $\YY$ of maximal rank. At the same time, we will also define compatible isomorphisms: $\YY/\UU\simeq \AA_Y$, up to multiplication by $A_0$.

\begin{lemma}
 Given a $\BB$-orbit $\YY$ (over $k$), the following data are equivalent:
\begin{enumerate}
 \item A trivialization of the $\AA_Y$-torsor $\YY/\UU$, i.e.\ an isomorphism: $\YY/\UU\simeq \AA_Y$.
 \item A splitting of the short exact sequence:
$$ 1\to k^\times \to k(\YY)^{(\BB)} \to \varchi(\YY)\to 1.$$
 \item A family $\{\omega_{\mathfrak c}\}_{\mathfrak c}$ of non-zero $k$-rational $\BB$-eigen-volume forms on $\YY$, determined up to a common multiple, where $\mathfrak c$ ranges over all possible characters of such eigenforms and the $\omega_{\mathfrak c}$ have the property that their quotients (which are $\BB$-eigenfunctions) all are equal to $1$ at the same $k$-point of $Y$.
\end{enumerate}
\end{lemma}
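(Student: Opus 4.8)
The plan is to exhibit explicit bijections between the three sets of data and check they are mutually inverse; most of the content is bookkeeping with the short exact sequence $1\to k^\times\to k(\YY)^{(\BB)}\to\varchi(\YY)\to 1$, so I would organize everything around that sequence. First I would set up the passage between (1) and (2). Given a trivialization $t\colon\YY/\UU\xrightarrow{\sim}\AA_Y$, every character $\mathfrak c\in\varchi(\YY)$ (equivalently, a character of $\AA_Y$, since $\varchi(\YY)$ is canonically the character group of $\AA_Y$) pulls back via $t$ and then via $\YY\to\YY/\UU$ to a genuine $\BB$-eigenfunction on $\YY$ with eigencharacter $\mathfrak c$; this assignment $\mathfrak c\mapsto t^*\mathfrak c$ is a homomorphism $\varchi(\YY)\to k(\YY)^{(\BB)}$ splitting the sequence. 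Conversely a splitting $s$ lets one define, for each point, the "coordinates" $\mathfrak c\mapsto (s\mathfrak c)(y)$, which is exactly a point of $\AA_Y=\Hom(\varchi(\YY),\Gm)$ on the torsor side; one checks $\BB$-equivariance reduces to the eigenfunction property, and that $s\mapsto(\,\text{this map}\,)$ and $t\mapsto t^*$ are inverse. The only subtlety here is the identification $\varchi(\YY)\cong X^*(\AA_Y)$ and that a splitting of the sequence is the same as a group-theoretic section $\varchi(\YY)\to k(\YY)^{(\BB)}$; both follow because $\AA_Y$ is the quotient of $\AA$ acting faithfully on $\YY/\UU$.

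Next I would pass between (2) and (3). Fix once and for all any nonzero $\BB$-eigen-volume form $\omega_0$ on $\YY$, with character $\mathfrak c_0$ (it exists because $\YY$ is a homogeneous $\BB$-space, so the canonical bundle is $\BB$-linearized, and by our standing hypotheses $\YY$ carries such a form); then any other eigenform with character $\mathfrak c$ is $\omega_0$ times a $\BB$-eigenfunction of character $\mathfrak c\mathfrak c_0^{-1}$. So a family $\{\omega_{\mathfrak c}\}$ as in (3) is the same as, for each $\mathfrak c$, a choice of element $f_{\mathfrak c}\in k(\YY)^{(\BB)}$ of eigencharacter $\mathfrak c\mathfrak c_0^{-1}$, subject to the normalization that all the ratios $\omega_{\mathfrak c}/\omega_{\mathfrak c'}=f_{\mathfrak c}/f_{\mathfrak c'}$ take the value $1$ at one common $k$-point $y$; equivalently, after dividing everything by $f_{\mathfrak c_0}$ (which is harmless up to the allowed common multiple, and is exactly the "up to a common multiple" clause), the data is a family of eigenfunctions $g_{\mathfrak c}$ of eigencharacter $\mathfrak c$, one for each $\mathfrak c$, all equal to $1$ at $y$ — and "multiplicative in $\mathfrak c$" is forced, because $g_{\mathfrak c}g_{\mathfrak c'}$ and $g_{\mathfrak c\mathfrak c'}$ are both eigenfunctions of character $\mathfrak c\mathfrak c'$ agreeing at $y$, hence equal (the difference being a $\BB$-invariant function on the homogeneous space $\YY$, i.e.\ a constant, here $1$). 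That is precisely a splitting of the sequence, recovering (2); and the existence of the common point $y$ is automatic, since a section determines an eigenfunction for every character and one may evaluate all of them at any chosen $y_0\in\YY(k)$.

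The main obstacle, and the only place where genuine care is needed rather than diagram-chasing, is verifying that the normalizations match up consistently and that the "determined up to a common multiple" in (3) corresponds exactly to the rigidity (or lack thereof) on the other two sides — i.e.\ that (3) really is equivalent data and not data-plus-a-torsor. The key point to nail down is that two $\BB$-eigenfunctions on the homogeneous space $\YY$ with the same eigencharacter differ by a nonzero constant (no nonconstant $\BB$-invariant rational functions, because $\YY=\BB\cdot y_0$), which simultaneously forces multiplicativity of the family in (3), shows a section of the sequence is rigid once the common value $1$ at $y$ is imposed, and shows that rescaling the whole family $\{\omega_{\mathfrak c}\}$ by a scalar is the only ambiguity — matching the fact that a trivialization of a torsor and a splitting of the sequence are each rigid data. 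With that lemma in hand the three reformulations are identified; I would present the argument as the two composable bijections $(1)\leftrightarrow(2)$ and $(2)\leftrightarrow(3)$ just described, noting at the end that all constructions are manifestly $A_0$-equivariant, so everything is well defined up to the action of $A_0$ as the statement allows.
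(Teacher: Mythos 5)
Your proof is correct and follows essentially the same route as the paper's: the equivalence $(1)\Leftrightarrow(2)$ via the graded decomposition of $k[\YY/\UU]$ (a point of the torsor being the algebra homomorphism sending each chosen eigenfunction to $1$, or equivalently your evaluation map $\mathfrak c\mapsto (s\mathfrak c)(y)$), and the passage to $(3)$ via the fact that two $\BB$-eigenfunctions with the same eigencharacter on the homogeneous space $\YY$ differ by a constant, which forces multiplicativity of the normalized family and pins down the ambiguity in $(3)$ to a single common scalar.

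The one place where you gloss over something the paper actually argues is the existence of a nonzero $\BB$-eigen-volume form on $\YY$, which is needed for $(3)$ to be non-vacuous. Your parenthetical justification is not quite right: the standing hypotheses of the paper only provide measures on $X$ and $\mathring X$, not eigenforms on an arbitrary $\BB$-orbit $\YY$, and the mere existence of a $\BB$-linearization of the canonical bundle does not by itself produce a nonzero eigensection. The correct argument (the paper's) is that an eigenform with character $\mathfrak c$ exists if and only if $\mathfrak c|_{\BB_y}=\mathfrak d_{\BB_y}\mathfrak d_{\BB}^{-1}|_{\BB_y}$ for $y\in\YY(k)$, and such a $\mathfrak c$ exists because every algebraic character of $\BB_y$ extends to a character of $\BB$. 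With that sentence inserted, your argument is complete.
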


\begin{proof}
 The implication $1\Rightarrow 2$ is obvious. For the converse, we notice that $k(\YY)^{(\BB)}=k[\YY]^{(\BB)}$ and $k[\YY/\UU]=\oplus_{\chi\in\varchi(\YY)} k[\YY]_{\chi}$, where $k[\YY]_\varchi$ denotes the corresponding $1$-dimensional subspace. For any splitting $\chi\mapsto e_\chi\in k[\YY]_{\chi}$ the maps $e_\chi\mapsto 1\in k$ extend uniquely to a homomorphism of algebras: $k[\YY/\UU]\to k$, i.e.\ a $k$-point on $\YY/\UU$.

For the third condition we notice that there exists an eigen-volume form on $\YY$ with character $\mathfrak c$ if and only if $\mathfrak c|_{\BB_y}=\mathfrak d_{\BB_y} \mathfrak d_{\BB}^{-1}|_{\BB_y}$, and that every algebraic character of $\BB_y$ (where $y\in\YY(k)$) extends to a character of $\BB$. In particular, there exists an eigen-volume form. Multiplying such a form by elements of $\varchi(\YY)$, considered as eigenfunctions on $\YY$ via an identification $\YY/\UU\simeq \AA_Y$ we get a family $\{\omega_{\mathfrak c}\}_{\mathfrak c}$ with the stated property, which depends only up to a common scalar on the form chosen originally. Vice versa, the quotients of elements in such a family are eigenfunctions distinguishing a unique point in $\YY/\UU(k)$.
\end{proof}

Recall that $A_0$ and $B_0$ denote, respectively, the maximal compact subgroups of $A$ and $B$. A variant of this lemma is:

\begin{lemma}\label{equivalence}
 Given a $\BB$-orbit $\YY$, the following data are equivalent:
\begin{enumerate}
 \item \label{one} An isomorphism: $Y/A_0U\simeq A_Y/A_0$.
 \item \label{two} A continuous splitting of the short exact sequence:
$$ 1\to \CC^\times\to $$ $$ \{A_Y\mbox{-eigenfunctions on }Y/U\mbox{ with eigencharacters which are trivial in the image of }A_0\}$$ $$\to \{\mbox{characters of }A_Y\mbox{ which are trivial in the image of }A_0\}\to 1$$
 \item \label{three} A family of non-zero pseudo-rational $B$-eigenmeasures on $Y$, determined up to a common scalar multiple, which have the property that their quotients by any fixed element in the family form a family of eigenfunctions as in the image of the splitting of (\ref{two}).
\end{enumerate}
\end{lemma}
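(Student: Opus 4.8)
The plan is to prove Lemma~\ref{equivalence} by reducing it to the previous lemma, i.e.\ by taking the ``compact-quotient'' version of the equivalences $1\Leftrightarrow 2\Leftrightarrow 3$ already established on the level of schemes over $k$. The underlying point is that $Y/U\simeq A_Y$ is an $A_Y$-torsor with a $k$-point (since $\YY$ is a $\BB$-orbit with $\mathfrak o$-points by the running assumptions on orbits of maximal rank), and the three data in the statement are exactly what one gets from the three data in the previous lemma after passing to $k$-points and quotienting out the compact subgroup $A_0$ of $A$ (equivalently $B_0$ of $B$). So first I would record that $Y/U$ is a (trivial, after choosing a point) $A_Y$-torsor, hence $Y/A_0U$ is an $(A_Y/A_0)$-torsor; choosing an isomorphism as in~(\ref{one}) is the same as choosing a base point in $Y/A_0U(k)$, and this is what the whole argument revolves around.

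Next I would prove $1\Rightarrow 2$: given the trivialization $Y/A_0U\simeq A_Y/A_0$, pull back characters of $A_Y/A_0$ to $A_Y$-eigenfunctions on $Y/U$ in the obvious way — a character $\eta$ of $A_Y$ trivial on the image of $A_0$ descends to $A_Y/A_0$, and composing with the trivialization gives a function on $Y/A_0U$, which we view as an eigenfunction on $Y/U$ with eigencharacter $\eta$. This is visibly continuous and a group homomorphism in $\eta$, hence a splitting of the exact sequence in~(\ref{two}). For $2\Rightarrow 1$: given the splitting $\eta\mapsto f_\eta$, the functions $f_\eta$ are simultaneous eigenfunctions on $Y/A_0U$ and, exactly as in the proof of the previous lemma (using that $Y/A_0U$ is an $A_Y/A_0$-torsor and that its ring of functions, or rather the span of these eigenfunctions, separates points), the assignment $f_\eta\mapsto 1$ extends to an algebra homomorphism picking out a distinguished point of $Y/A_0U$, which is the same as the sought isomorphism with $A_Y/A_0$. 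One has to be a little careful here because $Y/A_0U$ is not an algebraic variety over $k$ but a locally compact (in fact discrete, since $A_Y/A_0$ is a finitely generated abelian group tensored with the value group) space; but precisely for that reason the argument is even easier — $A_Y/A_0$ is discrete, so ``continuous splitting'' just means a group-theoretic splitting of the torsor structure, and a point of the torsor is literally the same as a section.

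For the equivalence with~(\ref{three}) I would argue as in the previous lemma: the pseudo-rational $B$-eigenmeasures on $Y$ are exactly the $|\omega_Y|\cdot\eta'$ for $\eta'$ an $A_Y$-eigenfunction whose character is unramified on the image of $A$, where $\omega_Y$ is a fixed $\BB$-eigenform; their ratios are such eigenfunctions, and demanding that these ratios realize the image of the splitting in~(\ref{two}) is, after fixing one element of the family, exactly the data of that splitting. Conversely, starting from the splitting in~(\ref{two}), multiply a fixed pseudo-rational eigenmeasure $\mu_0$ by the functions $f_\eta$ to get the family; it is well defined up to the common scalar coming from the initial choice of $\mu_0$ (equivalently of $\omega_Y$). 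The only mild subtlety is bookkeeping of which characters appear: in~(\ref{three}) the characters of the measures are the absolute values of algebraic eigencharacters (hence automatically ``trivial on $A_0$'' in the relevant sense, since $A_0$ is compact and the values lie in $\RR_{>0}$), which matches the index set in~(\ref{two}); I would spell this matching out but not belabor it.

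The main obstacle, such as it is, is not any single hard step but the careful identification of the index sets and the precise meaning of ``continuous splitting'' in a setting where the relevant objects ($Y/A_0U$, $A_Y/A_0$, the character groups) are no longer algebraic varieties but discrete or profinite-adjacent locally compact groups; once one observes that $A_Y/A_0\cong\Lambda_Y$ is discrete (so torsors under it have sections iff they are trivial, and continuity is automatic), the three equivalences become essentially formal transcriptions of the previous lemma. So I expect the write-up to be short: invoke the previous lemma, pass to $k$-points and kill $A_0$, and check that each of the three data there maps to the corresponding one here, the key being the torsor-point dictionary in the first paragraph.
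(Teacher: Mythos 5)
Your overall strategy is exactly the paper's: the paper offers no separate proof of Lemma \ref{equivalence}, presenting it as a ``variant'' of the preceding lemma, and your transcription of the torsor--point dictionary to the quotient by $A_0$ (resp.\ $B_0$) is the intended argument. The identifications in your first and third paragraphs (the fibration $Y\to Y/U$ having single $U$-orbits as fibers, the image of $A_0$ being compact open so that $A_Y/A_0$ is discrete, and the matching of pseudo-rational eigenmeasures with the eigenfunctions of (2) via a fixed $|\omega_Y|$) are all correct.

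There is, however, one genuine misstep: your claim that ``continuity is automatic'' because $A_Y/A_0$ is discrete. The word ``continuous'' in (2) does not refer to the topology of the torsor $Y/A_0U$ but to the splitting map from the group of characters of $A_Y$ trivial on the image of $A_0$ --- which is a complex torus $\widehat\Lambda\cong(\CC^\times)^r\times(\mathrm{finite})$, not a discrete group --- into the group of eigenfunctions. After choosing any base point, a splitting is the same as a homomorphism $c:\widehat\Lambda\to\CC^\times$, $\eta\mapsto c_\eta$, and only the \emph{continuous} such homomorphisms are given by evaluation at an element of $\Lambda=A_Y/A_0$; a discontinuous character of $(\CC^\times)^r$ yields a perfectly good abstract splitting, and the algebra homomorphism ``$f_\eta\mapsto 1$'' it defines corresponds to no point of $Y/A_0U$. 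So in the direction $(2)\Rightarrow(1)$ continuity is precisely the load-bearing hypothesis, not a formality to be discarded. The fix is one line (continuous characters of $\widehat\Lambda$ are exactly $\Lambda$, by duality for finitely generated abelian groups), but as written your argument asserts the wrong reason that the algebra homomorphism picks out a point.
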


Notice also that choosing an isomorphism: $A_Y/A_0 \to Y/A_0U$ with the additional property that $1\mapsto ($the coset of an element of $\YY(\mathfrak o))$ is equivalent to choosing a $B_0$-orbit on $\YY(\mathfrak o)$.

\subsubsection{Generalities on differential forms.} \label{generalities} Let $\GG_1\supset \GG_2\supset \GG_3$ be algebraic groups. The following is the algebro-geometric version of the factorization of an integral: $$ \int_{G_3\backslash G_1} f(g) dg = \int_{G_2\backslash G_1} \int_{G_3\backslash G_2} f(hx) dh dx$$ (where $dg, dh, dx$ are suitable eigenmeasures), which will be useful when the corresponding sets of $k$-points do not surject on the $k$-points of the quotient.

Let $\mathfrak q:\VV\to \YY$ be a smooth morphism of schemes. Then, by definition, the sheaf of relative differentials $\Omega_{V/Y}$ is locally free on $\VV$. Now assume in addition that $\YY$ (and hence also $\VV$) is smooth over $\spec(k)$, and let $\bigwedge^{\rm{top}}$ denote the top-degree exterior powers (i.e.\ determinants) of the vector bundles (sheaves of differentials) $\Omega_V, \Omega_Y, \Omega_{V/Y}$. From the short exact sequence: $0\to\Omega_Y\overset{\mathfrak q^*}{\to}\Omega_V\to\Omega_{V/Y}\to 0$ we get a canonical isomorphism of line bundles on $\VV$: $\bigwedge^{\rm{top}}(\Omega_V)=\bigwedge^{\rm{top}}(\Omega_{V/Y})\otimes \mathfrak q^*\bigwedge^{\rm{top}}(\Omega_Y)$. If $\omega_1$ denotes a section of $\bigwedge^{\rm{top}}(\Omega_{V/Y})$ and $\omega_2$ denotes a section of $\bigwedge^{\rm{top}}(\Omega_Y)$ the corresponding section $\omega_3$ of $\bigwedge^{\rm{top}}(\Omega_V)$ will be denoted (by a slight abuse of notation) by $\omega_1\wedge\mathfrak q^*(\omega_2)$ and will be called a 
\emph{factorization} of $\
omega_3$ with respect to the morphism $\mathfrak q:\VV\to \YY$. 

Now we return to the situation of $\GG_1\supset \GG_2\supset \GG_3$. Let $\omega$ be a $\GG_1$-eigen-volume form on $\GG_3\backslash \GG_1$. For a non-zero such form to exist, the character $\mathfrak c$ of $\omega$ must satisfy: $\mathfrak c|_{\GG_3}=\mathfrak d_{\GG_3} \mathfrak d_{\GG_1}^{-1}|_{\GG_3}$. Let $\mathfrak c'$ be the character $\mathfrak c \mathfrak d_{\GG_2}^{-1} \mathfrak d_{\GG_1}|_{\GG_2}$ of $\GG_2$. Any character $\mathfrak c'$ of $\mathfrak \GG_2$ defines a line bundle over $\GG_2\backslash\GG_1$, equipped with a trivialization of its pull-back to $\GG_1$; by definition, the sections over an open $\YY\subset\GG_2\backslash\GG_1$ with preimage $\VV\subset\GG_1$ are sections $f$ of the structure sheaf of $\VV$ such that $f(g_2 g)=\mathfrak c'(g_2) f(g)$ for any $g_2\in\GG_2$. Let $\mathcal L$ be the line bundle defined by $\mathfrak c'$. Then $\omega$ admits a factorization: $\omega_1\wedge \mathfrak q^*(\omega_2)$ with respect to $\mathfrak q: \GG_3\backslash\GG_1\to\GG_2\backslash\GG_1$ 
and 
$\mathcal L$, where $\omega_1\in \Gamma(\GG_3\backslash\GG_1, \bigwedge^{\rm{top}}(\Omega_{(G_3\backslash G_1)/(G_2\backslash G_1)})\otimes \mathcal \mathfrak q^* \mathcal L^{-1})$ and $\omega_2\in \Gamma(\GG_2\backslash\GG_1,\bigwedge^{\rm{top}}(\Omega_{G_2\backslash G_1})\otimes \mathcal L)$ are eigenforms for $\GG_1$ and $\GG_2$, with eigencharacters $\mathfrak c$ and $\mathfrak c'$, respectively. For every point $x\in \GG_3\backslash \GG_1$, the form $\omega_1$ gives rise to a volume form on the $\GG_2$-orbit of $x$: First, the choice of $x$ gives rise to a trivialization of $\mathfrak q^*\mathcal L^{-1}$ over its $\GG_2$-orbit, and then the pull-back of $\omega_1$ via the inclusion $x\GG_2\to \GG_3\backslash\GG_1$ is a volume form on $x\GG_2$. Therefore, the form $\omega_1$ will be called a ``volume form along $\GG_2$-orbits''.

\subsubsection{The case of two adjacent orbits.} \label{twoorbits} Now let $\YY$ and $\ZZ$ be two $\BB$-orbits on $\XX$ with $\ZZ\subset\overline\YY$, $\alpha$ a simple root joining them of type $U$ under Knop's action. Assume that we are given a family of non-zero pseudo-rational $B$-eigenmeasures on $Y$ as in Lemma \ref{equivalence}. Assume moreover that the distinguished $A_0U$ orbit on $Y$ is that of a point $y_0\in\YY(\mathfrak o)$. We will show how to associate to this family a similar family of eigenmeasures on $Z$.

Pick a representative $\tilde w_\alpha$ for $w_\alpha$ in $\mathcal N(\AA)(\mathfrak o)$; then as $\mathfrak o$-schemes: $\ZZ \times \UU_\alpha \simeq \YY$ under the map $(z,u)\mapsto z\tilde w_\alpha u$. Thus, to a $\BB$-eigen-volume form $\omega_Z$ on $\ZZ$ (with eigencharacter $\mathfrak c$) we can associate the volume form $\omega_Y=\omega_Z\wedge dx$ on $\YY$, where $dx$ denotes the obvious eigenform on $\UU_\alpha$ induced through an $\mathfrak o$-isomorphism: $\UU_\alpha\simeq \mathbb A_x^1$ (hence uniquely determined up to $\mathfrak o^\times$). Then $\omega_Y$ is also an eigenform for $\BB$ with eigencharacter equal to $e^{-\alpha}\cdot \,{^{w_\alpha}\mathfrak c}$. Notice that the corresponding measure $|\omega_Y|$ depends only on $\omega_Z$ and not on any other of the choices made. Hence, for any family of pseudo-rational eigenmeasures on $Y$ as in Lemma \ref{equivalence}.(\ref{three}), we get a family of pseudo-rational eigenmeasures on $Z$. 

This process, in particular, gives rise by Lemma \ref{equivalence} to compatible choices of $B_0$-orbits on $\YY(\mathfrak o)$ and $\ZZ(\mathfrak o)$ which we explicate here: Let $y_0B_0$ be a given $B_0$-orbit on $\YY(\mathfrak o)$, consider the quotient $\YY\PP_\alpha\to \YY\PP_\alpha/\UU_{P_\alpha}$ and denote by $\overline{\bullet}$ images under this map, e.g.\ $\overline{y_0}$, $\overline{y_0 B_0}$ etc. The $\LL_\alpha(\mathfrak o)$-orbit of $\overline{y_0}$ is isomorphic to $F \UU_\alpha(\mathfrak o)\backslash \LL_\alpha(\mathfrak o)$, where $F$ is a subgroup normalizing $\UU_\alpha$ with $F\cap [L_\alpha,L_\alpha]$ finite. Let $J_\alpha$ be the Iwahori subgroup of $\LL_\alpha$ with respect to the Borel $\BB_\alpha:=\LL_\alpha\cap \BB$. By the Iwahori-Bruhat decomposition, $\LL_\alpha(\mathfrak o)= J_\alpha \sqcup J_\alpha w_\alpha J_\alpha = J_\alpha \sqcup \UU_\alpha(\mathfrak o) w_\alpha \BB_\alpha(\mathfrak o)$, the group $J_\alpha$ has two orbits in $\overline{y_0}\LL_\alpha(\mathfrak o)$: one of 
them is the $\BB_\alpha(\mathfrak o)$-orbit of $y_0$ and the other intersects the 
smaller $\BB_\alpha$-orbit in a unique $\BB_\alpha(\mathfrak o)$-orbit $\overline{z_0 B_0}$. Each fiber of the quotient: $\YY\PP_\alpha\to \YY\PP_\alpha/\UU_{P_\alpha}$ being homogeneous under the unipotent group $\UU_{P_\alpha}$, the preimage of $\overline{z_0 B_0}$ intersects $\ZZ(\mathfrak o)$ in a unique $B_0$-orbit $z_0 B_0$.

\begin{lemma}\label{composeStypeU}
Let $\YY$, $\ZZ$ be two orbits joined by a root $\alpha$ under Knop's action, and assume that they are equipped with compatible families of pseudo-rational $B$-eigenmeasures as above. Use these eigenmeasures to define the morphisms $S_{\tilde\chi}^Y$ of (\ref{defofS}). Let $T_{w_\alpha}:I(\chi)\to I({^{w_\alpha}\chi})$ denote the standard intertwining operator between principal series:
$$T_{w_\alpha}(f)(g)= \int_{U_\alpha} f(\tilde w_\alpha ug) du$$
defined with some preimage $\tilde w_\alpha \in \mathcal N(\AA)(\mathfrak o)$ of $w_\alpha$, and with measure $dx$ on $U_\alpha\simeq \mathbb A_x^1$ (with the latter isomorphism over $\mathfrak o$).
Then:
$$ T_{w_\alpha} \circ S_{\tilde\chi}^Z =  S_{^{w_\alpha}\tilde\chi}^Y.$$
\end{lemma}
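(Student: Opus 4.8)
The plan is to compute both sides on a test function $\phi \in C_c^\infty(X)$ and evaluate the resulting element of $I({^{w_\alpha}\chi})$ at the identity, since an intertwining morphism $C_c^\infty(X) \to I({^{w_\alpha}\chi})$ is determined by $\ev_1$ composed with it (Frobenius reciprocity). By the defining formula \eqref{defofS}, $\ev_1 \circ S_{\tilde\chi}^Z(\phi) = \int_Z \phi(z)\, \tilde\chi'^{-1}(z)\, |\omega_Z|(z)$, where $\omega_Z$ is the chosen $\BB$-eigenform on $\ZZ$ with eigencharacter $\mathfrak c$ and $\tilde\chi'$ is the associated eigencharacter of $A_Z$. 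On the other side, unwinding the definition of the standard intertwining operator gives
\begin{equation*}
\ev_1\bigl(T_{w_\alpha}\circ S_{\tilde\chi}^Z(\phi)\bigr) = \int_{U_\alpha} \bigl(S_{\tilde\chi}^Z(\phi)\bigr)(\tilde w_\alpha u)\, du = \int_{U_\alpha} \int_Z \phi(z)\, \tilde\chi'^{-1}(z)\, \bigl(\text{right translate of the measure by } \tilde w_\alpha u\bigr) \, du,
\end{equation*}
all in a common domain of convergence for $\chi$, after which the identity extends by rational continuation (both sides being rational in $\chi$, as recalled in the footnote to \eqref{defofS}).

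First I would make the geometric identification explicit: by the discussion in \S\ref{twoorbits}, the choice of representative $\tilde w_\alpha \in \mathcal N(\AA)(\mathfrak o)$ gives an isomorphism of $\mathfrak o$-schemes $\ZZ \times \UU_\alpha \xrightarrow{\sim} \YY$, $(z,u) \mapsto z\tilde w_\alpha u$, and the eigenform $\omega_Y$ on $\YY$ was by construction $\omega_Z \wedge dx$, i.e.\ the factorization of $\omega_Y$ with respect to this product decomposition, in the sense of \S\ref{generalities}. Hence the double integral above is precisely an integral over $Y$ against $|\omega_Y| = |\omega_Z| \cdot |dx|$, and by Fubini (valid in the domain of absolute convergence) it equals $\int_Y \phi(y)\, (\text{some character})(y)\, |\omega_Y|(y)$. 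The remaining task is bookkeeping of characters: I must check that the character by which $\phi$ gets integrated over $Y$ is exactly $({^{w_\alpha}\tilde\chi})'^{-1}$, the eigencharacter of $A_Y$ attached to ${^{w_\alpha}\tilde\chi}$ under the compatible trivializations of \S\ref{twoorbits}. Here one uses: (i) the eigencharacter of $\omega_Y = \omega_Z\wedge dx$ is $e^{-\alpha}\cdot {^{w_\alpha}\mathfrak c}$, as recorded in \S\ref{twoorbits}; (ii) the relation $\tilde\chi = \delta^{-1/2}\cdot\tilde\chi'\cdot|\mathfrak c|^{-1}$ from \S\ref{ssmorphisms}, applied on both $\ZZ$ and $\YY$; (iii) the behaviour of $\tilde\chi'$ under right translation by $\tilde w_\alpha u$, which introduces exactly the twist by $w_\alpha$ together with a $\delta_\alpha$-type Jacobian factor from the intertwining integral; and (iv) the compatibility of the trivializations $\ZZ/\UU \simeq \AA_Z$ and $\YY/\UU \simeq \AA_Y$ built in \S\ref{twoorbits}, which guarantees that the distinguished $B_0$-orbit on $\ZZ(\mathfrak o)$ maps to the chosen one on $\YY(\mathfrak o)$, so that the ``$1$''-normalizations match. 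When these pieces are assembled, the character that $\delta^{1/2}$, $|\mathfrak c|^{-1}$, the form's eigencharacter, and the $U_\alpha$-integral Jacobian contribute should collapse to precisely ${^{w_\alpha}\tilde\chi}$ restricted to $A$, proving the claimed equality $T_{w_\alpha}\circ S_{\tilde\chi}^Z = S_{{^{w_\alpha}\tilde\chi}}^Y$.

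The main obstacle I anticipate is item (iii)–(iv) above: tracking the character twist through the intertwining integral in a way that is consistent with the ``$\tilde\chi$ is only a character of $R$, not of $A_Y$'' subtlety emphasised in \S\ref{ssmorphisms}, and verifying that the $\mathfrak o$-normalizations (choice of $\tilde w_\alpha$ in $\mathcal N(\AA)(\mathfrak o)$, choice of $\UU_\alpha \simeq \mathbb A^1_x$ over $\mathfrak o$, and measure $dx$) are exactly the ones that make $|\omega_Y| = |\omega_Z|\cdot|dx|$ with no stray $\mathfrak o^\times$-ambiguity affecting the equality — which is why the hypothesis that $Y$ and $Z$ are equipped with \emph{compatible} families of pseudo-rational eigenmeasures is essential, rather than merely arbitrary ones. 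Once one grants the constructions of \S\ref{twoorbits} this is a finite, if delicate, verification; the conceptual content is entirely the geometric product decomposition $\ZZ\times\UU_\alpha \simeq \YY$ together with the factorization $\omega_Y = \omega_Z\wedge dx$, and the intertwining operator is, from this viewpoint, just the fibre integral over $\UU_\alpha$.
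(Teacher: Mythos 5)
Your proposal is correct and follows essentially the route of the paper's own proof: both reduce to writing out the explicit integral expressions for the two sides in a common domain of convergence, using the product decomposition $\ZZ\times\UU_\alpha\simeq\YY$ and the factorization $\omega_Y=\omega_Z\wedge dx$ from \S\ref{twoorbits}, and then invoking rational continuation. The only (harmless) difference is that the paper first cites \cite[Theorem 5.2.1]{Sa2} to know a priori that the two morphisms are proportional and then pins down the constant on functions supported in $Z\cup Y$, whereas you obtain the full equality directly from the $\ev_1$-computation via Frobenius reciprocity, which is legitimate since a $(B,{}^{w_\alpha}\chi\delta^{\frac{1}{2}})$-equivariant functional determines the morphism into $I({}^{w_\alpha}\chi)$.
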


This is just a strengthened version of a special case of \cite[Theorem 5.2.1]{Sa2}.

\begin{proof}
We know already from \cite[Theorem 5.2.1]{Sa2} that one side has to be a rational multiple of the other. To compute the proportionality constant, it is enough to do it when the morphisms of both sides -- viewed as functionals by Frobenius reciprocity, are applied to functions with support in $Z\cup Y$. For those, one writes down explicitly the integral expressions for both sides, when $\tilde\chi$ is in the region of convergence of $T_{w_\alpha}$.
\end{proof}

Now consider the weak order graph $\mathfrak G$, which was defined in \S \ref{ssBrion}.
We fix a class of pseudo-rational eigenmeasures, to be called ``standard'', on $\mathring X$, with the property that their quotients are equal to $1$ on our distinguished point $x_0$. This condition determines them up to a common multiple, and the precise normalization will be discussed in a later section. Choosing a path $\gamma$ in $\mathfrak G$ from $\YY$ to $\mathring \XX$ (where $\YY$ is of maximal rank), the above process induces a similar class of eigenmeasures on $Y$ (and hence also a $B_0$-orbit $y_0B_0 \subset \YY(\mathfrak o)$) which, a priori, depends on the choice of path $\gamma$. Our goal is to show:

\begin{proposition}\label{independence}
 The induced class of eigenmeasures on $Y$ (and therefore also the orbit $y_0B_0$) does not depend on the choice of path $\gamma$.
\end{proposition}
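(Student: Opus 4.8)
The plan is to reduce the independence statement to Brion's combinatorial description of the relations among paths in the weak order graph, namely Proposition \ref{associates}. That proposition tells us that any two elements $w, w' \in W(\YY)$ are connected by a chain of elementary moves, each of which replaces a word $u w_\alpha v$ by $u w_\beta v$ where $\alpha, \beta$ are mutually orthogonal simple roots and the lengths add up: $l(w_i) = l(w_{i+1}) = l(u) + l(v) + 1$. Actually, what I need is the analogous statement at the level of paths $\gamma \in \mathfrak G(\YY)$, not just their images $w(\gamma) \in W(\YY)$; but the proof of Brion's Proposition 2 produces exactly such a chain of paths, each consecutive pair differing by a local modification within a rank-two sub-situation. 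So the first step is to state precisely (citing \cite{BrO}) that the set $\mathfrak G(\YY)$ of paths is connected under two types of elementary homotopies: (i) a "commuting square" move, replacing a segment $\YY' \xrightarrow{\alpha} \cdot \xrightarrow{\beta} \YY''$ by $\YY' \xrightarrow{\beta} \cdot \xrightarrow{\alpha} \YY''$ when $\alpha \perp \beta$ and both are of type $U$ on the relevant orbits; and (ii) a move within a single $\PP_{\alpha\beta}$ orbit when $\alpha \perp \beta$ raise the same orbit of maximal rank. By transitivity, it suffices to show that the induced class of eigenmeasures on $Y$ is unchanged under each such elementary move.

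The second and main step is the verification for a single commuting square. Here I fix a $\BB$-orbit $\YY''$ equipped with its standard class of pseudo-rational eigenmeasures (obtained inductively, so one may assume $\YY''$ is one step closer to $\mathring\XX$), and two simple roots $\alpha \perp \beta$ of type $U$, with intermediate orbits $\YY_\alpha$ (reached by first applying $\alpha^{-1}$) and $\YY_\beta$ (by first applying $\beta^{-1}$), and a common lower orbit $\YY'$. By the construction in \S\ref{twoorbits}, passing from $\YY''$ to $\YY_\alpha$ tensors the eigenform with $dx_\alpha$ on $\UU_\alpha$ (twisting the character by $e^{-\alpha} \cdot {}^{w_\alpha}(\cdot)$), and then from $\YY_\alpha$ to $\YY'$ tensors with $dx_\beta$ on $\UU_\beta$; going the other way tensors first with $dx_\beta$, then with $dx_\alpha$. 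The key geometric input is that, because $\alpha$ and $\beta$ are orthogonal, the two composite identifications $\YY' \times \UU_{\beta} \times \UU_\alpha \simeq \YY''$ and $\YY' \times \UU_\alpha \times \UU_\beta \simeq \YY''$ agree up to the obvious swap and an element of $\UU_{P_{\alpha\beta}}(\mathfrak o)$ which acts by an $\mathfrak o^\times$-scalar on the product form $dx_\alpha \wedge dx_\beta$ (indeed $\UU_\alpha$ and $\UU_\beta$ commute when $\alpha \perp \beta$, and a representative $\tilde w_\alpha \tilde w_\beta = \tilde w_\beta \tilde w_\alpha$ can be chosen in $\mathcal N(\AA)(\mathfrak o)$). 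Hence the wedge $\omega_{\YY'} \wedge dx_\alpha \wedge dx_\beta$ built one way equals, up to $\mathfrak o^\times$, the one built the other way, as differential forms on $\YY''$; taking absolute values, the two induced classes of pseudo-rational eigenmeasures on $Y'$ coincide. The normalization by the value at the distinguished point is preserved because the corresponding $B_0$-orbits match — this follows from the explicit description at the end of \S\ref{twoorbits} applied in each of the two orders and the same commutation of $J_\alpha, J_\beta$ inside $\LL_{\alpha\beta}(\mathfrak o)$.

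The third step is the move of type (ii), where $\alpha \perp \beta$ both raise a single orbit $\YY'$ of maximal rank to $\YY''$. This is genuinely a rank-two computation inside $\PP_{\alpha\beta}/\mathcal R(\PP_{\alpha\beta})$, which by sphericity is a homogeneous space for a group of type $A_1 \times A_1$ (mod radical); using the integrality Axioms \ref{integralityaxiom} and \ref{Iwahori} one checks directly that the two one-step modifications $\YY' \to \YY''$ (via $\alpha$, resp.\ via $\beta$) produce the same class of eigenmeasures on $Y'$ and the same $B_0$-orbit. I expect this case to be the main obstacle: unlike the commuting-square move, it is not purely formal, because the two edges are not "parallel transport in independent directions" but two genuinely different ways of contracting the same cell, so one must unwind the $\PPGL_2$-geometry and keep careful track of the $e^{-\alpha}$-twist versus the $e^{-\beta}$-twist in the eigencharacter — these differ, and the reconciliation uses that ${}^{w_\alpha}\mathfrak c$ and ${}^{w_\beta}\mathfrak c$ restrict to the same character of $A_{Y'}$ precisely because both $\alpha$ and $\beta$ act trivially on $\varchi(\YY')$ in this configuration (equivalently, the relevant spherical root is $\alpha + \beta$, cf.\ \S\ref{ssrootsystem}). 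Once these three steps are in place, Proposition \ref{independence} follows by the connectedness of $\mathfrak G(\YY)$ under the elementary moves.
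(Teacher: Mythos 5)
Your overall architecture inverts the one the paper uses, and the inversion creates a real gap in your first step. You want to connect any two paths in $\mathfrak G(\YY)$ by elementary homotopies of types (i) (commuting squares) and (ii) (two orthogonal roots raising the same orbit), citing Proposition \ref{associates}. But that proposition is a statement about the \emph{elements} $w(\gamma)\in W(\YY)$, and the moves it supplies are exactly your type (ii); it says nothing about two distinct paths with the \emph{same} $w(\gamma)$. Such paths need not differ by commuting squares: already for $\XX=\UU\backslash\SSL_3$ the two paths from the closed orbit to the open one carry the two reduced words $w_\alpha w_\beta w_\alpha$ and $w_\beta w_\alpha w_\beta$ of the longest element, which are related by a braid move, not a commutation; your list of elementary moves does not connect them, and checking invariance under a braid move would be a genuinely new rank-two computation with non-orthogonal roots. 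The paper sidesteps all of this with a representation-theoretic argument: for any path with $w(\gamma)=w$, iterating Lemma \ref{composeStypeU} gives $T_w\circ S^Y_{\tilde\chi}=S^{\mathring X}_{{}^w\tilde\chi}$, so the families of morphisms coming from the two paths are both equal to the same rational multiple of $T_{w^{-1}}\circ S^{\mathring X}_{{}^w\tilde\chi}$ (as $T_{w^{-1}}T_w$ is a scalar), hence coincide; Lemma \ref{equivalence} then converts equality of the morphism families back into equality of the classes of eigenmeasures. This is the step you should borrow: it makes the class depend only on $w(\gamma)$ with no path combinatorics at all, after which Proposition \ref{associates} reduces everything to your move (ii).

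Your step 3 --- move (ii) itself --- is then the entire mathematical content of the proposition, and you leave it at ``one checks directly'', having flagged it yourself as the main obstacle. The paper's resolution is short but specific: pass to $\YY\PP_{\alpha\beta}/\UU_{P_{\alpha\beta}}$, identify the relevant $\LL_{\alpha\beta}'$-orbit with the group $\SSL_2$ (or $\PPGL_2$) viewed as a spherical variety under $\SSL_2\times\SSL_2$, with $\YY$ the open and $\ZZ$ the closed Bruhat cell; the two identifications $\ZZ\times\UU_\alpha\simeq\YY$ and $\ZZ\times\UU_\beta\simeq\YY$ become the two factorizations $\UU w\BB$ and $\BB w\UU$ of the big cell, which visibly induce the same class of eigenforms from a given class on $\BB$. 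Your step 2 (the commuting square) is correct as far as it goes --- $\tilde w_\alpha$ normalizes $\UU_\beta$ integrally when $\alpha\perp\beta$, so the two parametrizations of the top orbit differ by integral automorphisms --- but once step 1 is done the paper's way, that verification becomes unnecessary.
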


\begin{proof}
First, we claim that the induced class of eigenmeasures on $Y$ does not depend on $\gamma$ itself but only, possibly, on $w(\gamma)$. For this, it is enough to show that for two paths $\gamma_1,\gamma_2\in\mathfrak G(\YY)$ with $w(\gamma_1)=w(\gamma_2)=w$ the families of morphisms $S_{\tilde\chi}^Y$ defined with the eigenmeasures obtained through each of the two paths coincide. But this follows from Lemma \ref{composeStypeU}: Indeed, the lemma implies that $T_w\circ S_{\tilde\chi}^Y = S_{^w\tilde\chi}^{\mathring X}$ for both families $S_{\tilde\chi}^Y$, and therefore $S_{\tilde\chi}^Y$ is the same rational multiple of $T_{w^{-1}} \circ S_{^w\tilde\chi}^{\mathring X}$ for both families. By Lemma \ref{equivalence}, this implies that the corresponding families of eigenmeasures are the same.

Now, using Proposition \ref{associates}, it is sufficient to prove the following: If $\alpha$, $\beta$ are two orthogonal simple roots which raise an orbit $\ZZ$ to an orbit $\YY$ then the identifications defined by $\alpha$ and $\beta$ coincide. We can substitute the variety $\YY\PP_{\alpha\beta}$ by the variety $\YY\PP_{\alpha\beta}/\UU_{P_{\alpha\beta}}$. The $\LL_{\alpha\beta}'$-orbit of a $\mathfrak o$-point on the latter is isomorphic to one of the varieties $\SSL_2$, $\PPGL_2$, acted upon by left and right multiplication (under an isomorphism $\LL_{\alpha\beta}'\simeq \SSL_2\times\SSL_2$ -- recall that we have assumed that $\GG$ is simply connected). We are thus reduced to the case $\YY=$ the open Bruhat cell and $\ZZ=$ the closed Bruhat cell in $\SSL_2$ (or $\PPGL_2$). Then the isomorphisms $\ZZ\times\UU_\alpha\simeq\YY$ and $\ZZ\times\UU_\beta\simeq \YY$, used in \S \ref{twoorbits} to define compatible choices of eigenmeasures, correspond to nothing else than the decomposition of the open Bruhat 
cell as a product $\UU w\BB$ or $\BB w \UU$, respectively, and hence it is immediate to see that given a class of eigenforms on $\BB$ of the form of Lemma \ref{equivalence}, the induced class of eigenforms on $\BB w \BB$ does not depend on which decomposition we use.
\end{proof}

This proposition allows us to define the notion of ``standard'' eigenmeasures on each orbit $Y$ of maximal rank, which depends only on the choice of standard eigenmeasures on $\mathring X$.

\subsection{Adjoints} \label{adjoints}
Having chosen a point $x_0\in \mathring \XX(\mathfrak o)$, we normalize our family of standard pseudo-rational eigenmeasures on $\mathring X$ by requiring that $\mu(x_0 J)=1$, where $J$ is the Iwahori subgroup. Recall that $x_0J\subset x_0 B_0$ by Axiom \ref{Iwahori}, therefore such a normalization is possible, since all eigenmeasures in the family differ by unramified $B$-eigenfunctions. So we have:

\begin{definition}
 The pseudo-rational $B$-eigenmeasures on $\mathring X$ normalized according to the property $\mu(x_0 J)=1$ will be called \emph{standard}. The corresponding eigenmeasures on any orbit $Y$ of maximal rank, obtained via Proposition \ref{independence}, will also be called \emph{standard}.
\end{definition}

From now on we will only be using the morphisms $S_{\tilde\chi}^Y$ defined as in (\ref{defofS}) with the use of \emph{standard} eigenforms and eigenmeasures.

Recall that we are assuming the existence of a $G$-eigenmeasure on $X$; we fix from now on an eigenmeasure $|\omega_X|$, chosen to be the class of standard eigenmeasures. We denote by $\mathfrak n$ the eigencharacter of the underlying eigenform $\omega_X$ and by $\nu$ the eigencharacter of $|\omega_X|$. Multiplication by this measure yields an identification:
\begin{equation} \label{ident} 
 C_c^\infty(X)\otimes \nu \simeq M_c^\infty(X).
\end{equation}

We can now, via (\ref{ident}), modify $S_{\tilde\chi^{-1}\nu^{-1}}^Y$ into a morphism: 
$$\tilde S_{\tilde\chi^{-1}\nu^{-1}}^Y: M_c^\infty(X)\simeq C_c^\infty(X)\otimes \nu \xrightarrow{S_{\tilde\chi^{-1}\nu^{-1}}^Y\otimes 1} I(\chi^{-1}\nu^{-1})\otimes \nu \simeq I(\chi^{-1})$$
via the canonical isomorphism: $I(\chi^{-1}\nu^{-1})\otimes\nu \simeq I(\chi^{-1})$. From now on we drop the notation $\tilde S^Y_{\tilde\chi^{-1}\nu^{-1}}$, and we will denote $\tilde S^Y_{\tilde\chi^{-1}\nu^{-1}} $ by $S^Y_{\tilde\chi^{-1}\nu^{-1}}$. It should be clear from the context whether we are referring to a morphism from $C_c^\infty(X)$ to $I(\chi^{-1}\nu^{-1})$ or a morphism from $M_c^\infty(X)$ to $I(\chi^{-1})$.

\begin{remark}
 In fact, it is more natural to introduce the morphism $\tilde S_{\tilde\chi^{-1}\nu^{-1}}$ (recall that we have $\YY=\mathring\XX$ when the exponent $^Y$ is omitted) directly as a morphism: $M_c^\infty(X)\to I(\chi^{-1})$, namely, integration of any measure against the character $\delta^{-\frac{1}{2}}\tilde\chi$, considered as a function on $\mathring X$ via the choice of the point $x_0$. The reference to $C_c^\infty(X)$ is artificial and less canonical, since it depends on the choice of $|\omega_X|$. However, to handle the sheaf $M_c^\infty$ on $X$ and describe its restrictions to the smaller orbits, it is convenient to work with functions instead of measures, since restrictions of functions make sense as functions.
\end{remark}

We will denote by $$\Delta^Y_{\tilde\chi}: I(\chi)\to C^\infty(X)$$ the morphism adjoint to $S_{\tilde\chi^{-1}\nu^{-1}}^Y$. (Here we are using the standard pairing between $I(\chi)$ and $I(\chi^{-1})$: $(f_1,f_2)\mapsto \int_K f_1(k)f_2(k)dk$ with $\Vol(K)=1$.) Notice that for $Y=\mathring X$, $\Delta^Y_{\tilde\chi}$ is defined whenever $\chi\in\delta^{\frac{1}{2}} \overline{A_X^*}$. By our assumption that $\mathring X$ carries a $B$-invariant measure, we have $\delta|_{B_y}=\delta_{B_y}$ for a point $y$ on the open orbit. It is known \cite[Th\'eor\`eme 3.4]{BLV} that the unipotent radical of $\BB_y$ is equal to $\LL(\XX)\cap \UU$; therefore, $\delta_{B_y} = \delta_{(X)}|_{B_y}$ and hence:
\begin{equation}\label{deltapx}
 \delta_{P(X)}\in A_X^*
\end{equation}
(more precisely, it belongs to $\overline{A_X^*}$ but, as we remarked in \ref{ssmorphisms}, it has a natural lift to $A_X^*$) and the condition for the open orbit can be written:
\begin{equation}\label{condopen}
 \chi\in\delta_{(X)}^{\frac{1}{2}} \overline{A_X^*}.
\end{equation}

In general, the condition (see (\ref{cond1})) is:
\begin{equation}\label{cond2}
 \chi\nu\delta^{\frac{1}{2}}|_{B_y}=\delta_{B_y}
\end{equation}
for any point $y\in Y$.

We want to obtain an explicit formula for $\ev_\xi\circ\Delta_{\tilde\chi}^Y$, where $\xi$ is some point of $Y$. Assume for now that a point $\xi$ has been chosen, and denote by $\GG_\xi$ its stabilizer.

We fix volume eigen-forms on the spaces $\XX,\GG,\UU\backslash\GG$ as follows:
\begin{itemize}
 \item As mentioned above, the volume form $\omega_X$ on $\XX$ will be standard, hence normalized by the condition: $|\omega_X|(x_0 J)=1$.
 \item On $\UU\backslash \GG$, we fix the invariant volume form $\omega_{U\backslash G}$ such that \linebreak $|\omega_{U\backslash G}|(U\backslash UK)=1$.
 \item On $\GG$ we fix the invariant volume form $\omega_G$  such that $|\omega_G|(K)=1$.
\end{itemize}

These induce differential forms $\omega_\xi$, $\omega_U$ ``along'' $\GG_\xi$-orbits, respectively $\UU$-orbits, on $\GG$ (cf.\ \S \ref{generalities}) which factorize $\omega_G$ with respect to the orbit maps: $$o_\xi:\GG\ni g\mapsto \xi g \in \XX$$ and $$o_U: \GG\ni g\mapsto \UU g \in \UU\backslash \GG.$$ In other words, in the notation of \S \ref{generalities} we have:
$$\omega_G:={\mathfrak n}^{-1} \omega_\xi\wedge o_\xi^*\omega_X = \omega_U\wedge o_U^*(\omega_{U\backslash G}).$$
(This statement is essentially just saying that for a compactly supported function $\phi$ on $G$, considering $G_\xi\backslash G$ as a subset of $X$ via the orbit map, we have:
$$ \int_G \phi(g) = \int_X \int_{G_\xi} (\nu^{-1}\phi)(hx) = \int_{U\backslash G} \int_U \phi(ug)$$
with respect to the corresponding measures.) 

Finally, recall that we have a standard top $\BB$-eigenform $\omega_Y$ on $\YY$, involved in the definition of $S_{\tilde\chi}^Y$ (\ref{defofS}). We use $\omega_\xi$ to define compatible eigenforms $\omega_{\tilde Y}, \omega_{\hat Y}$ on $\tilde\YY:=\BB\GG_\xi\subset\GG$ and $\hat\YY:= \UU\backslash \BB\GG_\xi \subset \UU\backslash \GG$; explicitly, these forms satisfy:
$$\omega_{\tilde Y}:= \iota^*(\omega_\xi\wedge o_\xi^*(\omega_Y)) = \omega_U\wedge o_U^*(\omega_{\hat Y})$$
where $\iota:\GG\to\GG$ is the map $g\mapsto g^{-1}$. Notice that $\omega_{\tilde Y}$, resp.\ $\omega_{\hat Y}$, is a $\BB\times \GG_\xi$-eigenform, resp.\ an $\AA\times \GG_\xi$-eigenform, with eigencharacter equal to $\mathfrak d_{\GG_\xi}$ times the character of $\omega_Y$.

For reasons that will become apparent later, we would like to think of $I(\chi^{-1})$ as living in $C^\infty(U\backslash G)$ (and, later, for almost every $\chi$, in the space  $C_\temp^\infty(U\backslash G)$ of \emph{tempered}, smooth functions on $U\backslash G$, i.e.\ smooth functions which, when multiplied by an invariant measure on $U\backslash G$, become distributions on the Schwartz space which will be defined below). Via the duality between $I(\chi)$ and $I(\chi^{-1})$, the functional $\ev_\xi\circ \Delta_{\tilde\chi}^Y$ can be considered as a generalized vector in $I(\chi^{-1})$ \label{generalizedvector}-- the space of these vectors will be denoted by $I(\chi^{-1})^{-\infty}$ -- or a generalized function on $U\backslash G$ (again: \emph{tempered} with respect to the Schwartz space that we are going to define). Thus, we may think of $\Delta_{\tilde\chi}^Y$ as a morphism: $$M_c^\infty(U\backslash G)\to I(\chi)\to C_\temp^\infty(X).$$ Identifying $M_c^\infty(U\backslash G)$ and $C_c^\infty(U\backslash 
G)$ via the invariant measure that we fixed before we get a morphism: $$C_c^\infty(U\backslash G)\to I(\chi) \to C_\temp^\infty(X),$$ still to be denoted by $\Delta_{\tilde\chi}^Y$. We notice that the first arrow of this morphism can be described as the integral: $$\Phi\mapsto \int_A \Phi(Ua\bullet) \chi^{-1}\delta^{-\frac{1}{2}}(a) da$$ with measure $1$ on $A_0$.

The following is an explicit description of $\Delta_{\tilde\chi}^Y$:

\begin{lemma} \label{lemmaadj}
For $\phi\in C_c^\infty(U\backslash G)$, $\xi\in Y$ and $\chi$ in a suitable region of convergence we have: $$\ev_\xi \circ \Delta_{\tilde\chi}^Y (\phi)= \int_{\hat Y}  \phi(y) \nu\tilde\chi'^{-1}(y) |\omega_{\hat Y}|(y),$$ where $\tilde\chi'$ is the character of $A_Y$ such that $\nu^{-1}\tilde\chi' |\mathfrak c|^{-1}= \delta^{\frac{1}{2}}\tilde\chi$, where $\mathfrak c$ is the eigencharacter of $\omega_{\hat Y}$ under the $\AA$-action.
\end{lemma}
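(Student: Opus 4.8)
The plan is to establish the formula as an identity of rational functions of $\chi$, verified on an open set of parameters where all the integrals below converge absolutely; there these integrals compute the rational continuations defining $S^Y_{\tilde\chi^{-1}\nu^{-1}}$ and its adjoint $\Delta^Y_{\tilde\chi}$, so the identity of rational functions follows. Throughout I would work with $Y$ a single $\BB$-orbit, as the notation $\AA_Y,\omega_Y$ of the statement presupposes.

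First I would unwind the adjunction. By the definition of $\Delta^Y_{\tilde\chi}$ as the adjoint of $S^Y_{\tilde\chi^{-1}\nu^{-1}}$, using the duality (\ref{duality}) on the $X$-side and the pairing $(f_1,f_2)\mapsto\int_K f_1f_2$ on $I(\chi)\times I(\chi^{-1})$, one has $\int_X(\Delta^Y_{\tilde\chi}f)\,\mu=\int_K f(k)\,(S^Y_{\tilde\chi^{-1}\nu^{-1}}\mu)(k)\,dk$ for all $f\in I(\chi)$ and $\mu\in M_c^\infty(X)$. I would take $f=f_\phi$, the image of $\phi\in C_c^\infty(U\backslash G)$ under $\phi\mapsto\int_A\phi(Ua\,\bullet)\chi^{-1}\delta^{-\frac{1}{2}}(a)\,da$, and note that, by the Iwasawa decomposition $G=UAK$ together with the normalizations $|\omega_{U\backslash G}|(U\backslash UK)=1$ and measure $1$ on $A_0$, the induced pairing $(\phi,g)\mapsto\int_K f_\phi(k)g(k)\,dk$ on $C_c^\infty(U\backslash G)\times I(\chi^{-1})$ is nothing but $\int_{U\backslash G}\phi\cdot g\,|\omega_{U\backslash G}|$ (both $f_\phi$ and $g\in I(\chi^{-1})$ being left-$U$-invariant, the weights $\chi^{-1}\delta^{-\frac{1}{2}}$ in the definition of $f_\phi$ exactly absorb the modulus in the Iwasawa integration formula). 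This rewrites the adjunction as $\int_X(\Delta^Y_{\tilde\chi}f_\phi)\,\mu=\int_{U\backslash G}\phi(Ug)\,(S^Y_{\tilde\chi^{-1}\nu^{-1}}\mu)(Ug)\,|\omega_{U\backslash G}|(Ug)$.

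Next I would expand $S^Y_{\tilde\chi^{-1}\nu^{-1}}\mu\in I(\chi^{-1})$ via (\ref{defofS}) and the modification of \S\ref{adjoints}: for $g\in G$, $(S^Y_{\tilde\chi^{-1}\nu^{-1}}\mu)(Ug)$ is the integral over $Y$ of the $g$-translate of $\mu$ against the pseudo-rational $\BB$-eigenmeasure attached to $\tilde\chi$ and $\omega_Y$; equivalently it equals $\int_X\kappa_g\,\mu$, where $\kappa_g$ is supported on $Yg$ and there is the push-forward of that eigenmeasure under right translation by $g$ (regarded as a function after comparison with the fixed measure on $X$). Substituting and applying Fubini moves the $U\backslash G$-integral inside, turning the right-hand side into $\int_X\big(\int_{U\backslash G}\phi(Ug)\,\kappa_g(x)\,|\omega_{U\backslash G}|(Ug)\big)\mu(x)$. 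Since $\mu$ is arbitrary and both sides are integrals against $\mu$ of functions smooth in $x\in\YY$, it suffices to identify the inner integral at $x=\xi$; there $\kappa_g(\xi)\neq 0$ forces $\xi g^{-1}\in Y$, hence (as $\YY=\xi\BB$) $g\in\BB\GG_\xi$, so the inner integral is supported precisely on $\hat\YY=\UU\backslash\BB\GG_\xi$.

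The remaining, and main, step is a comparison of measures on $\hat\YY$: one must show that the restriction to $\hat\YY$ of $\kappa_g(\xi)\,|\omega_{U\backslash G}|$ equals $\nu\tilde\chi'^{-1}\,|\omega_{\hat Y}|$ with $\tilde\chi'$ as in the statement. Concretely, $\kappa_g(\xi)\,|\omega_{U\backslash G}|$ is obtained from the eigenmeasure $\delta^{-\frac{1}{2}}\tilde\chi^{-1}|\mathfrak c|^{-1}|\omega_Y|$ on $Y$ by pulling back along $o_\xi\colon\GG\to\XX$, dividing by $\mathfrak n$ to land inside $|\omega_G|$, transporting by $\iota\colon g\mapsto g^{-1}$, and pushing down along $o_U\colon\GG\to\UU\backslash\GG$; this chain is governed exactly by the factorizations $\omega_G=\mathfrak n^{-1}\omega_\xi\wedge o_\xi^*\omega_X=\omega_U\wedge o_U^*\omega_{U\backslash G}$ and $\omega_{\tilde Y}=\iota^*(\omega_\xi\wedge o_\xi^*\omega_Y)=\omega_U\wedge o_U^*\omega_{\hat Y}$ already set up in \S\ref{generalities} and \S\ref{adjoints}, and reading off the $\AA$-eigencharacter $\mathfrak c$ of $\omega_{\hat Y}$ forces the character factor to be $\nu\tilde\chi'^{-1}$ with $\nu^{-1}\tilde\chi'|\mathfrak c|^{-1}=\delta^{\frac{1}{2}}\tilde\chi$, as claimed. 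I expect the hard part to be precisely this bookkeeping: carrying out the changes of variables $\GG\leftrightarrow\XX$ and $\GG\leftrightarrow\UU\backslash\GG$ rigorously when the induced maps on $k$-points are not surjective --- which is the whole reason for the ``volume forms along orbits'' formalism of \S\ref{generalities} --- and tracking eigencharacters through the inversion and the two orbit maps so that exactly $\nu\tilde\chi'^{-1}$ and $|\omega_{\hat Y}|$ appear with no stray constant. A secondary point is convergence: since $\hat\YY$ is not closed in $\UU\backslash\GG$, the integral $\int_{\hat Y}\phi\,\nu\tilde\chi'^{-1}|\omega_{\hat Y}|$ converges only for $\Re(\tilde\chi')$ in a suitable cone, so one runs the argument there and then appeals to rational continuation.
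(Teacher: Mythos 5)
Your proposal is correct and follows essentially the same route as the paper: unfold the adjunction, apply Fubini, and reduce everything to a comparison of eigenmeasures on $\hat Y$ governed by the factorizations $\omega_G=\mathfrak n^{-1}\omega_\xi\wedge o_\xi^*\omega_X=\omega_U\wedge o_U^*(\omega_{U\backslash G})$ and $\omega_{\tilde Y}=\iota^*(\omega_\xi\wedge o_\xi^*(\omega_Y))=\omega_U\wedge o_U^*(\omega_{\hat Y})$, reading off the eigencharacter relation at the end. The bookkeeping you defer is carried out in the paper by lifting both test functions to $C_c^\infty(G)$ and performing the substitution $g\mapsto zg$ there, which is exactly the computation your kernel $\kappa_g$ encodes.
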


Here we regard $\nu$ as a function on $U\backslash G$, since it is $U$-invariant on $G$. The $B$-eigenfunction $\tilde\chi'$ on $Y$, pulled back to $\tilde Y$, can be considered as an $A\times G_\xi$-eigenfunction on $\hat Y\subset U\backslash G$ with trivial eigencharacter for $G_\xi$. Notice that the eigencharacter of $\omega_{\hat Y}$ for the action of $\GG_\xi$ is $\mathfrak d_{\GG_\xi}^{-1}$, and $\nu|_{G_\xi}=\delta_{G_\xi}$, therefore the stated functional is indeed $G_\xi$-invariant.

\begin{proof}
We notice that with the chosen normalizations we have a commutative diagram of $G$-equivariant maps:
\begin{equation*}
 \begin{CD}
  C_c^\infty(G) \otimes \nu @>o_{\xi*}>> C_c^\infty(X) \otimes \nu \\
@VVV @VVV \\
  M_c^\infty(G) @>o_{\xi*}>> M_c^\infty(X)
 \end{CD}
\end{equation*}
where: the first vertical map is $\Phi\mapsto \Phi(g) \nu(g) |\omega_G|(g)$, the first horizontal map is integration over the fibers of $o_\xi$ with respect to $\omega_\xi$, the second horizontal map is push-forward of measures and the second vertical map is the one described previously, i.e.\ $\phi\mapsto \phi |\omega_X|$. A similar diagram holds for $o_U$ (where the push-forward $o_{U*}$ of functions is with respect to $|\omega_U|$) without tensoring with $\nu$ on the upper row.

Let $\Phi_1, \Phi_2 \in C_c^\infty(G)$ with images $o_{\xi*}(\Phi_1)=\phi_1\in C_c^\infty(X)$, $o_{U*}(\Phi_2)=\phi_2\in C_c^\infty(U\backslash G)$. (In particular, $\phi_1$ is supported in $G_\xi\backslash G\subset X$.) Recall that, by definition $S_{\tilde\chi^{-1}\nu^{-1}}^Y(\phi_1|\omega_X|) = \nu \cdot  S_{\tilde\chi^{-1}\nu^{-1}}(\phi_1)$. We compute:
\begin{eqnarray*} \left< \phi_1 |\omega_X|, \Delta_{\tilde\chi}^Y \phi_2 \right>_X = \left< S_{\tilde\chi^{-1}\nu^{-1}}^Y (\phi_1 |\omega_X|), \phi_2 \right>_{U\backslash G} = \\ = \int_{U\backslash G} \left(\nu(g)\int_Y \phi_1(yg) \tilde\chi'(y) |\omega_Y|(y)\right) \phi_2(g) |\omega_{U\backslash G}|(g) = \\ = \int_G \nu(g) \int_{\tilde Y} \Phi_1(z^{-1}g)\Phi_2(g) \tilde\chi'(z^{-1}) |\omega_{\tilde Y}|(z) |\omega_G|(g)  \\ \left(\textrm{since }\int_{\tilde Y} \Phi_1(z^{-1}) |\omega_{\tilde Y}|(z) = \int_{Y} \int_{G_\xi} \Phi_1(hy) |\omega_\xi|(h) |\omega_Y|(y)\textrm{ by definition}\right)\\ = \int_G \int_{\tilde Y} \nu(zg) \Phi_1(g) \Phi_2(zg) \tilde\chi'^{-1}(z)|\omega_{\tilde Y}|(z) |\omega_G|(g) = \\ = \int_G \nu(g)\Phi_1(g) \left(\int_{\hat Y} \nu(y) \phi_2(y g) \tilde\chi'^{-1}(y) |\omega_{\hat Y}|(y)\right) |\omega_G|(g) =  \\ = \int_{G_\xi\backslash G} \left( \int_{\hat Y}  \phi_2(y g) \nu\tilde\chi'^{-1}(y)|\omega_{\hat Y}|(y)\right) \phi_1(g) |\omega_X|(g).
\end{eqnarray*}

To prove the statement on eigencharacters recall that, if we denote by $\mathfrak c_1$ the eigencharacter of $\omega_Y$ then $\tilde\chi'|\mathfrak c_1| = \tilde\chi \nu \delta^{-\frac{1}{2}}$ from the definition of $S_{\tilde\chi^{-1}\nu^{-1}}^Y$. Given the factorization $\omega_{\tilde Y}= \omega_U\wedge o_U^*(\omega_{\hat Y})$ and the fact that $\omega_U$ has a $\BB$-eigencharacter equal to $\mathfrak d_B$, it follows that $\mathfrak c_1^{-1}=$ the $\BB$-eigencharacter of $\omega_{\tilde Y}$ is equal to $\mathfrak d_B \mathfrak c$.
\end{proof}

\subsection{Type $N$ geometry}\label{sstypeN}

Before we proceed, we must discuss the case of a type $N$ reflection where additional complications arise and we will need to define other intertwining operators. Let $\YY$ be a Borel orbit of maximal rank. Based on Proposition \ref{independence} we have a distinguished $B_0$-orbit $y_0B_0$ on $Y$ (more precisely, on $\YY(\mathfrak o)$), depending, of course, on the $B_0$-orbit of the distinguished point $x_0\in \mathring \XX(\mathfrak o)$. Recall that this allows us to identify the geometric quotient $\YY/\UU$ with the quotient $\AA_Y$ of $\AA$, in a unique way up to translations by $A_0$. We have: $\AA_Y=\AA/\AA_y$, where $\AA_y$ is the stabilizer in $\BB$, modulo $\UU$, of any point $y\in \YY$.

Let $\alpha$ be a simple root such that $(Y,\alpha)$ is of type $N$. We consider the quotient map: $\YY\PP_\alpha \to \YY\PP_\alpha/\mathcal R(\PP_\alpha)\simeq\mathcal N(\TT)\backslash \PPGL_2$ and let $\BB_2$ denote the corresponding Borel subgroup of $\PPGL_2$. (As usual, the above isomorphisms are unique up to integral automorphisms.) 

The space $\XX_2:=\mathcal N(\TT)\backslash\PPGL_2$ is the space of non-degenerate quadratic forms modulo homotheties. The stabilizer in $\BB_2$ (the Borel subgroup of $\PPGL_2$) of a point on the open $\BB_2$-orbit is isomorphic to $\mathbb Z/2$. Therefore, the open $B_2$-orbits are naturally a torsor for $H^1(k,\mathbb Z/2)\simeq k^\times/(k^\times)^2$. The same group parametrizes the isomorphism classes of tori over $k$, and it is easy to see that each open $B_2$-orbit belongs to a different $\PGL_2$-orbit, with all of the possible isomorphism classes of tori appearing as the connected components of stabilizers in the different $\PGL_2$-orbits.

The quotient map: $\YY\PP_\alpha \to \YY\PP_\alpha/\mathcal R(\PP_\alpha)\simeq\mathcal N(\TT)\backslash \PPGL_2$ induces a map from the set of open $B$-orbits on $(\YY\PP_\alpha)(k)$ to the set of open $B_2$-orbits on $X_2$. The latter comes with a distinguished point (the image of $y_0B$) and the ``kernel'' of the composite map (i.e.\ the preimage of the distinguished point): $Y/U=A_Y\to\{B_2\text{-orbits on }X_2\}$ will be denoted by $A_{Y,\alpha}$; it is a subgroup of $A_Y$. Notice that this definition is specific to the root $\alpha$ and this dependence will be a serious obstacle in obtaining a useful explicit formula in those cases. 

Let $\zeta$ be a coset of $A_{Y,\alpha}$ in $A_Y$. The points of $Y$ mapping to $\zeta$ are those belonging to the preimage of a single open $B_2$-orbit on $X_2$; their set will be denoted by $Y_\zeta$. Since the connected component of the isotropy subgroup in the $\PGL_2$-orbit of that $B_2$-orbit is a torus, we can attach certain invariants to the coset $\zeta$. More precisely, we let $D(\zeta)$ be the discriminant of the splitting field of that torus. It is an element of $\mathfrak o\smallsetminus \mathfrak p^2$, well-defined up to the action of $(\mathfrak o^\times)^2$. According as $D(\zeta)\in\mathfrak o^\times$ or not we will say that ``$\zeta$ corresponds to an integral, resp.\ non-integral torus''. This is equivalent to $\zeta$ cointaining (resp.\ not containing) $\mathfrak o$-points. If $\zeta$ corresponds to an integral torus, we will say that it ``corresponds to a split, resp.\ non-split, torus'' according as $D(\zeta)\in (\mathfrak o^\times)^2$ or not.

\subsection{Spaces of morphisms and their bases.}\label{ssbases}
Let $\underline S_\chi^Y$ denote the space of morphisms $C_c^\infty(X)\to I(\chi)$ spanned by the operators $S_{\tilde\chi}^Y$ with $\tilde\chi$ extending $\chi$. Likewise, let $\underline \Delta_\chi^Y$ be the span of the $\Delta_{\tilde\chi}^Y$. The standard bases we will be using for these spaces are the bases of $S_{\tilde\chi}^Y$ and $\Delta_{\tilde\chi}^Y$. (And when we will be expressing linear maps between such spaces as matrices using this basis, unless otherwise stated.) 

Unfortunately, in certain cases it is necessary to use a different basis: 
Let $(Y,\alpha)$ be of type $N$. We defined above a subgroup $A_{Y,\alpha}$ of $A_Y$. Now we define the following basis of $\underline S_{\chi}^Y$: \label{otherbasis} It will be indexed by the set of data $(\tilde\chi,\zeta)$ where $\tilde\chi$ runs over representatives for equivalence classes of extensions of $\chi$ to $A_Y$, two of them considered equivalent if they have the same restriction to $A_{Y,\alpha}$; and $\zeta$ is a coset of $A_{Y,\alpha}$ in $A_Y$. Recall that the phrase ``extensions to $A_Y$'' is by abuse of language, since $\tilde\chi$ is not a character of $A_Y$, cf.\ \S \ref{ssmorphisms}. Let $Y_\zeta$ denote the set of points on $Y$ which map to $\zeta$ under $Y\to Y/U\simeq A_Y$. The corresponding operator will be given by the formula:
\begin{equation}
 \ev_1\circ S_{\tilde\chi,\zeta}^{Y}(\phi) := \int_{Y_\zeta} \phi(y) \tilde\chi'^{-1}(y) |\omega_Y|(y).
\end{equation}
In other words, the integral is the same as for $S_{\tilde\chi}^Y$, except that integration is restricted to the subset $Y_\zeta$. The adjoint of $S_{\tilde\chi^{-1}\nu^{-1},\zeta}^{Y}$ will be denoted by $\Delta_{\tilde\chi,\zeta}^Y$.

\subsection{Composing intertwining operators}\label{sscomposingiop}

Let $T_w$ denote the standard intertwining operators between principal series: $I(\chi)\to I({^w\chi})$, defined by rational continuation in $\chi$ of the following integral:
\begin{equation} \label{stdiop}
  T_w(\phi)(g) = \int_{U\cap w^{-1}Uw\backslash U}  \phi(\tilde w ug) du
\end{equation} (where $\tilde w\in \mathcal N(\AA)(\mathfrak o)$ is a representative for $w$). 
A main result of \cite{Sa2} was:

\begin{theorem}\label{thmSa2}
For an orbit $\YY$ of maximal rank and for almost all $\chi$ satisfying (\ref{cond1}) the space $\underline S_\chi^Y$ is mapped under composition with $T_w$ into the space $\underline S_{{^w\chi}}^{^wY}$. More precisely for $w=w_\alpha$ a simple reflection, the composition is given as follows:
\begin{itemize}
\item If $(Y,\alpha)$ is of type $G$ then $T_w  \underline S_\chi^Y = 0$.
\item If $(Y,\alpha)$ is of type $U$ or $(U,\psi)$ or $T$ then $T_w S_{\tilde\chi}^Y \sim S_{{^w\tilde\chi}}^{{^w Y}}$.
\item If $(Y,\alpha)$ is of type $N$ then $T_w S_{\tilde\chi,\zeta}^Y \sim S_{{^w\tilde\chi,\zeta}}^{Y}$ where $\tilde\chi,\zeta$ are as at the end of the previous paragraph.
\end{itemize}
Here the symbol $\sim$ denotes equality up to a non-zero rational function of $\chi$.
\end{theorem}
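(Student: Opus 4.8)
The plan is to reduce, as indicated already by the statement of Theorem \ref{thmSa2}, to a rank-one computation on the quotient $\YY\PP_\alpha/\mathcal R(\PP_\alpha)$, which is a homogeneous spherical variety for $\PP_\alpha/\mathcal R(\PP_\alpha)\simeq \PPGL_2$. The key point is that composition with the intertwining operator $T_{w_\alpha}$ only depends on the behaviour of functions in a neighbourhood of the orbits involved, i.e.\ on the ``$\PP_\alpha$-saturation'' of $\YY$, and that the functional $S_{\tilde\chi}^Y$ itself, being an integral over $Y$ against a $\BB$-eigenmeasure, factors through integration over the $\UU_{P_\alpha}$-fibers followed by a functional on the $\PPGL_2$-variety. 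So first I would recall the qualitative part of \cite[Theorem 5.2.1]{Sa2} (restated here as the content before Lemma \ref{composeStypeU} and in Theorem \ref{thmSa2} itself in its ``up to a rational function'' form), namely that $T_{w_\alpha}\circ S_{\tilde\chi}^Y$ lies in the span of the $S^{w_\alpha Y}_{\tilde\eta}$; this is the genuinely representation-theoretic input and I would take it as given. What remains is to identify \emph{which} member of that span appears, and this is a matter of matching $\BB$-eigencharacters together with an explicit rank-one check.

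The second step is the eigencharacter bookkeeping. Applying $T_{w_\alpha}$ to $S_{\tilde\chi}^Y$ produces a $(B,{}^{w_\alpha}\chi\cdot\delta^{1/2})$-equivariant functional, hence (by Frobenius reciprocity, as in \S\ref{ssmorphisms}) a morphism into $I({}^{w_\alpha}\chi)$; one checks it satisfies condition (\ref{cond1}) for a point of ${}^{w_\alpha}Y$. Tracking the character $\tilde\chi'=\delta^{1/2}\cdot\tilde\chi\cdot|\mathfrak c|$ through the isomorphism $\ZZ\times\UU_\alpha\simeq\YY$ of \S\ref{twoorbits} (for type $U$), or through the analogous rank-one picture for types $T$ and $N$, shows that the only candidate in the span is indexed by ${}^{w_\alpha}\tilde\chi$ (resp.\ $({}^{w_\alpha}\tilde\chi,\zeta)$ in the type $N$ case, since $w_\alpha$ fixes each coset $\zeta$ of $A_{Y,\alpha}$ — this is exactly why the refined basis of \S\ref{otherbasis} is needed, as a single $\tilde\chi$ does not separate the orbits $Y_\zeta$ and the naive statement $T_{w_\alpha}S_{\tilde\chi}^Y\sim S_{{}^{w_\alpha}\tilde\chi}^Y$ would be false). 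For type $G$, the variety $\YY\PP_\alpha/\mathcal R(\PP_\alpha)$ is a point under $\PPGL_2$, so $S_{\tilde\chi}^Y$ is (up to the fiber integral) a $\PPGL_2$-invariant functional on $C_c^\infty(\PPGL_2\backslash\PPGL_2)=C_c^\infty(\mathrm{pt})$, and $T_{w_\alpha}$ of such a functional vanishes because the integral over $U_\alpha$ of a function pulled back from the point diverges/gives zero after rational continuation — equivalently, $I(\chi)$ has no $\PPGL_2$-quotient on which $T_{w_\alpha}$ is non-zero. This I would verify by direct computation on $\SSL_2$ (equivalently $\PPGL_2$), which is a two-line residue calculation.

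The third step is the explicit rank-one verification for types $U$, $T$, and $N$, showing the proportionality constant in ``$\sim$'' is a nonzero rational function of $\chi$ rather than identically zero. For type $U$ this is precisely Lemma \ref{composeStypeU} (where, with standard eigenmeasures, the constant is in fact $1$); for types $T$ and $N$ one repeats the same strategy: restrict attention to functions supported on the union of the two (or, for $N$, finitely many) relevant orbits, write both sides as explicit integrals in the region of convergence of $T_{w_\alpha}$ using the decomposition $\UU_\alpha\, w_\alpha\, \BB_\alpha$ of the big Bruhat cell, and compare. On the $\PPGL_2$-variety $\TT\backslash\PPGL_2$ (type $T$) or $\mathcal N(\TT)\backslash\PPGL_2$ (type $N$) the relevant integral is a local Tate-type integral whose rationality and non-vanishing as a function of $\chi$ is classical; I expect the non-split subcase of type $T$ and the type $N$ case to require a little care with the quadratic characters / the invariants $D(\zeta)$ of \S\ref{sstypeN}, but these do not affect which basis element shows up, only the precise rational function. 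The main obstacle, then, is not any single hard estimate but the combinatorial-bookkeeping burden of the type $N$ case: one must be careful that $T_{w_\alpha}$ respects the decomposition $Y=\bigsqcup_\zeta Y_\zeta$ and acts within each $\zeta$-block, and that the refined operators $S_{\tilde\chi,\zeta}^Y$ are the correct objects on which $\sim$ becomes a clean statement. Everything else reduces to the rank-one cases, where each possibility is a short explicit computation.
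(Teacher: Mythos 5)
First, a point of reference: the paper does not prove Theorem \ref{thmSa2} at all --- it is imported verbatim as ``a main result of \cite{Sa2}'' (Theorem 5.2.1 there), so there is no in-paper proof to match your argument against. Your overall strategy --- factor $S_{\tilde\chi}^Y$ through integration along $\UU_{P_\alpha}$-fibers, reduce to the rank-one variety $\YY\PP_\alpha/\mathcal R(\PP_\alpha)\simeq$ a $\PPGL_2$-spherical variety of type $G$, $U$, $T$ or $N$, and then do explicit Tate-type integrals in the region of convergence --- is indeed the strategy of \cite{Sa2}, and it is also exactly how the present paper computes the refined constants for the normalized operators $F_w$ in \S \ref{ssred}--\ref{sscaseN} and proves Lemma \ref{composeStypeU} (restrict to functions supported on $Z\cup Y$ and compare integrals). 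So the route is the right one.

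The genuine gap is circularity. You declare that you will ``take as given'' the qualitative part of \cite[Theorem 5.2.1]{Sa2}, namely that $T_{w_\alpha}\circ S_{\tilde\chi}^Y$ lies in the span of the $S^{^{w_\alpha}Y}_{\tilde\eta}$. But that \emph{is} the first assertion of the theorem you are asked to prove; everything after ``More precisely'' is a refinement of it. With that assumed, what remains really is only eigencharacter bookkeeping plus the rank-one integrals, which is why your sketch feels short. A self-contained proof must supply the representation-theoretic core: (i) the composed functional $\ev_1\circ T_{w_\alpha}\circ S_{\tilde\chi}^Y$ is a $(B,{}^{w_\alpha}\chi\delta^{\frac12})$-equivariant functional on $C_c^\infty(X)$ supported on $\overline{Y\PP_\alpha}$; (ii) for $\chi$ outside a proper closed subset of $\Adm_Y$, the Bruhat-filtration/Mackey analysis of \cite{Sa2} shows the space of such functionals is spanned by the $S^Z_{\tilde\eta}$ for the $\BB$-orbits $Z\subset Y\PP_\alpha$ together with functionals supported on orbits of smaller rank, and the latter do not occur for generic $\chi$ by an eigencharacter comparison; (iii) a leading-term computation identifies the surviving orbit as ${}^{w_\alpha}Y$ and the surviving index as ${}^{w_\alpha}\tilde\chi$ (resp.\ $({}^{w_\alpha}\tilde\chi,\zeta)$, which requires checking that $T_{w_\alpha}$ preserves the decomposition $Y=\bigsqcup_\zeta Y_\zeta$ --- you flag this correctly but do not prove it; it follows from the fact that the relevant kernel is supported on a single $\PGL_2$-orbit of $X_2$, cf.\ the mechanism behind Remark \ref{remarkpartofchi}). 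Two smaller omissions: the type-$G$ vanishing should be pinned down as the statement that the image of $S_{\tilde\chi}^Y$ lies in the degenerate sub(quotient) $\Ind_{P_\alpha}^G(\cdot)\subset I(\chi)$, which is killed by $T_{w_\alpha}$ because the local factor $1-q^{-1}e^{\check\alpha}(\chi)$ vanishes at the admissible $\chi$, rather than by an appeal to a divergent integral; and the statement for general $w$ (not a simple reflection) needs the reduction via a reduced decomposition and $T_{w_1w_2}=T_{w_1}T_{w_2}$ when lengths add, which you do not mention.
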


This theorem implies that the elements $w\in W_{(X)}\smallsetminus W_{P(X)}$ satisfying the description of Brion's Theorem \ref{Brionsdescription} are all in $W_X$. Indeed, since these elements correspond to a succession of type $U$, $T$ or $N$ reflections, the corresponding intertwining operator $T_w$ does not kill $S_\chi$. On the other hand, $w$ has a reduced decomposition $w=w_1\cdot w_2$ with $w_1\in W_X, w_2\in W_{P(X)}$, so $T_w=T_{w_1}\circ T_{w_2}$, but if $w_2\ne 1$ then $T_{w_2}$ kills $S_\chi$.

Notice that the action of the operator $T_w: I(\chi)\to I({^w\chi})$ extends to generalized sections: $I(\chi)^{-\infty} \to  I({^w\chi})^{-\infty}$ for almost every $\chi$; indeed, under the duality between $I(\chi)$ and $I(\chi^{-1})$ it is a multiple of the adjoint of $T_{w^{-1}}$ applied to $I({^w\chi^{-1}})$, and we extend this adjoint to the nonsmooth linear dual. Therefore, we may apply $T_w$ to the generalized vector $\ev_\xi\circ \Delta_{\tilde\chi}^Y \in I(\chi^{-1})^{-\infty}$ (where $\xi\in X$, see p.\ \pageref{generalizedvector}), and we may define $T_w \Delta_{\tilde\chi}^Y$ to be that morphism: $$I({^w\chi})\to C^\infty(X)$$
for which $$\ev_\xi\circ T_w \Delta_{\tilde\chi}^Y = T_w \left(\ev_\xi\circ \Delta_{\tilde\chi}^Y \right)$$
for every $\xi\in X$. We have a proportionality: $T_w \Delta_{\tilde\chi}^Y \sim \Delta_{\tilde\chi}^Y \circ T_{w^{-1}}$.

By applying the above theorem to adjoints, it immediately follows that:

\begin{corollary}\label{composition}
For an orbit $\YY$ of maximal rank and for almost all $\chi$ satisfying (\ref{cond2}) the space $\underline \Delta_\chi^Y$ is mapped under composition with $T_w$ into the space $\underline \Delta_{{^w\chi}}^{^wY}$. More precisely for $w=w_\alpha$ a simple reflection, the composition is given as follows:
\begin{itemize}
\item If $(Y,\alpha)$ is of type $G$ then $T_w  \underline \Delta_\chi^Y = 0$.
\item If $(Y,\alpha)$ is of type $U$ or $(U,\psi)$ or $T$ then $T_w \Delta_{\tilde\chi}^Y \sim \Delta_{{^w\tilde\chi}}^{{^w Y}}$.
\item If $(Y,\alpha)$ is of type $N$ then $T_w \Delta_{\tilde\chi,\zeta}^Y \sim \Delta_{{^w\tilde\chi,\zeta}}^{Y}$ where $\tilde\chi,\zeta$ are as in \S\ref{otherbasis}.
\end{itemize}
\end{corollary}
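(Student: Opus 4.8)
The plan is to obtain Corollary~\ref{composition} as a formal consequence of Theorem~\ref{thmSa2} by passing to adjoints, so the main content is bookkeeping: identifying the adjoint of $T_w$, tracking how the parameters $\chi$, $\tilde\chi$, $\nu$ transform, and checking that the proportionality ``$\sim$'' (equality up to a non-zero rational function of $\chi$) is preserved. Concretely, recall that $\Delta^Y_{\tilde\chi}$ is by definition the adjoint of $S^Y_{\tilde\chi^{-1}\nu^{-1}}$ with respect to the standard pairings between $C_c^\infty(X)$ (resp.\ $M_c^\infty(X)$) and the relevant principal series, and that the standard intertwining operator $T_w\colon I(\chi)\to I(^w\chi)$ has an adjoint $T_w^*\colon I(^w\chi^{-1})\to I(\chi^{-1})$ which, by the theory of intertwining operators for principal series, is again (a scalar multiple of) the standard intertwining operator $T_w\colon I(^w\chi^{-1})\to I(\chi^{-1})$ (using $(^w\chi)^{-1}={}^w(\chi^{-1})$ and $w^{-1}=w$ for a simple reflection). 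Thus the statement ``$T_w\circ\Delta^Y_{\tilde\chi}\sim\Delta^{{}^wY}_{{}^w\tilde\chi}$'' should be read off by taking the adjoint of ``$T_w\circ S^{{}^wY}_{{}^w\tilde\chi^{-1}\nu^{-1}}\sim S^Y_{\tilde\chi^{-1}\nu^{-1}}$'', which is exactly (a relabeled instance of) the first bullet of Theorem~\ref{thmSa2}.

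First I would write out the adjunction diagram carefully: for morphisms $A\colon I(\chi)\to I(^w\chi)$ and $B\colon C_c^\infty(X)\to I(\chi^{-1}\nu^{-1})$ with adjoint $B^*\colon I(\chi)\to C^\infty(X)$, one has $(A\circ B)^* = B^*\circ A^*$ in the appropriate sense, so that composition-on-the-left by $T_w$ on the $S$-side corresponds to composition-on-the-left by $T_w^*$ on the $\Delta$-side. Then I would substitute $\chi\rightsquigarrow {}^w\chi^{-1}\nu^{-1}$ in Theorem~\ref{thmSa2} (this is legitimate since the theorem holds for almost all $\chi$, hence for almost all values of any such substituted parameter), so that the source principal series is $I({}^w\chi^{-1}\nu^{-1})$ and $T_w$ maps it to $I(\chi^{-1}\nu^{-1})$; adjointness then turns this into a statement about $T_w^*\colon I(\chi)\to I(^w\chi)$ post-composed appropriately, and unwinding the definitions of $S^Y_{\tilde\chi^{-1}\nu^{-1}}$ and $\Delta^Y_{\tilde\chi}$ gives the three bulleted cases. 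The type $G$ case is the cleanest: $T_w\circ\underline S^Y = 0$ adjoints directly to $T_w\circ\underline\Delta^Y = 0$ (a zero map has zero adjoint and vice versa). For types $U$, $(U,\psi)$, $T$ the proportionality $S_{\tilde\chi}^Y\sim S_{{}^w\tilde\chi}^{{}^wY}$ transports through the adjoint because a nonzero rational multiple has a nonzero rational multiple as adjoint (the pairing is bilinear and the scalar is pulled out), and the index transformation $\tilde\chi\mapsto{}^w\tilde\chi$ is compatible with the substitution $\tilde\chi\rightsquigarrow{}^w\tilde\chi^{-1}\nu^{-1}$ since $w$ is an involution. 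The type $N$ case requires additionally noting that the decorated operators $S_{\tilde\chi,\zeta}^Y$ and their adjoints $\Delta_{\tilde\chi,\zeta}^Y$ from \S\ref{otherbasis} are set up so the same adjunction applies verbatim, with the coset $\zeta$ of $A_{Y,\alpha}$ carried along unchanged (as in the statement of Theorem~\ref{thmSa2}).

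The main obstacle — really the only nontrivial point — is verifying that the adjoint of the standard intertwining operator $T_w$ is, up to a nonzero scalar (or rational function of $\chi$), again the standard intertwining operator between the dual principal series, and matching conventions so that no spurious factors of $\nu$, $\delta$, or $q$-powers are lost. This is standard (it follows from the explicit integral formula for $T_{w_\alpha}$ and the self-duality of the measure on $U_\alpha\simeq\mathbb A^1$, together with the pairing $(f_1,f_2)\mapsto\int_K f_1 f_2$), but since the whole paper is scrupulous about normalizations one should check that the chosen representative $\tilde w_\alpha\in\mathcal N(\AA)(\mathfrak o)$ and the measure on $U_\alpha$ used to define $T_{w_\alpha}$ match on both sides; the discrepancy, if any, is absorbed into the $\sim$. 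Beyond that the corollary is purely formal, and I would present it in two or three lines: ``By passing to adjoints in Theorem~\ref{thmSa2}, using that $T_w^*$ is a nonzero rational multiple of the standard intertwining operator $I(^w\chi^{-1})\to I(\chi^{-1})$ and that $\Delta^Y_{\tilde\chi}$ is by definition the adjoint of $S^Y_{\tilde\chi^{-1}\nu^{-1}}$, the three cases follow immediately.''
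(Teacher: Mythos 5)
Your proposal is correct and is exactly the paper's argument: the corollary is obtained by passing to adjoints in Theorem \ref{thmSa2}, using that the adjoint of $T_w$ is, up to a nonzero rational multiple, the standard intertwining operator $T_{w^{-1}}$ (which equals $T_w$ for a simple reflection). The paper dispatches this in one line with precisely the remark you identify as the only nontrivial point, so your more detailed bookkeeping is simply an expansion of the same route.
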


In particular, for $\YY=\mathring \XX$ (where we omit the superscript $^Y$ from the notation) we get that $T_w\Delta_{\tilde\chi}$ is not identically zero if and only if $w\in [W/W_{P(X)}]$ (coset representatives of minimal length), that $T_w\underline\Delta_\chi = \underline \Delta_\chi$ if and only if $w\in W_X$ (for generic $\chi$) and that for almost all points in the support of $C_c^\infty(X)^K$ as an $\mathcal H(G,K)$-module, the space of eigenvectors is spanned by the $K$-invariant vectors in the images of $\underline \Delta_\chi$ (for suitable $\chi$), since for almost all points all other intertwining operators can be obtained by composing elements of $\underline\Delta_\chi$ by some $T_w$. (Knop's action is transitive on the set of orbits of maximal rank.) Therefore, the goal of computing eigenvectors of the unramified Hecke algebra now becomes:

\begin{quote}
 \emph{Let $\phi_{K,\chi}$ denote a $K$-invariant vector in $I(\chi)$. Compute its image under $\Delta_{\tilde\chi}$, for every $\tilde\chi$.}
\end{quote}

A priori, this will provide us with all the eigenvectors for almost every point in the support. In \S \ref{secHecke} we will examine, among other things, to what extent this gives a complete answer for every point.

\subsection{Schwartz space and Fourier transforms} \label{ssSchwartz}

We now explain a precise choice of intertwining operators which will make our computations easier and more transparent. In fact, we change the setting slightly and replace unramified principal series by functions on $U\backslash G$.  We recall that there exists a notion of Schwartz space and Fourier transform for this space \cite{BK}:

If $\GG=\SSL_2$ then it is simply the space of compactly supported, locally constant functions on $k^2 \supset k^2\smallsetminus\{0\} \simeq U\backslash G$ with the equivariant Fourier transform. (The Fourier transform obtained by identifying $k^2$ with its dual via a non-degenate symplectic form.) Since our spaces have models over $\mathfrak o$, we can impose some canonical conditions on our Fourier transforms, namely: 
The measure on $k^2$ is such that $\mathfrak o^2$ has measure 1 (this is the correct choice only under our simplifying assumption that $k$ is unramified over $\QQ_p$); we use a symplectic form which in some basis for $\mathfrak o^2$ has the form: $\omega((x,y),(v,w))= xw-yv$; and we use an additive character $\psi$ on $k$ whose conductor is the ring of integers $\mathfrak o$. (We choose and fix such a character.) Using this data, Fourier transform on $k^2$ is given by the formula:
\begin{equation}\label{Fouriertransform}
 \mathcal (Ff)(v,w)= \int_{k^2} f(x,y) \psi(-xw+yv) dx dy.
\end{equation}

In general, one defines for every simple reflection $w_\alpha$ in the Weyl group an equivariant Fourier transform: $\mathcal F_{w_\alpha}: L^2(U\backslash G)\to L^2(U\backslash G)$ as follows: Let $\PP_\alpha$ be the parabolic of semisimple rank one corresponding to the simple root $\alpha$. Then we have $[\PP_\alpha,\PP_\alpha]/\UU_{P_\alpha}\simeq\SSL_2$ and $\UU\backslash[\PP_\alpha,\PP_\alpha]\simeq \mathbb A^2\smallsetminus\{0\}$. Notice that the integral structure on $\GG$ defines an integral structure on $\mathbb A^2\smallsetminus\{0\}$, therefore after fixing the former isomorphism the latter is determined up to multiplication by an element of $\mathfrak o^\times$. Then one defines $\mathcal F_{w_\alpha}$ to be the unique equivariant transform which is equal to the $\SSL_2$-equivariant Fourier transform on the restriction of a continuous function in $L^2(U\backslash G)$ to $U\backslash [P_\alpha,P_\alpha]$. It is known that the equivariant Fourier transforms compose as follows: $\Ff_{w} \circ \Ff_{w'}
= \Ff_{ww'}$, hence they define a unitary action of the Weyl group on $L^2(U\backslash G)$. Braverman and Kazhdan define the Schwartz space to be the smallest subspace which contains $C_c^\infty(U\backslash G)$ and is closed under all Fourier transforms. The Schwartz space comes with a canonical $K$-invariant vector $c_0$, which is stable under all Fourier transforms and whose restriction to the $[L_\alpha,L_\alpha]$-orbit of $U\cdot 1$, for each simple root $\alpha$, is equal, under the above identifications, to the characteristic function of $\mathfrak o^2$ in $k^2$. 

We define the action of $A$ on $L^2(U\backslash G)$ as follows, so that it is \emph{unitary}:
$$ (L_af)(Ug) = \delta^{-\frac{1}{2}}(a) f(Uag).$$
This way we have:
\begin{equation}\label{equivariance}
 \mathcal F_w (L_a f) = L_{wa} \mathcal F_w f.
\end{equation}

We can extend Fourier transforms uniquely to \emph{tempered} distributions (i.e.\ distributions on the Schwartz space) and tempered generalized functions. For almost every $\chi$, the elements of the principal series $I(\chi)$ are tempered, and by the equivariance properties the transform $\mathcal F_w$ maps: $I(\chi)\to I(^w\chi)$. We will denote $\mathcal F_w$, restricted to $I(\chi)$, by the non-script symbol $F_w$.

Equivalently, consider the surjective map: $\mathcal S(U\backslash G) \to I(\chi)$ given by the rational continuation of the integral:
$$ f \mapsto \int_A L_a f \,\,\chi^{-1}(a) da.$$
(We normalize $\Vol(A_0)=1$.) The Fourier transforms $\mathcal F_w$ and certain intertwining operators $F_w$ between the $I(\chi)'s$ fit into the commutative diagram:
\begin{equation*}
\begin{CD}
\mathcal S(U\backslash G) @>\int>>  & I(\chi) \\
@V{\Ff_w}VV & @VV{F_w}V \\
\mathcal S(U\backslash G) @>\int>> & I({^w\chi}),
\end{CD}
\end{equation*}
and $F_w$ is a multiple of the ``standard'' intertwining operator $T_w$ discussed previously. Notice that the $F_w$'s satisfy: $F_w^*=F_{w^{-1}}$ (recall that the smooth dual of $I(\chi)$ is $I(\chi^{-1})$). Thinking of them as adjoints, we may also apply them to the generalized vectors (i.e.\ to the space $I(\chi)^{-\infty}$), as we did with the $T_w$'s.

The reason for working with Schwartz space and Fourier transforms, instead of principal series and their intertwining operators, is that computations will be much easier there; in particular, we will be able to avoid all the mysterious cancellations and simplifications that one discovers a posteriori in all other variants of the Casselman-Shalika method.

\subsection{An Iwahori-invariant vector of small support} The starting point for (all variants of) the Casselman-Shalika method is the observation that it is easy to compute the value of the functional $\Delta_{\tilde\chi}^{Y}$ on translates of an Iwahori-invariant vector of small support. More precisely, let $\Phi_J\in \mathcal S(U\backslash G)$ denote the characteristic function of $U\cdot J$. 

\begin{lemma} \label{Iwahorivolume}
For $x\in A_X^+$ we have: 
$$\Delta_{\tilde\chi}^{Y} (\Phi_J)(x)= \left\{\begin{array}{lr} 0 & \operatorname{if }\YY\ne\mathring\XX \\ \Vol(J)\cdot \tilde\chi\delta^{-\frac{1}{2}} (x) & \operatorname{otherwise.}\end{array}\right.$$
\end{lemma}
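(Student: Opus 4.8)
The plan is to unwind the definition of $\Delta_{\tilde\chi}^Y$ as the adjoint of $S_{\tilde\chi^{-1}\nu^{-1}}^Y$, apply the explicit integral formula of Lemma \ref{lemmaadj}, and then exploit the very small support of $\Phi_J$ together with Axiom \ref{Iwahori}. First I would recall that by Lemma \ref{lemmaadj}, for $\xi\in Y$ in a region of convergence,
\begin{equation*}
\ev_\xi\circ\Delta_{\tilde\chi}^Y(\Phi_J)=\int_{\hat Y}\Phi_J(y)\,\nu\tilde\chi'^{-1}(y)\,|\omega_{\hat Y}|(y),
\end{equation*}
where $\hat Y=\UU\backslash\BB\GG_\xi\subset\UU\backslash\GG$ and $\Phi_J$ is the characteristic function of $U\cdot J$. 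The point of evaluation here is $\xi=x\in A_X^+\subset\mathring X$, so we may and do take $Y=\mathring\XX$ and $\xi=x$; the stabilizer $G_\xi$ is then a conjugate of $H$. Thus the computation reduces to integrating a single $A\times G_\xi$-eigenfunction over the intersection $\hat Y\cap (U\cdot J)$, i.e.\ over the part of the orbit $\UU\backslash\BB\GG_\xi$ that meets $U\cdot J$.

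The key geometric input is that this intersection is, up to the relevant unipotent directions, a single coset: since $x\in A_X^+$, Axiom \ref{Iwahori} gives $xJ\subset x\BB(\mathfrak o)=xB_0$, which means that translating $x$ by the Iwahori does not move it outside the distinguished $B_0$-orbit $x_0B_0$ used to trivialize $\mathring X/U\simeq A_X$ and normalize the standard eigenmeasures. Concretely, pulling this back to $\UU\backslash\GG$, the set $\hat Y\cap U\cdot J$ maps (under $\UU\backslash\GG\to A$, the orbit map for the $A$-action) to the single coset $A_0$, and the fiber over it is a single $U$-type cell of volume controlled by $\Vol(J)$. I would verify, using the normalizations fixed in \S\ref{adjoints} — $|\omega_X|(x_0J)=1$, $|\omega_{U\backslash G}|(U\backslash UK)=1$, $|\omega_G|(K)=1$, and the compatible factorizations $\omega_G=\mathfrak n^{-1}\omega_\xi\wedge o_\xi^*\omega_X=\omega_U\wedge o_U^*\omega_{U\backslash G}$ — that the measure $|\omega_{\hat Y}|$ of this cell is exactly $\Vol(J)$. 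Then the integral collapses to $\Vol(J)$ times the value of the eigenfunction $\nu\tilde\chi'^{-1}$ at the base point; tracing through the identity $\nu^{-1}\tilde\chi'|\mathfrak c|^{-1}=\delta^{1/2}\tilde\chi$ of Lemma \ref{lemmaadj} (so $\nu\tilde\chi'^{-1}=|\mathfrak c|\,\delta^{-1/2}\tilde\chi^{-1}$ before the sign/inversion bookkeeping of passing from $S$ to its adjoint is accounted for) yields the claimed value $\Vol(J)\cdot\tilde\chi\delta^{-1/2}(x)$ — here one uses that on the standard eigenmeasure the extra character $|\mathfrak c|$ and the $\mathfrak n$-factor are trivial at the distinguished $\mathfrak o$-point, by the normalization $\mu(x_0J)=1$.

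Finally, the vanishing statement: if $Y\ne\mathring\XX$ then $Y$ is a lower-dimensional $B$-orbit, hence $\hat Y=\UU\backslash\BB\GG_\xi$ for $\xi\in Y$ is a proper subvariety of $\UU\backslash\GG$ which is disjoint from the open orbit, whereas $U\cdot J$ is an open neighborhood of $U\cdot 1$ sitting inside the open $B$-orbit $U\backslash UB$; since $x\in A_X^+$ lies on $\mathring X$ and not on $Y$, the adjointness pairing $\langle S_{\tilde\chi^{-1}\nu^{-1}}^Y(\delta_x),\Phi_J\rangle$ involves integrating a measure supported on $Y$ against a function supported on (the $\mathfrak o$-points near) the open orbit, and these supports are disjoint — so the integral is identically zero. (More carefully: $\Delta_{\tilde\chi}^Y(\Phi_J)(x)=\langle \Phi_J, \Delta_{\tilde\chi}^Y(\text{--})\rangle$ evaluated via the $S^Y$-side forces evaluation of a $Y$-supported object at $x\in\mathring X\setminus Y$.) I expect the main obstacle to be the bookkeeping in the second paragraph: pinning down the precise constant coming from the three compatible volume forms and confirming that the Iwahori cell in $\hat Y$ really does have measure exactly $\Vol(J)$ with no stray powers of $q$, which requires carefully threading the factorization $\omega_{\tilde Y}=\omega_U\wedge o_U^*\omega_{\hat Y}$ and the normalization $|\omega_X|(x_0J)=1$ through the change of variables. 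The convergence caveat (``$\chi$ in a suitable region'') is harmless since both sides are rational in $\chi$ and agree on an open set.
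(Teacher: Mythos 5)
Your overall strategy (adjointness plus Axiom \ref{Iwahori} plus the small support of $\Phi_J$) is the right one, but the execution has two genuine problems. The more serious one is the vanishing case $\YY\ne\mathring\XX$. Your primary argument there is that $\hat Y=\UU\backslash\BB\GG_\xi$ is ``disjoint from the open orbit'' and hence from $U\cdot J$; this is false: $\hat Y$ always contains the identity coset $U\cdot 1$ (since $1\in BG_\xi$), so it meets $U\cdot J$ for \emph{every} orbit $Y$. The disjointness that actually drives the vanishing lives on the $X$-side, not on the $U\backslash G$-side: by Axiom \ref{Iwahori} one has $xJ\subset xB_0\subset\mathring X$, so for $\YY\ne\mathring\XX$ the set $xJ$ is disjoint from $Y$ and therefore $S^Y_{\tilde\chi^{-1}\nu^{-1}}(1_{xJ}|\omega_X|)$, which by (\ref{defofS}) is built from integrals of $1_{xJ}$ over $Y$, vanishes identically. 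Your parenthetical ``more carefully'' remark gestures at this, but it is not the argument you actually give. Relatedly, Lemma \ref{lemmaadj} only computes $\ev_\xi\circ\Delta^Y_{\tilde\chi}$ for $\xi\in Y$, so it cannot be invoked at all to evaluate $\Delta^Y_{\tilde\chi}(\Phi_J)$ at $x\in\mathring X$ when $\YY\ne\mathring\XX$; your framework simply does not cover that case.

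The second problem is that for $\YY=\mathring\XX$ you leave the decisive step unverified: the measure of $\hat Y\cap(U\backslash UJ)$ under $|\omega_{\hat Y}|$ and, just as importantly, the $x$-dependence of the answer, which in your setup enters only through the $\xi$-dependence of $\hat Y$, of $\omega_{\hat Y}$, and of the trivialization of $\tilde\chi'$ as a function on $\hat Y$ (via $Ug\mapsto\xi g^{-1}$). Untangling this is genuinely delicate (there is a real risk of landing on $\tilde\chi^{-1}(x)$ instead of $\tilde\chi(x)$ if the pullback conventions are not threaded exactly right), and it is unnecessary. The cleaner route is to exploit the $J$-invariance of $\Delta^Y_{\tilde\chi}(\Phi_J)$ to write
\begin{equation*}
\Delta_{\tilde\chi}^{Y}(\Phi_J)(x)=\frac{\left<\Delta_{\tilde\chi}^{Y}(\Phi_J),\,1_{xJ}\,|\omega_X|\right>_X}{|\omega_X|(xJ)}
=\frac{\left<\Phi_J,\,S_{\tilde\chi^{-1}\nu^{-1}}^{Y}(1_{xJ}|\omega_X|)\right>_{U\backslash G}}{|\omega_X|(xJ)},
\end{equation*}
and then, since $\Phi_J=1_{U\cdot J}$ and $S^Y_{\tilde\chi^{-1}\nu^{-1}}(1_{xJ})$ is right $J$-invariant, everything reduces to $\ev_{U1}\circ S^Y_{\tilde\chi^{-1}\nu^{-1}}(1_{xJ})$, i.e.\ to a single integral over $Y$ of $1_{xJ}$ against a pseudo-rational eigenmeasure. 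Axiom \ref{Iwahori} then gives both the vanishing and, for $\YY=\mathring\XX$, the value $|\omega_X|(xJ)\,\tilde\chi\delta^{-1/2}(x)\nu^{-1}(x)$ in one line, because the eigenfunction is constant on $xB_0\supset xJ$. In short: transpose to the $S$-side \emph{before} trying to compute, rather than unwinding the explicit $\hat Y$-integral of Lemma \ref{lemmaadj}.
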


\begin{proof}
$$ \Delta_{\tilde\chi}^{Y} (\Phi_J)(x) = \frac{{\left< \Delta_{\tilde\chi}^{Y} (\Phi_J), 1_{xJ} |\omega_X|\right>}_{X}}{|\omega_X|(xJ)} = \frac{{\left<\Phi_J, S_{\tilde\chi^{-1}\nu^{-1}}^{Y} (1_{xJ}|\omega_X|) \right>}_{U\backslash G}}{|\omega_X|(xJ)} = $$ $$=\frac{|\omega_{U\backslash G}|(U\backslash UJ)}{|\omega_X|(xJ)} \cdot \nu(x) \cdot \ev_{U1}\circ S_{\tilde\chi^{-1}\nu^{-1}}^{Y} (1_{xJ}).$$
By Axiom \ref{Iwahori} we have: $xJ\subset xB_0$. Therefore, from the definition of $S_{\tilde\chi^{-1}\nu^{-1}}^{Y}$ (\ref{defofS}) we get: $\ev_{U1} \circ S_{\tilde\chi^{-1}\nu^{-1}}^{Y} (1_{xJ})= 0$ unless $\YY=\mathring \XX$, in which case: $$\nu(x)\ev_{U1} \circ S_{\tilde\chi^{-1}\nu^{-1}}^{Y} (1_{xJ})= |\omega_X|(xJ) \tilde\chi\delta^{-\frac{1}{2}}(x).$$ 
\end{proof}

\subsection{Functional equations} Using Fourier transforms, and the fact that $F_{w^{-1}}^*=F_w$, Corollary \ref{composition} can be restated as follows:

\emph{For every $w\in W$ there is a rational family of linear operators: $\underline b_w^{Y}(\chi): \underline\Delta_\chi^Y\to \underline\Delta_{^w\chi}^{^w Y}$ such that $F_w \Delta = \underline b_w^{Y}(\chi) \Delta$ for $\Delta\in \underline\Delta_\chi^Y$.} Moreover, for $Y=\mathring X$ the operator $\underline b_w(\chi)$ is non-zero if and only if $w\in [W/W_{P(X)}]$, and for $w$ a simple reflection we have explicit bases which make the matrix of $\underline b_w^Y$ diagonal. Since $F_{w_1}\circ F_{w_2} = F_{w_1 w_2}$, the $\underline b_w^Y$ satisfy the cocycle relations: 
\begin{equation}\label{cocycle}
 \underline b_{w_1 w_2}^Y(\chi) = \underline b_{w_1}^{^{w_2}Y}(^{w_2}\chi) \underline b_{w_2}^Y(\chi).
\end{equation}
As a matter of notation, \emph{we will be identifying $\underline b_w^{Y}(\chi)$ with its matrix in the bases $[\Delta_{\tilde\chi}^Y]_{\tilde\chi}, [\Delta_{^w\tilde\chi}^{^wY}]_{\tilde\chi}$}.

Suppose that we knew the matrices $\underline b_w(\chi)$ (recall that when we ignore the orbit as a superscript we mean $\mathring X$). Then it would be easy to compute values of $\Delta_{\tilde\chi}$ applied to every vector of the form $\mathcal F_{w^{-1}} \Phi_J$ as follows:

$$[\Delta_{\tilde\chi}(\mathcal F_{w^{-1}} \Phi_J)(x)]_{\tilde\chi}= [F_w \Delta_{\tilde\chi} (\Phi_J) (x)]_{\tilde\chi}= \underline b_w(\chi) \cdot [\Delta_{^w\tilde\chi}^{^w \mathring X} (\Phi_J) (x)]_{\tilde\chi}.$$

This vanishes unless $^w \mathring \XX = \mathring \XX$, i.e.\ unless $w\in W_X$, in which case from the previous lemma we get: 
\begin{equation}\label{PhiJ}
[\Delta_{\tilde\chi}(\mathcal F_{w^{-1}} \Phi_J)(x)]_{\tilde\chi}=  \underline b_w(\chi) \cdot [\Vol(J)\cdot\,^w\tilde\chi\delta^{-\frac{1}{2}} (x)]_{\tilde\chi}  
\end{equation}

Therefore, the computation of the Hecke eigenvectors will be straightforward if we can perform the following two steps:

\begin{enumerate}\label{sssteps}
\item Compute the functional equations $F_w [\Delta_{\tilde\chi}^{Y}]_{\tilde\chi} = \underline b_w^Y(\chi) [\Delta_{^w\tilde\chi}^{^w Y}]_{\tilde\chi}$.
\item Express a non-zero $K$-invariant Schwartz function as a linear combination of the functions $\mathcal F_{w^{-1}} \Phi_J$, $w\in W$.
\end{enumerate}

\begin{remark}
The second step depends only on the group $G$ and is independent of the particular case that we are examining (i.e.\ independent of the subgroup $H$ or the possible ``additive'' character $\Psi$), therefore it suffices to do it once and for all. It would therefore be possible to retrieve it, for example, from the work of Casselman on zonal spherical functions. However, we construct a $K$-invariant vector in an independent way, which in the sequel will help us avoid all the mysterious cancellations which are usually found in the literature and will lead us directly to simple formulas. Notice also that the requirement that a linear combination of the $\mathcal F_{w^{-1}} \Phi_J$'s be non-zero is automatically satisfied for any non-trivial linear combination. This follows from (\ref{PhiJ}) (applied to any choice of $\HH$) and the linear independence of the characters ${^w\chi}$, for $\chi$ in general position.
\end{remark}

%****************************************************************************************************

\section{Construction of a $K$-invariant vector}

\subsection{The case of $\SSL_2$} \label{ssKinv} In this section we perform the second step of \S \ref{sssteps}.

We start by considering the case of $\SL_2$. Then $\Phi_J=1_{\mathfrak {p \times o^\times}} = 1_{\mathfrak {p \times o}}- 1_{\mathfrak {p \times p}}$. Since $1_{\mathfrak{p\times p}}$ is $K$-invariant we see that $\Phi_J\equiv 1_{\mathfrak {p \times o}}$ up to $K$-invariants and the equivariant Fourier transform of $\Phi_J$ is, up to $K$-invariants, equal to:
$$\mathcal F 1_{\mathfrak {p \times o}} =  q^{-1} 1_{\mathfrak {o \times p^{-1}}}= L_{\tiny{\left(\begin{array}{cc} \varpi^{-1} & \\ & \varpi \end{array}\right)}} 1_{\mathfrak {p \times o}}.$$

Hence $$L_{\tiny{\left(\begin{array}{cc} \varpi^{-1} & \\ & \varpi \end{array}\right)}} \Phi_J - \mathcal F \Phi_J \equiv L_{\tiny{\left(\begin{array}{cc} \varpi^{-1} & \\ & \varpi \end{array}\right)}} 1_{\mathfrak {p \times o}} - \mathcal F 1_{\mathfrak {p \times o}}\equiv 0$$ up to $K$-invariants; equivalently, the vector:
\begin{equation}\label{Kinv}
\Phi_J - L_{\tiny{\left(\begin{array}{cc} \varpi & \\ & \varpi^{-1} \end{array}\right)}} \mathcal F \Phi_J
\end{equation}
is $K$-invariant.

\subsection{The general case} We now return to an arbitrary $G$. To simplify notation, for each co-root $\check \alpha$ and each $\Phi\in\mathcal S(U\backslash G)$ we will denote $L_{e^{\check\alpha}(\varpi)}\Phi$ by $e^{\check\alpha}\Phi$. This is consistent with the fact that the image of $L_{e^{\check\alpha}(\varpi)}\Phi$ in $I(\chi)$ under the natural morphism (integration with respect to the $A$-action against the character $\chi^{-1}$) is $e^{\check\alpha}(\chi) := \chi(e^{\check\alpha}(\varpi))$ times the image of $\Phi$.

For each simple root $\alpha$ let $K_\alpha\subset K$ denote the inverse image, through the reduction map, of $\PP_\alpha(\mathbb F_q)$. Evidently, $K$ is generated by all the $K_\alpha$ since they contain the Iwahori and representatives for the simple reflections in the Weyl group. Using the $\SL_2$ case that we examined above, it is now easy to show:

\begin{proposition}\label{PhiK}
The vector
$$\Phi_K:=\Vol(J)^{-1}\sum_{w\in W} \prod_{\alpha>0, w^{-1}\alpha<0} (-1) e^{\check\alpha} \mathcal F_w \Phi_J$$
is $K$-invariant.
\end{proposition}

\begin{proof}
It suffices to show that for any simple positive root $\beta$ it is $K_\beta$-invariant.

For the parabolic $P_\beta$ we denote by $W_\beta$ the Weyl group of its Levi, and by $[W/W_\beta]$ the canonical set of representatives of $W/W_\beta$ cosets consisting of representatives of minimal length. Notice that if $w=w' w_\beta$, with $w'\in [W/W_\beta]$, then $\{ \alpha| \alpha>0, w^{-1}\alpha<0\} =  \{\alpha| \alpha>0, w'^{-1}\alpha<0\}\cup \{w'\beta\}$. Hence:

$$\Vol(J)\Phi_K= \sum_{w\in [W/W_\beta]} \prod_{\alpha>0, w^{-1}\alpha<0} (-1)e^{\check\alpha} \left( \mathcal F_w \Phi_J - e^{w\check\beta}\mathcal F_{w w_\beta} \Phi_J\right) = $$
$$ = \sum_{w\in [W/W_\beta]} \prod_{\alpha>0, w^{-1}\alpha<0} (-1)e^{\check\alpha} \mathcal F_w \left( \Phi_J - e^{\check\beta} \mathcal F_{w_\beta}\Phi_J\right), $$
where we used (\ref{equivariance}), and the latter is $K_\beta$-invariant.
\end{proof}

Notice that we can write: $\prod_{\alpha>0,w^{-1}\alpha<0} e^{\check\alpha} (\chi) = e^{{\check\rho}-w{\check\rho}}({\chi})$. Combining this with (\ref{PhiJ}) and changing $w$ to $w^{-1}$ inside the sum we get:

\begin{theorem} \label{maintheorem}
For almost every $\chi$, a basis of eigenvectors of $\mathcal H(G,K)$ on $C^\infty(X)$ with eigencharacter corresponding to $\chi$ is given by the formula:
\begin{equation} \label{mainformula}
[\Omega_{\tilde\chi} (x)]_{\tilde\chi} := [\Delta_{\tilde\chi}(\Phi_K) (x)]_{\tilde\chi} = e^{{\check\rho}}(\chi)\delta^{-\frac{1}{2}}(x)  \sum_{w\in W_X} \sigma(w) e^{-{\check\rho}}({^w\chi}) \underline b_w(\chi) \cdot [{^w\tilde\chi} (x)]_{\tilde\chi} 
\end{equation}
for $x\in A_X^+$, where $\sigma(w)$ is the sign of $w$ as an element of $W$ and the matrices $\underline b_w(\chi)$ are given by the functional equations:
\begin{equation}\label{fe3}
F_{w} [\Delta_{\tilde\chi}]_{\tilde\chi} = \underline b_w(\chi) [\Delta_{{^w\tilde\chi}}]_{\tilde\chi}.
\end{equation}
\end{theorem}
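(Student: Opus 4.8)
This statement is a direct assembly of the pieces already set up in \S\ref{secintertwining}--\S\ref{ssKinv}, so the plan is to substitute and collect. The two inputs are Lemma~\ref{PhiK}, which exhibits $\Phi_K$ as an explicit $\CC$-linear combination of the Fourier transforms $\mathcal F_w\Phi_J$, and formula~(\ref{PhiJ}) (itself deduced from Lemma~\ref{Iwahorivolume} together with the functional equations~(\ref{fe3})), which evaluates $\Delta_{\tilde\chi}$ on each $\mathcal F_{w^{-1}}\Phi_J$. I would also use the elementary identity $\prod_{\alpha>0,\,w^{-1}\alpha<0}e^{\check\alpha}=e^{\check\rho-w\check\rho}$ recorded just before the statement, and the cocycle relation~(\ref{cocycle}).

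First I would expand, by $\CC$-linearity of $\Delta_{\tilde\chi}$,
\[
\Delta_{\tilde\chi}(\Phi_K)(x)=\Vol(J)^{-1}\sum_{w\in W}\Big(\prod_{\alpha>0,\,w^{-1}\alpha<0}(-1)\Big)\;\Delta_{\tilde\chi}\big(e^{\check\alpha}\mathcal F_w\Phi_J\big)(x).
\]
Here $\prod_{\alpha>0,\,w^{-1}\alpha<0}(-1)=\sigma(w)$, since that product runs over $\ell(w)$ roots. Because $\Delta_{\tilde\chi}$ factors through the natural map $\mathcal S(U\backslash G)\to I(\chi)$, and the image of $e^{\check\alpha}\Phi=L_{e^{\check\alpha}(\varpi)}\Phi$ in $I(\chi)$ is $e^{\check\alpha}(\chi)$ times that of $\Phi$ (the defining property of the notation $e^{\check\alpha}\Phi$ introduced in \S\ref{ssKinv}), the operator $\prod e^{\check\alpha}$ pulls out the scalar $\prod_{\alpha>0,\,w^{-1}\alpha<0}e^{\check\alpha}(\chi)=e^{\check\rho-w\check\rho}(\chi)$. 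This reduces matters to knowing $\Delta_{\tilde\chi}(\mathcal F_w\Phi_J)(x)$.

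Next I would reindex the sum by $w\mapsto w^{-1}$, so that $\mathcal F_w\Phi_J$ becomes $\mathcal F_{w^{-1}}\Phi_J$, using $\sigma(w^{-1})=\sigma(w)$ and the elementary identity $e^{\check\rho-w^{-1}\check\rho}(\chi)=e^{\check\rho}(\chi)\,e^{-\check\rho}({}^w\chi)$ (bilinearity of the pairing and the definition of the $W$-action on characters). By~(\ref{PhiJ}) the $w$-term vanishes unless ${}^w\mathring\XX=\mathring\XX$, i.e.\ unless $w\in W_X$, and for $w\in W_X$ it equals $\underline b_w(\chi)\cdot[\Vol(J)\cdot{}^w\tilde\chi\,\delta^{-\frac12}(x)]_{\tilde\chi}$. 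Collecting the surviving terms, the two factors $\Vol(J)^{\pm1}$ cancel and one obtains exactly the asserted formula~(\ref{mainformula}).

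It remains to argue that, for almost every $\chi$, the tuple $[\Omega_{\tilde\chi}]_{\tilde\chi}$ is genuinely a \emph{basis} of the corresponding eigenspace. Spanning follows from the discussion closing \S\ref{sscomposingiop}: for almost every point of the support of $C_c^\infty(X)^K$ the eigenspace is spanned by the $K$-fixed vectors in the images of $\underline\Delta_\chi$, and $\Phi_K$ maps to a $K$-fixed vector of $I(\chi)$ by Lemma~\ref{PhiK}. Linear independence follows because $\underline b_1(\chi)$ is the identity (from~(\ref{cocycle}) and $F_1=\mathrm{id}$), so $\Omega_{\tilde\chi}$ restricted to $A_X^+$ has the honest character $\tilde\chi$ occurring with nonzero coefficient $e^{\check\rho}(\chi)\delta^{-\frac12}$; distinctness of the characters $\tilde\chi$ for $\chi$ in general position then forces the $\Omega_{\tilde\chi}$ to be linearly independent, so their number matches the dimension of the eigenspace supplied by Theorem~\ref{thmmult}. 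The only genuinely non-formal point throughout is the repeated qualifier ``for almost every $\chi$'': one must check that all the rational families involved ($\underline b_w$, the equivariant Fourier transforms, the map to $\spec_M\mathcal H(G,K)$) are regular and that the asserted linear independence holds away from a proper closed subset. This is where I would expect any difficulty to hide, but this bookkeeping has already been carried out in \S\ref{secintertwining} and in \cite{Sa2}, so the present argument is purely an assembly with no essential new obstacle.
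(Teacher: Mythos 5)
Your proposal is correct and is essentially the paper's own argument: the paper derives Theorem \ref{maintheorem} precisely by combining Lemma \ref{PhiK} with formula (\ref{PhiJ}) and the identity $\prod_{\alpha>0,\,w^{-1}\alpha<0}e^{\check\alpha}(\chi)=e^{\check\rho-w\check\rho}(\chi)$, exactly as you do, and your justification of the basis claim matches the discussion closing \S\ref{sscomposingiop} and the remark on linear independence of the characters $^w\chi$.
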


\begin{remarks} \begin{enumerate}
 \item There is some abuse of notation here (which we will consistently practice!), in that the expression $^w\tilde\chi \delta^{-\frac{1}{2}}(x)$ should be taken as a whole and not as a product of two factors, since $^w\tilde\chi$ is not, in general,  a character of $A_X$.
 \item In the case that $(Y,\alpha)$ is never of type $N$ (for $Y$ of maximal rank and $\alpha$ a simple root) then we actually know from Corollary \ref{composition} that the matrices $\underline b_w^Y(\chi)$ are diagonal. If in addition $\tilde\chi$ is unramified we may write: $\tilde\chi(x_{\check\lambda})=e^{-{\check\lambda}}(\tilde\chi)$ where $x_{\check\lambda}=e^{-{\check\lambda}}(\varpi)\in A_X^+$ for any uniformizer $\varpi$, and the formula becomes more pleasant:
\begin{equation}
\Omega_{\tilde\chi} (x_{\check\lambda}) = e^{\check\rho}(\chi){e^{-{\check\lambda}}(\delta^\frac{1}{2})}  \sum_{w\in W_X} \sigma(w) b_w(\tilde\chi) e^{-{\check\rho}+{\check\lambda}}({^w\tilde\chi})
\end{equation}
where the $b_w(\tilde\chi)$ are now scalars given by the functional equations: $F_w \Delta_{\tilde\chi} = b_w(\tilde\chi) \Delta_{^w\tilde\chi}$.
\end{enumerate}
\end{remarks}

The following will be useful later:

\begin{lemma}\label{omegatrivial}
 We have $\Omega_{\delta^\frac{1}{2}}(x)=1$ for every $x$.
\end{lemma}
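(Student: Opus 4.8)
My plan is to describe the morphism $\Delta_{\delta^{\frac12}}^{\mathring X}$ explicitly on all of $I(\delta^{\frac12})$, and not just on $\Phi_K$. First I would prove that for every $\phi\in I(\delta^{\frac12})$ one has
\[
  \Delta_{\delta^{\frac12}}^{\mathring X}(\phi)=\Bigl(\int_K\phi(k)\,dk\Bigr)\cdot\mathbf 1_X ,
\]
the constant function. For this, recall from \S\ref{adjoints} that $\Delta_{\delta^{\frac12}}^{\mathring X}$ is the adjoint of $S^{\mathring X}_{\delta^{-\frac12}\nu^{-1}}\colon M_c^\infty(X)\to I(\delta^{-\frac12})$ with respect to the duality (\ref{duality}) and the principal-series pairing normalized by $\Vol(K)=1$; and by the remark in \S\ref{adjoints}, $S^{\mathring X}_{\delta^{-\frac12}\nu^{-1}}$ sends a measure $\mu$ to the function obtained by integrating $\mu$ against the character $\delta^{-\frac12}\tilde\chi$ with $\tilde\chi=\delta^{\frac12}$, i.e.\ against the trivial character. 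Hence $\ev_1\circ S^{\mathring X}_{\delta^{-\frac12}\nu^{-1}}(\mu)$ is the total mass $\mu(X)$, and by $G$-equivariance (total mass being invariant under right translation) $S^{\mathring X}_{\delta^{-\frac12}\nu^{-1}}(\mu)$ is the constant function with value $\mu(X)$ — which indeed lies in $I(\delta^{-\frac12})$, as $\delta^{-\frac12}\cdot\delta^{\frac12}=1$. Dualizing, $\int_X\Delta^{\mathring X}_{\delta^{\frac12}}(\phi)\,d\mu=\mu(X)\int_K\phi(k)\,dk$ for all $\mu\in M_c^\infty(X)$, which forces the displayed identity.

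Applying this to $\phi=\phi_K$, the image of $\Phi_K$ in $I(\delta^{\frac12})$ under the map $\int$ of \S\ref{ssSchwartz}, and using $\Omega_{\delta^{\frac12}}=\Delta_{\delta^{\frac12}}(\Phi_K)$ (from (\ref{mainformula})), the $K$-invariance of $\phi_K$, and $\Vol(K)=1$, I obtain $\Omega_{\delta^{\frac12}}=\phi_K(U\cdot 1)\cdot\mathbf 1_X$. In particular $\Omega_{\delta^{\frac12}}$ is a constant function; and since the scalar $\phi_K(U\cdot 1)$ depends only on $G$ and not on $\XX$, the lemma is reduced to the single, $X$-free statement $\phi_K(U\cdot 1)=1$.

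To finish I would compute $\phi_K(U\cdot 1)=\int_A\delta^{-1}(a)\,\Phi_K(Ua)\,da$ by expanding $\Phi_K$ via Lemma \ref{PhiK} and working one simple root at a time: $\int$ carries $L_{e^{\check\alpha}(\varpi)}\mathcal F_w\Phi_J$ to $e^{\check\alpha}(\chi)$ times $T_w$ applied to the image of $\Phi_J=1_{U\cdot J}$; that image has value $\Vol(A_0)=1$ at $U\cdot 1$ (the rank-zero instance of Lemma \ref{Iwahorivolume}, i.e.\ $UJ\cap A=A_0$); and the values at $U\cdot 1$ of the intertwining operators $T_w$ are Gindikin--Karpelevich integrals whose rank-one building block is exactly the $\SSL_2$ computation of \S\ref{ssKinv}. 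Carrying this out yields
\[
  \phi_K(U\cdot 1)=\Vol(J)^{-1}\prod_{\check\alpha>0}\frac{q^{-1}\bigl(1-e^{\check\alpha}(\chi)\bigr)}{1-q^{-1}e^{\check\alpha}(\chi)} .
\]
Specializing $\chi=\delta^{\frac12}$, so that $e^{\check\alpha}(\delta^{\frac12})=q^{-\langle\check\alpha,\rho\rangle}$, each factor becomes $\frac{q^{\langle\check\alpha,\rho\rangle}-1}{q^{\langle\check\alpha,\rho\rangle+1}-1}$, and the product telescopes: grouping positive coroots by height and using that the number of positive (co)roots of height $j$ equals $\#\{i:e_i\ge j\}$ (the $e_i=d_i-1$ being the exponents of $W$), one gets $\prod_{\check\alpha>0}\frac{q^{\langle\check\alpha,\rho\rangle+1}-1}{q^{\langle\check\alpha,\rho\rangle}-1}=\prod_i\frac{q^{d_i}-1}{q-1}=\sum_{w\in W}q^{\ell(w)}=[K:J]=\Vol(J)^{-1}$. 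Hence $\phi_K(U\cdot 1)=1$, and combined with the previous paragraph this gives $\Omega_{\delta^{\frac12}}=\mathbf 1_X$.

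The one genuinely delicate point is the normalization bookkeeping of the last step: one must track the several compatible normalizations ($\mu(x_0J)=1$ for the standard measures, $\Vol(K)=1$, $|\omega_{U\backslash G}|(U\backslash UK)=1$, $\Vol(A_0)=1$, $\Phi_J=1_{U\cdot J}$) through the rank-one reductions and the resulting product carefully enough to see that the constant is exactly $1$ and not merely a power of $q$. Everything else in the argument is formal; as a shortcut that sidesteps the explicit product, the constant $\phi_K(U\cdot 1)$ could alternatively be pinned down by running the formalism on the single example of the group case $\XX=\GG$ (with $\GG\times\GG$ acting), where $\Omega_{\delta^{\frac12}}$ is visibly the zonal spherical function of the trivial representation, hence $\equiv 1$.
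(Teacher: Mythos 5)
Your proposal is correct, and its first half is exactly the paper's argument: the paper also observes that $S_{\nu^{-1}\delta^{-1/2}}$, after tensoring with $\nu$, is the total-mass functional $M_c^\infty(X)\to\mathbf 1$, so that $\Delta_{\delta^{1/2}}$ factors through the trivial representation and $\Omega_{\delta^{1/2}}$ is constant (using, as you do, that the image of $\Phi_K$ in $I(\chi)$ is finite at $\chi=\delta^{1/2}$). Where you diverge is in pinning down the constant. The paper simply reads it off from the already-established formula (\ref{mainformula}): the $w=1$ term contributes $B_1\cdot\delta^{-1/2}(x)\delta^{1/2}(x)=1$, and since the function is constant this forces the value $1$. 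You instead compute $\phi_K(U\cdot 1)$ at $\chi=\delta^{1/2}$ by a Gindikin--Karpelevich/Poincar\'e-polynomial telescoping. This is valid, and your product formula is consistent with the paper: (\ref{sphvector}) gives $\int_\chi\Phi_K(U\cdot 1)=Q\prod_{\check\alpha>0}\frac{1-e^{\check\alpha}}{1-q^{-1}e^{\check\alpha}}(\chi)$ (since $\phi_{K,\chi}(U\cdot 1)=1$), which at $\chi=\delta^{1/2}$ equals $Q\cdot Q^{-1}=1$; so your height/exponent identity $\prod_{\check\alpha>0}\frac{q^{\langle\check\alpha,\rho\rangle+1}-1}{q^{\langle\check\alpha,\rho\rangle}-1}=\sum_{w\in W}q^{\ell(w)}=[K:J]$ is a re-derivation of a special case of the normalization already carried out in \S\ref{ssnormalization}. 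Two small caveats: the map $\int_\chi$ intertwines $\mathcal F_w$ with the \emph{normalized} operator $F_w$, not with $T_w$, so your ``one simple root at a time'' bookkeeping must route through (\ref{relationFwTw}) --- the endpoint you state is nevertheless the correct one; and the proposed shortcut via the group case is mildly circular as phrased, since ``the zonal spherical function of the trivial representation is $\equiv 1$'' presupposes exactly the normalization $\Omega(x_0)=1$ that is at issue, so one would still have to verify the normalization there (e.g.\ by the same product computation). Net effect: your route buys an $X$-independent, self-contained evaluation of the constant, at the cost of redoing a computation the paper gets for free from the shape of (\ref{mainformula}).
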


\begin{proof}
 By our assumption that $\XX$ carries an eigen-volume form, and by the form of the morphisms $S_{\tilde\chi}$, it follows that $S_{\nu^{-1}\delta^{-\frac{1}{2}}}$ furnishes a morphism: $C_c^\infty(X)\to \nu^{-1}$ or, tensoring by $\nu$, a morphism: $M_c^\infty(X)\to 1$ (the trivial representation). Therefore its dual, as a morphism: $I(\delta^\frac{1}{2})\to C^\infty(X)$, factors through the trivial representation. Moreover, the image of $\Phi_K$ via the morphism $\mathcal S(U\backslash G)\to I(\chi)$ is finite at $\delta^\frac{1}{2}$ (see (\ref{sphvector}) below). Therefore, $\Omega_{\delta^\frac{1}{2}}$ is a constant function, and since the coefficient of the trivial character in (\ref{mainformula}) is $1$, we have $\Omega_{\delta^\frac{1}{2}}=1$.
\end{proof}

\begin{remark}
 The lemma does not apply to the case of $C^\infty(X,\mathcal L_\Psi)$, with non-trivial $\Psi$, since there is no $G$-eigenmeasure valued in $\mathcal L_\Psi$ in that case.
\end{remark}

\subsection{Normalization} \label{ssnormalization} 
It is good to keep track of how our eigenfunctions are normalized, so we collect here the properties of $\Omega_{\tilde\chi}=\Delta_{\tilde\chi}(\Phi_K)$:
\begin{description}
 \item[1. Iwahori normalization:] Let $\phi_{J,\chi}\in I(\chi)$ be the vector with $\phi_{J,\chi}|_K= 1_J$. Then $\phi_{J,\chi}$ is the image of $\Phi_J$ and $\Delta_{\tilde\chi}(\phi_{J,\chi})(x_0)=\Vol(J)=\frac{1}{(K:J)}$.
 \item[2. ``Spherical vector'' normalization:] Let $\phi_{K,\chi}\in I(\chi)$ be the unramified vector with $\phi_{K,\chi}(b k) = \chi\delta^\frac{1}{2}(b)$ for every $b\in B$. To determine the value of $\Delta_{\tilde\chi}(\phi_{K,\chi})$ we must determine its relation with the image of $\Phi_K$. Let $\int_\chi$ denote the natural morphism: $\mathcal S(U\backslash G) \to I(\chi)$. We claim: 
\begin{equation}\label{sphvector}
\phi_{K,\chi} = Q^{-1}\prod_{\check\alpha>0}\frac{1-q^{-1}e^{\check\alpha}}{1-e^{\check\alpha}}(\chi) \int_\chi \Phi_K 
\end{equation}
where $Q=\frac{\Vol(K)}{\Vol(Jw_lJ)}=\prod_{\check\alpha>0}\frac{1-q^{-1}e^{\check\alpha}}{1-e^{\check\alpha}}(\delta^\frac{1}{2})$.
To prove this we need a result which we will prove later, together with a result of Casselman and Shalika. Let $\Delta_\chi$ refer to the morphism corresponding to the Whittaker model, i.e.\ $\HH=\UU^-$, the opposite unipotent subgroup, with a generic character $\Psi$. We prove later that: 
$$\Delta_\chi(\Phi_K)(x_{{\check\lambda}}) = {e^{-{\check\lambda}}(\delta^\frac{1}{2})} e^{\check\rho}(\chi)\sum_W \sigma(w) e^{-\check\rho+{\check\lambda}}(^w\chi).$$
The corresponding formula of \cite[Theorem 5.4]{CS} is:
$$\Delta^{CS}_\chi(\phi_{K,\chi})(x_{{\check\lambda}}) = \prod_{\check\alpha>0} \frac{1-q^{-1}e^{\check\alpha}}{1-e^{\check\alpha}}(\chi){e^{-{\check\lambda}}(\delta^\frac{1}{2})} e^{\check\rho}(\chi)\sum_W \sigma(w) e^{-\check\rho+{\check\lambda}}(^w\chi)$$
where $\Delta_\chi^{CS}$ is normalized so that $\Delta_\chi^{CS}(\phi_{J^-,\chi})=1$, where $\phi_{J^-,\chi}$ denotes the element in $I(\chi)$ with $\phi_{J^-,\chi}|_K=1_{B_0J^-}$, where $J^-$ is the Iwahori subgroup corresponding to the opposite Borel. Comparing the two normalizations, since clearly in this case $\Delta_\chi(\phi_{J^-,\chi})= (U_0:U_1) \Delta_\chi(\phi_J)$ (here $U_1$ is the kernel in $U_0$ of the reduction map to $\UU(\FF_q)$) and also since $Q= (U_0:U_1) \Vol(J)$, our claim follows. (Notice that in \cite{CS} the spherical subgroup has not been put in opposite position from the Borel and the representatives for $K$-orbits are all dominant. Therefore, one has to substitute $e^{w_l {\check\lambda}}$ for $a$ in \cite[Theorem 5.4]{CS} -- where $w_l$ is the longest element of the Weyl group -- to get the correct formula.)

For simplicity, we state the following corollary only for the case of trivial arithmetic multiplicity (see Theorem \ref{thmmult}), i.e.\ there is only one open $B$-orbit and $A_X^*\hookrightarrow A^*$:
\begin{corollary}
Assume that there is a unique open $B$-orbit. The spherical function whose value at $x_0$ is equal to the integral (in the domain of convergence for $\Delta_\chi$, and by rational continuation otherwise) of $\phi_{K,\chi}$ over $H$ (with the given normalizations) is: 
\begin{eqnarray} \Delta_{\chi}(\phi_{K,\chi})(x_{{\check\lambda}}) = Q^{-1} {e^{-{\check\lambda}}(\delta^\frac{1}{2})}  \prod_{\check\alpha>0}\frac{1-q^{-1}e^{\check\alpha}}{1-e^{\check\alpha}}(\chi) \cdot \nonumber \\
 \cdot e^{\check\rho}(\chi) \sum_{w\in W_X} \sigma(w) b_w(\chi) e^{-{\check\rho}+{\check\lambda}}({^w\chi}).
\end{eqnarray}
\end{corollary}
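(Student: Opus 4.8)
The plan is to obtain the stated formula by pushing the identity (\ref{sphvector}) through the morphism $\Delta_{\tilde\chi}$ and then substituting the closed form of $\Omega_{\tilde\chi}$ supplied by Theorem \ref{maintheorem}. Recall that $\Omega_{\tilde\chi}$ was by definition $\Delta_{\tilde\chi}(\Phi_K)$, i.e.\ the image of $\Phi_K$ under the composite $\mathcal S(U\backslash G)\xrightarrow{\int_\chi}I(\chi)\xrightarrow{\Delta_{\tilde\chi}}C^\infty(X)$. Hence, applying the linear map $\Delta_{\tilde\chi}$ to both sides of (\ref{sphvector}) gives at once
\[
\Delta_{\tilde\chi}(\phi_{K,\chi})(x)=Q^{-1}\prod_{\check\alpha>0}\frac{1-q^{-1}e^{\check\alpha}}{1-e^{\check\alpha}}(\chi)\cdot\Omega_{\tilde\chi}(x),
\]
with $Q$ the constant appearing in (\ref{sphvector}). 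This step uses (\ref{sphvector}) itself, which has just been established via the (later) computation of $\Delta_\chi(\Phi_K)$ for the Whittaker datum together with \cite[Theorem 5.4]{CS}; I would take it as given.

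It then remains to insert $\Omega_{\tilde\chi}(x)$. Under the running hypotheses — a unique open $B$-orbit and $A_X^*\hookrightarrow A^*$ — the functional equations of Theorem \ref{maintheorem} are diagonal, so each $\Omega_{\tilde\chi}$ is given by the ``pleasant'' scalar expression of the second Remark after that theorem, with $\tilde\chi$ taken unramified:
\[
\Omega_{\tilde\chi}(x_{\check\lambda})=e^{\check\rho}(\chi)\,e^{-\check\lambda}(\delta^{1/2})\sum_{w\in W_X}\sigma(w)\,b_w(\tilde\chi)\,e^{-\check\rho+\check\lambda}({}^w\tilde\chi),
\]
where $b_w(\tilde\chi)$ is the scalar in $F_w\Delta_{\tilde\chi}=b_w(\tilde\chi)\Delta_{{}^w\tilde\chi}$. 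Setting $x=x_{\check\lambda}$ and combining the two displays reproduces the asserted formula.

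Finally I would justify the description of $\Delta_{\tilde\chi}(\phi_{K,\chi})$ as the spherical function whose value at $x_0$ is $\int_H\phi_{K,\chi}$. Since $\Delta_{\tilde\chi}$ is $G$-equivariant for the right regular action and $x_0$ has stabilizer $H$, the functional $v\mapsto(\Delta_{\tilde\chi}v)(x_0)$ on $I(\chi)$ is $H$-invariant; because $\Delta_{\tilde\chi}$ is by construction the adjoint of $S_{\tilde\chi^{-1}\nu^{-1}}$, which is the rational continuation of the period integral over the open $B$-orbit through $x_0$, and because the measures have all been normalized compatibly ($\Vol(K)=1$, $|\omega_X|(x_0J)=1$, $|\omega_{U\backslash G}|(U\backslash UK)=1$, $\Vol(A_0)=1$), evaluation at $x_0$ yields precisely $\int_H\phi_{K,\chi}(h\,\cdot)\,dh$. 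The value $\check\lambda=0$ of the formula then gives this $H$-period, and a general $x_{\check\lambda}$ describes the full eigenfunction.

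I expect the only real work to lie in that last normalization check: tracing the chosen measures and the canonical isomorphisms $I(\chi^{-1}\nu^{-1})\otimes\nu\simeq I(\chi^{-1})$ and $M_c^\infty(X)\simeq C_c^\infty(X)\otimes\nu$ through the construction of $\Delta_{\tilde\chi}$ as an adjoint, so that ``value at $x_0$'' equals the $H$-period with no spurious scalar, and in confirming that the hypotheses do collapse Theorem \ref{maintheorem} to its scalar form. Granting those, the corollary is a one-line substitution of (\ref{sphvector}) into Theorem \ref{maintheorem}.
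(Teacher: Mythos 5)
Your proposal is correct and is essentially the paper's own (implicit) derivation: the corollary is stated in \S\ref{ssnormalization} as an immediate consequence of (\ref{sphvector}) combined with the scalar form of Theorem \ref{maintheorem}, which is exactly the substitution you perform. The normalization discussion you add at the end (tracing the measures so that evaluation at $x_0$ is the $H$-period) is a reasonable elaboration of what the paper leaves implicit in the phrase ``with the given normalizations.''
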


 \item[3. ``Basic vector'' normalization:] In their study \cite{BK} of $\mathcal S(U\backslash G)$ Braverman and Kazhdan introduce a ``basic vector'' $c_0\in \mathcal S(U\backslash G)^K$ which generates $\mathcal S(U\backslash G)$ over $\mathcal H(A,A_0)\otimes\mathcal H(G,K)$. It is related to $1_{UK}$ as follows:
$$ 1_{UK} = \prod_{\check\alpha>0} (1-q^{-1} e^{\check\alpha}) c_0$$
where $e^{\check\alpha}$ denotes the corresponding element of $\mathcal H(A,A_0)$ (acting via the normalized left $A$-action on $U\backslash G$) (see \cite[eq.\ (3.15), (3.22)]{BK}). Therefore $\int_\chi c_0 = \prod_{\check\alpha>0} (1-q^{-1} e^{\check\alpha})(\chi) \phi_{K,\chi}$ and from \eqref{sphvector}:
\begin{equation}\label{BKfunction}
c_0= \frac{Q^{-1}\Phi_K}{\prod_{\check\alpha>0} (1-e^{\check\alpha})}.
\end{equation}
Notice that, indeed, using the expression of Proposition \ref{PhiK}, one immediately verifies that $c_0$ is invariant under Fourier transforms.
\end{description}

At this point we can also discuss the relationship between the intertwining operators $F_w$ and the ``classical'' intertwining operators $T_w$ (\ref{stdiop}). By \cite[Theorem 3.1]{C} one has:
\begin{equation}\label{Twphi}T_w \phi_{K,\chi} = \prod_{\check\alpha>0, w\check\alpha<0}\frac{1-q^{-1}e^{\check\alpha}}{1-e^{\check\alpha}}(\chi)\phi_{K,\,{^w}\chi}.
\end{equation}
On the other hand, we have $$F_w (\phi_{K,\chi}) = \prod_{\check\alpha>0, w\check\alpha <0} \frac{1-q^{-1}e^{\check\alpha}}{1- q^{-1}e^{-\check\alpha}}(\chi) \phi_{K,\, {^w\chi}}$$
and therefore: 
\begin{equation}\label{relationFwTw}
 F_w = \prod_{\check\alpha>0, w\check\alpha<0} \frac{1-e^{\check\alpha}}{1- q^{-1}e^{-\check\alpha}}(\chi) T_w.
\end{equation}

%********************************************************************************

\section{Functional equations}\label{secfe}

\subsection{Reduction to rank one} \label{ssred} In this section we compute the proportionality factors in the functional equations:
$$F_{w} [\Delta_{\tilde\chi}^Y]_{\tilde\chi} = \underline b_w^Y(\chi) [\Delta_{{^w\tilde\chi}}^{^wY}]_{\tilde\chi}$$
or equivalently:
$$F_{w} [S_{\tilde\chi^{-1}\nu^{-1}}^Y]_{\tilde\chi} = \underline b_w^Y(\chi) [S_{{^w\tilde\chi^{-1}\nu^{-1}}}^{^wY}]_{\tilde\chi}$$
when $w=w_\alpha$ is a simple reflection and $\YY$ is an orbit of maximal rank. For general $w$, the functional equations follow from the cocycle relations (\ref{cocycle}). Assume $(\YY,\alpha)$ not of type $G$. We have already exhibited basis elements of $\underline\Delta^Y_\chi, \underline\Delta^{^wY}_{^w\chi}$ which map to a multiple of each other under Fourier transform. As a matter of notation, the (scalar) quotient of $F_w\Delta_{\tilde\chi}^Y$ by $\Delta_{{^w\tilde\chi}}^{^wY}$, in cases $U$, $(U,\psi)$ and $T$ will be denoted by $b_w^Y(\tilde\chi)$, and the quotient of $F_w\Delta_{\tilde\chi,\zeta}^{Y}$ by $\Delta_{{^w\tilde\chi},\zeta}^{^wY}$ in case $N$ will be denoted by $b_w^{Y}(\tilde\chi,\zeta)$. Hence, it is enough to compute these scalar quotients for all simple reflections $w=w_\alpha$.

\begin{remark}
 Though our notation below is adapted to the cases $U$ and $T$, the same discussion holds with obvious modifications for cases $N$ and ($U,\psi$).
\end{remark}

To reduce to the case of $\SSL_2$, we proceed as follows:

We can compose $S_{\tilde\chi^{-1}\nu^{-1}}^Y$ with the ``restriction'' morphism to get a sequence of $P_\alpha$-morphisms:
$$ C_c^\infty(X) \xrightarrow{S_{\tilde\chi^{-1}\nu^{-1}}^Y} C^\infty_\temp(U\backslash G) \xrightarrow{\rm{Res}} C^\infty_\temp(U\backslash P_\alpha).$$ We temporarily denote by $S_{\tilde\chi^{-1}\nu^{-1}}'^Y$ the composition. (The dependence on $\alpha$ has been suppressed from the notation.)

We have a commutative diagram of $P_\alpha$-morphisms:
\begin{equation}\begin{CD}
 C_\temp^\infty(U\backslash G) @>{\rm{Res}}>> C_\temp^\infty(U\backslash P_\alpha) \\
@V{\mathcal F_{w_\alpha}}VV @VV{\mathcal F_{w_\alpha}}V \\
 C_\temp^\infty(U\backslash G) @>{\rm{Res}}>> C_\temp^\infty(U\backslash P_\alpha).
\end{CD}\end{equation}

Therefore the morphisms $S_{\tilde\chi^{-1}\nu^{-1}}'^Y$ satisfy \emph{the same functional equations with respect to $w_\alpha$} as the morphisms $S_{\tilde\chi^{-1}\nu^{-1}}^Y$.

Clearly, from the definition of $S_{\tilde\chi^{-1}\nu^{-1}}^Y$, the morphism $S_{\tilde\chi^{-1}\nu^{-1}}'^Y$ factors through restriction of elements of $C_c^\infty(X)$ to $\overline{YP_\alpha}$. To compute the scalar proportionality factors, it is enough to restrict our $P_\alpha$-morphisms to the $P_\alpha$-stable subspace of $C_c^\infty(X)$ of those elements whose restriction to $\overline{YP_\alpha}$ is supported in $YP_\alpha$. For those, we can factorize the morphism as:
\begin{eqnarray*}
  S_{\tilde\chi^{-1}\nu^{-1}}'^Y\otimes 1: C_c^\infty(YP_\alpha)\otimes \nu \to
C_c^\infty((\HH_\alpha\backslash \LL_\alpha)(k), \delta_{G_\xi\cap U_{P_\alpha}\backslash U_{P_\alpha}}) \otimes \nu\to \\ \to \Ind_{B_\alpha}^{L_\alpha}(\chi^{-1}\nu^{-1}\delta^\frac{1}{2})\otimes \nu \simeq \Ind_{B_\alpha}^{L_\alpha}(\chi^{-1}\delta^\frac{1}{2})\subset C^\infty_\temp(U_\alpha\backslash L_\alpha)
\end{eqnarray*} according to the decomposition of eigenforms as in \S \ref{eigenforms}. 
Here the notation is as follows: $\xi$ is a point on $YP_\alpha$ (to be chosen more carefully later), and $\HH_\alpha$ denotes the image of $\GG_\xi\cap \PP_\alpha$ in $\LL_\alpha=\PP_\alpha/\UU_{P_\alpha}$. The first arrow is ``integration along $U_{P_\alpha}$-orbits''.

 We denote the second arrow in the above sequence by $S_{\tilde\chi^{-1}\nu^{-1}}^{Y,\alpha}$, fix invariant measures (chosen compatibly, as we did in \S \ref{adjoints}) on $U_\alpha\backslash L_\alpha$ and $(\HH_\alpha\backslash \LL_\alpha)(k)$ (this admits an invariant measure by inspection of the quasi-affine $\PPGL_2$-spherical varieties), and denote by $\Delta_{\tilde\chi}^{Y,\alpha}$ the adjoint:
$$\Delta_{\tilde\chi}^{Y,\alpha}: \mathcal S(U_\alpha\backslash L_\alpha) \to C^\infty((\HH_\alpha\backslash \LL_\alpha)(k), \delta^{-1}_{G_\xi\cap U_{P_\alpha}\backslash U_{P_\alpha}})\otimes \nu^{-1} \xrightarrow{\sim} $$ $$ \xrightarrow{\sim} C^\infty((\HH_\alpha\backslash \LL_\alpha)(k), \delta^{-1}_{G_\xi\cap U_{P_\alpha}\backslash U_{P_\alpha}}\nu^{-1}).$$

Notice that this factors through the dual of $\Ind_{B_\alpha}^{L_\alpha}(\chi^{-1}\delta^\frac{1}{2})$, which is canonically identified with  $\Ind_{B_\alpha}^{L_\alpha}(\chi e^\alpha\delta^{-\frac{1}{2}})$. The $L_\alpha$-morphisms $\Delta_{\tilde\chi}^{Y,\alpha}$ \emph{satisfy the same functional equations as the morphisms $\Delta_{\tilde\chi}^Y$.} Let $\hat \YY_\alpha:= \UU_\alpha\backslash \BB_\alpha \HH_\alpha\subset \UU_\alpha\backslash \LL_\alpha$. In analogy with Lemma \ref{lemmaadj}, we have the following result (notice that the line bundle $\mathcal L_{ \delta^{-1}_{G_\xi\cap U_{P_\alpha}\backslash U_{P_\alpha}}\nu^{-1}}$ over $\HH_\alpha\backslash \LL_\alpha (k)$ comes with a canonical trivialization of its pull-back to $L_\alpha$, therefore it makes sense to ``evaluate at 1''):
\begin{lemma}\label{lemmaadj2}
For $\phi\in C_c^\infty(U_\alpha\backslash L_\alpha)$ and $\chi$ in a suitable region of convergence we have: 
$$\ev_1 \circ \Delta_{\tilde\chi}^{Y,\alpha} (\phi)= \int_{\hat Y_\alpha} \phi(y) \nu\tilde\chi'^{-1}(y) |\omega_{\hat Y_\alpha}|(y)$$ for a suitable $\AA\times\HH_\alpha$-eigenform $\omega_{\hat Y_\alpha}$ on $\hat\YY_\alpha$ with eigencharacter: $$\mathfrak c_\alpha\times \mathfrak d_{G_\xi\cap U_{P_\alpha}\backslash U_{P_\alpha}}\mathfrak n$$ (unnormalized action of $\AA$) where $\mathfrak c_\alpha$ and $\tilde\chi'$ satisfy: $\nu^{-1}\tilde\chi'|\mathfrak c_\alpha|^{-1}=e^\alpha\delta^{-\frac{1}{2}}\tilde\chi$. 

For $(\YY,\alpha)$ of type $N$ the same holds for $\ev_1 \circ \Delta_{\tilde\chi,\zeta}^{Y,\alpha}$,
provided that $\xi\in Y_\zeta$.
\end{lemma}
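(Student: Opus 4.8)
The plan is to transcribe the proof of Lemma~\ref{lemmaadj}, with $\GG$ replaced by $\LL_\alpha$, the $\GG$-space $\XX$ replaced by the homogeneous space $(\HH_\alpha\backslash\LL_\alpha)(k)$ carrying the line bundle $\mathcal L_{\delta_{G_\xi\cap U_{P_\alpha}\backslash U_{P_\alpha}}}$, and $\UU\backslash\GG$ replaced by $\UU_\alpha\backslash\LL_\alpha$. First I would fix compatible volume eigen-forms as in \S\ref{adjoints}: an $\LL_\alpha$-invariant form $\omega_{L_\alpha}$ on $\LL_\alpha$, the invariant form on $\UU_\alpha\backslash\LL_\alpha$, and a generator of the relevant bundle on $(\HH_\alpha\backslash\LL_\alpha)(k)$, all normalized so as to be compatible --- via the orbit maps $o_\xi\colon\LL_\alpha\to\HH_\alpha\backslash\LL_\alpha$, $o_{U_\alpha}\colon\LL_\alpha\to\UU_\alpha\backslash\LL_\alpha$ and the recipe of \S\ref{generalities} --- with the forms $\omega_X,\omega_G,\omega_{U\backslash G}$ already fixed, under the quotient $\PP_\alpha\to\LL_\alpha=\PP_\alpha/\UU_{P_\alpha}$ and the identification $\UU\backslash\PP_\alpha\simeq\UU_\alpha\backslash\LL_\alpha$ (this is the compatibility promised in the footnote of \S\ref{ssred}). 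Factorizing $\omega_{L_\alpha}$ along $o_\xi$ and $o_{U_\alpha}$ produces forms ``along $\HH_\alpha$-orbits'' and ``along $\UU_\alpha$-orbits''; out of these, together with the standard eigenform on the relevant open $B_\alpha$-orbit of $(\HH_\alpha\backslash\LL_\alpha)(k)$ induced from $\omega_Y$, one builds compatible eigenforms $\omega_{\tilde Y_\alpha}$ on $\tilde\YY_\alpha:=\BB_\alpha\HH_\alpha\subset\LL_\alpha$ and $\omega_{\hat Y_\alpha}$ on $\hat\YY_\alpha$, related by $\omega_{\tilde Y_\alpha}=\omega_{U_\alpha}\wedge o_{U_\alpha}^*(\omega_{\hat Y_\alpha})$ and by pullback along $o_\xi$ and $\iota(g)=g^{-1}$, exactly as in \S\ref{adjoints}.

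With these choices in place, the computation is the one displayed in the proof of Lemma~\ref{lemmaadj}. I would pick test functions $\Phi_1,\Phi_2\in C_c^\infty(\LL_\alpha)$, push them forward under $o_\xi$ and $o_{U_\alpha}$ to a section $\phi_1$ over $(\HH_\alpha\backslash\LL_\alpha)(k)$ and a function $\phi_2$ over $\UU_\alpha\backslash\LL_\alpha$, and unfold
\[
\bigl\langle \phi_1,\ \Delta_{\tilde\chi}^{Y,\alpha}\phi_2\bigr\rangle \;=\; \bigl\langle S_{\tilde\chi^{-1}\nu^{-1}}^{Y,\alpha}(\phi_1),\ \phi_2\bigr\rangle
\]
by writing out both defining integrals, converting the integral over $\hat\YY_\alpha$ and the one over $\UU_\alpha\backslash\LL_\alpha$ into integrals over $\LL_\alpha$ via the orbit maps and the factorized forms, and then splitting the resulting integral over $\tilde\YY_\alpha$ --- the same chain of equalities as in \S\ref{adjoints}, line for line. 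This produces the asserted formula $\ev_1\circ\Delta_{\tilde\chi}^{Y,\alpha}(\phi)=\int_{\hat Y_\alpha}\phi(y)\,\nu\tilde\chi'^{-1}(y)\,|\omega_{\hat Y_\alpha}|(y)$.

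The eigencharacter assertions are then pure bookkeeping with modular characters, as in the last paragraph of the proof of Lemma~\ref{lemmaadj}. The only genuinely new feature is the shift from $\delta^{\frac{1}{2}}$ to $e^\alpha\delta^{-\frac{1}{2}}$ in the relation between $\tilde\chi'$, $\tilde\chi$ and $\mathfrak c_\alpha$: this is because $S_{\tilde\chi^{-1}\nu^{-1}}^{Y,\alpha}$ takes values in $\Ind_{B_\alpha}^{L_\alpha}(\chi^{-1}\delta^{\frac{1}{2}})$, whose smooth dual --- the target of $\Delta_{\tilde\chi}^{Y,\alpha}$ --- is canonically $\Ind_{B_\alpha}^{L_\alpha}(\chi e^\alpha\delta^{-\frac{1}{2}})$, the factor $e^\alpha$ being the modular character of the Borel $\BB_\alpha$ of $\LL_\alpha$. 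The $\HH_\alpha$-eigencharacter $\mathfrak d_{G_\xi\cap U_{P_\alpha}\backslash U_{P_\alpha}}\mathfrak n$ of $\omega_{\hat Y_\alpha}$ follows from the factorization $\omega_G=\mathfrak n^{-1}\omega_\xi\wedge o_\xi^*\omega_X$ and its $\LL_\alpha$-analogue, using that $\mathfrak n|_{G_\xi\cap P_\alpha}$ decomposes through $\HH_\alpha=(G_\xi\cap P_\alpha)/(G_\xi\cap U_{P_\alpha})$. Finally, for $(\YY,\alpha)$ of type $N$ I would run the identical argument with $Y$ replaced by $Y_\zeta$: $S_{\tilde\chi,\zeta}^{Y,\alpha}$ is defined by the same integral with the domain of integration cut down to the fiber over $\zeta$, and since $\xi\in Y_\zeta$ the base point $1$ lies on the open $B_\alpha$-orbit of $(\HH_\alpha\backslash\LL_\alpha)(k)$ corresponding to $\zeta$, so nothing in the computation changes.

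The step I expect to require the most care is the first one: arranging the normalizations of all the forms and measures (on $\LL_\alpha$, $\UU_\alpha\backslash\LL_\alpha$, $(\HH_\alpha\backslash\LL_\alpha)(k)$, $\tilde\YY_\alpha$, $\hat\YY_\alpha$) so that they are genuinely compatible with the ambient ones. An error here would be invisible at the level of this lemma but would corrupt the functional-equation constants $\underline b_w^Y(\chi)$ that get extracted from this rank-one picture in the following subsections; the eigencharacter bookkeeping, and in particular the appearance of $e^\alpha$, is the secondary place where a sign or a shift can be lost.
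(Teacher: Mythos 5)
Your proposal is correct and follows exactly the route of the paper, whose own proof consists of the single remark that the argument is "completely analogous to that of Lemma \ref{lemmaadj}," together with the observation for case $N$ that the image of $\tilde Y_\alpha=(\BB_\alpha\HH_\alpha)(k)$ under $g\mapsto\xi\cdot g^{-1}$ lands in the piece of $Y$ indexed by $\zeta$ — which is precisely your closing point about $\xi\in Y_\zeta$. Your identification of the $e^\alpha$ shift via the duality $\Ind_{B_\alpha}^{L_\alpha}(\chi^{-1}\delta^{\frac{1}{2}})^\vee\simeq\Ind_{B_\alpha}^{L_\alpha}(\chi e^\alpha\delta^{-\frac{1}{2}})$ also matches the paper's setup in \S\ref{ssred}.
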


\begin{proof}
 The proof is completely analogous to that of Lemma \ref{lemmaadj}. The only thing to notice is that in case $N$, if we consider the image of $\tilde Y_\alpha = (\BB_\alpha\HH_\alpha)(k)$ in $Y$ under the action map $g\mapsto \xi \cdot g^{-1}$, it belongs to $Y_{\alpha,\zeta}$. 
\end{proof}

For the computation, we pick the point $\xi$ as follows: In cases $U$, $(U,\psi)$ and $T$ we choose $\xi\in \YY(\mathfrak o)$, and more precisely we choose $\xi$ in the distinguished $B_0$-orbit (cf.\ Proposition \ref{independence}). The choice of $\xi$ in case $N$ will be discussed in \S \ref{sscaseN}. (We caution the reader that in case $N$ it will not be possible, in general, to choose $\xi$ in the distinguished $B_0$-orbit on $Y$ (cf.\ \S \ref{sscaseN}), and for that reason the function $\nu\tilde\chi'^{-1}(y)$ of Lemma \ref{lemmaadj2} on $\hat Y_\alpha$ will not, in general, be equal to one at $U_\alpha 1$.) To simplify notation, we temporarily denote the distribution
$ \ev_1\circ\Delta_{\tilde\chi}^{Y,\alpha}$ on $U_\alpha\backslash L_\alpha$ (respectively $ \ev_1\circ\Delta_{\tilde\chi,\zeta}^{Y,\alpha}$ in case $N$)  by $\Delta'_{\tilde\chi}$ (resp.\ $\Delta'_{\tilde\chi,\zeta}$). Hence:
$$\Delta'_{\tilde\chi} : \mathcal S(U_\alpha\backslash L_\alpha)\to \CC$$
is an $A\times H_\alpha$-eigendistribution, with eigencharacter (considering the unnormalized action of $A$):
$$\chi^{-1}e^{-\alpha}\delta^\frac{1}{2} \times \delta_{G_\xi\cap U_{P_\alpha}\backslash U_{P_\alpha}}\nu.$$

To complete the reduction to $\SSL_2$, it suffices to consider the factorization: $\LL_\alpha=\ZZ_\alpha \LL_\alpha'$ (where $\ZZ_\alpha$ is the center of $\LL_\alpha$) and to notice that the distribution $\Delta_{\tilde\chi}$ is smooth along $Z_\alpha$-orbits with respect to Haar measure on $Z_\alpha$. More precisely, we can pick a small subgroup-neighborhood $N$ of $1 \in Z_\alpha$ such that:
\begin{itemize}
 \item A neighborhood of $U_\alpha\backslash L_\alpha'$ is isomorphic to $U_\alpha\backslash L_\alpha'\times N$ under the action map.
 \item The restriction of $\Delta'_{\tilde\chi}$ to this neighborhood can be written as: Haar measure on $N$ $\times$ a distribution $\Delta_{\tilde\chi}^2$ on $U_\alpha\backslash L_\alpha'$.
\end{itemize}
Since the kernel of Fourier transform is supported on $U_\alpha\backslash L_\alpha'\times U_\alpha\backslash L_\alpha'$, it is enough for the functional equations to compute the Fourier transform of $\Delta_{\tilde\chi}^2$ (having fixed a Haar measure on $N$ which is independent of $\chi$). We are free to pick the Haar measure on $N$ (and hence vary $\Delta_{\tilde\chi}^2$ up to a constant which is independent of $\tilde\chi$), but we need to make the same choices when relating the distributions $\Delta_{\tilde\chi}^2$ coming from two orbits $\YY,\ZZ$ of maximal rank in the same $\PP_\alpha$-orbit. In particular, the corresponding distributions $\Delta_{\tilde\chi}^2$ will be related to each other in the way discussed in \S \ref{twoorbits}.

\subsection{The result}\label{ssferesult} The computation has now been reduced to the case of $\SSL_2$. To present the functional equations, we need to introduce some extra language: If ${\check\lambda}:\Gm\to\AA_Y$ is a cocharacter, its image is an algebraic subgroup $\MM_{\check\lambda}$ of $\AA_Y$ we say that a character $\tilde\chi$ of $A_Y$ is \emph{${\check\lambda}$-unramified} if it is unramified on $M_{\check\lambda}$. 

Finally, in the split case $T$ we introduce some extra data coming from the geometry of the spherical variety. More precisely, $\YY\PP_\alpha$ is the union of $\YY$ and two $k$-rational divisors $\DD, \DD'$. They define valuations $\check v_D, \check v_{D'}: k(Y)\to \mathbb Z$ which, restricted to $k(Y)^{(B)}$ define homomorphisms: $\varchi(Y)\to\mathbb Z$. These, in turn, can be identified with coweights ${\check v}_D,{\check v}_{D'}$ into $\AA_Y$. It is known \cite[\S 1]{Lu} that ${\check v}_D=-^{w_\alpha}{\check v}_{D'}$ and that ${\check v}_D+{\check v}_{D'}\equiv \check\alpha$ (where $\equiv$ stands for equality with the image of $\check\alpha$ in $\varchi(Y)^*$). As we will see later, the image of ${\check v}_D$ in $\AA_Y$ coincides with the image of the stabilizer in $\BB$ modulo $\UU$ of a point on $\DD$.

We are ready now to state the functional equations for $\Delta_{\tilde\chi}^Y$ (resp.\ $\Delta_{\tilde\chi,\zeta}^Y$) and for $w=w_\alpha$, a simple reflection. The data that we need is: The type of $(\YY,\alpha)$ ($U$, $(U,\psi)$, $T$ split, $T$ non-split or $N$); in case $T$ split, the coweights $\check v_D, \check v_{D'}$ (as we saw, knowledge of one implies knowledge of the other) and the modular character $\delta_{(U_{P_\alpha})_\xi}$, as a character of the stabilizer of $\xi$ in $P_\alpha$; in case $N$, whether the coset $\zeta$ corresponds to an integral/non-integral, split/non-split torus.

Finally, we need to know the character $\tilde\chi$. We remind that $\tilde\chi$ is not really a character of $A_Y$ but rather a character of its preimage in $\AA(\bar k)$ which satisfies the condition:
$$ \tilde\chi\nu\delta^{\frac{1}{2}}|_{B_\xi}=\delta_{B_\xi}.$$
In any case, for a co-root $\check\alpha$ the quantity $e^{\check\alpha}(\tilde\chi)=e^{\check\alpha}(\chi)$ makes sense. It also makes sense in case $T$ to consider the expression $e^{\check v_D}(\tilde\chi\nu\delta^{\frac{1}{2}} \delta_{\left(U_{P_\alpha}\right)_\xi}^{-1})$, as we will explain in \S \ref{sscaseTsplit}.

\begin{theorem} \label{fetheorem}
 In cases $U$, $(U,\psi)$ and $T$ the functional equations read as follows:
\begin{description}
 \item[Case U:]
We have 
$$F_{w_\alpha}\Delta_{\tilde\chi}^Y = \Delta_{^{w_\alpha}\tilde\chi}^{^{w_\alpha}Y} \cdot \left\{\begin{array}{l} -e^{-\check\alpha}\frac{1-q^{-1}e^{-\check\alpha}}{1-e^{-\check\alpha}}(\chi) \textrm{ if }\alpha\textrm{ lowers }Y \\ \\
-e^{-\check\alpha}\frac{1-e^{\check\alpha}}{1-q^{-1}e^{\check\alpha}}(\chi) \textrm{ if }\alpha\textrm{ raises }Y.
\end{array}\right. $$

\item[Case U,$\psi$:]
We have
$$ F_{w_\alpha}\Delta_{\tilde\chi}^Y = \Delta_{^{w_\alpha}\tilde\chi}^{Y}.$$

\item[Case T, split:]
We have
$$ F_{w_\alpha}\Delta_{\tilde\chi}^Y = \Delta_{^{w_\alpha}\tilde\chi}^{Y} \cdot \frac{(1-q^{-1}e^{-{\check v}_D})(1-q^{-1}e^{-{\check v}_{D'}})}{(1-e^{{\check v}_D})(1-e^{{\check v}_{D'}})} (\tilde\chi\nu \delta^{\frac{1}{2}}\delta_{\left(U_{P_\alpha}\right)_\xi}^{-1})  $$
if $\tilde\chi$ is $\check v_D$-unramified, and
$$ F_{w_\alpha}\Delta_{\tilde\chi}^Y = \Delta_{^{w_\alpha}\tilde\chi}^{Y} \cdot e^{- m\check\alpha}(\chi) $$
if it is $\check v_D$-ramified of conductor $\mathfrak p^m$.

\item[Case T, non-split:]
We have
$$ F_{w_\alpha}\Delta_{\tilde\chi}^Y = \Delta_{^{w_\alpha}\tilde\chi}^{Y} \cdot \frac{1-q^{-1}e^{-\check\alpha}(\chi)}{1-q^{-1} e^{\check\alpha}(\chi)}$$
if $\tilde\chi$ is $\frac{\check\alpha}{2}$-unramified, and
$$ F_{w_\alpha}\Delta_{\tilde\chi}^Y = \Delta_{^{w_\alpha}\tilde\chi}^{Y} \cdot e^{-\check\alpha}(\chi)$$
otherwise.

\item[Case $N$:] Here we have four cases, according as the coset $\zeta$ corresponds to a split or non-split, integral or non-integral torus, and in each of these cases we have to distinguish between $\tilde\chi$ being $\frac{\check\alpha}{2}$-ramified or not. The functional equations for this case are given in \S \ref{sscaseN}.
\end{description}

\end{theorem}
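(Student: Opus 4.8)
The plan is to reduce the whole statement to a computation of an $\SSL_2$-equivariant Fourier transform on $k^2\smallsetminus\{0\}$ and then to run through the (short) list of rank-one quasi-affine spherical $\PPGL_2$-varieties. By \S\ref{ssred} the functional equation for a simple reflection $w_\alpha$ is already reduced: after restricting $S_{\tilde\chi^{-1}\nu^{-1}}^Y$ to $P_\alpha$, integrating along $U_{P_\alpha}$-orbits, and factoring out the central torus $Z_\alpha$ of $L_\alpha$, the proportionality constant $b_{w_\alpha}^Y(\tilde\chi)$ (resp.\ $b_{w_\alpha}^{Y,\zeta}(\tilde\chi)$ in case $N$) is precisely the scalar by which $\mathcal F$ multiplies a single $\BB$-eigendistribution $\Delta_{\tilde\chi}^2$ on $U_\alpha\backslash\LL_\alpha'\cong k^2\smallsetminus\{0\}$. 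By Lemma \ref{lemmaadj2} this distribution is the push-down of the pseudo-rational eigenmeasure $\tilde\chi'^{-1}|\omega_{\hat Y_\alpha}|$ supported on the closure of the relevant $\BB$-orbit(s) of $\HH_\alpha\backslash\LL_\alpha$. Since Corollary \ref{composition} already guarantees a priori that $\mathcal F_{w_\alpha}\Delta_{\tilde\chi}^Y$ is a scalar multiple of $\Delta_{{}^{w_\alpha}\tilde\chi}^{w_\alpha Y}$, and since the cocycle relation (\ref{cocycle}) propagates a simple-reflection answer to all $w$, it suffices to produce this scalar case by case.

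First I would enumerate the quasi-affine homogeneous rank-one $\PPGL_2$-varieties and write, for each, the set $\hat Y_\alpha=U_\alpha\backslash\BB_\alpha\HH_\alpha$ explicitly as a $\BB_\alpha$-stable locally closed subset of $k^2\smallsetminus\{0\}$, together with the eigenmeasure of Lemma \ref{lemmaadj2}: in case $U$ the relevant orbits are punctured lines; in case $(U,\psi)$ the same picture twisted by a nondegenerate additive character; in case $T$ split the $P_\alpha$-orbit is the complement of two $k$-rational divisors $\DD,\DD'$ with associated coweights $\check v_D,\check v_{D'}$ satisfying $\check v_D+\check v_{D'}\equiv\check\alpha$ (\S\ref{ssferesult}); and in cases $T$ non-split and $N$ one meets level sets of an anisotropic binary quadratic form $x^2-Dy^2$, so that norm-one tori over $k$ or over the quadratic extension $k(\sqrt{D})$ enter. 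Then I would compute $\mathcal F\Delta_{\tilde\chi}^2$ directly from (\ref{Fouriertransform}). After rational continuation each such integral reduces to standard one-variable local integrals: when the inducing datum is unramified in the pertinent direction, a Tate integral $\int_{k^\times}|t|^{s}\psi(at)\,d^\times t$ whose $\gamma$-factor produces exactly the quotients $\frac{1-q^{-1}e^{-\check v}}{1-e^{\check v}}$ of the theorem (with $\check v=\check\alpha$ in case $U$, $\check v=\check v_D,\check v_{D'}$ in case $T$ split, and the norm-one analogue $\frac{1-q^{-1}e^{-\check\alpha}}{1-q^{-1}e^{\check\alpha}}$ in case $T$ non-split); and when the datum is ramified of conductor $\mathfrak p^m$ in that direction, a Gauss sum which, having modulus a power of $q$, collapses the Fourier transform to a pure translate of the other eigendistribution, i.e.\ to the monomial coefficient $e^{-m\check\alpha}(\chi)$ (resp.\ $e^{-\check\alpha}(\chi)$). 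The normalizing factor $-e^{-\check\alpha}$ and the dependence on whether $\alpha$ raises or lowers $Y$ in case $U$ are exactly the $\SSL_2$ identities recorded around (\ref{Kinv}) in \S\ref{ssKinv}, and the modular twist $\delta_{(U_{P_\alpha})_\xi}$ in case $T$ split is the character measuring the failure of the integration along $U_{P_\alpha}$-orbits to be unimodular when the stabilizer of $\xi$ meets $U_{P_\alpha}$ nontrivially.

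The genuine work is concentrated in two places. In case $T$ split one must match the abstract differential-form normalizations of \S\ref{eigenforms}--\S\ref{adjoints} with the explicit hyperbola coordinates so as to make sense of $e^{\check v_D}(\tilde\chi\nu\delta^{\frac{1}{2}}\delta_{(U_{P_\alpha})_\xi}^{-1})$ (recall that $\tilde\chi$ is not literally a character of $A_Y$) and to identify $\check v_D$ with the coweight cut out by the stabilizer modulo $\UU$ of a point on $\DD$; this bookkeeping I would package in \S\ref{sscaseTsplit}. The main obstacle, however, is case $N$: by \S\ref{sstypeN} the base point $\xi$ cannot in general be chosen in the distinguished $B_0$-orbit of $Y$, so the function $\nu\tilde\chi'^{-1}$ of Lemma \ref{lemmaadj2} fails to equal $1$ at $U_\alpha 1$, and one must track the ensuing normalization discrepancy jointly with the discriminant invariant $D(\zeta)$ attached to the coset $\zeta$ --- whence the four subcases (integral/non-integral, split/non-split torus), each crossed with the $\frac{\check\alpha}{2}$-ramified/unramified dichotomy; these computations are carried out separately in \S\ref{sscaseN}. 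By contrast the case $(U,\psi)$ is immediate: $\Delta_{\tilde\chi}^2$ is (a translate of) a nondegenerate additive character along a $U_\alpha$-coset, whose Fourier transform is the corresponding object based at the translated point, so the proportionality constant is $1$, which is the classical Casselman--Shalika functional equation in this normalization. Finally, feeding the simple-reflection scalars into (\ref{cocycle}) yields the functional equations for arbitrary $w$, and the case-by-case simple-reflection computations above, together with \S\ref{sscaseN} for type $N$, give the theorem.
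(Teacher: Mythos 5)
Your proposal is correct and follows essentially the same route as the paper: reduction to rank one via \S\ref{ssred}, explicit identification of the eigendistributions $\Delta^2_{\tilde\chi}$ on $k^2\smallsetminus\{0\}$ case by case, and evaluation of their Fourier transforms through Tate's local functional equation (unramified $\gamma$-factors versus Gauss sums), with case $N$ deferred to \S\ref{sscaseN} exactly as in the theorem's statement. The only real divergence is in case $T$ non-split, where you invoke the Tate integral over the quadratic extension attached to the anisotropic form, while the paper instead evaluates $\left<\Delta^2_{\tilde\chi},1_{\mathfrak o^2}\right>$ directly using $|x^2+\kappa y^2|=\max\{|x|^2,|\kappa y^2|\}$ and Parseval (switching to $\Phi_J$ in the ramified subcase, where the pairing with $1_{\mathfrak o^2}$ vanishes) --- both give the same answer.
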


\subsection{Preliminaries of the computation} We are going to compute the proportionality constant case-by-case. We will use many times the following fact: Consider usual Fourier transform of functions and distributions on $k$, with respect to our fixed character $\psi$ and the measure $dx$ normalized as: $dx(\mathfrak o)$=1. The Fourier transform of a function $f$ on $k$ will be denoted by $\hat f$. By Tate's thesis, the Fourier transform of the distribution $\chi(x) d^\times x$ (where $dx^\times$ denotes some fixed multiplicative Haar measure on $k^\times$) is equal to:
$$\frac{1-q^{-1}\chi^{-1}(\varpi)}{1-\chi(\varpi)} \chi^{-1}(x) |x| d^\times x = \frac{1-q^{s-1}}{1-q^{-s}} |x|^{1-s}d^\times x$$
if $\chi(x)=|x|^s$ is unramified, and:
$$q^{-m} \tau(\chi) \cdot \chi^{-1}(x) |x| d^\times x$$
if $\chi$ is 
a ramified character of conductor $\mathfrak p^m$, where $\tau(\chi)$ is the Gauss sum: $$\tau(\chi)=\sum_{\epsilon \in \mathfrak o^\times/(1+\mathfrak p^m)} \chi(\varpi^{-m}\epsilon)\psi(\varpi^{-m}\epsilon).$$ 

Indeed, we know a priori from equivariance properties that the Fourier transform of $\chi(x) d^\times x$ must be a multiple of $\chi^{-1}(x) |x| d^\times x$, and that multiple can be computed from the formula: $\left< f, g \right> = \left< \hat f, \hat g\right>$. Tate computes: $\left<1_{\mathfrak o}, \chi d^\times x\right>=:\zeta(1_{\mathfrak o}, \chi |\bullet|) = (1- \chi(\varpi))^{-1}$ if $\chi$ is unramified. If $\chi=\eta \cdot |\bullet|^s$ is ramified, with $\eta$ a unitary character, we compute the zeta integral of $f_m:=\psi(x) 1_{\mathfrak p^{-m}}$ whose Fourier transform is $\hat f_m = q^m 1_{1+\mathfrak p^m}$ and we get: $\left< f_m, \chi d^\times x\right>=\zeta(f_m,\eta |\bullet|^{s}))= q^{ms} \tau(\eta) \Vol(\mathfrak p^m)$ and $\left<\hat f_m, \chi^{-1}|\bullet| d^\times x \right>=\zeta(\hat f_m, \eta^{-1} |\bullet|^{-s+1})= q^m \Vol(\mathfrak p^m)$. 

Notice that here we are using our simplifying assumption that $k$ is unramified over $\QQ_p$; these equations would have to be modified otherwise.

\subsection{\textbf{Case U}} 
Let us identify $\LL_\alpha'$ with $\SSL_2$ and $\UU_\alpha\backslash\LL_\alpha'$ with $\mathbb A^2\smallsetminus\{0\}$ over $\mathfrak o$. The distribution $\Delta_{\tilde\chi}^2$ (defined at the end of \S \ref{ssred}) is here an $(A\cap L_\alpha')\times U_\alpha$-eigendistribution. Based on Lemma \ref{lemmaadj2}, depending on whether $Y$ is raised or lowered by $\alpha$, the distribution $\Delta_{\tilde\chi}^2$ on $U_\alpha\backslash L_\alpha'$ is, up to an integral change of coordinates and up to a common scalar multiple one of the following two:
$$ \Delta_{1, \chi}= |x|^{s_1} dx dy \textrm{ and } \Delta_{2, \chi}=  |y|^{s_2} \delta_0(x) dy $$
where $s_1,s_2\in \CC$ is such that $q^{s_1+1}=q^{s_2}=e^{-\check\alpha}(\chi\nu)=e^{-\check\alpha}(\chi)$, and $\delta_0$ denotes the delta measure at $\{0\}\subset k$.

Now we see immediately that $$F_{w_\alpha} \Delta_{1, \chi}= \frac{1-q^{s_1}}{1-q^{-s_1-1}} \Delta_{2, {^{w_\alpha}\chi}}= \frac{1-q^{-1}e^{-\check\alpha}(\chi)}{1-e^{\check\alpha}(\chi)} \Delta_{2, {^{w_\alpha}\chi}}$$
and $$F_{w_\alpha} \Delta_{2, \chi}= \frac{1-q^{s_2}}{1-q^{-s_2-1}} \Delta_{1, {^{w_\alpha}\chi}}= \frac{1-e^{-\check\alpha}(\chi)}{1-q^{-1}e^{\check\alpha}(\chi)} \Delta_{1, {^{w_\alpha}\chi}}$$

Hence, for $(\alpha,Y)$ of type U, we have: 
\begin{equation}
\boxed{b_{w_\alpha}^Y(\tilde\chi)= \left\{\begin{array}{l} -e^{-\check\alpha}\frac{1-q^{-1}e^{-\check\alpha}}{1-e^{-\check\alpha}}(\chi) \textrm{ if }\alpha\textrm{ lowers }Y \\ \\
-e^{-\check\alpha}\frac{1-e^{\check\alpha}}{1-q^{-1}e^{\check\alpha}}(\chi) \textrm{ if }\alpha\textrm{ raises }Y.
\end{array}\right. }
\end{equation}

\begin{example} \label{groupcase}
Let $\GG'=\GG\times \GG$, $\HH= \GG^\diag$, and consider the spherical variety $\XX=\GG=\HH\backslash \GG'$ of $\GG'$ with action $(g_1,g_2)\cdot x = g_1^{-1} x g_2$. Let us choose a Borel subgroup $\BB'=\BB^-\times \BB$, so that $\HH\BB'$ is open. The little Weyl group is generated by $w_\alpha w_{\tilde\alpha}$, where $-\alpha$ is a simple root of $\BB^-$ in the first copy of $\GG$ and $\tilde\alpha$ the corresponding root of $\BB$ in the second copy. The admissible characters $\chi'$ are of the form: $\chi^{-1}\otimes\chi$. (We put $\chi^{-1}$ in the first copy so that the eigenfunction $\Omega_{\chi^{-1}\otimes\chi}$ is proportional to the matrix coefficient $\left<\pi(g) v, \tilde v\right>$ with $\pi=I(\chi)$ and $v,\tilde v$ the unramified vectors.) Notice that $w_{\tilde\alpha}$ lowers the open orbit to some orbit $\YY_\alpha$, which $w_\alpha$ then raises back to the open orbit. Hence we compute: $e^{-\check\alpha} e^{\check{\tilde\alpha}}\cdot b_{w_\alpha w_{\tilde\alpha}}(\chi') = \frac{1-q^{-1}e^{-
\check\alpha}}{1-e^{-\check\alpha}}(\chi)\frac{1-e^{-\check\alpha}}{1-q^{-1}e^{-\check\alpha}}(\chi^{-1})$ and the formula reads:
$$ \Omega_{\chi'} (g_{\check\lambda}) ={e^{-{\check\lambda}}(\delta^\frac{1}{2})} \sum_W \prod_{\check\alpha>0. w\check\alpha<0} \frac{1-q^{-1}e^{-\check\alpha}}{1-e^{-\check\alpha}}\frac{1-e^{\check\alpha}}{1-q^{-1}e^{\check\alpha}}(\chi) e^{{\check\lambda}}(^w\chi)$$
which up to a factor which is independent of ${\check\lambda}$ is equal to:
$$e^{-{\check\lambda}}(\delta^\frac{1}{2}) \sum_{w\in W} \prod_{\check\alpha>0} \frac{1-q^{-1}e^{\check\alpha}}{1-e^{\check\alpha}} e^{{\check\lambda}} (^w\chi).$$
This is, of course, Macdonald's formula for zonal spherical functions which was reproven by Casselman in \cite[Theorem 4.2]{C}. (To compare with the result in \emph{loc.cit.}, recall that our ${\check\lambda}$ is anti-dominant.)

\end{example}

\subsection{\textbf{Case U,$\psi$}} Here the distribution $\Delta_{\tilde\chi}^2$ has eigencharacter $\psi$ under the action of $U$. Under the same identifications as above we have, up to an integral change of coordinates:
$$ \Delta^2_{\tilde\chi} = |x|^s\psi^{-1}\left(\frac{y}{x}\right) dx dy$$
where $s\in \CC$ is such that $q^{s+1}=e^{-\check\alpha}(\chi)$.

Here we see that $$F_{w_\alpha} |x|^s \psi^{-1}\left(\frac{y}{x}\right) dx dy = |x|^{-s-2}\psi^{-1}\left(\frac{y}{x}\right) dx dy$$ in other words:
$$ F_{w_\alpha} \Delta^2_\chi = \Delta^2_{^{w_\alpha}\chi}$$
and hence:
\begin{equation}\label{casepsi}
\boxed{b^Y_{w_\alpha}(\chi)=1}.
\end{equation}

\begin{example}
Let $H=U$ with the standard Whittaker character. Then $W_X=W$, all simple roots are of type (U,$\psi$), and we have:
$$\Omega_\chi (g_{\check\lambda}) = {e^{-{\check\lambda}}(\delta^\frac{1}{2})} e^{\check\rho}(\chi)\sum_W \sigma(w) e^{-\check\rho+{\check\lambda}}(^w\chi).$$
This is the Shintani-Casselman-Shalika formula \cite{CS}. (Again, to compare with the result in \emph{loc.cit.}, recall that our ${\check\lambda}$ is anti-dominant.)
\end{example}

\begin{example}
Let $G=\GL_{2n}$ and let $H$ be a conjugate of the Shalika subgroup such that $HB$ is open, equipped with the Shalika character $\Psi$. The Shalika subgroup is the semidirect product of $\GL_n$, embedded diagonally in the $\GL_n\times\GL_n$ parabolic, with the unipotent radical of this parabolic; the Shalika character is the complex character $(g,u)\mapsto \psi(\tr u)$ of this subgroup. In this case, if we enumerate $w_1,\dots,w_{2n-1}$ the simple reflections in the Weyl group of $G$, the little Weyl group is generated by the elements $w_i w_{2n-i}$ ($1\le i \le n-1$) and $w_n$, hence can be identified with the Weyl group of the subgroup $\Sp_{2n}(\CC)$ of the dual group $\check G=\GL_{2n}(\CC)$. The character $\chi$ is of the form $\chi_0\otimes 
{^{w_0}\chi_0^{-1}}$, where $\chi_0$ is a character of the maximal torus of $\GL_n$, and $w_0$ denotes the longest Weyl element of $\GL_n$. The Knop action of the element $w_i w_{2n-1}$ is by lowering $\mathring\XX$ to some orbit $\YY_i$ and then raising $\YY_i$ back to the open orbit, so the corresponding factor is: $e^{\check\alpha} e^{\check{\tilde\alpha}}(\chi)b_{w_\alpha w_{\tilde\alpha}}(\chi) = \frac{1-q^{-1}e^{-\check\alpha}}{1-e^{-\check\alpha}}\frac{1-e^{\check\alpha}}{1-q^{-1}e^{\check\alpha}}(\chi)$ (where $\alpha,\tilde\alpha$ are the roots corresponding to $w_i$, $w_{2n-i}$, and we have used the fact that $\chi$ is of the aforementioned form). On the other hand, the pair $(\mathring X,w_n)$ is of type ($U, \psi$), hence the corresponding factor is $e^{\check\alpha}(\chi) b_\alpha(\chi)=\frac{1-e^{\check\alpha}}{1-e^{-\check\alpha}} (\chi)$. Notice that $\chi$ can be considered as an element of the maximal torus of $\Sp_{2n}(\CC)\subset\check G$, and the coroots $\check\alpha$ which appear in 
the above factors are the roots of $\Sp_{2n}(\CC)$. Hence the unramified Shalika function is given by:

$$\Omega_\chi (g_{\check\lambda}) = {e^{-{\check\lambda}}(\delta^\frac{1}{2})}\sum_{W_X} \prod_{\alpha\in \Phi_{\Sp_{2n}}^S, \alpha>0, w\alpha<0} \frac{1-q^{-1}e^{-\check\alpha}}{{1-e^{-\check\alpha}}} \frac{1-e^{\check\alpha}}{1-q^{-1}e^{\check\alpha}}(\chi) $$  $$\cdot\prod_{\alpha\in\Phi_{\Sp_{2n}}^L, \alpha>0, w\alpha<0}  \frac{1-e^{\check\alpha}}{1-e^{-\check\alpha}}(\chi)  e^{{\check\lambda}}({^w\chi})$$
which, up to a factor independent of ${\check\lambda}$, is:
$$e^{-{\check\lambda}}(\delta^\frac{1}{2})\sum_{W_X} \sigma(w)\prod_{\alpha\in \Phi_{\Sp_{2n}}^S}(1-q^{-1}e^{\check\alpha}) e^{-{\check\rho}+{\check\lambda}}({^w\chi})$$
($\sigma(w)$ denoting the sign of $w$ as an element of $W$), the formula of \cite[Theorem 2.1]{Sa1}. The symbols $\Phi_{\Sp_{2n}}^S$ and $\Phi_{\Sp_{2n}}^L$ denote the sets of short and long roots of $\Sp_{2n}$, respectively. (Once more, one needs to substitute ${\check\lambda}$ by $w_{\check\lambda}{\check\lambda}$ to arrive at the formula of \emph{loc.cit.} Notice that here $e^{-{\check\lambda}}=e^{w_l {\check\lambda}}$ where $w_l$ is the longest element of the Weyl group, so the signs of the coweights inside of the $W_X$-sum will be inverted.)
\end{example}

\subsection{Case $T$ split} \label{sscaseTsplit} The goal of the discussion that follows is to identify the distribution $\Delta_{\tilde\chi}^2$ in case $T$, split. The following is an easy fact:

\begin{lemma}
 In case $T$, the group $\AA\cap\HH_\alpha$ is central in $\LL_\alpha$.
\end{lemma}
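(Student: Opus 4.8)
\emph{Proof proposal.} The plan is to transport the statement to $\PPGL_2$ and use that two distinct maximal tori of $\PPGL_2$ intersect trivially. Put $\bar\LL:=\PP_\alpha/\mathcal R(\PP_\alpha)\simeq\PPGL_2$ and let $q\colon\LL_\alpha\to\bar\LL$ be the composite of the inclusion $\LL_\alpha\hookrightarrow\PP_\alpha$ with the projection $\PP_\alpha\twoheadrightarrow\bar\LL$. Since $\mathcal R(\PP_\alpha)=\mathcal R(\LL_\alpha)\UU_{P_\alpha}$ and $\LL_\alpha\cap\UU_{P_\alpha}=1$, the kernel of $q$ is $\mathcal R(\LL_\alpha)=\mathcal Z(\LL_\alpha)^\circ$, which is central in $\LL_\alpha$; so it is enough to prove $q(\AA\cap\HH_\alpha)=1$. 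Note that $\AA$ is a maximal torus of $\LL_\alpha$ meeting $\UU_{P_\alpha}$ trivially, so $q(\AA)$ is a maximal torus of $\bar\LL$ contained in the Borel subgroup $\BB_2:=q(\BB_\alpha)$.

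The next step is to locate $q(\HH_\alpha)$. By construction $\HH_\alpha$ is the image in $\LL_\alpha$ of $\GG_\xi\cap\PP_\alpha$, and this map is compatible with $q$ and with the $\bar\LL$-action on $\XX_2:=\YY\PP_\alpha/\mathcal R(\PP_\alpha)$; hence every element of $q(\HH_\alpha)$ fixes the image $\bar\xi$ of $\xi$, i.e.\ $q(\HH_\alpha)\subseteq\TT_2:=\operatorname{Stab}_{\bar\LL}(\bar\xi)$. Because $(\YY,\alpha)$ is of type $T$, the variety $\XX_2$ is $\simeq\TT\backslash\PPGL_2$, homogeneous with stabilizers in a single conjugacy class of one-dimensional connected tori; so $\TT_2$ is a maximal torus of $\bar\LL$. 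Moreover $\YY$, being the orbit of maximal rank in $\YY\PP_\alpha$ and fixed by $w_\alpha$, is the open $\BB$-orbit of $\YY\PP_\alpha$, so $\bar\xi$ lies in the open $\BB_2$-orbit of $\XX_2$; as the open $\BB_2$-orbit has dimension $\dim\XX_2=\dim\BB_2$, the group $\TT_2\cap\BB_2=\operatorname{Stab}_{\BB_2}(\bar\xi)$ is finite. Hence $\TT_2$, being connected of dimension one, is not contained in $\BB_2$, while $q(\AA)\subseteq\BB_2$; so $\TT_2$ and $q(\AA)$ are \emph{distinct} maximal tori of $\bar\LL\simeq\PPGL_2$. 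A semisimple element lying in two distinct maximal tori of a connected reductive group is non-regular, and the only non-regular semisimple element of $\PPGL_2$ is $1$; therefore $q(\AA)\cap\TT_2=1$. Since $q(\AA\cap\HH_\alpha)\subseteq q(\AA)\cap q(\HH_\alpha)\subseteq q(\AA)\cap\TT_2$, we conclude $\AA\cap\HH_\alpha\subseteq\ker q=\mathcal R(\LL_\alpha)\subseteq\mathcal Z(\LL_\alpha)$, which is the assertion.

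I expect the only delicate point to be the geometric input used in the second paragraph: that $\xi$ maps into the \emph{open} $\BB$-orbit of $\XX_2$ (so that $\operatorname{Stab}_{\BB_2}(\bar\xi)$ is finite), and that in type $T$ the stabilizer $\TT_2$ is a torus rather than its normalizer. Both follow from the description of the rank-one $\PPGL_2$-spherical varieties recalled in \S\ref{secorbits}, applied to the maximal-rank orbit $\YY$; alternatively one can sidestep the connectedness of $\TT_2$ by observing that if $q(\AA)$ fixed $\bar\xi$ then $\operatorname{Stab}_{\BB_2}(\bar\xi)$ would contain the one-dimensional torus $q(\AA)$, which is impossible for a point of an orbit of maximal rank. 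Finally, should $\bar\LL$ only be isogenous to $\PPGL_2$ (for instance $\simeq\SSL_2$), the same computation yields $q(\AA\cap\HH_\alpha)\subseteq\mathcal Z(\bar\LL)$, and $q^{-1}(\mathcal Z(\bar\LL))=\mathcal Z(\LL_\alpha)$ because $\ker q$ is central, so the conclusion is unchanged.
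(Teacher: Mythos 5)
Your proof is correct and follows the same route as the paper: the paper's entire proof is the one-line assertion that the image of $\AA\cap\HH_\alpha$ in $\LL_\alpha/\mathcal R(\LL_\alpha)\simeq\PPGL_2$ is trivial, and your argument simply supplies the justification (the image lies both in the maximal torus $q(\AA)\subset\BB_2$ and in the point-stabilizer $\TT_2$, which in type $T$ is a maximal torus not contained in $\BB_2$, and two distinct maximal tori of $\PPGL_2$ meet trivially). The details you fill in — finiteness of $\operatorname{Stab}_{\BB_2}(\bar\xi)$ from openness of the $\BB_2$-orbit, and the fact that the only non-regular semisimple element of $\PPGL_2$ is the identity — are all sound.
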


\begin{proof}
 Its image in $\LL_\alpha/\mathcal R(\LL_\alpha)\simeq \PPGL_2$ is trivial.
\end{proof}

This implies that the antidiagonal copy: $g\mapsto (g^{-1},g)$ of this group is the kernel of the morphism: $\AA\times \HH_\alpha\to\Aut(\UU_\alpha\backslash\LL_\alpha)$. Denote the quotient by $\overline{\AA\times\HH_\alpha}$, i.e.:
$$\overline{\AA\times\HH_\alpha}:=(\AA\times\HH_\alpha)/ (\AA\cap\HH_\alpha)^{\rm{adiag}} \hookrightarrow \Aut(\UU_\alpha\backslash\LL_\alpha).$$

Let $\check v_D, \check v_{D'}:\Gm\to \AA_Y$ denote the cocharacters defined in \S \ref{ssferesult}. We will prove:

\begin{proposition} \begin{enumerate}
 \item There is a natural way to identify $\check v_D, \check v_{D'}$ with cocharacters $\widetilde{\check v_D}, \widetilde{\check v_{D'}}$ into $\overline{\AA\times\HH_\alpha}$ such that:
\begin{itemize}
\item[i)] Their compositions with the map $\overline{\AA\times\HH_\alpha}\to\AA_Y$ are equal to $\check v_D,\check v_{D'}$, respectively.
\item[ii)] Their images are automorphisms which preserve the subvariety $\UU_\alpha\backslash\LL_\alpha'$. 
\item[iii)] Under an $\mathfrak o$-isomorphism: $\UU_\alpha\backslash\LL_\alpha'\simeq \mathbb A^2_{x,y}$ and up to an integral change of coordinates, the automorphisms $\check v_D(a), \check v_{D'}(a)$ restrict to $\UU_\alpha\backslash\LL_\alpha'$ as: $(x,y)\mapsto (ax,y)$ and $(x,y)\mapsto(x,ay)$.
\end{itemize}
 \item The distribution $\Delta_{\tilde\chi}^2$ is an eigendistribution for $\widetilde{\check v_D}(k^\times), \widetilde{\check v_{D'}}(k^\times)$, with eigencharacter equal to the restriction of $\tilde\chi^{-1}e^{-\alpha}\delta^\frac{1}{2} \otimes \delta_{G_\xi\cap U_{P_\alpha}\backslash U_{P_\alpha}}\nu$
 to their image.
\end{enumerate}
\end{proposition}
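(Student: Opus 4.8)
The plan is to reduce everything to explicit coordinate computations on the rank-one space $\UU_\alpha\backslash\LL_\alpha'\simeq \mathbb A^2_{x,y}$, where the geometry of $\XX$ enters only through the two boundary divisors $\DD,\DD'$ of $\YY\PP_\alpha$ and the valuations they induce. First I would establish part (1). The lemma just proved says $\AA\cap\HH_\alpha$ is central in $\LL_\alpha$, hence acts trivially on $\UU_\alpha\backslash\LL_\alpha$; so the antidiagonal copy is exactly the kernel of $\AA\times\HH_\alpha\to\Aut(\UU_\alpha\backslash\LL_\alpha)$, and the quotient $\overline{\AA\times\HH_\alpha}$ embeds. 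To lift $\check v_D$: the divisor $\DD\subset\YY\PP_\alpha$ descends under $\YY\PP_\alpha\to\YY\PP_\alpha/\UU_{P_\alpha}$, and after passing to the $\LL_\alpha'$-orbit of a point and killing the central torus it becomes one of the two coordinate axes in $\mathbb A^2_{x,y}$. The stabilizer in $\BB_\alpha$ modulo $\UU_\alpha$ of a point on that open-in-$\DD$ stratum is a one-dimensional subtorus of $\overline{\AA\times\HH_\alpha}$ — here I would invoke the fact recalled in \S\ref{ssferesult}, namely that the image of $\check v_D$ in $\AA_Y$ coincides with the image of that stabilizer modulo $\UU$ — and parametrizing that subtorus gives $\widetilde{\check v_D}$. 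Properties (i) and (ii) are then immediate: (i) holds because the lift was built from the same $\BB_\alpha$-stabilizer data that defines $\check v_D$ in $\AA_Y$, and (ii) holds because the central torus $\ZZ_\alpha$ commutes with the chosen one-parameter subgroup, so the latter preserves the complementary slice $\UU_\alpha\backslash\LL_\alpha'$. For (iii), since $\DD,\DD'$ are the two $\BB_\alpha$-stable divisors in the $\PPGL_2$-spherical variety $\TT\backslash\PPGL_2$ (split case) and their total transforms in $\mathbb A^2$ are the two coordinate lines $\{x=0\}$ and $\{y=0\}$, the subtorus fixing one line pointwise and scaling transversally to the other is, after an integral change of coordinates, exactly $(x,y)\mapsto(ax,y)$, and similarly for $\DD'$; the identity $\check v_D + \check v_{D'}\equiv\check\alpha$ and $\check v_D=-^{w_\alpha}\check v_{D'}$ recalled earlier is consistent with the two lifts scaling the two coordinates separately while their product scales $xy$, i.e.\ acts through $\check\alpha$.

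Next, part (2). By the reduction of \S\ref{ssred}, $\Delta_{\tilde\chi}^2$ is the distribution on $U_\alpha\backslash L_\alpha'$ obtained from $\ev_1\circ\Delta_{\tilde\chi}^{Y,\alpha}$ after splitting off the central-torus factor, and by Lemma \ref{lemmaadj2} it is an $\AA\times\HH_\alpha$-eigendistribution with eigencharacter the restriction to this group of $\tilde\chi^{-1}e^{-\alpha}\delta^{\frac12}\otimes\delta_{G_\xi\cap U_{P_\alpha}\backslash U_{P_\alpha}}\nu$ (this is precisely the $\AA\times\HH_\alpha$-eigencharacter of the distribution $\Delta'_{\tilde\chi}$ computed at the end of \S\ref{ssred}, transported through the $\ZZ_\alpha$-splitting). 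Since $\widetilde{\check v_D}(k^\times)$ and $\widetilde{\check v_{D'}}(k^\times)$ are, by part (1)(i), subgroups of the image of $\AA\times\HH_\alpha$ in $\Aut(\UU_\alpha\backslash\LL_\alpha')$, restricting the eigencharacter along these two one-parameter subgroups gives the claim. The only subtlety is well-definedness of the expression $e^{\check v_D}(\tilde\chi\nu\delta^{\frac12}\delta_{(U_{P_\alpha})_\xi}^{-1})$ used in the statement of Theorem \ref{fetheorem}: although $\tilde\chi$ is not literally a character of $A_Y$, it is a character of the preimage $R$ of $\AA_Y(k)$ in $\AA(\bar k)$, and $\widetilde{\check v_D}$ being defined through $\AA\times\HH_\alpha$ means we must check that the composite character descends to something evaluable on $\widetilde{\check v_D}(k^\times)$; this is exactly the content of saying the eigencharacter of $\Delta_{\tilde\chi}^2$ restricts, and I would phrase this carefully — the cocharacter $\widetilde{\check v_D}$ lands in $\overline{\AA\times\HH_\alpha}$, whose character group receives both $\tilde\chi^{-1}e^{-\alpha}\delta^{1/2}$ (from $\AA$, via $R$) and $\delta_{(U_{P_\alpha})_\xi}^{-1}\nu$ (from the $\HH_\alpha$-factor through $\delta_{G_\xi\cap U_{P_\alpha}\backslash U_{P_\alpha}}$, since $(U_{P_\alpha})_\xi=\GG_\xi\cap\UU_{P_\alpha}$), so their product pairs with $\widetilde{\check v_D}$ unambiguously.

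I expect part (1)(iii) — pinning down the \emph{integral} normalization so that the two lifted cocharacters act as the diagonal coordinate scalings on $\mathbb A^2_{\mathfrak o}$ — to be the main obstacle. Identifying which divisor maps to which axis, and checking that the $\mathfrak o$-structure coming from $\GG$ matches the standard $\mathfrak o$-structure on $\mathbb A^2$ up to $\GL_2(\mathfrak o)$, requires tracking the model of $\TT\backslash\PPGL_2$ over $\mathfrak o$ from Axiom \ref{integralityaxiom} and the compatibility of eigenforms from \S\ref{eigenforms}; the split hypothesis on $\TT$ is essential here, since it is what makes both $\DD$ and $\DD'$ individually $k$-rational (equivalently both coordinate axes defined over $k$), which is exactly the case distinction flagged in \S\ref{sstypeN}. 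Everything downstream — computing $F_{w_\alpha}\Delta_{\tilde\chi}^2$ by a two-variable Tate-type Fourier transform, which gives the quotient $\frac{(1-q^{-1}e^{-\check v_D})(1-q^{-1}e^{-\check v_{D'}})}{(1-e^{\check v_D})(1-e^{\check v_{D'}})}$ in the unramified case and the Gauss-sum/monomial answer $e^{-m\check\alpha}$ in the ramified case — then proceeds exactly as in Case $U$, using the preliminary Fourier-transform computation already recorded, so I would treat it as routine once the proposition is in place.
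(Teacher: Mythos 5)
Your overall architecture matches the paper's: construct $\widetilde{\check v_D}$ as (a parametrization of) the stabilizer in $\overline{\AA\times\HH_\alpha}$ of a point on the divisor stratum, deduce (ii) and (iii) from the coordinate picture on $\mathbb A^2_{x,y}$, and get part (2) by restricting the $\AA\times\HH_\alpha$-eigencharacter of Lemma \ref{lemmaadj2}. But there is a circularity at the decisive step. You justify the identification of that stabilizer with $\check v_D(\Gm)$ by "invoking the fact recalled in \S\ref{ssferesult} that the image of $\check v_D$ in $\AA_Y$ coincides with the image of the stabilizer modulo $\UU$ of a point on $\DD$." That section does not prove this; it says "as we will see later," and the place where it is seen is precisely the proposition you are proving. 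The missing argument is the following: for $z\in\DD$, an element $b\in\BB$ stabilizes $z\UU$ if and only if $z$ and $z\cdot b$ cannot be separated by a regular $\BB$-eigenfunction on $\DD$; because $(\YY,\alpha)$ is of type $T$, every $\BB$-eigenfunction on $\DD$ extends to $\YY\PP_\alpha$; and by the definition of $\check v_D$ as a valuation, the eigencharacters of eigenfunctions restricting non-trivially to $\DD$ are exactly those orthogonal to $\check v_D$. This is what shows the stabilizer of $z\UU$ is the preimage of $\check v_D(\Gm)$, and hence what makes property (i) true rather than true by fiat. Without it your construction of $\widetilde{\check v_D}$ is not grounded.

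Two smaller points. For (ii), your reason ("the central torus commutes with the one-parameter subgroup, so the latter preserves the complementary slice") does not actually yield preservation of $\UU_\alpha\backslash\LL_\alpha'$; the correct mechanism is the fibration $\UU_\alpha\backslash\LL_\alpha\to\LL_\alpha'\backslash\LL_\alpha$: the stabilizer of any point of $\UU_\alpha\backslash\LL_\alpha'$ fixes its image downstairs and therefore preserves the fiber. For (iii), saying the identity $\check v_D+\check v_{D'}\equiv\check\alpha$ is "consistent with" the diagonal scalings is not a normalization argument; the paper pins down the scaling factor by pairing against an $\AA\times\HH_\alpha$-eigenfunction $f$ of eigencharacter $e^\alpha\times 1$ and using $f\circ\widetilde{\check v_D}(a)=af$. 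Your final paragraph on the two-variable Tate computation is downstream of the proposition and not at issue.
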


Notice that the statement about the eigencharacter makes sense, because we have $\tilde\chi^{-1}e^{-\alpha}\delta^\frac{1}{2} = \delta_{G_\xi\cap U_{P_\alpha}\backslash U_{P_\alpha}}\nu$ on $A\cap H_\alpha$. Indeed, the right hand side is equal to $\delta_{U_{P_\alpha}} \delta_{\left(U_{P_\alpha}\right)_\xi}^{-1}$ and in this case we have: $\left(U_{P_\alpha}\right)_\xi=U_\xi$. Therefore, the equality follows from (\ref{cond2}). By a slight abuse of notation, we will be denoting the pull-back of this character to $k^\times$ by $\tilde\chi^{-1}\nu^{-1}\delta^{-\frac{1}{2}} \delta_{\left(U_{P_\alpha}\right)_\xi}\circ e^{\check v_D}$ (and similarly for $D'$).

\begin{proof}
 Let $z\in \DD$ be any point. Consider the series of quotients: $\BB\to \AA\to \AA_Y$. Let $\BB'\subset\BB$, $\AA'\subset\AA$ be the preimages of $\check v_D(\Gm)$. We claim:

\emph{The subgroup $\BB'$ coincides with the stabilizer in $\BB$ of the $\UU$-orbit of $z$.}

An element $b\in\BB$ is in the stabilizer of $z\UU$ if and only if $z$ and $z\cdot b$ cannot be separated by a regular $\BB$-eigenfunction on $\DD$. The pair $(\YY,\alpha)$ being of type $T$, every $\BB$-eigenfunction on $\DD$ extends to a $\BB$-eigenfunction on $\YY\PP_\alpha$. By definiton of $\check v_D$, the eigencharacters of eigenfunctions which restrict non-trivially to $\DD$ are precisely those which are orthogonal to $\check v_D$. This proves the claim.

Under the identification: $\YY \PP_\alpha \simeq (\PP_\alpha)_\xi\backslash \PP_\alpha$ (following from a choice of point $\xi$), we may let $\hat\DD$ denote the orbit of $\AA\times\HH_\alpha$ on $\UU\backslash \PP_\alpha=\UU_\alpha\backslash \LL_\alpha$ corresponding to the $\BB$-orbit $\DD$ on $\YY\PP_\alpha$. Then the above claim translates to the following: 

\emph{For every $\hat z\in \hat\DD$ the stabilizer of $z$ in ${\AA\times\HH_\alpha}$ is isomorphic to $\AA'$.}

Clearly, this isomorphism does not depend on the choice of $z$. Since the antidiagonal copy of $\AA\cap\HH$ acts trivially, we get:

\emph{For every $\hat z\in \hat\DD$ the stabilizer of $z$ in $\overline{\AA\times\HH_\alpha}$ is canonically isomorphic to $\check v_D(\Gm)$.}

Hence the cocharacter $\widetilde{\check v_D}$ of the proposition. 

We have a morphism: $\UU_\alpha\backslash\LL_\alpha\to \LL_\alpha'\backslash\LL_\alpha$. Therefore the stabilizer, in $\AA\times\LL_\alpha$, of any point on $\UU_\alpha\backslash\LL_\alpha'$ stabilizes $\UU_\alpha\backslash\LL_\alpha'$, as well. Hence claim (i).

Identify $\UU_\alpha\backslash\LL_\alpha'\simeq\mathbb A^2_{x,y}\smallsetminus\{0\}$. For a suitable choice of $\mathfrak o$-isomorphisms, we can identify the non-open orbits of $\rm{stab}_{\overline{\AA\times\HH_\alpha}}(\UU_\alpha\backslash\LL_\alpha')$ with the two coordinate axes. Let $f$ be a $\BB$-eigenfunction on $\HH_\alpha\backslash\LL_\alpha$ with eigencharacter $e^\alpha$. Equivalently, $f\circ\iota$ (where $\iota$ denotes inversion in the group) can be thought of as an $\AA\times\HH_\alpha$-eigenfunction on $\UU_\alpha\backslash\LL_\alpha$ with eigencharacter $e^\alpha\times 1$. From the definitions, $f\circ\widetilde{\check v_D}(a)=a f$. Since $\widetilde{\check v_D}(a)$ stabilizes the points of a coordinate axis and preserves the other axis, it follows that $\widetilde{\check v_D}(a)$ is the automorphism: $(x,y)\mapsto (ax,y)$ or $(x,y)\mapsto (y,ax)$. 

The statement about the eigencharacter follows from the eigencharacter of $\Delta^{Y,\alpha}_{\tilde\chi}$, cf.\ Lemma \ref{lemmaadj2}.
\end{proof}

Therefore, up to an integral change of variables we have on $\mathbb A^2_{x,y}$: 

$$ \Delta_{\tilde\chi}^2 = \eta_D(x) \eta_{D'} (y) d^\times x d^\times y $$
where $\eta_D = \tilde\chi\nu\delta^{\frac{1}{2}}\delta_{\left(U_{P_\alpha}\right)_\xi}^{-1}\circ\check v_D$, and similarly for $\eta_{D'}$. Its Fourier transform is, therefore, as follows:

\begin{enumerate}
\item If $\tilde\chi$ is $\check v_{D}$--unramified (equivalently, since $\check v_D+\check v_{D'}=\check\alpha$, if it is $\check v_{D'}$-unramified):
$$ F_{w_\alpha} \Delta_{\tilde\chi}^2= 
 \frac{(1-q^{-1}e^{-{\check v}_D})(1-q^{-1}e^{-{\check v}_{D'}})}{(1-e^{{\check v}_D})(1-e^{{\check v}_{D'}})} (\tilde\chi\nu \delta^{\frac{1}{2}}\delta_{\left(U_{P_\alpha}\right)_\xi}^{-1}) \Delta_{^{w_\alpha}\tilde\chi}^2$$
Consequently:
\begin{equation} \label{tsplitetaunramified}
\boxed{b_{w_\alpha}^Y(\tilde\chi)=\frac{(1-q^{-1}e^{-{\check v}_D})(1-q^{-1}e^{-{\check v}_{D'}})}{(1-e^{{\check v}_D})(1-e^{{\check v}_{D'}})} (\tilde\chi\nu \delta^{\frac{1}{2}}\delta_{\left(U_{P_\alpha}\right)_\xi}^{-1}).}
\end{equation}

\begin{remark}
The vanishing of the terms $(1-e^{{\check v}_D})(\tilde\chi\nu \delta^{\frac{1}{2}}\delta_{\left(U_{P_\alpha}\right)_\xi}^{-1})$ and $(1-e^{{\check v}_{D'}})(\tilde\chi\nu \delta^{\frac{1}{2}}\delta_{\left(U_{P_\alpha}\right)_\xi}^{-1})$ in the denominator is precisely the condition for the orbit of $D$ (resp.\ $D'$) to support a morphism from $I(\tilde\chi)$, in other words a condition for $\Delta_{\tilde\chi}^{\mathring D}$ (resp.\ $\Delta_{\tilde\chi}^{\mathring D'}$) to be defined. This, of course, was expected -- see \cite{Ga} or \cite[\S 4.6]{Sa2}.
\end{remark}

\item If $\tilde\chi$ is $\check v_D$-ramified:
$$ F_{w_\alpha} \Delta_{\tilde\chi}^2 = q^{-2m}\tau(\eta_D)\tau(\eta_{D'}) \eta_D(-1) \Delta_{^{w_\alpha}{\tilde\chi}}^2.$$

We can simplify the above expression as follows: From the fact that $\hat{\hat f} (x) = f(-x)$ and the formula $\widehat{\eta(x)d^\times x}=q^{-m}\tau(\eta)\eta^{-1}(x)|x|d^\times x$ for a ramified character $\eta$ of conductor $\mathfrak p^m$ it follows that $\tau(\eta)\tau(\eta^{-1}) = \eta(-1) q^m$. Applying this to $\eta=\eta_D$ and taking into account that $\eta_{D'}= \tilde\chi\nu\delta^{\frac{1}{2}}\delta_{\left(U_{P_\alpha}\right)_\xi}^{-1}\circ e^{\check\alpha} \cdot \eta_D^{-1}$ $\Rightarrow \tau(\eta_{D'})=\chi\nu\delta^{\frac{1}{2}}\delta_{\left(U_{P_\alpha}\right)_\xi}^{-1}\circ e^{\check\alpha}(\varpi^{-m}) \cdot \tau(\eta^{-1}) = q^m e^{-m\check\alpha}(\chi) \tau(\eta^{-1})$ we get:

$$ F_{w_\alpha} \Delta_{\tilde\chi}^2 = e^{-m\check\alpha}(\chi) \Delta_{^{w_\alpha}{\tilde\chi}}^2.$$

Hence:
\begin{equation} \label{tsplitetaramified}
\boxed{b_{w_\alpha}^Y(\tilde\chi)=e^{- m\check\alpha}(\chi) .}
\end{equation}
\end{enumerate}

\begin{example}\label{tripleproduct1}
Let $\GG=(\PPGL_2)^3$ and $\HH$ a $\GG(\mathfrak o)$-conjugate of the diagonal copy of $\PPGL_2$ such that $\HH\BB$ is open.  For instance, if we take for $\BB$ three copies of the group of upper triagonal matrices, then $\HH$ can be the diagonal copy of $\PPGL_2$ conjugated by the element $\left(\left( \begin{array}{cc} 1 \\ & 1\end{array}\right),\left( \begin{array}{cc} & 1 \\ -1 & \end{array}\right),\left( \begin{array}{cc} 1 \\ 1 & 1\end{array}\right) \right)$. Here the little Weyl group is the whole Weyl group and $A_X^*=A^*$. Let $\alpha_1,\alpha_2, \alpha_3$ denote the simple positive roots in each copy respectively. Then we easily see that $(\mathring X,\alpha_i)$ is of type $T$ and that there are three orbits $D_1, D_2, D_3$ of codimension one, with ${\check v}_{D_i}= \frac{1}{2} (-\check\alpha_i + \sum_{j\ne i} \check\alpha_j)$ and the $D_j, j\ne i$ being the two orbits in the $P_{\alpha_i}$-orbit of $\mathring X$. Moreover, $\delta_{B_\xi}$ is trivial. Therefore:
$$b_{\alpha_i}(\chi)= \frac{(1-q^{-\frac{1}{2}}e^{\frac{-\check\alpha_i+\check\alpha_j-\check\alpha_k}{2}})(1-q^{-\frac{1}{2}}e^{\frac{-\check\alpha_i-\check\alpha_j+\check\alpha_k}{2}})} {(1-q^{-\frac{1}{2}}e^{\frac{\check\alpha_i+\check\alpha_j-\check\alpha_k}{2}})(1-q^{-\frac{1}{2}}e^{\frac{\check\alpha_i-\check\alpha_j+\check\alpha_k}{2}})} (\chi)$$
and, up to a factor independent of ${\check\lambda}$ (see also Example \ref{tripleproduct2}), $\Omega_\chi(g_{\check\lambda})$ is equal to:
$$ {e^{-{\check\lambda}}(\delta^\frac{1}{2})} \sum_W \sigma(w) (1-q^{-\frac{1}{2}}e^{\frac{\check\alpha_1+\check\alpha_2-\check\alpha_3}{2}}) (1-q^{-\frac{1}{2}}e^{\frac{\check\alpha_1-\check\alpha_2+\check\alpha_3}{2}})\cdot$$ $$\cdot(1-q^{-\frac{1}{2}}e^{\frac{-\check\alpha_1+\check\alpha_2+\check\alpha_3}{2}})(1-q^{-\frac{1}{2}}e^{\frac{\check\alpha_1+\check\alpha_2+\check\alpha_3}{2}}) e^{-{\check\rho}+{\check\lambda}}(^w\chi). $$
\end{example}

\subsection{\textbf{Case T non-split}} The analysis here is simpler than in the split case, because all one-dimensional non-split tori of $\LL_\alpha$ are contained in $\LL_\alpha'$ and therefore $\HH_\alpha\cap \LL_\alpha'$ is a spherical subgroup of $\LL_\alpha'$. Under an $\mathfrak o$-identification of $\UU_\alpha\backslash \LL_\alpha'$ with $\mathbb A^2_{x,y}\smallsetminus\{0\}$, the group $\HH_\alpha\cap \LL_\alpha'$ is the special orthogonal group of a non-degenerate, non-split binary quadratic form $Q$, and therefore:
$$\Delta_{\tilde\chi}^2 = \eta(Q(x,y)) d x d y$$
where $\eta = \tilde\chi\delta^{-\frac{1}{2}} \circ e^{\frac{\check\alpha}{2}}$. By our assumption that the action of the group on $\XX$ is smooth over $\mathfrak o$, up to an integral change of coordinates we have $Q(x,y)= x^2+\kappa y^2$, where $-\kappa$ is a non-square unit element. We can write $\eta= \eta_2 \cdot |\bullet|^{\frac{s}{2}}$ where $q^{s+1}=e^{-\check\alpha}(\chi)$ and $\eta_2$ is either the trivial character or a ramified quadratic character.

A fact which will make our computations easier is that if $-\kappa$ is not a square then $|x^2+\kappa y^2|=\max\{|x^2|,|\kappa y^2|\}$. Since, moreover, $\kappa$ is a unit,  the value of $\eta_2(x^2+\kappa y^2)$ only depends on the restriction of $\eta_2$ on units. We will compute the constant in the functional equations through the Parseval formula:
$$ \left< \Delta_{\tilde\chi}^2, f\right> = \left< \mathcal F \Delta_{\tilde\chi}^2, \mathcal F f\right>$$
with $f=\mathcal F f = 1_{\mathfrak o^2}$.

A basic relation which will be used repeatedly is
$$ \int_{|x|<|y|\le 1} |y|^s dx dy = \frac{q^{-1}(1-q^{-1})}{1-q^{-s-2}}.$$

\subsubsection*{$\tilde\chi$ is $\frac{\check\alpha}{2}$-unramified.}

We compute: 
$$ \left< \Delta_{\tilde\chi}^2, 1_{\mathfrak o^2}\right> = \int_{\mathfrak o^2} |x^2+\kappa y^2|^\frac{s}{2} dx dy =\int_{|x|<|y|\le 1} |y|^s dx dy + \int_{|y|<|x|\le 1} |x|^s dx dy + $$
$$+ \int_{|x|=|y|\le 1} |x|^s dx dy= 
 2\frac{q^{-1}(1-q^{-1})}{1-q^{-s-2}} + \frac{(1-q^{-1})^2}{1-q^{-s-2}} = \frac{1-q^{-2}}{1-q^{-s-2}}.$$

Similarly, for $\Delta_{^{w_\alpha}\tilde\chi}^2= |x^2+\kappa y^2|^{-\frac{s}{2}-1}$ we will get:
$$ \left< \Delta_{^w\tilde\chi}^2, 1_{\mathfrak o^2}\right>= \frac{1-q^{-2}}{1-q^{s}}$$
and the quotient of the two is:
$$b_{w_\alpha}^Y(\tilde\chi)= \frac{1-q^s}{1-q^{-s-2}}.$$

Hence:

\begin{equation} \label{tunramifiedetaunramified}
\boxed{b_{w_\alpha}^Y({\tilde\chi}) = \frac{1-q^{-1}e^{-\check\alpha}(\chi)}{1-q^{-1} e^{\check\alpha}(\chi)}.}
\end{equation}

\subsubsection*{$\tilde\chi$ is $\frac{\check\alpha}{2}$-ramified.}

It is an easy exercise in harmonic analysis and Gauss sums to verify the following identity:
$$ \int_{\mathfrak o^\times} \eta_2(\kappa+x^2) = \left\{\begin{array}{ll} 0 & \textrm{, if }-1\in (k^\times)^2 \\
									-2q^{-1} & \textrm{, otherwise.}\end{array}\right.$$
(Recall that $-\kappa$ is not a square.)

Now we compute:
$$\left< \Delta_{\tilde\chi}^2, 1_{\mathfrak o^2}\right>= \int_{\mathfrak o^{2}} \eta_2(x^2+\kappa y^2) |x^2+\kappa y^2|^\frac{s}{2} dx dy = $$ $$=\eta_2(\kappa) \int_{|x|<|y|\le 1} |y|^s dx dy + \int_{|y|<|x|\le 1} |x|^s dx dy +  \int_{|x|=|y|\le 1} \eta_2(x^2+\kappa y^2) |y|^s dx dy  =$$
$$= (1- \eta_2(-1))\frac{q^{-1}(1-q^{-1})}{1-q^{-s-2}} + \int_{\mathfrak o} |y|^{s+1} dy \int_{\mathfrak o^\times} \eta_2(x^2+\kappa y^2) = $$
$$ =  (1-\eta_2(-1)) \frac{q^{-1}(1-q^{-1})}{1-q^{-s-2}} + \left\{\begin{array}{ll} 0 & \textrm{, if }-1\in (k^\times)^2 \\
									-2q^{-1} \frac{1-q^{-1}}{1-q^{-s-2}} & \textrm{, otherwise.}\end{array}\right. = 0.$$

Since $1_{\mathfrak o^2}$ generates all $K$-invariant Schwartz functions on $k^2$ over the dilation action of $k^\times$, the value of the distribution $\Delta_{\tilde\chi}^2$ on any $K$-invariant function is zero. (There is no contradiction here; this just says that the corresponding eigenfunction $\Omega_{\tilde\chi}$ on $C^\infty(\TT\backslash\SSL_2)$ has value zero at the chosen point -- not that it vanishes identically.) We will use instead the function $\Phi_J=1_{\mathfrak p \times \mathfrak o^\times}$ whose Fourier transform has been computed in \S \ref{ssKinv}:
$$ \Delta_{\tilde\chi}^2(\Phi_J) = \int_{\mathfrak p \times \mathfrak o^\times} \eta_2(x^2+\kappa y^2) |x^2+\kappa y^2|^\frac{s}{2} dx dy = q^{-1}(1-q^{-1}) \eta_2(\kappa)$$
and we have $\Ff (\Phi_J) \equiv L_{\check\alpha(\varpi^{-1})} \Phi_J$ modulo $K$-invariants, whence by the vanishing of $\Delta_{\tilde\chi}^2$ on $K$-invariants:
$$ b_{w_\alpha}({\tilde\chi}) q^{-1}(1-q^{-1}) \eta_2(\kappa) = b_{w_\alpha} ({\tilde\chi}) \left< \Delta_{^w{\tilde\chi}}^2, \Phi_J\right>= \left< \Ff \Delta_{\tilde\chi}^2, \Phi_J\right> = $$
$$= \left< \Delta_{\tilde\chi}^2, \Ff \Phi_J \right> = \left< \Delta_{\tilde\chi}^2, L_{\check\alpha(\varpi^{-1})} \Phi_J \right> = e^{-\check\alpha}(\chi) q^{-1}(1-q^{-1}) \eta_2(\kappa).$$

Therefore:
\begin{equation} \label{tunramifiedetaramified} \boxed{b_{w_\alpha}^Y(\tilde\chi) = e^{-\check\alpha}(\chi).}
\end{equation}

\begin{remark}\label{remarkpartofchi}
 Notice that in the functional equations for the case $T$ non-split all that matters is whether $\tilde\chi$ is a ramified extension of $\chi$ and not which unramified/ramified extension it is (for instance, the cocharacter $e^{\frac{\check\alpha}{2}}$ does not appear). This is no coincidence: In this case the space $(\TT\backslash\SSL_2)(k)$ has two $\SL_2$-orbits (isomorphic to each other), and therefore if instead of the distribution $S_{\tilde\chi^{-1}\nu^{-1}}$, given by an integral over all open $B_2$-orbits, we considered the restriction of that distribution to a single $\SL_2$-orbit, then its Fourier transform should also be supported on the same $\SL_2$-orbit. But these distributions which are supported on one $\SL_2$-orbit do not ``see'' the extension of $\chi$ to the whole $A_Y$, but only to a subgroup of it, and one can check that that subgroup can detect ramification (it contains $\AA_Y(\mathfrak o)$) but not the precise extension of $\chi$.
\end{remark}

\subsection{\textbf{Case N}} \label{sscaseN}

Notice that for a one-dimensional torus $\TT$ which is a spherical subgroup of $\LL_\alpha$, if $\mathcal N_{\LL_\alpha}(\TT)\ne \mathcal Z_{\LL_\alpha}(\TT)$ then $\TT\subset \LL_\alpha'$, and therefore in case $N$ the variety $\HH_\alpha^0\backslash\LL_\alpha$ is of type $T$ with associated cocharacters $\check v_{D_{1/2}}=\frac{\check\alpha}{2}$ if $\HH_\alpha^0$ is split. To compute the functional equations for $\Delta_{\tilde\chi,\zeta}$ (\S \ref{ssbases}), we first describe how to choose the point $\xi\in Y_\zeta$. Notice that $Y_\zeta$ may not contain integral points, therefore $\HH_\alpha^0$ cannot always be assumed to be a smooth subgroup scheme over $\mathfrak o$ -- hence, not all of the cases which we will encounter in case $N$ have already been encountered in case $T$.

First of all, let $\xi_0\in \YY(\mathfrak o)$ be a point in the ``standard'' $B_0$-orbit on $\YY(\mathfrak o)$, and $\zeta_0$ the corresponding coset of $A_{Y,\alpha}$. We identify $\LL_\alpha/\mathcal R(\LL_\alpha)$ with $\PPGL_2$ and use $\xi_0$ to define a map: $\YY\PP_\alpha\to \XX_2:=\mathcal N(\TT)\backslash\PPGL_2$ (for an integral torus $\TT$ corresponding to the class of $\xi_0$). The latter is the space of binary quadratic forms modulo homotheties. Without loss of generality, $\xi_0$ maps to the quadratic form $x^2+ay^2$ for some $a\in -D(\zeta_0)\subset\mathfrak o^\times$. We choose $\xi\in Y_\zeta$ such that under the above map $\xi$ maps to the homothety class of the quadratic form $x^2+by^2$ with $b\in -D(\zeta)$. (Recall from \S \ref{sstypeN} that the discriminant $D(\zeta)$ is by its definition an element of $\mathfrak o\smallsetminus \mathfrak p^2$.) 

From the definition of $\Delta_{\tilde\chi,\zeta}:= \ev_1 \circ \Delta^{Y,\alpha}_{\tilde\chi,\zeta}$ and Lemma \ref{lemmaadj2} we deduce that the distribution  $\Delta_{\tilde\chi,\zeta}^2$ on $U_\alpha\backslash L_\alpha'$ is given by:
\begin{equation}\label{D2N}
 \Delta_{\tilde\chi,\zeta}^2 = \tilde\chi'(\xi) \eta(Q(x,y)) dx dy
\end{equation}
where, $\eta = \chi\delta^{-\frac{1}{2}} \circ e^{\frac{\check\alpha}{2}}$ and $Q$ is the quadratic form stabilized by $\HH_2^0$ and $\tilde\chi'$ is the character of Lemma \ref{lemmaadj2}. (The factor $\tilde\chi'(\xi)$ is non-canonical, as is the distribution $\Delta_{\tilde\chi,\zeta}^2$, cf.\ \S \ref{ssred}; what is important is how this factor varies as $\tilde\chi$ varies.) By our choice of point $\xi$, up to an integral change of coordinates and up to a constant we have $Q(x,y)=x^2+\varpi y^2$, where $\varpi=-D(\zeta)$.

To determine the functional equations for $\Delta_{\tilde\chi,\zeta}^2$, if $\zeta$ corresponds to an integral torus, we can use our computations for the corresponding cases of type $T$, with $\check v_D=\frac{\check\alpha}{2}$. However, notice that now there is an extra factor of 
$$\frac{\tilde\chi'(\xi)}{e^{-\alpha}\cdot ^{w_\alpha}\tilde\chi'(\xi) } = \tilde\chi\circ e^{-\frac{\check \alpha}{2}} \left(\frac{D(\zeta)}{D(\zeta_0) }\right).$$
In the above, we took into account that if $\tilde\chi'$ is the character used in the expression (\ref{D2N}) for $\Delta_{\tilde\chi,\zeta}^2$, then the character in the expression for $\Delta_{^{w_\alpha}\tilde\chi,\zeta}^2$ is $e^{-\alpha}\cdot^{w_\alpha}\tilde\chi'$. But the quotient $\frac{\tilde\chi'(\xi)}{e^{-\alpha}\,(^{w_\alpha}\tilde\chi')(\xi) }$ can be written as $\frac{\tilde\chi}{^{w_\alpha}\tilde\chi}$ (which in this case is a character of $A_Y$). By the definition of $\xi$, if $\bar\xi$ denotes its image in $A_Y$ we get:
$$\frac{\tilde\chi}{^{w_\alpha}\tilde\chi}(\xi) = \tilde\chi\left(\frac{\bar\xi}{^{w_\alpha}\bar\xi}\right) = \tilde\chi\circ e^{-\frac{\check\alpha}{2}}\left(\frac{b}{a}\right) = \tilde\chi\circ e^{-\frac{\check\alpha}{2}}\left(\frac{D(\zeta)}{D(\zeta_0) }\right).$$

Hence, if $\zeta$ corresponds to a split integral torus and $\tilde\chi$ is $\frac{\check\alpha}{2}$-unramified then 
\begin{equation}
 \boxed{ b_{w_\alpha}^Y(\tilde\chi,\zeta)=  \left( \frac{1-q^{-\frac{1}{2}}e^{-\frac{\check\alpha}{2}}(\tilde\chi)}{1- q^{-\frac{1}{2}}e^\frac{\check\alpha}{2}(\tilde\chi)} \right)^2},
\end{equation}
if $\zeta$ corresponds to a split integral torus and $\tilde\chi$ is $\frac{\check\alpha}{2}$-ramified then 
\begin{equation}
\boxed{b_{w_\alpha}^Y(\tilde\chi,\zeta)= \tilde\chi\circ e^\frac{-\check\alpha}{2}\left(\frac{D(\zeta)}{D(\zeta_0)}\right)  e^{- \check\alpha}(\chi)},
\end{equation}
if $\zeta$ corresponds to a non-split integral torus and $\tilde\chi$ is $\frac{\check\alpha}{2}$-unramified then
\begin{equation}
\boxed{b_{w_\alpha}^Y(\tilde\chi,\zeta) =  \frac{1-q^{-1}e^{-\check\alpha}(\chi)}{1-q^{-1} e^{\check\alpha}(\chi)}},
\end{equation}
while if $\zeta$ corresponds to a non-split integral torus and $\tilde\chi$ is $\frac{\check\alpha}{2}$-ramified then
\begin{equation} 
\boxed{b_{w_\alpha}^Y(\tilde\chi,\zeta) = \tilde\chi\circ e^\frac{-\check\alpha}{2}\left(\frac{D(\zeta)}{D(\zeta_0)}\right)  e^{-\check\alpha}(\chi)}.
\end{equation}

There remains to compute the equations for the case that $\zeta$ does not correspond to an integral torus. Here we will denote $b$ by $\varpi$ to remind of the fact that $\varpi=-D(\zeta)$ will be a uniformizing element in $k$, uniquely defined up to multiplication by $(\mathfrak o^\times)^2$. Hence $Q(x,y)=x^2+\varpi y^2$.

Now we notice the following: The form $Q(x,y)$ takes values not in the whole of $k$ but only in $\{1,\varpi\}\cdot (\mathfrak o^\times)^2$. This implies that the values of $\eta(Q(x,y))$ will be the same for two characters $\eta$ agreeing on $\{1,\varpi\}\cdot (\mathfrak o^\times)^2$, and since $\eta^2$ is unramified there exists an unramified such character. To compute the Fourier transforms, we may therefore without loss of generality assume that $\eta$ is unramified. The reader should compare this with Remark \ref{remarkpartofchi}: Here it will not matter whether $\tilde\chi$ is ramified or not, but only the value of its pull-back via $e^{\frac{\check\alpha}{2}}$ at $\varpi=-D(\zeta)$.

Hence, we may write $\eta(\bullet)=|\bullet|^{\frac{s}{2}}$ where $q^{\frac{s}{2}}=\tilde\chi\delta^{-\frac{1}{2}} \circ e^{-\frac{\check\alpha}{2}} (-D(\zeta)) = q^{-\frac{1}{2}}\tilde\chi \circ e^{-\frac{\check\alpha}{2}} (-D(\zeta))$.
Now we compute:
$$\left<\Delta^2_{\tilde\chi,\zeta},1_{\mathfrak o^2}\right> = \tilde\chi'(\xi) \int_{\mathfrak o^2} |x^2+\varpi y^2|^\frac{s}{2} dx dy = $$ $$=\tilde\chi\nu\delta^{-\frac{1}{2}}(\bar\xi)  \left(\int_{|x|<|y|\le 1} q^{-\frac{s}{2}}|y|^s dx dy + \int_{|y|<|x|\le 1} |x|^s dx dy + \int_{|x|=|y|\le 1} |x|^s dx dy \right)= $$
$$= \tilde\chi\nu\delta^{-\frac{1}{2}}(\bar\xi) \left((1+q^{-\frac{s}{2}})\frac{q^{-1}(1-q^{-1})}{1-q^{-s-2}} + \frac{(1-q^{-1})^2}{1-q^{-s-2}}\right) = $$ $$=\tilde\chi\nu\delta^{-\frac{1}{2}}(\bar\xi) \frac{(1-q^{-1})(1+q^{\frac{-s-2}{2}})}{1-q^{-s-2}}= \tilde\chi\nu\delta^{-\frac{1}{2}}(\bar\xi) \frac{1-q^{-1}}{1-q^\frac{-s-2}{2}}.$$

Similarly, the integral of $\Delta_{^{w_\alpha}\chi,\zeta}^2= e^{-\alpha}\cdot {^{w_\alpha}\tilde\chi'}(\xi)  |x^2+\varpi y^2|^{-\frac{s}{2}-1}dxdy$:
\begin{equation*}
\left<\Delta^2_{^{w_\alpha}\tilde\chi,\zeta},1_{\mathfrak o^2}\right> = e^{-\alpha}\cdot {^{w_\alpha}\tilde\chi'}(\xi)   \frac{1-q^{-1}} {1-q^{\frac{s}{2}}} .
\end{equation*}

The quotient of the two is:
$$b_{w_\alpha}^Y(\tilde\chi,\zeta)= \tilde\chi\circ e^\frac{-\check\alpha}{2}\left(\frac{D(\zeta)}{D(\zeta_0)}\right) \frac{1-q^{\frac{s}{2}}}{1-q^{\frac{-s-2}{2}}}.$$
Therefore:
\begin{equation} \label{caseNnonintegral}
\boxed{b_{w_\alpha}^Y(\tilde\chi,\zeta) = \tilde\chi\circ e^\frac{-\check\alpha}{2}\left(\frac{D(\zeta)}{D(\zeta_0)}\right)  \frac{1-q^{-\frac{1}{2}}\tilde\chi \circ e^{-\frac{\check\alpha}{2}} (-D(\zeta))}{1-q^{-\frac{1}{2}}\tilde\chi \circ e^{\frac{\check\alpha}{2}} (-D(\zeta))}}
\end{equation}

%***************************************************************************

\part{The formula with $L$-values}

%*****************************************************************************
\section{Simple spherical reflections} \label{secsimplespherical}

From now on we assume that $\XX$ is a spherical variety without type-$N$ reflections, i.e.\ such that for no orbit $\YY$ of maximal rank and simple root $\alpha$ the pair $(\YY,\alpha)$ is of type $N$. Moreover, the character $\delta_{(X)}^{-\frac{1}{2}}\tilde\chi$ of $A_X$ is assumed to be unramified. In the case of a parabolically induced $\XX$ with a non-trivial character $\Psi=\psi\circ\Lambda$, we assume that $\Lambda$ (or $\Psi$) is \emph{generic}: that means that $\Lambda$ belongs to the open orbit of the corresponding Levi on the additive characters of the unipotent radical. 

\subsection{The root system of a spherical variety}\label{ssrootsystem}

Up to now we have developed an algorithm for computing the constants in formula (\ref{mainformula}) of Theorem \ref{maintheorem} by calculating the functional equations for all orbits $Y$ of maximal rank and simple roots $\alpha$ \emph{of $G$}. However, the formula really depends only on the functional equations for $\Delta_{\tilde\chi}$, the morphism defined by the open $\BB$-orbit. Those are parametrized by the little Weyl group $W_X$. 

It is known that the faithful action of $W_X$ on $\varchi(\XX) \otimes \QQ$ (equivalently on $\mathcal Q=\Hom(\varchi(\XX),\QQ)$) is generated by reflections, and the cone $\mathcal V\subset\mathcal Q$ of invariant valuations is a fundamental domain for its action on $\mathcal Q$. In fact $W_X$ is the Weyl group of a root system, and the rank of this root system is called the \emph{rank} of the spherical variety $\XX$. (We will use this term only when it is clear that we are not referring to the other notion of rank -- namely, the rank of $\XX$ as a $\BB$-variety.) This root system is defined as follows \cite{KnAut}: One considers in $\varchi(\AA_X) \otimes \QQ$ the cone (negative) dual to $\mathcal V$, i.e.\ the set $\{\chi \in \varchi(\XX)\otimes\QQ |\left<v,\chi\right>\le 0\text{ for every }v\in\mathcal V\}$. This is a strictly convex cone; let us call its extremal rays the \emph{spherical root half-lines} of $\XX$. The simple roots of the root system of \emph{loc.cit.\ }are the generators for the 
intersections of $\varchi(\XX)$ with the spherical root half-lines; these simple roots are called the \emph{spherical roots} of $\XX$. It seems that this root system is not quite the correct one for the purposes of representation theory, therefore we describe in \cite{SV} (and recall below) a variant $\Phi_X$ of that, on the same vector space and with the same Weyl group, but with roots of different length. We point out that in the literature the term ``spherical roots'' is used for what is the set of \emph{simple} roots in the corresponding root system. We will comply with this convention here, both for the spherical roots and for the normalized spherical roots that will be defined below.

For the case of $C^\infty(X,\mathcal L_\Psi)$, we explained in \cite{Sa2} how to associate similar invariants based on Knop's definitions for non-spherical varieties. Notice the following:

\begin{lemma}\label{genericcharacter}
 Let $\Lambda:\UU_{P^-}\to \Ga$ be an additive character of the unipotent radical of a parabolic $\PP^-$, normalized by a spherical subgroup $\MM$ of the Levi $\LL$. We assume that $\PP^-$ is opposite to a standard parabolic and $\MM\cdot (\LL\cap\BB)$ is open in $\LL$. Then $\Lambda$ is completely determined by its restriction to the simple root subgroups $\UU_{-\alpha}$, $\alpha\in\Delta\smallsetminus\Delta_L$. A generic character is non-trivial on all those subgroups.
\end{lemma}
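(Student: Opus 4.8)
The plan is to establish the two assertions separately, both essentially by unwinding the structure of $\UU_{P^-}$ as an $\LL$-module and of the abelianization $\UU_{P^-}^{\mathrm{ab}}$ as an $\MM$-module. First I would observe that an additive character $\Lambda: \UU_{P^-}\to\Ga$ factors through the abelianization $\UU_{P^-}^{\mathrm{ab}} = \UU_{P^-}/[\UU_{P^-},\UU_{P^-}]$, which is a direct sum of root spaces $\bigoplus_{\alpha} \mathfrak{u}_{-\alpha}$ where $\alpha$ ranges over those positive roots whose restriction to the center of $\LL$ is a fixed minimal (``simple'') value, i.e.\ the roots in $\Delta\smallsetminus\Delta_L$ together with whatever other roots have the same $\mathcal{Z}(\LL)$-weight. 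The key point for the first assertion is that, as a representation of $\LL$, each irreducible constituent of $\UU_{P^-}^{\mathrm{ab}}$ is generated by its lowest weight space, which is precisely one of the simple root spaces $\mathfrak{u}_{-\alpha}$, $\alpha\in\Delta\smallsetminus\Delta_L$; hence a linear functional on $\UU_{P^-}^{\mathrm{ab}}$ that is fixed under $\MM$-conjugation (equivalently, since $\MM(\LL\cap\BB)$ is open in $\LL$, determined by its behaviour under an open subset of $\LL$) is determined by its values on those $\mathfrak{u}_{-\alpha}$. I would spell this out by noting that the $\LL$-orbit of a character $\Lambda$ is open in the space of characters precisely when $\Lambda$ is nonzero on every simple root space, which is the standard notion of a generic (or nondegenerate) character for the parabolic $\PP^-$.

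Concretely, for the first sentence of the lemma I would argue as follows. Let $\Lambda'$ be another character of $\UU_{P^-}$ with the same restriction to each $\UU_{-\alpha}$, $\alpha\in\Delta\smallsetminus\Delta_L$. Then $\Lambda-\Lambda'$ vanishes on all these subgroups; I want to conclude it vanishes identically. Since $\Lambda-\Lambda'$ is $\MM$-invariant and $\MM(\LL\cap\BB)$ is open in $\LL$, the character $\Lambda-\Lambda'$ is invariant under an open subset of $\LL$, and since the stabilizer of a character in $\LL$ is a subgroup, $\Lambda-\Lambda'$ is actually $\LL$-invariant. But an $\LL$-invariant character of $\UU_{P^-}^{\mathrm{ab}}$ must be trivial on $[\LL,\LL]$-translates, and since each irreducible $\LL$-constituent of $\UU_{P^-}^{\mathrm{ab}}$ is generated over $\LL$ by the simple root space it contains, an $\LL$-invariant functional vanishing on those simple root spaces vanishes on the whole constituent; hence $\Lambda-\Lambda' = 0$. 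This gives the first assertion.

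For the second assertion — that a generic character is nonzero on all the $\UU_{-\alpha}$, $\alpha\in\Delta\smallsetminus\Delta_L$ — I would use the converse direction: if $\Lambda$ were zero on some $\UU_{-\alpha_0}$, then the $\LL$-orbit of $\Lambda$ would be contained in the proper closed subvariety of characters vanishing on a nonzero $\LL$-subrepresentation (the sub generated by $\mathfrak{u}_{-\alpha_0}$, or more precisely the image in the dual), hence could not be open; this contradicts genericity, which by definition (as recalled in the paragraph preceding the lemma, following \cite{Sa2} and Knop's setup) means $\Lambda$ lies in the open $\LL$-orbit on $\operatorname{Hom}(\UU_{P^-},\Ga)$. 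Conversely, if $\Lambda$ is nonzero on every simple root space one checks the orbit map $\LL\to\LL\cdot\Lambda$ is dominant onto an open set by a dimension count (the stabilizer of such a $\Lambda$ in $\LL$ has the expected dimension, as in the classical theory of Whittaker/degenerate-Whittaker characters).

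The main obstacle I anticipate is bookkeeping around the constituents of $\UU_{P^-}^{\mathrm{ab}}$: one must be careful that it may contain roots outside $\Delta\smallsetminus\Delta_L$ (roots of the form $\alpha + (\text{roots in }\Delta_L)$ with the same $\mathcal{Z}(\LL)$-weight), so the ``lowest weight space'' of each irreducible $\LL$-summand really is one of the simple root spaces $\mathfrak{u}_{-\alpha}$, $\alpha\in\Delta\smallsetminus\Delta_L$, and that distinct summands have distinct such lowest weights. This is a standard fact about Levi decompositions of parabolics (each $\LL$-constituent of $\UU_{P}^{\mathrm{ab}}$ is an irreducible $\LL$-module with extreme weight a simple root not in $\LL$), but it is the step that carries the actual content, and I would cite it rather than reprove it, reducing the lemma to the elementary orbit/invariance arguments above.
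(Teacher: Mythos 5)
Your proposal assembles the right ingredients (the simple root spaces generate $\UU_{P^-}^{\mathrm{ab}}$ as an $\LL$-module, the openness of $\MM\cdot\BB_L$ where $\BB_L=\LL\cap\BB$, genericity as membership in the open $\LL$-orbit), but the central step is wrong. You write: ``Since $\Lambda-\Lambda'$ is $\MM$-invariant and $\MM(\LL\cap\BB)$ is open in $\LL$, the character $\Lambda-\Lambda'$ is invariant under an open subset of $\LL$, and since the stabilizer of a character in $\LL$ is a subgroup, $\Lambda-\Lambda'$ is actually $\LL$-invariant.'' The stabilizer of $\Lambda-\Lambda'$ is only known to contain $\MM$, which is a spherical (hence in general proper, non-open) subgroup of $\LL$; openness of the \emph{set} $\MM\BB_L$ says nothing about the stabilizer containing an open set, since $\BB_L$ does not stabilize the character. (Compare $\LL=\GGL_2$, $\MM=$ a maximal torus: an $\MM$-fixed vector in a representation is rarely $\LL$-fixed.) The same conflation undermines your argument for the second assertion: from $\Lambda|_{\UU_{-\alpha_0}}=0$ you cannot conclude that the $\LL$-orbit of $\Lambda$ lies in the set of characters vanishing on the $\LL$-submodule generated by $\mathfrak u_{-\alpha_0}$, because you do not yet know that $\Lambda$ itself vanishes on that submodule.

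The missing idea, which is the actual content of the paper's proof, is that the relevant $\LL$-invariant statement is not ``$l$ stabilizes $\Lambda$'' but ``$l\cdot\Lambda$ vanishes on the subgroups $\UU_{-\alpha}$, $\alpha\in\Delta\smallsetminus\Delta_L$,'' and that \emph{this} condition is preserved by $\BB_L$ as well as by $\MM$. Indeed $[\UU_{-\alpha},\UU_\beta]=1$ for distinct simple roots $\alpha,\beta$, so the unipotent part of $\BB_L$ centralizes each $\UU_{-\alpha}$ with $\alpha\notin\Delta_L$, and the torus normalizes it; hence if $\Lambda$ vanishes on these subgroups, so does $bm\cdot\Lambda$ for all $b\in\BB_L$, $m\in\MM$. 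Since $\BB_L\MM$ is open (hence Zariski-dense) in $\LL$ and the vanishing condition is closed in $l$, the condition holds for all $l\in\LL$; equivalently $\Lambda$ kills the $\LL$-submodule of $\UU_{P^-}^{\mathrm{ab}}$ generated by the simple root spaces, which is everything, so $\Lambda=0$. This repairs the first assertion (applied to the difference of two $\MM$-normalized characters) and gives the second, since a character vanishing on some $\UU_{-\alpha_0}$ then has its entire $\LL$-orbit inside a proper closed subvariety and cannot be generic.
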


\begin{proof}
 For the first statement, it suffices to show that if $\Lambda$ is zero on all those simple root subgroups, then it is trivial. Given that $[\UU_{-\alpha},\UU_\beta]=1$ for all distinct $\alpha,\beta\in\Delta$, the subgroups $\UU_{-\alpha}$, $\alpha\in\Delta\smallsetminus\Delta_L$, are normalized by $\BB_L:=\LL\cap \BB$. In combination with the fact that $\MM$ stabilizes $\Lambda$, $bm\cdot \Lambda$ is zero on those subgroups, for every $b\in \BB_L$, $m\in\MM$. But $\BB_L\MM$ is open in $\LL$, hence, $l\cdot \Lambda$ is zero on all those subgroups for all $l\in \LL$. The only character with this property is the trivial one.

For the second claim, the same argument shows that if $\Lambda$ is zero on one of the $\UU_{-\alpha}$'s, then the same is true for $l\cdot \Lambda$, for every $l\in\LL$. Such a $\Lambda$ cannot be generic.
\end{proof}

To describe the root system $\Phi_X$, since it has the same Weyl group as that of Knop, it is enough to describe how to choose a set of simple roots on the spherical root half-lines; those simple roots will be called \emph{normalized spherical roots}. 
 Recall Brion's description of generators for $W_{(X)}$ (Theorem \ref{Brionsdescription}). To repeat, adding the case of a non-trivial $\mathcal L_\Psi$, every $w$ in a set of generators of $W_{(X)}$ can be written as $w=w_1^{-1} w_2 w_1$ where:
\begin{itemize}
 \item $^{w_1}\mathring X =: Y$ with $\operatorname{codim}(Y)= l(w_1)$. (In the notation of \ref{ssBrion}, $w_1^{-1}\in W(\YY)$.)
 \item $w_2$ is either of the following three:
\begin{enumerate}
 \item equal to $w_\alpha$, where $\alpha$ is a simple root such that $(Y,\alpha)$ is of type $T$ (or $N$, if we allow type $N$ reflections), or
 \item equal to $w_\alpha w_\beta$, where $\alpha,\beta$ are two orthogonal simple roots which both lower $Y$ to the same orbit $Y'$, or
 \item equal to $w_\alpha$, where $\alpha$ is a simple root such that $(Y,\alpha)$ is of type $(U,\psi)$. (By our assumption that $\Psi$ be generic and Lemma \ref{genericcharacter}, in this case $w_1$ is trivial.)
\end{enumerate}
\end{itemize} 

It is easy to see that such an element $w$ induces a reflection on $\varchi(\XX)\otimes \mathbb Q$, which implies that every spherical root half-line contains a root of $\GG$ or the sum of two orthogonal roots which are simple with respect to some choice of basis for the root system of $\GG$. Moreover, these two cases are mutually exclusive, hence we can define, for every spherical root $\gamma$, the corresponding \emph{normalized spherical root} $\gamma'$ which is equal to the unique positive multiple of $\gamma$ with the property that:
\begin{itemize}
 \item either $\gamma'$ is a root of $\GG$;
\item or it is the sum of two roots of $\GG$ which are orthogonal to each other and simple with respect to some choice of basis for the root system.
\end{itemize}

It is shown in \cite{SV} that the set of those $\gamma'$, for $\gamma$ ranging over all spherical roots of $\XX$, is the set of simple roots for a root system $\Phi_X$ with Weyl group $W_X$.
The dual root system $\check\Phi_X$ is expected to be the root system of the dual group $\check G_{X,GN}$ that Gaitsgory and Nadler \cite{GN} have associated to the spherical variety. In \cite{SV} we show that if there are no reflections of type $N$ then the data $(\varchi(\XX)^*, \check\Phi_X,\varchi(\XX), \Phi_X)$ form a root datum and hence define uniquely up to isomorphism a complex group $\check G_X$ with a canonical maximal torus $A_X^*$ and root system $\check\Phi_X$; it will be called ``the dual group of $\XX$''.

The set of normalized spherical roots will be denoted by $\Delta_X$ (not to be confused with $\Delta(\XX)$), and the set of positive elements in $\Phi_X$ by $\Phi_X^+$. The rank of this root system is called the \emph{rank} of the spherical variety. We will see that the normalized spherical roots (or rather, their co-roots) naturally appear in our formula for Hecke eigenvectors. It is clearly enough to compute the proportionality factors $b_{w_\gamma}$ for all $\gamma$ in $\Delta_X$. For those, we introduce the following notation: $\PP_\gamma, \LL_\gamma$ are the standard parabolic and Levi $\PP_{\supp(\gamma)}, \LL_{\supp(\gamma)}$, and $\XX_\gamma$ the spherical variety $\mathring \XX \PP_\gamma/\UU_{P_\gamma}$ for $\LL_\gamma$. Here and throughout, $\supp(\gamma)$ denotes the minimal set of simple roots such that $\gamma$ lies in their linear span.

\subsection{Type of a spherical root and Brion paths}

Since Brion's description of generators for $W_{(X)}$ will be very important in our definitions later, we mention a few more facts on it. First of all, we introduce the following language:

Let us consider the weak order graph $\mathfrak G$, forgetting the orientation of the edges, and consider paths $\mathfrak g$ corresponding to the description of $W_{(X)}$-generators in Brion's theorem, that is the path consists of a sequence of adjacent edges of the form $e_1 e_2 \dots e_n E_1 E_2 e_n \dots e_2 e_1$, where:
\begin{itemize}
 \item one vertex of the edge $e_1$ is $\mathring \XX$, and for all edges $e_1,\dots,e_n$, if $\YY$ is a vertex of the edge and $\alpha$ is its label then $(\YY,\alpha)$ is of type $U$;
 \item if $\YY$ denotes the lower vertex of the edge $e_n$ then one of the following happens:
\begin{enumerate}
 \item either $E_1=E_2$ and it is labelled by a simple root $\alpha$ such that $(\YY,\alpha)$ is of type $T$, $N$ or $(U,\psi)$;
 \item or $E_1$ and $E_2$ are labelled by orthogonal simple roots $\alpha,\beta$ lowering $\YY$ to the same $\BB$-orbit.
\end{enumerate}
\end{itemize}

(For simplicity, in the case of a non-trivial $\mathcal L_\Psi$ we use the weak order graph for the underlying variety without reference to $\mathcal L_\Psi$.)

Let $\alpha_1,\dots,\alpha_n$ denote the labels of $e_1,\dots,e_n$, respectively. Let $w_2=w_\alpha$, in the first case, and $w_2=w_\alpha w_\beta$ in the second. If the path $\mathfrak g$ corresponds to a reduced decomposition of the corresponding Weyl group element $w=w_{\alpha_1}\cdots w_{\alpha_n} w_2 w_{\alpha_n}\cdots w_{\alpha_1}$, i.e.\ if $l(w)=2n+l(w_2)$, then that path will be called a \emph{Brion path} corresponding to $w$. A fact that we will use is that Brion's description for a set of generators of $W_{(X)}$ applies to the simple reflections in $W_X$ corresponding to spherical roots:

\begin{proposition}\label{Brionapplies}
 For every spherical root $\gamma$, the corresponding element $w_\gamma\in W_X$ admits a Brion path.
\end{proposition}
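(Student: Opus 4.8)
The plan is to combine Brion's structural description of a set of generators for $W_{(X)}$ (Theorem \ref{Brionsdescription}, in the form recalled at the start of \S \ref{secsimplespherical}) with the fact, recorded after Theorem \ref{thmSa2}, that each such generator which is not a $w_\alpha$ with $\alpha\in\Delta(\XX)$ actually lies in $W_X$. First I would fix a spherical root $\gamma$ and its normalized spherical root $\gamma'\in\Delta_X$, so that $w_\gamma=w_{\gamma'}$ is a canonical simple reflection of $W_X$. The claim is that this particular reflection is among the Brion-type generators, and moreover that the reduced word witnessing its Brion decomposition can be realized by an actual path in the weak order graph of the stated palindromic shape $e_1\cdots e_n E_1 E_2 e_n\cdots e_1$.

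The key steps, in order: (1) Recall from \S \ref{ssrootsystem} that every spherical root half-line contains either a root of $\GG$ or a sum of two orthogonal simple roots of $\GG$, and that the reflection defined by such an element on $\varchi(\XX)\otimes\QQ$ coincides with the corresponding element of $W_X$; this already tells us that $w_{\gamma'}$ has one of the two shapes $w_\alpha$ (with $\alpha$ a root of $\GG$, not in $\Delta(\XX)$ since $w_{\gamma'}\in W_X$ acts nontrivially on $\varchi(\XX)$) or $w_\alpha w_\beta$ with $\alpha\perp\beta$ simple. (2) Invoke Brion's Theorem \ref{Brionsdescription}: a generating set of $W_{(X)}$ consists of the $w_\alpha$, $\alpha\in\Delta(\XX)$, together with elements $w=w_1^{-1}w_2w_1$ with $w_1^{-1}\in W(\YY)$, $\operatorname{codim}(\YY)=l(w_1)$, and $w_2$ of type $T$/$N$ or a product of two orthogonal simple roots raising a common orbit. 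I must check that the specific reflection $w_{\gamma'}$ appears in (is $W_X$-conjugate to, or literally equals) one of these generators; here one uses that $W_X$ is generated by the $w_{\gamma'}$, $\gamma'\in\Delta_X$, that Brion's generators not in $\{w_\alpha:\alpha\in\Delta(\XX)\}$ all lie in $W_X$, and a minimality/length argument to match simple reflections with Brion generators — essentially, a simple reflection of $W_X$ cannot be a nontrivial product of several Brion generators without violating the length identity $l(w)=2n+l(w_2)$, so it must itself be one of them. (3) Finally, unwind the equality $w_{\gamma'}=w_1^{-1}w_2w_1$ into a path in $\mathfrak G$: the condition $w_1^{-1}\in W(\YY)$ means precisely that there is an oriented path $\gamma$ from $\YY$ to $\mathring\XX$ with $w(\gamma)=w_1^{-1}$, built from edges of type $U$ (since raising edges in the weak-order graph are exactly the type-$U$ Knop reflections on orbits of maximal rank, by the analysis following Axiom \ref{integralityaxiom}); reading this path backwards from $\mathring\XX$ down to $\YY$ gives $e_1,\dots,e_n$ with labels a reduced word for $w_1$, then appending $E_1,E_2$ at $\YY$ (equal edges for the $T/N/(U,\psi)$ case, two orthogonal-root edges in the double case) and the reversed path $e_n,\dots,e_1$ produces the required palindrome. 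The length condition $l(w_{\gamma'})=2n+l(w_2)$ is exactly the statement that this palindrome is a reduced decomposition, i.e.\ a Brion path.

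The main obstacle I anticipate is step (2): matching the abstractly-defined simple reflections $w_{\gamma'}$ of the root system $\Phi_X$ with Brion's explicit list of generators. Brion's theorem gives \emph{a} generating set, not the Coxeter generators of $W_X$, so one has to argue that passing to the canonical simple reflections does not destroy the Brion-path structure. The cleanest route is probably: first show that each Brion generator lying in $W_X$ is a reflection $w_{\gamma'}$ for some spherical root (using that it acts as a reflection on $\varchi(\XX)\otimes\QQ$ and that its reflecting hyperplane is a wall of the fundamental chamber $\mathcal V$, because the corresponding orbit $\YY$ of maximal rank sits adjacent to $\mathring\XX$ in a way that forces the wall to be spherical); then conclude by a counting argument that the Brion generators in $W_X$ exhaust the $w_{\gamma'}$, $\gamma'\in\Delta_X$, up to the trivial $w_\alpha$, $\alpha\in\Delta(\XX)$, reflections of $W_{P(X)}$. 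Alternatively one can cite that this identification is made in \cite{SV}, where the root system $\Phi_X$ is constructed precisely by selecting, on each spherical root half-line, the generator realized by a Brion path; in that case the proposition is essentially the definition, and the content is just the compatibility of the weak-order-path description with the reduced-word condition, which is routine.
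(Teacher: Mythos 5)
Your step (3) --- unwinding a Brion decomposition $w_{\gamma}=w_1^{-1}w_2w_1$ with $w_1^{-1}\in W(\YY)$ into a palindromic path in the weak order graph --- is fine; it is essentially the definition of a Brion path. The genuine gap is exactly where you flag it, in step (2), and neither of your proposed resolutions closes it. Brion's theorem produces \emph{some} generating set of $W_{(X)}$; the generators lying in $W_X$ are reflections and do generate $W_X$, but there is no a priori reason that the \emph{canonical} simple reflections $w_\gamma$, $\gamma\in\Delta_X$ (those whose hyperplanes are walls of $\mathcal V$), are themselves among Brion's generators rather than merely conjugates or products of them. Your ``length argument'' does not address this: the identity $l(w)=2n+l(w_2)$ is part of the definition of a Brion decomposition of a \emph{given} $w$, not a criterion forcing a simple reflection of $W_X$ to coincide with one of the listed generators. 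Your fallback --- that $\Phi_X$ is ``defined'' in \cite{SV} by selecting the Brion generator on each half-line --- is also not how things are set up: the spherical root half-lines are the extremal rays of the cone dual to $\mathcal V$, defined independently of Brion's theorem, and the proposition is precisely the nontrivial claim that the reflection attached to each such ray is realized by a Brion path.

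The paper closes this gap by a reduction your proposal is missing. One passes to the variety $\mathring\XX\PP_\gamma/\mathcal R(\PP_\gamma)$ for the Levi supported on $\supp(\gamma)$ and proves (Lemma \ref{rankatmosttwo}) that it has rank at most two; that lemma is itself established by inspecting the Akhiezer--Wasserman classification of wonderful varieties of rank one and two (Theorem \ref{AW}). In rank one the little Weyl group of the reduced variety has a single nontrivial element, so the Brion generator mapping to it is forced to equal $w_\gamma$; in rank two one checks the handful of varieties in which one spherical root lies in the span of the support of the other. Some appeal to the classification (or an equivalent combinatorial input) appears unavoidable here, and your argument has nothing playing that role.
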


\begin{proof}[Proof (Sketch)] The proof will eventually rest on a case-by-case analysis of low-rank spherical varieties which will be presented later in this section.
Given a simple spherical root $\gamma$, we consider the spherical variety $\mathring\XX\cdot\PP_\gamma/\mathcal R(\PP_\gamma)$. We claim that it is a spherical variety of rank at most two:
\begin{lemma}\label{rankatmosttwo}
 A spherical variety for a group $\GG$ with the property that there is a spherical root $\gamma$ such that $\PP_\gamma =\GG$ is of rank at most two. Equivalently, for any spherical variety and any spherical root $\gamma$ of it, the span of the support of $\gamma$ contains at most two spherical roots.
\end{lemma}
\begin{proof}
 Assume that $\XX$ is a (homogeneous) spherical variety for a group $\GG$ with a spherical root $\gamma$ such that $\PP_\gamma = \GG$, and that $\XX$ has rank three or more. Let $\gamma'$ be a spherical root different from $\gamma$, then there is a spherical variety for $\GG=\PP_\gamma$ whose spherical roots are $\gamma$ and $\gamma'$ and which has a non-trivial torus of $\GG$-automorphisms. (Indeed, we may assume by dividing by a group of automorphisms of $\XX$ that $\XX$ admits a wonderful compactification, see \cite{Wa}. Then there is a non-open $\GG$-orbit in this wonderful compactification whose spherical roots are $\gamma$ and $\gamma'$, and the open $\GG$-orbit in its normal bundle has the properties stated.) By inspection of the tables of \cite{Wa} (see Theorem \ref{AW} below), no such spherical variety exists.
\end{proof}

Now if $\mathring\XX\cdot\PP_\gamma/\mathcal R(\PP_\gamma)$ is of rank one, Brion's description applies to an element of $W_{(X)}$ which maps to the unique non-trivial element in the quotient $\overline{W_X}$ but, as we remarked after Theorem \ref{Brionsdescription}, that element has to belong to $W_X$. If it is of rank two, we can see by inspection of the few wonderful varieties of rank two where one spherical root is included in the span of the support of the other (see again Theorem \ref{AW}) that, again, Brion's description applies to the elements of $W_X$ corresponding to simple spherical roots.
\end{proof}

Depending on which of the enumerated cases above appears:
\begin{enumerate}
 \item the weight $\gamma'=w_1^{-1}\alpha$ will be called a \emph{normalized spherical root of type $T$, $N$ or $(U,\psi)$}, respectively.
 \item the weight $\gamma'=w_1^{-1} (\alpha + \beta)$ will be called a \emph{normalized spherical root of type $G$}. (The name ``type $G$'' originates from the fact that this is the spherical root for the variety $\SSL_2\backslash\SSL_2\times\SSL_2$. This shouldn't cause any confusion with ``reflections of type $G$'', since the latter do not generate any spherical roots.)
\end{enumerate}

We notice that the type of $\gamma'$ is anambiguous because it does not depend on the choice of path. Indeed, spherical roots of type $G$ are characterized as not being proportional to any root of $\GG$; spherical roots of type $(U,\psi)$ are, by assumption, simple roots in $G$ so there is a unique choice of Brion path for them (it is possible to distinguish those roots even without assuming that $\psi$ is generic, but we omit that); the remaining cases are all of type $T$ or type $N$, according as $\gamma'\in \varchi(\XX)$ or not.

Finally, for the discussion which follows recall that $\BB_{\bar k}$-invariant irreducible divisors in $\XX_{\bar k}$ are called ``colors''. We notice that a color may not be defined over $k$, as happens in the case of $\TT\backslash\SSL_2$ with $\TT$ a non-split torus. In fact, this is essentially the only case where a color is not defined over $k$. More precisely, every color is contained in $(\mathring \XX\PP_\alpha)_{\bar k}$ for some simple root $\alpha$, and if $(\mathring\XX,\alpha)$ is of type $U$, $T$ split or $N$ then the absolutely irreducible divisors in $(\mathring \XX\PP_\alpha)_{\bar k}$ are defined over $k$. We have already encountered colors in \S \ref{ssferesult} and we have seen that each color $\DD$ induces a valuation on $k(\XX)$, which by restriction to $\BB$-semiinvariants (eigenfunctions) defines an element $\check v_D\in \Hom(\varchi(\XX),\mathbb Z)$. (This is clearly well-defined, even if the color is not defined over $k$.)

\subsection{The values of the cocycles for simple spherical reflections}

For the statement of the results it is convenient to introduce the constants $B_w$ (for $w\in W_X$), defined as:

\begin{equation}\label{Bw}
 B_w(\tilde\chi) = \prod_{\check\alpha\in\check \Phi^+, w\check\alpha<0} (-e^{\check\alpha}(\chi)) b_w(\delta_{(X)}^{\frac{1}{2}}\tilde\chi).
\end{equation}

The $B_w$ are cocycles: $W_X \to \CC(A_X^*)$. Using those, Theorem \ref{maintheorem} becomes:

\begin{equation}\label{formulawithBw}
  \Omega_{\delta_{(X)}^\frac{1}{2}\tilde\chi}(x) = \delta_{P(X)}^{-\frac{1}{2}}(x) \sum_{w\in W_X} B_w(\tilde\chi) \, ^w\tilde\chi(x).
\end{equation}

Our goal at this point is to compute $B_w$ for $w=w_\gamma$, where $w_\gamma$ is a simple spherical reflection. In what follows, we denote by $\rho$ the half-sum of positive roots, and by $\rho_{P(X)}$ the half-sum of roots in the unipotent radical of $\PP(\XX)$; as before, $\check\rho$ denotes the half-sum of positive co-roots. We are aiming at the following:

\begin{statement}\label{STATEMENT}
 Let $\mathfrak g$ be a Brion path corresponding to the simple spherical reflection $w_\gamma$, where $\gamma$ is a \emph{normalized} spherical root. Let ${\check\theta}$ be the valuation induced by the codimension-one orbit in the path, viewed as an element of $\Hom(\varchi(\XX),\mathbb Z)$. Then, according to the type of the root $\gamma$ we have:
\begin{itemize}
 \item If $\gamma$ is of type $G$, then ${\check\theta}=\check\gamma$ and:
\begin{equation}\label{STATEMENTG}
 B_{w_\gamma}(\tilde\chi)=-e^{\check\gamma} \frac{1-q^{-\left<\check\gamma,\rho_{P(X)}\right>}e^{-\check\gamma}}{1-q^{-\left<\check\gamma,\rho_{P(X)}\right>}e^{\check\gamma}}(\chi).
\end{equation}
 \item If $\gamma$ is of type $T$ split, then $\left<{\check\theta},\gamma\right>=1$ and:
\begin{equation}\label{STATEMENTT}
B_{w_\gamma}(\tilde\chi) = - e^{\check\gamma} \frac{1-q^{-\left<{\check\theta},\rho_{P(X)}\right>} e^{-{\check\theta}}} {1-q^{\left<{\check\theta},\rho_{P(X)}\right>-\left<\check\rho,\gamma\right>}e^{{\check\theta}}} 
\frac{1-q^{\left<{\check\theta},\rho_{P(X)}\right>-\left<\check\rho,\gamma\right>}e^{w_\gamma{\check\theta}}}{1-q^{-\left<{\check\theta},\rho_{P(X)}\right>}e^{-w_\gamma{\check\theta}}}(\tilde\chi).
\end{equation}
 \item If $\gamma$ is of type $T$ non-split, then $\check\theta=\frac{\check \gamma}{2}$ and:
\begin{equation}\label{STATEMENTTnonsplit}
B_{w_\gamma}(\tilde\chi) = - e^{\check\gamma} \frac{1-q^{-\left<{\check\theta},\rho_{P(X)}\right>} e^{-{\check\theta}}} {1+q^{\left<{\check\theta},\rho_{P(X)}\right>-\left<\check\rho,\gamma\right>}e^{{\check\theta}}} 
\frac{1+q^{\left<{\check\theta},\rho_{P(X)}\right>-\left<\check\rho,\gamma\right>}e^{w_\gamma{\check\theta}}}{1-q^{-\left<{\check\theta},\rho_{P(X)}\right>}e^{-w_\gamma{\check\theta}}}(\tilde\chi).
\end{equation}
 \item If $\gamma$ is of type $(U,\psi)$, then $\check\theta=\check\gamma$ and: \begin{equation}\label{STATEMENTPSI} B_{w_\gamma}(\tilde\chi)=-e^{\check\gamma}(\chi).\end{equation}
\end{itemize}
In case $T$, the exponents $-\left<{\check\theta},\rho_{P(X)}\right>$ and $\left<{\check\theta},\rho_{P(X)}\right>-\left<\check\rho,\gamma\right>$ are equal, unless possibly if $\check\theta=\frac{\check\gamma}{2}$. In any case, these exponents are negative half-integers. 
\end{statement}

\begin{remarks}\begin{enumerate}
 \item Recall that $\delta_{P(X)}\in A_X^*$ (\ref{deltapx}) and therefore the exponential of $\left<\check\gamma,\rho_{P(X)}\right>$ makes sense.
 \item  A priori it is not clear that being of type $T$ split or non-split is a well-defined property of $\gamma$, independent of the path. A posteriori, this is true if the above statement is true.
 \item We remark that the choice of lift $\tilde\chi$ of $\chi$ matters only in cases $T$ split or non-split.
\end{enumerate}
\end{remarks}

We can only prove this statement by reducing to certain low-rank spherical varieties, and we have performed the corresponding computation only for classical groups:

\begin{theorem} \label{statementtrue}
 Let $\XX$ be a spherical variety without type-$N$ roots and such that for every $\gamma\in \Delta_X$ the Levi $\LL_{\gamma}$ is a classical group. Then for every normalized spherical root $\gamma$ of $\XX$ and any Brion path corresponding to $w_\gamma$ Statement \ref{STATEMENT} is true.
\end{theorem}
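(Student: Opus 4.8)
The plan is to express $B_{w_\gamma}$ as an explicit product of the elementary functional-equation factors of \S\ref{secfe}, then reduce to a finite list of low-rank spherical varieties and check each entry. By Proposition~\ref{Brionapplies}, $w_\gamma$ admits a Brion path $\mathfrak g$, so $w_\gamma=w_1^{-1}w_2w_1$ with $w_1$ a product of type-$U$ reflections $w_{\alpha_1},\dots,w_{\alpha_n}$ (the labels of the edges $e_1,\dots,e_n$ of $\mathfrak g$) and $w_2$ equal to $w_\alpha$ in the cases $T$, $(U,\psi)$, or to $w_\alpha w_\beta$ for orthogonal $\alpha,\beta$ in case $G$. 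The cocycle relation~(\ref{cocycle}) then writes $\underline b_{w_\gamma}$ as the ordered product of the elementary scalars along the two legs $e_1\cdots e_n$ and $e_n\cdots e_1$ together with the middle scalar $b_{w_2}$, each evaluated at the appropriate $W$-translate of $\chi$; these are precisely the factors boxed in Theorem~\ref{fetheorem} (the type-$U$ formulas together with~(\ref{casepsi}), (\ref{tsplitetaunramified}), (\ref{tsplitetaramified}), (\ref{tunramifiedetaunramified}), (\ref{tunramifiedetaramified})). Since every root occurring in $\mathfrak g$ lies in $\supp(\gamma)$ and $w_\gamma$ lies in the Weyl group of $\LL_\gamma$, this is a computation internal to $\LL_\gamma$ acting on $\XX_\gamma=\mathring\XX\PP_\gamma/\UU_{P_\gamma}$, which by Lemma~\ref{rankatmosttwo} has rank at most two. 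Thus it suffices to prove the Statement when $\XX$ has rank $\le 2$ and $\PP_\gamma=\GG$, provided one carries the global data $\delta_{(X)}$, $\rho_{P(X)}$ through this reduction (see the last paragraph).

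For each reduced case I would then run the bookkeeping as follows. Multiply the elementary factors to get $b_{w_\gamma}(\delta_{(X)}^{\frac{1}{2}}\tilde\chi)$; the two type-$U$ legs produce quotients $\frac{1-q^{-1}e^{-\check\alpha_i}}{1-e^{-\check\alpha_i}}$ and $\frac{1-e^{\check\alpha_i}}{1-q^{-1}e^{\check\alpha_i}}$ at conjugate arguments. Combined with the prefactor $\prod_{\check\alpha>0,\,w_\gamma\check\alpha<0}(-e^{\check\alpha}(\chi))$ of the definition~(\ref{Bw}) — a product over the positive coroots of $\LL_\gamma$ inverted by $w_\gamma$, since $w_\gamma$ normalizes the radical of $\PP_\gamma$ — these telescope in pairs, leaving only the $w_1$-conjugate of the middle factor and a single monomial. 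One then identifies the surviving weights with $\pm\check\theta$, the valuation of the codimension-one orbit of $\mathfrak g$ as in the Statement, and the surviving power of $q$ with $q^{-\langle\check\theta,\rho_{P(X)}\rangle}$ (resp.\ $q^{\langle\check\theta,\rho_{P(X)}\rangle-\langle\check\rho,\gamma\rangle}$), using $\rho=\rho_{L(X)}+\rho_{P(X)}$, $\delta_{(X)}^{\frac{1}{2}}=e^{\rho_{L(X)}}$, $\delta_{P(X)}=e^{2\rho_{P(X)}}$, and the description of the colors from \S\ref{ssferesult}, \S\ref{sscaseTsplit}. Matching against~(\ref{STATEMENTG})--(\ref{STATEMENTPSI}) settles that case; along the way one also sees that $\check\theta$, and the split/non-split dichotomy in case $T$, are independent of the Brion path, as claimed in the Remarks.

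The case list itself comes from the classification of rank-one spherical varieties for classical Levis together with Wasserman's tables of rank-two wonderful varieties (Theorem~\ref{AW}): by Lemma~\ref{rankatmosttwo} and the argument in its proof, the only rank-two $\XX$ with $\PP_\gamma=\GG$ are the few in which one spherical root lies in the $\QQ$-span of the support of another, and these can be listed explicitly. For each entry the explicit model of the variety yields a Brion path and the type of each of its steps, so the telescoping above can be carried out; requiring $\LL_\gamma$ classical removes the remaining rank-two configurations (those involving $G_2$), which are not treated here.

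The main obstacle is the second step. The elementary formulas of Theorem~\ref{fetheorem} only ever produce the bare constant $q^{-1}$, so obtaining the clean exponent $q^{-\langle\check\theta,\rho_{P(X)}\rangle}$ — and in particular verifying the nonobvious equality, in case $T$, of $-\langle\check\theta,\rho_{P(X)}\rangle$ and $\langle\check\theta,\rho_{P(X)}\rangle-\langle\check\rho,\gamma\rangle$ except when $\check\theta=\frac{\check\gamma}{2}$ — requires tracking precisely how the normalization shift $\tilde\chi\mapsto\delta_{(X)}^{\frac{1}{2}}\tilde\chi$, the $\check\rho$-prefactors in~(\ref{mainformula}) and~(\ref{Bw}), and the conjugation by $w_1$ interact, case by case. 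A secondary difficulty is that the rank-two embedded-spherical-root cases have genuinely intricate Brion paths and must be read off Wasserman's list individually; this, together with the fact that those cases have only been computed for classical groups, is the reason for the standing hypothesis on the $\LL_\gamma$.
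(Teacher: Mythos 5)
Your plan follows the paper's proof in all essentials: decompose $B_{w_\gamma}$ along a Brion path via the cocycle relation, reduce to the rank-$\le 2$ variety $\mathring\XX\PP_\gamma/\mathcal R(\PP_\gamma)$ using Lemma \ref{rankatmosttwo} and the Akhiezer--Wasserman classification (Theorem \ref{AW}), and verify each classical entry by explicit computation along an explicit path.

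One correction to your description of the mechanism, which is exactly where the content of the ``main obstacle'' you flag lives. The factors from the descending and ascending legs do \emph{not} telescope pairwise against each other and the prefactor in any formal way. What actually happens (Proposition \ref{gluedfrom2}) is an induction along the path in which the denominator $1-e^{-\check\alpha}(\chi)$ produced at one node cancels the numerator $1-q^{-1}e^{-w_\alpha\check\beta}(\chi)$ coming from the inductively known coefficient one level down, and this rests on the identity $e^{-\check\alpha}(\chi)=q^{-1}e^{-w_\alpha\check\beta}(\chi)$ of (\ref{inductionstep}), which holds only for the admissible $\chi$ (those supporting the intermediate orbit's morphisms) and only because of the geometry of the local rank-two variety $\ZZ\PP_{\alpha\beta}/\mathcal R(\PP_{\alpha\beta})$ at that node. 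The paper therefore organizes the argument around paths ``glued'' from a short explicit list of such local varieties -- including the horospherical ones $\PP_\alpha\backslash\SSL_3$ and $\PP_\alpha\backslash\SSp_4$, which occur at intermediate nodes and are not themselves of the form $\XX_\gamma$ -- checks (\ref{inductionstep}) once for each, and then (Proposition \ref{ranktwo}) exhibits, for every classical entry of the rank-$\le 2$ classification and every first edge, a Brion path glued from that list. Your reduction to $\XX_\gamma$ alone does not by itself produce this finite local check, so without supplying (\ref{inductionstep}) and the glueing structure your ``bookkeeping'' step is not yet a proof; with them, it is the paper's proof.
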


\subsection{Glueing paths from simple varieties}

Our goal for the rest of this section is to prove Theorem \ref{statementtrue}. Let $\gamma\in\Delta_X$, and choose a Brion path $\mathfrak g$ which corresponds to $w_\gamma$. Let $\ZZ\ne \mathring \XX$ be a vertex in the path $\mathfrak g$ which is the endpoint of two distinct edges $e_1,e_2$ in the path. Let $\delta,\epsilon$ be the simple roots of $\GG$ labelling $e_1$ and $e_2$, and consider the spherical variety $\ZZ\PP_{\delta\epsilon}/\mathcal R(\PP_{\delta\epsilon})$ for the group $\LL_{\delta\epsilon}$. If $\GG_1,\GG_2,\dots$ are reductive groups of rank two, and $\XX_1,\XX_2,\dots$ spherical varieties thereof, we will say that the path $\mathfrak g$ is ``glued'' from $\XX_1,\XX_2,\dots$ if all spherical varieties $\ZZ\PP_{\delta\epsilon}/\mathcal R(\PP_{\delta\epsilon})$ obtained this way are contained in the list of $\XX_1,\XX_2,\dots$. As a matter of language, if the path does not contain any vertex attached to two distinct edges (which is the case if $\gamma\in \Delta$), then we consider the path 
$\mathfrak g$ to be glued from any list of such spherical varieties.

We have:
\begin{proposition}\label{gluedfrom}
 Assume that there is a path $\mathfrak g$ corresponding to $w_\gamma$ which over the algebraic closure is glued out of the following varieties:
$$\PPGL_2\backslash\PPGL_2\times\PPGL_2, \GGL_2\backslash\SSL_3, $$ $$\SSp_2\times\SSp_2\backslash \SSp_4, (\Gm\ltimes\Ga)\times\SSp_2\backslash\SSp_4, \Gm\times\SSp_2\backslash\SSp_4,  $$
$$\PP_\alpha\backslash\SSL_3 \text{ (where }\alpha\text{ is a simple root) and }$$
$$\PP_\alpha\backslash\SSp_4\text{ (where }\alpha\text{ is the long simple root)}.$$
Then $\mathfrak g$ satisfies Statement \ref{STATEMENT}, except possibly for the last assertion (on the exponents).
\end{proposition}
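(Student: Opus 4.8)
The plan is to reduce the computation of the cocycle $B_{w_\gamma}$ to a product (or rather an ordered composition) of the rank-one functional equations computed in Section \ref{secfe}, by following the Brion path $\mathfrak g$ edge by edge and using the cocycle relation (\ref{cocycle}). First I would fix the Brion path $\mathfrak g = e_1\cdots e_n E_1 E_2 e_n \cdots e_1$ corresponding to $w_\gamma$, so that $w_\gamma = w_{\alpha_1}\cdots w_{\alpha_n} w_2 w_{\alpha_n}\cdots w_{\alpha_1}$ is a reduced decomposition with $w_2 = w_\alpha$ or $w_\alpha w_\beta$. By the cocycle relation, $\underline b_{w_\gamma}(\chi)$ is the ordered product of the $\underline b$'s attached to the individual simple reflections occurring in this decomposition, each evaluated at the appropriate twisted character; the "down" part of the path consists entirely of type-$U$ reflections (this is what makes it a Brion path), so by Corollary \ref{composition} the matrices are diagonal there and we are simply multiplying scalars $b_{w_{\alpha_i}}^{Y}$ given by Case $U$ of Theorem \ref{fetheorem}. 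The "middle" piece $w_2$ contributes either a Case $T$ (split or non-split), Case $(U,\psi)$, or Case $G$ (= product of two orthogonal type-$U$'s) factor. The "up" part is the mirror image of the "down" part, again type $U$.

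Next I would organize the bookkeeping so that the telescoping of the type-$U$ factors along the two halves of the path can be carried out cleanly. The key observation is that the Case $U$ factor $-e^{-\check\alpha}\frac{1-q^{-1}e^{-\check\alpha}}{1-e^{-\check\alpha}}(\chi)$ (when $\alpha$ lowers) and $-e^{-\check\alpha}\frac{1-e^{\check\alpha}}{1-q^{-1}e^{\check\alpha}}(\chi)$ (when $\alpha$ raises) are inverse-like to each other up to the sign and the $e^{-\check\alpha}$ prefactor, so that when we traverse $e_i$ downward and then (in the mirror) upward, the "rational" parts telescope against each other with shifts by powers of $q$ governed by the heights $\langle\check\alpha_i, \rho\rangle$. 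The accumulated power of $q$ is exactly what produces the exponents $-\langle\check\theta,\rho_{P(X)}\rangle$ and $\langle\check\theta,\rho_{P(X)}\rangle - \langle\check\rho,\gamma\rangle$ in Statement \ref{STATEMENT}; here $\check\theta = w_1^{-1}(\text{valuation of the codimension-one orbit in the middle})$, and the identity $\langle\check\alpha+\check\beta$-type sums, together with $w_1 \check\theta =$ the middle valuation, is what converts the path-dependent intermediate data into the stated intrinsic formula. I would also need the conversion between $\Omega$ and the $B_w$ via (\ref{Bw}), which absorbs the $-e^{\check\alpha}(\chi)$ prefactors and the shift by $\delta_{(X)}^{1/2}$ into the clean final form.

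The actual content of Proposition \ref{gluedfrom}, however, is that the scalar product produced by the above telescoping is the \emph{same} for every Brion path and equals the stated closed form; rather than verify this by an abstract manipulation (which would be delicate because the intermediate characters $^w\tilde\chi$ at which the rank-one factors are evaluated depend subtly on the path), the strategy is to check it on the explicit list of rank-$\le 2$ varieties in the statement. So I would: (i) for each variety $\mathbf X_i$ in the list, write down its weak-order/Knop graph and its Brion path(s) for the relevant spherical root (these appear in the diagrams promised in Section \ref{secsimplespherical}); (ii) compute $b_{w_\gamma}$ directly by composing the appropriate Case $U$, $T$, $(U,\psi)$, $G$ factors from Theorem \ref{fetheorem}, read off $\check\theta$, $\rho_{P(X)}$, $\langle\check\rho,\gamma\rangle$, and check agreement with (\ref{STATEMENTG})--(\ref{STATEMENTPSI}); (iii) invoke the gluing hypothesis: any Brion path $\mathfrak g$ for a general $\mathbf X$ decomposes, at each branch vertex $\mathbf Z$, into a rank-two piece $\mathbf Z\mathbf P_{\delta\epsilon}/\mathcal R(\mathbf P_{\delta\epsilon})$ appearing in the list, so the composition of rank-one factors along $\mathfrak g$ is governed entirely by these rank-two models, and the closed formula — being correct for each model and being expressed in terms of $\check\theta$ and the heights, which are additive/compatible under the gluing — propagates to $\mathbf X$. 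The point is that $\langle\check\theta,\rho_{P(X)}\rangle$ and $\langle\check\rho,\gamma\rangle$ can be computed locally at each rank-two piece and summed, matching the local computations in the models.

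The main obstacle I expect is exactly step (iii): making precise and rigorous the claim that "the composition of rank-one functional equations along a Brion path is determined by its rank-two sub-varieties, and the closed-form answer glues". One has to check that the twisted characters $^{w}\tilde\chi$ at which successive factors are evaluated interact correctly across branch points — i.e. that the restriction of $\chi$ seen by the rank-two model $\mathbf Z\mathbf P_{\delta\epsilon}/\mathcal R(\mathbf P_{\delta\epsilon})$ is the "correct" one, and that the $q$-powers accumulated from the $U$-legs on either side of a branch combine into the global $\langle\check\theta,\rho_{P(X)}\rangle$. Equivalently, one must verify that $\rho_{P(X)}$ and $\check\rho$ restrict compatibly to the Levis $\mathbf L_{\delta\epsilon}$ appearing along the path, which is a root-combinatorial lemma; and one must confirm that the list of rank-two varieties in the statement is genuinely exhaustive for the branch-vertex configurations that can occur (this uses Akhiezer--Wasserman's classification, Theorem \ref{AW}, and the rank-$\le 2$ restriction from Lemma \ref{rankatmosttwo}). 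The remaining case-by-case rank-two computations, while numerous, are routine given Theorem \ref{fetheorem}, and I would relegate them to the subsequent subsections; the last assertion of Statement \ref{STATEMENT} (on the exponents being negative half-integers, equal unless $\check\theta = \check\gamma/2$) is explicitly excluded here and handled separately.
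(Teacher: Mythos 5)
Your proposal is correct and follows essentially the same route as the paper: the paper proves Proposition \ref{gluedfrom} by deducing it from an auxiliary induction up the path (Proposition \ref{gluedfrom2}), in which the accumulated scalar $`b_w^Z$ is kept in closed form at each intermediate node and the ``telescoping'' you describe is made precise by the single identity $e^{-\check\alpha}(\chi)=q^{-1}e^{-w_\alpha\check\beta}(\chi)$ at each branch vertex, verified case by case on exactly the listed rank-two varieties. The only pieces you leave implicit that the paper spells out are the base cases where the first edge is already of type $T$ or $(U,\psi)$ (handled directly from Theorem \ref{fetheorem} together with $B_{w_\alpha}(\delta_{P(X)}^{1/2})=0$ from Lemma \ref{omegatrivial}, which pins down $e^{\check v_D}(\delta_{P(X)})=q^{-1}$) and the precise induction hypothesis that resolves your flagged obstacle (iii).
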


\begin{remark}
 The first two rows in the list consist of all spherical varieties without spherical roots of type $N$, for simply connected groups of rank two other than $\GG_2$, and with the property that both simple roots are in the support of a spherical root (see Theorem \ref{AW} below). Therefore, unless $\mathring\XX$ is the only node of the path $\mathfrak g$, the bottom part of the path will necessarily consist of one of these varieties. Hence, if the group has no $\GG_2$-factors, the content of the assumption is contained in the next lines, where one restricts the horospherical varieties which can glue the path.
\end{remark}

Moreover:

\begin{proposition}\label{ranktwo}
 If $\gamma\in\Delta_X$ with $\LL_\gamma$ a classical group then for any Brion path $\mathfrak g$ corresponding to $w_\gamma$ there is a path $\mathfrak g'$ corresponding to $w_\gamma$ such that $\mathfrak g$ and $\mathfrak g'$ have the same first edge and $\mathfrak g'$ satisfies the assumptions of Proposition \ref{gluedfrom}. 

Moreover, in case $T$ the numbers $-\left<{\check\theta},\rho_{P(X)}\right>$ and $\left<{\check\theta},\rho_{P(X)}\right>-\left<\check\rho,\gamma\right>$ are equal, unless possibly if $\check\theta=\frac{\check\gamma}{2}$. In any case, these numbers are negative half-integers. 
\end{proposition}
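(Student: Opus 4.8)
The plan is to reduce the statement to a finite case analysis over the rank-$\le 2$ spherical varieties that can occur. First I would observe that any Brion path corresponding to $w_\gamma$ is entirely supported on $\XX_\gamma:=\mathring\XX\PP_\gamma/\UU_{P_\gamma}$: every $\BB$-orbit and every edge it uses is pulled back from $\XX_\gamma$, and each rank-two reduction $\ZZ\PP_{\delta\epsilon}/\mathcal{R}(\PP_{\delta\epsilon})$ met along it is a spherical variety for the rank-two Levi $\LL_{\delta\epsilon}\subseteq\LL_\gamma$. By Lemma \ref{rankatmosttwo} the variety $\XX_\gamma$ has rank at most two, and since $\LL_\gamma=\LL_{\supp\gamma}$ every simple root of $\LL_\gamma$ lies in the support of one of its (at most two) spherical roots; hence, up to isogeny and quotient by a group of $\GG$-automorphisms, $\XX_\gamma$ — and a fortiori each $\ZZ\PP_{\delta\epsilon}/\mathcal{R}(\PP_{\delta\epsilon})$ — appears in Wasserman's classification of wonderful varieties of rank $\le 2$ (Theorem \ref{AW}). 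Demanding that $\LL_\gamma$ be classical cuts this down to a short explicit list.

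I would then go through that list. The case $\gamma\in\Delta$ — in particular every normalized spherical root of type $(U,\psi)$, by Lemma \ref{genericcharacter} — is trivial: the Brion path is the single edge $E_1=E_2$ at $\mathring\XX$, with nothing to glue. If $\XX_\gamma$ has rank one but $\LL_\gamma$ has semisimple rank $\ge 2$, then $\XX_\gamma$ is horospherical, and inspection of the list shows that its weak-order graph is assembled from copies of $\PP_\alpha\backslash\SSL_3$ and $\PP_\alpha\backslash\SSp_4$ overlapping along common edges, so every Brion path is already glued from the allowed varieties and only the first edge has to be matched. If $\XX_\gamma$ has rank two, the lower part of a Brion path necessarily meets one of the five genuinely rank-two members of the list of Proposition \ref{gluedfrom} (the $\PPGL_2$-group variety, $\GGL_2\backslash\SSL_3$, and the three $\SSp_4$-varieties); I would draw the finitely many weak-order graphs explicitly, read off all Brion paths for $w_\gamma$, and check directly that each is glued from the seven varieties. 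To pass from a prescribed Brion path $\mathfrak g$ to one with the same first edge of the desired form, I would invoke Proposition \ref{associates}: its elementary exchanges $uw_\delta v\leftrightarrow uw_\epsilon v$ with $\delta\perp\epsilon$ are exactly the moves that slide a path across a type-$G$ vertex or a horospherical $\PP_\alpha\backslash\SSL_3$/$\PP_\alpha\backslash\SSp_4$ vertex, and in the classical cases these can be chained so as to fix the initial edge while never leaving the list.

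For the ``moreover'' clause I would argue by a direct computation whose outcome is anyway confirmed by the case analysis. With $D$ the codimension-one orbit distinguished by the path, so $\check\theta=\check v_D$, the local rank-two analysis of \S\ref{ssferesult} gives a partner color $D'$ with $\check v_{D'}=-w_\gamma\check v_D$, $\check v_D+\check v_{D'}\equiv\check\gamma$, and $\langle\check\theta,\gamma\rangle=1$ in case $T$. Pairing against $\rho_{P(X)}$ — which lies in $\varchi(\XX)\otimes\QQ$ since $\delta_{P(X)}\in A_X^*$ by (\ref{deltapx}) — and using $\rho_{P(X)}-w_\gamma\rho_{P(X)}=\langle\check\gamma,\rho_{P(X)}\rangle\,\gamma$, one obtains $\langle\check v_D,\rho_{P(X)}\rangle+\langle\check v_{D'},\rho_{P(X)}\rangle=\langle\check\gamma,\rho_{P(X)}\rangle$, so the asserted equality of the exponents $-\langle\check\theta,\rho_{P(X)}\rangle$ and $\langle\check\theta,\rho_{P(X)}\rangle-\langle\check\rho,\gamma\rangle$ reduces to the numerical identity $\langle\check v_D,\rho_{P(X)}\rangle=\langle\check v_{D'},\rho_{P(X)}\rangle=\frac{1}{2}\langle\check\rho,\gamma\rangle$, which I would verify on the finite list (and which genuinely fails only when $\check\theta=\check\gamma/2$ forces $D=D'$). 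That both exponents are negative half-integers is read off the same way: $\langle\check\theta,\rho_{P(X)}\rangle$ is a positive half-integer in each case and strictly smaller than $\langle\check\rho,\gamma\rangle$, the strict positivity being forced because otherwise the denominator would say that $D$ supports no morphism from a principal series in the relevant range (cf.\ the remark after (\ref{tsplitetaunramified})).

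The main obstacle is the brute-force case analysis itself: there is no structural shortcut past enumerating the rank-$\le 2$ spherical varieties of classical type together with their weak-order graphs, and the delicate point within it is to carry the ``same first edge'' bookkeeping through the reshuffling moves of Proposition \ref{associates} without ever being forced across a rank-two reduction outside the allowed list. This is precisely where the hypothesis that $\LL_\gamma$ be classical enters — it excludes the exceptional rank-two configurations, notably those involving $\GG_2$, for which a glued path of the prescribed shape need not exist, and is the reason Theorem \ref{statementtrue} (hence Theorem \ref{improvedformula}) is proved only for classical groups.
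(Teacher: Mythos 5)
Your proposal follows essentially the same route as the paper: reduce to the rank-$\le 2$ variety $\mathring\XX\PP_\gamma/\mathcal R(\PP_\gamma)$ using the fact that a Brion path for $w_\gamma$ only involves simple roots in $\supp(\gamma)$, invoke the Akhiezer--Wasserman classification (Theorem \ref{AW}), and finish by a finite case-by-case inspection of the classical entries, reading off the exponent identities from the explicit data in each case. The only cosmetic difference is that the paper verifies the ``moreover'' clause directly from the displayed formulas for $B_{w_\gamma}$ in each case rather than via your symmetry reduction, and it handles the ``same first edge'' bookkeeping by exhibiting a good path for each admissible first edge (ruling out edges orthogonal to $\gamma$ by length considerations) rather than by chaining the exchange moves of Proposition \ref{associates}.
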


Since Statement \ref{STATEMENT} only depends on the first edge of $\mathfrak g$, this proves Theorem \ref{statementtrue}. The goal of the rest of this section is to prove Propositions \ref{gluedfrom} and \ref{ranktwo}.

\subsection{Computation for the simple varieties.}

Proposition \ref{gluedfrom} will follow from the following:

\begin{proposition}\label{gluedfrom2}
Assume that the Brion path $\mathfrak g$ in the weak order graph, corresponding to $w_\gamma$ for a normalized spherical root $\gamma$, satisfies the assumptions of Proposition \ref{gluedfrom}. Let $\ZZ$ be a node of $\mathfrak g$ other than the lowest one, $w$ the element of the Weyl group corresponding to the path below $\ZZ$. Hence $w=w_1^{-1} w_2 w_1$, where $w_1$ lowers $\ZZ$ to an orbit $\YY$ and $w_2=w_\beta w_{\tilde\beta}$ or $w_2=w_\beta$, according as $\gamma$ is of type $G$ or $T$. Let $\alpha$ be the label of an edge in $\mathfrak g$ which lowers $\ZZ$. Let $`b_w^Z(\tilde\chi) = \prod_{\check\epsilon\in\check \Phi^+, w\check\epsilon<0} (-e^{\check\epsilon}(\chi)) b_w^Z(\tilde\chi)$. Then:
\begin{itemize}
 \item If the root $\gamma$ is of type $G$ then 
\begin{equation}
 `b_w^Z(\tilde\chi) = - e^{\check\gamma'}(\chi) \frac{1-q^{-1}e^{-\check\alpha}}{1-q^{-1}e^{-w\check\alpha}}(\chi)
\end{equation}
where $\check\gamma'=w_1^{-1}\check\beta$. Moreover, we have that $\check\alpha|_{\varchi(\ZZ)}=\check\gamma'|_{\varchi(\ZZ)}$. (Recall that as an element of $\varchi(\ZZ)^*$, this does not depend on the choice between $\beta$ and ${\tilde\beta}$, and similarly $e^{\check\gamma'}(\chi)$ does not depend on this choice for characters $\chi$ of $A$ for which $\Delta_{\tilde\chi}^Z$ is defined.)

 \item If the root $\gamma$ is of type $T$ split, then
\begin{equation}\label{STATEMENTT-alt}
 `b_w^Z(\tilde\chi) = - e^{\check\gamma'}(\chi) \frac{1-q^{-1}e^{-\check\alpha}}{1-q^{1-\left<\check\rho,\gamma'\right>}e^{\check\alpha}} \frac{1-q^{1-\left<\check\rho,\gamma'\right>}e^{w\check\alpha}}{1-q^{-1}e^{-w\check\alpha}}(\chi)
\end{equation}
where $\gamma'=w_1^{-1}\beta$, $\check\gamma'=w_1^{-1}\check\beta$. Moreover, in this case $\gamma'\in\varchi(\ZZ)$ and $\left<\check\alpha,\gamma'\right>=1$.
 
 \item If the root $\gamma$ is of type $T$ non-split, then
\begin{equation}\label{STATEMENTTnonsplit-alt}
 `b_w^Z(\tilde\chi) = - e^{\check\gamma'}(\chi) \frac{1-q^{-1}e^{-\check\alpha}}{1+q^{1-\left<\check\rho,\gamma'\right>}e^{\check\alpha}} \frac{1+q^{1-\left<\check\rho,\gamma'\right>}e^{w\check\alpha}}{1-q^{-1}e^{-w\check\alpha}}(\chi)
\end{equation}
where $\gamma'=w_1^{-1}\beta$, $\check\gamma'=w_1^{-1}\check\beta$. Moreover, in this case $\gamma'\in\varchi(\ZZ)$ and $\check\alpha|_{\varchi(\XX)}=\frac{\check\gamma'}{2}$.
\end{itemize}
\end{proposition}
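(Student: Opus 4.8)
The statement is a purely rank-one-and-two computation, carried out along a Brion path, so the strategy is to reduce to the individual ``bricks'' from which the path is glued and then propagate the formula downward along the path. First I would set up the bookkeeping: a Brion path $\mathfrak g$ corresponding to $w_\gamma$ is a concatenation $e_1\cdots e_n E_1 E_2 e_n\cdots e_1$, and by the cocycle relation \eqref{cocycle} (equivalently \eqref{Bw} combined with Theorem \ref{thmSa2}) the constant $`b_{w_\gamma}^{\mathring X}$ factors as an alternating product over the edges of the path, each factor being one of the rank-one functional equations computed in Theorem \ref{fetheorem} (types $U$, $(U,\psi)$, $T$ split, $T$ non-split). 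The type-$U$ edges $e_1,\dots,e_n$ contribute the factors $-e^{-\check\alpha}\tfrac{1-q^{-1}e^{-\check\alpha}}{1-e^{-\check\alpha}}$ (lowering) and $-e^{-\check\alpha}\tfrac{1-e^{\check\alpha}}{1-q^{-1}e^{\check\alpha}}$ (raising) of Case $U$, which must be shown to telescope: the raising factor on the ``way up'' and the lowering factor on the ``way down'' are inverse to each other after the intervening reflection, so the whole product collapses to the bottom factor $w_2$ conjugated up by $w_1=w_{\alpha_n}\cdots w_{\alpha_1}$, together with the extra prefactor $\prod_{\check\alpha\in\check\Phi^+,\,w_\gamma\check\alpha<0}(-e^{\check\alpha})$ that turns the lowercase $b$ into the uppercase $B$. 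This telescoping is essentially a formal manipulation with the product $\prod_{\check\alpha>0,\,w^{-1}\check\alpha<0}$ and the identity $\prod_{\check\alpha>0,\,w^{-1}\check\alpha<0}e^{\check\alpha}(\chi)=e^{\check\rho-w\check\rho}(\chi)$ already used for Theorem \ref{maintheorem}.

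Second, the bottom brick: I would reduce $\mathring\XX\PP_\gamma/\mathcal R(\PP_\gamma)$ to a rank-$\le 2$ spherical variety (Lemma \ref{rankatmosttwo} guarantees rank $\le 2$), and then invoke Proposition \ref{Brionapplies} to get a Brion path whose bottom piece $\ZZ\PP_{\delta\epsilon}/\mathcal R(\PP_{\delta\epsilon})$ appears on the explicit list of Proposition \ref{gluedfrom}. For each variety on that list one must carry out the direct computation of $`b^Z_w$; this is exactly the content of Proposition \ref{gluedfrom2}, so the logical skeleton is: Proposition \ref{gluedfrom2} $\Rightarrow$ Proposition \ref{gluedfrom} (by the telescoping argument above, iterated up the path node by node), and then Proposition \ref{ranktwo} (which says that any Brion path can be replaced by one with the same first edge that is glued from the listed varieties, using the classification of rank-$\le 2$ wonderful varieties, Theorem \ref{AW}) closes the loop and yields Theorem \ref{statementtrue}. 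The exponents $-\langle\check\theta,\rho_{P(X)}\rangle$ and $\langle\check\theta,\rho_{P(X)}\rangle-\langle\check\rho,\gamma\rangle$ are read off by comparing $\delta_{P(X)}$, $\delta$ and $\nu$ on the relevant one-parameter subgroups; here one uses $\delta_{P(X)}\in A_X^*$ \eqref{deltapx} and the description of $\delta_{B_y}$ via $\LL(\XX)\cap\UU$ to translate the abstract exponents in Theorem \ref{fetheorem} (which involve $\delta_{(U_{P_\alpha})_\xi}$ and $\nu$) into the half-integers $\langle\check\theta,\rho_{P(X)}\rangle$ etc.

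Third, I would verify that $\check\theta$, the valuation attached to the codimension-one orbit of the path, is computed correctly in each type. For type $G$ one shows $\check\theta=\check\gamma$ because the wonderful $\PPGL_2\backslash\PPGL_2\times\PPGL_2$ has a single color whose valuation is $\check\gamma$; for types $T$ one uses the relations $\check v_D=-{}^{w_\alpha}\check v_{D'}$, $\check v_D+\check v_{D'}\equiv\check\alpha$ from \S\ref{ssferesult}, which give $\langle\check\theta,\gamma\rangle=1$ (split) resp.\ $\check\theta=\check\gamma/2$ (non-split, where there is a single color and it is not defined over $k$); for type $(U,\psi)$ one has $\check\theta=\check\gamma$ and the functional equation is trivial, so $B_{w_\gamma}=-e^{\check\gamma}(\chi)$ directly from Case $(U,\psi)$ of Theorem \ref{fetheorem}. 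The proportionality of $\check\alpha|_{\varchi(\ZZ)}$ to $\check\gamma'|_{\varchi(\ZZ)}$ in Proposition \ref{gluedfrom2} follows because, after descending to $\ZZ\PP_\gamma/\mathcal R(\PP_\gamma)$, both valuations become the valuation of the unique relevant color in the rank-$\le 2$ picture.

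\textbf{Main obstacle.} The genuinely hard part is not the telescoping (which is formal) but the case-by-case computation underlying Proposition \ref{gluedfrom2}: for each of the seven listed rank-two spherical varieties one must identify the geometry explicitly (the colors, the stabilizers $\HH_\alpha$, the modular characters $\delta_{(U_{P_\alpha})_\xi}$, and which Weyl-group element $w_2$ is of which type), plug these into the rank-one functional equations of Theorem \ref{fetheorem}, and check that the resulting product matches the claimed closed form \eqref{STATEMENTG}--\eqref{STATEMENTPSI} --- in particular that the somewhat delicate factor $\tfrac{1-q^{-1}e^{-\check\alpha}}{1-q^{1-\langle\check\rho,\gamma'\rangle}e^{\check\alpha}}\cdot\tfrac{1-q^{1-\langle\check\rho,\gamma'\rangle}e^{w\check\alpha}}{1-q^{-1}e^{-w\check\alpha}}$ emerges with the correct exponent $1-\langle\check\rho,\gamma'\rangle$ from the interplay of $\check v_D$, $\delta$, $\nu$ and $\delta_{(U_{P_\alpha})_\xi}$ on the one-dimensional torus $e^{\check v_D}(\Gm)$. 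A secondary difficulty is Proposition \ref{ranktwo}: showing that \emph{every} Brion path can be replaced, keeping the first edge, by one glued from the privileged list requires knowing the rank-two wonderful varieties well enough (via the Wasserman/Bravi--Luna tables, Theorem \ref{AW}) to exhibit the alternative path concretely, and the classical-groups hypothesis is exactly what makes this finite check feasible (the $\GG_2$ case is excluded for this reason).
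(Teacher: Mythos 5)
Your overall architecture (induction along the Brion path, reduction of the bottom brick to a rank-$\le 2$ variety, case-by-case verification on the privileged list) matches the paper's, and you correctly locate much of the labour in the explicit rank-two computations. But there is a genuine gap at the central step of Proposition \ref{gluedfrom2} itself: the cancellation along the ladder of type-$U$ edges is \emph{not} a formal manipulation, and consecutive raising/lowering factors are \emph{not} inverses of one another. In the paper's induction on the node $\ZZ$, descending along the edge labelled $\alpha$ to $\VV$ and coming back up contributes the factors $\frac{1-q^{-1}e^{-\check\alpha}}{1-e^{-\check\alpha}}(\chi)$ and $\frac{1-e^{-w\check\alpha}}{1-q^{-1}e^{-w\check\alpha}}(\chi)$; the spurious terms $1-e^{-\check\alpha}$ and $1-e^{-w\check\alpha}$ cancel not against each other but against factors of $`b^V_{w_\alpha w w_\alpha}({}^{w_\alpha}\tilde\chi)$ coming from one level down, and this cancellation holds only because of the identity \eqref{inductionstep}, namely $e^{-\check\alpha}(\chi)=q^{-1}e^{-w_\alpha\check\beta}(\chi)$ on the characters for which $\Delta^Z_{\tilde\chi}$ is defined. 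That identity is a geometric fact about the rank-two variety $\VV\PP_{\alpha\beta}/\mathcal R(\PP_{\alpha\beta})$ relating the restrictions of $\check\alpha$ and $\check\beta$ to $\varchi(\VV)$; it is precisely what has to be checked on each glue variety, and it is the whole reason the hypothesis restricts the gluing bricks to the given list. Your proposal never states it.

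Moreover, your claim that the product "collapses to the bottom factor $w_2$ conjugated up by $w_1$" by formal telescoping would produce the wrong answer: each application of \eqref{inductionstep} shifts the relevant $q$-exponent by one, so the exponent in the final formula grows with the length of the path below $\ZZ$ --- compare the $q^{-n/2}$ in the formula for $\GGL_n\backslash\SSL_{n+1}$ in \S\ref{ssglsl}, or the exponent $1-\left<\check\rho,\gamma'\right>$ in \eqref{STATEMENTT-alt}, which depends on $w_1$ through $\gamma'=w_1^{-1}\beta$. A purely formal telescoping would yield an exponent independent of the path length. To repair the argument you should (i) set up the induction on the length of the path below $\ZZ$, with base case Example \ref{groupcase} for type $G$ and, for types $T$, the node immediately above the bottom of each listed variety; and (ii) isolate \eqref{inductionstep} as the single identity to be verified on each brick --- that verification, together with the base cases, is where the case-by-case work genuinely lives, rather than in a "direct computation of $`b^Z_w$" for the listed varieties as you describe it.
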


Let us first see why this proves Proposition \ref{gluedfrom}:

\begin{proof}[Proof of Proposition \ref{gluedfrom}]
  First of all, if $(\mathring \XX,\alpha)$ is of type $T$ split, then setting $\delta_{(U_{P_\alpha})_\xi}=\delta_{(X)}$ and substituting $\tilde\chi$ by $\delta_{(X)}^\frac{1}{2}\tilde\chi$ in formula (\ref{tsplitetaunramified}) we get: $$e^{\check v_D}(\delta_{(X)}^\frac{1}{2}\tilde\chi\delta^\frac{1}{2}\delta_{(X)}^{-1})=e^{\check v_D}(\tilde\chi\delta_{P(X)}^\frac{1}{2}),$$ and similarly for $\check v_{D'}$. Given the property that $B_{w_\alpha}(\delta_{P(X)}^\frac{1}{2})=0$ (by Lemma \ref{omegatrivial}),
it follows that $e^{\check v_D}(\delta_{P(X)})=q^{-1}$ for a suitable naming of $\DD, \DD'$, and the fact that $\check v_{D'}+\check v_D \equiv \check\alpha$ (the symbol $\equiv$ stands to remind that we are talking about the image of $\check \alpha$ in $\varchi(\XX)^*$) and $e^{\check\alpha}(\delta_{P(X)})=e^{\check\alpha}(\delta)=q^{-2}$ implies that the same is true for $\check v_{D'}$ in place of $\check v_D$. This verifies (\ref{STATEMENTT}) in this case. (In case we are considering a line bundle $\mathcal L_\Psi$ over $X$, 
we still have $B_{w_\alpha}(\delta_{P(X)}^\frac{1}{2})=0$ by using, for instance, the variety $\XX$ without the character $\Psi$.)

Similarly, if $(\mathring \XX,\alpha)$ is of type $T$ non-split, then (\ref{STATEMENTTnonsplit}) follows from (\ref{tunramifiedetaunramified}) and if $(\mathring \XX,\alpha)$ is of type $(U,\psi)$ then (\ref{STATEMENTPSI}) follows from (\ref{casepsi}).

In all other cases, if $\alpha$ is the label of the first edge in the path $\mathfrak g$ then $(\mathring \XX, \alpha)$ is of type $U$ and the last proposition applies. Notice that $\varchi(\XX)\perp \Delta(\XX)$, therefore $e^{\check\gamma}(\chi\delta_{(X)}^\frac{1}{2})=e^{\check\gamma}(\chi)$. From the fact that $\delta_{P(X)}\in A_X^*$ we get in case of a root of type $G$ that: $e^{\check\alpha}(\delta_{(X)}^\frac{1}{2})= e^{\check\alpha}(\delta^\frac{1}{2} \delta^{-\frac{1}{2}}_{P(X)})=q^{-1} e^{\check\gamma}(\delta_{P(X)}^{-\frac{1}{2}})$ and this proves (\ref{STATEMENTG}). Finally, (\ref{STATEMENTT}) and (\ref{STATEMENTTnonsplit}) follow from (\ref{STATEMENTT-alt}), resp.\ (\ref{STATEMENTTnonsplit-alt}), because ${\check\theta}\equiv \check\alpha$ and $e^{\check\alpha}(\delta_{(X)}^\frac{1}{2}\delta_{P(X)}^\frac{1}{2})=q^{-1}$.
\end{proof}

\begin{proof}[Proof of Proposition \ref{gluedfrom2}]
 We prove it by induction on the dimension of $\ZZ$. We start with the case $G$. First of all, if $\ZZ$ is equal to $\YY$ and $\alpha=\beta,\tilde\beta$ are the two orthogonal roots lowering $\ZZ$ to another orbit then we have computed in Example \ref{groupcase} that in this case $\check\gamma'\equiv\check\alpha$ and:
\begin{equation*}
 `b^Z_w(\tilde\chi)= - e^{\check\alpha} \frac{1-q^{-1} e^{-\check\alpha}}{1-q^{-1} e^{\check\alpha}}(\chi).
\end{equation*}

Now assume that $\ZZ$ is lowered by $\alpha$ to some vertex $\VV$ of $\mathfrak g$, and let $\beta$ be an edge lowering $\VV$. By the induction hypothesis, $$`b^V_{w_\alpha w w_\alpha}(^{w_\alpha}\tilde\chi)= - e^{w_\alpha\check\gamma'} \frac{1-q^{-1} e^{-\check\beta}}{1-q^{-1} e^{\check\beta}}(^{w_\alpha}\chi).$$ Using our functional equations for roots of type $U$, we compute:
$$ `b^Z_w(\tilde\chi)= - e^{\check\gamma'} \frac{1-q^{-1} e^{-\check\alpha}}{1- e^{-\check\alpha}}(\chi) \cdot \frac{1-q^{-1} e^{-w_\alpha\check\beta}}{1-q^{-1} e^{w_\alpha\check\beta}}(\chi) \cdot \frac{1-e^{-w\check\alpha}}{1-q^{-1}e^{-w\check\alpha}}(\chi).$$

To complete the proof in this case, it suffices to prove that if $\VV\PP_{\alpha\beta}/\mathcal R(\PP_{\alpha\beta})$ is among the list of varieties in the assumptions of the proposition, then: 
\begin{equation}\label{inductionstep}
 e^{-\check\alpha}(\chi)=q^{-1}e^{-w_\alpha\check\beta}(\chi)
\end{equation}

This will be proved via a \emph{case-by-case} computation below. Finally, since by the induction assumption we have $\check\beta|_{\varchi(\VV)} = w_\alpha\check\gamma'|_{\varchi(\VV)}$ and $\varchi(\ZZ)=w_\alpha \varchi(\VV)$ it follows from (\ref{inductionstep}) that $\check\alpha|_{\varchi(\ZZ)}=\check\gamma'|_{\varchi(\ZZ)}$.

In cases $T$ split and non-split, we will see in the \emph{case-by-case} analysis that the statement of Proposition \ref{gluedfrom2} is correct when $\ZZ=$the node $\VV$ of $\mathfrak g$ which is immediately higher than the lowest node.
 Now, if the hypothesis is satisfied by a node $\VV$ with $\beta$ a root lowering $\VV$ and $\alpha$ a root raising it to a node $\ZZ$, then (in the case $T$ split, the non-split case being completely analogous):
\begin{eqnarray*} `b^Z_w(\tilde\chi)=  \frac{1-q^{-1} e^{-\check\alpha}}{1- e^{-\check\alpha}}(\chi) \cdot  (- e^{w_\alpha\check\gamma'}) \frac{1-q^{-1}e^{-\check\beta}}{1-q^{1-\left<\check\rho,w_\alpha\gamma'\right>}e^{\check\beta}} \cdot \\
 \cdot  \frac{1-q^{1-\left<\check\rho,w_\alpha\gamma'\right>}e^{(w_\alpha ww_\alpha)\check\beta}}{1-q^{-1}e^{-(w_\alpha w w_\alpha)\check\beta}}(^{w_\alpha}\chi) \cdot \frac{1-e^{-w\check\alpha}}{1-q^{-1}e^{-w\check\alpha}}(\chi).
\end{eqnarray*}

Once again, it suffices to prove (\ref{inductionstep}). Indeed, then the factor ${1- e^{-\check\alpha}}(\chi)$ cancels ${1-q^{-1}e^{-\check\beta}}(^{w_\alpha}\chi)$. Moreover, since $w_\alpha\gamma'\in\varchi(\VV)\iff \gamma'\in\varchi(\ZZ)$ we get $\left<\gamma',\check\alpha\right> = \left<\gamma',w_\alpha\check\beta\right> = \left<w_\alpha\gamma',\check\beta\right>=1$ by (\ref{inductionstep}) and the induction hypothesis. By (\ref{inductionstep}) we also have $e^{-\check\alpha}(^{w^{-1}}\chi)=q^{-1}e^{-w_\alpha\check\beta}(^{w^{-1}}\chi)$ and therefore the factor ${1-e^{-w\check\alpha}}(\chi)$ cancels the factor ${1-q^{-1}e^{-(w_\alpha w w_\alpha)\check\beta}}(^{w_\alpha}\chi)$. Finally, we have $$q^{1-\left<w_\alpha\gamma',\check\rho\right>}e^{\check\beta}(^{w_\alpha}\chi) = q^{1-\left<\gamma',\check\rho\right>} q^{\left<\gamma',\check\alpha\right>}e^{w_\alpha\check\beta}(\chi) = q^{1-\left<\gamma',\check\rho\right>} e^{\check\alpha}(\chi)$$ since $\gamma'\in\varchi(\ZZ)$ and hence $\left<\check\alpha,\
gamma'\right>=\left<\frac{\check\gamma}{2},\gamma\right>=1$. This completes, up to the case-by-case analysis that follows, the proof of the proposition.
\end{proof}

Now we study one-by-one the ``simple'' varieties of Proposition \ref{gluedfrom}, in order to establish the remaining points of its proof. More precisely, let $\ZZ$ be an orbit in a given path $\mathfrak g$ and $\alpha,\beta$ two simple roots such that $\ZZ\PP_{\alpha\beta}/\mathcal R(\PP_{\alpha\beta})$ is one of the varieties listed in Proposition \ref{gluedfrom}. Let $\HH_{\alpha\beta}$ be the stabilizer mod $\UU_{P_{\alpha\beta}}$ of a point on $\ZZ\PP_{\alpha\beta}$ (hence, by definition, $\HH_{\alpha\beta}$ is a subgroup of the reductive quotient $\LL_{\alpha\beta}$), and set  $\HH'=[\HH_{\alpha\beta},\HH_{\alpha\beta}]$ and $\GG'=[\LL_{\alpha\beta},\LL_{\alpha\beta}]$. We can prove the remaining statements of Proposotion \ref{gluedfrom2} by looking at the variety $\HH'\backslash\GG'$; the latter may not be spherical for $\GG'$, in which case we will treat it as a spherical variety for $\GG'\,\,\times $ the connected component of $(\mathcal N_{\LL_{\alpha\beta}}(\HH_{\alpha\beta}) \cap \GG')/\HH'$.

Recall that the formulas of section \ref{secfe} (and in particular (\ref{tsplitetaunramified})) do not depend just on the space $\HH'\backslash\GG'$ but also on some modular characters of subgroups of $\PP_{\alpha\beta}$. Fortunately, however, it will turn out that all coweights which appear in our computation are in the span of the coroots of $\GG'$, thus rendering irrelevant the difference between those subgroups of $\PP_{\alpha\beta}$ and their image in $\LL_{\alpha\beta}$.

\begin{remark}
 Assume that $(\ZZ,\alpha)$ is of type $T$ split, with $\DD,\DD'$ the divisors of smaller rank in $\ZZ\PP_\alpha$. While $\check v_D$ is an element of $\varchi(\ZZ)^*$, it will be convenient in the computations which follow to choose a lift of $\check v_D$ to $\Hom(\varchi(\AA),\QQ)$ -- to be denoted again by $\check v_D$ (and similarly for $\check v_{D'}$). This will not affect the computation of $e^{\check v_D}(\tilde\chi\nu \delta^{\frac{1}{2}}\delta_{\left(U_{P_\alpha}\right)_\xi}^{-1})$ of (\ref{tsplitetaunramified}) since, as we already remarked, $\tilde\chi\nu \delta^{\frac{1}{2}}\delta_{\left(U_{P_\alpha}\right)_\xi}^{-1}\in A_Z^*$, but it allows us to split it into a product of factors, for instance we will be able to evaluate $e^{\check v_D}(\delta^{\frac{1}{2}}\delta_{\left(U_{P_\alpha}\right)_\xi}^{-1})$. Moreover, we will for simplicity replace the symbol $\equiv$ by $=$, since it will be clear that whenever a simple co-root $\check\alpha$ appears we mean its image in $\mathcal Q$.
\end{remark}

\subsection{$\SSL_2\backslash\SSL_3$.}

Below is Knop's graph for the orbits of maximal rank:

\entrymodifiers={++[o][F-]}
\xymatrix{
*{} & *{} & *{} & Z \ar@{-}[dl]_{\alpha_1,U} \ar@{-}[dr]^{\alpha_2,U} \\ 
*{} & *{} & \ar@{-}@(d,l)[]^{\alpha_2,T}  & *{} & \ar@{-}@(d,l)[]^{\alpha_1,T} & *{} &  *{} \\
}\noindent

We will use the path on the left to compute the coefficient $`b^Z_{w_\gamma}$. It is easy to compute that for the orbits $D,D'$ corresponding to the bottom-left reflection of type $T$ we have: $\check v_D=-\check\alpha_1, \check v_{D'}=\check\alpha_1+\check\alpha_2$, $e^{\check v_D}(\delta^\frac{1}{2}\delta_{(U_{P_{\alpha_2}})_\xi}^{-1})= q^{-1}, e^{\check v_{D'}}(\delta^\frac{1}{2}\delta_{(U_{P_{\alpha_2}})_\xi}^{-1})=1$. Notice that, because $\check v_D\ne\check v_{D'}$, the spherical root of this variety is necessarily of type $T$ split. Therefore, we get:
$$`b_{w}^Z(\chi) = \frac{1-q^{-1}e^{-\check\alpha_1}}{1-e^{-\check\alpha_1}}(\chi) \cdot $$ $$\cdot
(-e^{\check\alpha_2})\frac{1-e^{\check\alpha_1}}{1-q^{-1}e^{-\check\alpha_1}} \cdot  \frac{1-q^{-1}e^{-\check\alpha_1-\check\alpha_2}}{1-e^{\check\alpha_1+\check\alpha_2}}(^{w_{\alpha_1}}\chi) \cdot 
 \frac{1-e^{-w\check\alpha}}{1-q^{-1}e^{-w\check\alpha}}(\chi)$$
$$=-e^{\check\alpha_1+\check\alpha_2} \frac{1-q^{-1}e^{-\check\alpha_1}}{1-q^{-1}e^{\check\alpha_1}}\frac{1-q^{-1}e^{-\check\alpha_2}}{1-q^{-1}e^{\check\alpha_2}}(\chi).$$
This agrees with the statement of Proposition \ref{gluedfrom2}.

\subsection{$\SSp_2\backslash\SSp_4$.}\label{sp4modsp2}

This is a spherical variety for the group $\Gm\times\SSp_4$. (Of course, $\SSp_2=\SSL_2$, but we write $\SSp_2$ to emphasize the way it embeds -- namely, into the stabilizer of a 2-dimensional symplectic subspace.) Here is Knop's graph for orbits of maximal rank:

\xymatrix{
*{} & *{} & *{} &  \ZZ \ar@{-}@(u,r)[]^{\beta,T} \ar@{-}[d]_{\alpha,U}  \\ 
*{} & *{}  & *{} & \ar@{-}@(d,l)[]^{\beta,T} & *{} & *{} & *{} \\
}\noindent
where $\alpha$ is the long root and $\beta$ is the short one.

We are interested in the spherical root $\alpha+\beta$, since the spherical root $\beta$ belongs to the type $\TT\backslash\SSL_2$. One computes that if $D,D'$ are the orbits of co-rank one corresponding to the reflection of type $T$ at the bottom, then $\check v_D=-\check\alpha$, $\check v_{D'}=\check\alpha+\check\beta$, $e^{\check v_D}(\delta^\frac{1}{2}\delta_{U_{(P_{\beta}})_\xi}^{-1})=q^{-1}$, $e^{\check v_{D'}}(\delta^\frac{1}{2}\delta_{U_{(P_{\beta}})_\xi}^{-1})=1$. Again, for the same reason as above, the spherical root is necessarily of type $T$ split. Therefore:$$ `b_w^Z(\chi) = \frac{1-q^{-1}e^{-\check\alpha}}{1-e^{-\check\alpha}}(\chi) \cdot $$ $$\cdot
(-e^{\check\beta})\frac{1-e^{\check\alpha}}{1-q^{-1}e^{-\check\alpha}} \cdot  \frac{1-q^{-1}e^{-\check\alpha-\check\beta}}{1-e^{\check\alpha+\check\beta}}(^{w_{\alpha}}\chi) \cdot 
 \frac{1-e^{-w\check\alpha}}{1-q^{-1}e^{-w\check\alpha}}(\chi)=$$
$$=-e^{2\check\alpha+\check\beta} \frac{1-q^{-1}e^{-\check\alpha}}{1-q^{-1}e^{\check\alpha}}\frac{1-q^{-1}e^{-\check\alpha-\check\beta}}{1-q^{-1}e^{\check\alpha+\check\beta}}(\chi)$$
which agrees with Proposition \ref{gluedfrom2}.

\subsection{$\Ga\times\SSp_2\backslash\SSp_4$.}

Here the subgroup $\Ga$ is the unipotent radical of the ``second'' $\SSp_2$ inside $\SSp_4$. Knop's graph is as follows in this case:

\xymatrix{
*{} & *{} & *{} &  \ZZ \ar@{-}[dr]^{\beta,U} \ar@{-}[d]_{\alpha,U}  \\ 
*{} & *{} & *{} & \ar@{-}@(d,l)[]^{\beta,T} & \ar@{-}[d]^{\alpha,U} & *{} & *{} \\
*{} & *{} & *{} &           *{}         &  \ar@{-}@(d,r)[]_{\beta,T}\\
}\noindent
where $\alpha$ is the long root and $\beta$ is the short one.

The left-most path is identical to the one of the previous example, including the same values for $\check v_D, e^{\check v_D}(\delta^\frac{1}{2}\delta_{U_{(P_{\alpha_2}})_\xi}^{-1})$ etc.,  whence again:
$$ `b_w^Z(\chi) = -e^{2\check\alpha+\check\beta} \frac{1-q^{-1}e^{-\check\alpha}}{1-q^{-1}e^{\check\alpha}}\frac{1-q^{-1}e^{-\check\alpha-\check\beta}}{1-q^{-1}e^{\check\alpha+\check\beta}}(\chi).$$
The path on the right does not correspond to a simple reflection $w_\gamma$, because it defines a decomposition of $w_\gamma$ which is longer than the length of $w_\gamma$.

\subsection{$\SSp_2\times\SSp_2\backslash\SSp_4$.}\label{sp4modsp2xsp2}

Here we have two cases over $k$, namely: 
\begin{itemize}
 \item  the variety $\SSp_2\times\SSp_2\backslash\SSp_4$ and 
 \item the variety $(\operatorname{Res}_{E/k}\SSp_2)\backslash\SSp_4$ where $E/k$ is a quadratic field extension and $\operatorname{Res}_{E/k}$ denotes restriction of scalars. 
\end{itemize}
Here is Knop's graph for orbits of maximal rank:

\xymatrix{
*{} & *{} & *{} &  \ZZ \ar@{-}@(u,r)[]^{\beta,G} \ar@{-}[d]_{\alpha,U}  \\ 
*{} & *{}  & *{} & \YY \ar@{-}@(d,l)[]^{\beta,T} & *{} & *{} & *{} \\
}\noindent
where $\alpha$ is the long root and $\beta$ is the short one. The reflection of type $T$ at the bottom is of split type in the first case and of non-split type in the second.

Again, for the divisors of co-rank one at the bottom we have: $\check v_D=-\check\alpha$, $\check v_{D'}=\check\alpha+\check\beta$, $e^{\check v_D}(\delta^\frac{1}{2}\delta_{U_{(P_{\beta}})_\xi}^{-1})=q^{-1}$, $e^{\check v_{D'}}(\delta^\frac{1}{2}\delta_{U_{(P_{\beta}})_\xi}^{-1})=1$. Therefore, as before we have in the split case:

$$ `b_w^Z(\chi) = -e^{2\check\alpha+\check\beta} \frac{1-q^{-1}e^{-\check\alpha}}{1-q^{-1}e^{\check\alpha}}\frac{1-q^{-1}e^{-\check\alpha-\check\beta}}{1-q^{-1}e^{\check\alpha+\check\beta}}(\chi).$$
which agrees with Proposition \ref{gluedfrom2}.

Notice that here $\chi=e^\frac{\beta}{2} \chi'$ with $\chi'\in A_Z^*$ and $\check\alpha\equiv\check\alpha+\check\beta=\frac{\check\gamma'}{2}$ on $\varchi(\ZZ)$, therefore the above can also be written:
$$`b_w^Z(\chi) = -e^{\check\gamma'} \frac{1-q^{-\frac{3}{2}}e^{-\frac{\check\gamma'}{2}}}{1-q^{-\frac{3}{2}}e^{\frac{\check\gamma'}{2}}}\frac{1-q^{-\frac{1}{2}}e^{-\frac{\check\gamma'}{2}}}{1-q^{-\frac{1}{2}}e^{\frac{\check\gamma'}{2}}}(\chi').$$

Similarly, in the non-split case we will have:
$$`b_w^Z(\chi) = -e^{\check\gamma'} \frac{1-q^{-\frac{3}{2}}e^{-\frac{\check\gamma'}{2}}}{1-q^{-\frac{3}{2}}e^{\frac{\check\gamma'}{2}}}\frac{1+q^{-\frac{1}{2}}e^{-\frac{\check\gamma'}{2}}}{1+q^{-\frac{1}{2}}e^{\frac{\check\gamma'}{2}}}(\chi').$$

\subsection{Horospherical varieties.}

For the varieties $\PP_\alpha\backslash\SSL_3$ or $\PP_\alpha\backslash\SSp_4$ of Proposition \ref{gluedfrom}, the graphs look as follows: 

\indent \xymatrix{
\ar@{-}@(u,r)[]^{\alpha,G} \ar@{-}[d]_{\beta,U} \\ \ar@{-}[d]_{\alpha,U} & *{}\\ \ar@{-}@(d,r)[]_{\beta,G}& *{}  }
and 
\xymatrix{
\ar@{-}@(u,r)[]^{\alpha,G} \ar@{-}[d]_{\beta,U} \\ \ar@{-}[d]_{\alpha,U}& *{} \\ \ar@{-}[d]_{\beta,U}& *{}\\ \ar@{-}@(d,r)[]_{\alpha,G}& *{} }, respectively. 
\\The validity of (\ref{inductionstep}) at every intermediate node is easily verified.

\subsection{Spherical varieties of rank at most two.}

We now come to the proof of Proposition \ref{ranktwo}. The starting point is the classification of wonderful varieties of rank one by Akhiezer \cite{Ak} and of rank two by Wasserman \cite{Wa}.

\begin{theorem}\label{AW}
 Below is a complete list of the isomorphism classes of homogeneous spherical varieties $\XX'=\HH'\backslash \GG'$ over $\bar k$ with the following properties:
\begin{itemize}
 \item $\GG'$ is semisimple, simply connected, and $\HH'$ is equal to the connected component of its normalizer.
 \item There is a spherical root $\gamma$ whose support is the set of simple roots of $\GG'$.
 \item There are no spherical roots of type $N$.
\end{itemize}
(For simplicity and since everything is over the algebraic closure, we do not use boldface letters in what follows. We also write $k^n$ to denote an $n$-dimensional vector space.)
$$\begin{array}{ccc}
 \GG' && \HH' \\
&\text{Rank one:}\\
\SL_{n+1} && \GL_n \\
\SL_2\times \SL_2 && \SL_2^\diag \\
\SL_4 && \Sp_4 \\
\Spin_{2n+1} && \Spin_{2n} \\
\Spin_{2n+1} && (\SL_n\times\Gm)\ltimes \wedge^2 k^n \\
\Spin_7 && G_2 \\
\Sp_{2n} && \Sp_2\times\Sp_{2n-2} \\
\Sp_{2n} && (\Gm\ltimes k)\times\Sp_{2n-2} \\
\Spin_{2n} && \Spin_{2n-1} \\
F_4 && \Spin_9 \\
G_2 && \SL_3 \\
G_2 && (\Gm\times\SL_2)\ltimes (k^1\oplus k^2)\\
&\text{Rank two:} \\
\Spin_9 && \Spin_7\\
\Sp_{2n} && \Gm\times\Sp_{2n-2}\\
G_2 && (\Gm\times\SL_2)\ltimes k^2.
\end{array}$$
(In the case of $G'=\Spin_{2n}$ and $H'=\Spin_{2n-1}$, $n=4$, this includes the ``non-obvious'' embedding of $\Spin_7$ obtained from the ``obvious'' one by applying the outer automorphism of $\Spin_8$. The spherical subgroup $\Spin_7\hookrightarrow\Spin_9$ is not inside of a Levi subgroup, but obtained via this ``non-obvious'' map to $\Spin_8$.)
\end{theorem}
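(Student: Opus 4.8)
The plan is to deduce Theorem \ref{AW} from the existing classifications of low-rank wonderful varieties — Akhiezer's \cite{Ak} in rank one and Wasserman's \cite{Wa} in rank two — by extracting exactly the sublists cut out by the three stated conditions. Recall first that a homogeneous spherical variety $\XX'=\HH'\backslash\GG'$ with $\HH'$ equal to the connected component of its own normalizer admits a wonderful compactification $\overline{\XX'}$, whose $\GG'$-stable prime boundary divisors are in canonical bijection with the spherical roots of $\XX'$, the number of them being the rank. Thus ``rank one'' and ``rank two'' in the statement mean precisely ``one'', resp.\ ``two'', boundary divisors, and these are exactly the objects tabulated in \emph{loc.\ cit.} (up to replacing the adjoint group used there by its simply connected cover, which does not change the variety).

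For the rank-one part I would run through Akhiezer's table and keep only the entries for which the unique spherical root $\gamma$ has $\supp(\gamma)=\Delta(\GG')$; equivalently, in the notation of \S\ref{ssinvariants}, those with $\PP(\XX')=\GG'$, i.e.\ $\Delta(\XX')=\emptyset$. This is a finite inspection. Among the survivors one then discards those whose spherical root is of type $N$. The type-$N$ condition is the combinatorial shadow of $\mathring\XX\PP_\alpha/\mathcal R(\PP_\alpha)\simeq\mathcal N(\TT)\backslash\PPGL_2$ in the relevant direction: $\gamma$ is of type $N$ precisely when $\gamma$ is a root of $\GG'$ with $\gamma\notin\varchi(\XX')$ but $2\gamma\in\varchi(\XX')$ (Luna's distinction between the ``$a$'' and ``$aa$''/``$b$'' cases), which can be read off directly from the lattice datum attached to each line of Akhiezer's list. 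What remains is the rank-one block of the displayed array.

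For rank two I would apply the same procedure to Wasserman's tables, with one extra filter: Wasserman classifies \emph{all} rank-two wonderful varieties, but most of these have both spherical roots supported on proper subsets of $\Delta(\GG')$ (they are ``glued'' from lower-rank pieces, so $\PP(\XX')\ne\GG'$), whereas here we want only those admitting a spherical root $\gamma$ with $\supp(\gamma)=\Delta(\GG')$ — this is exactly the content needed for Lemma \ref{rankatmosttwo} and Proposition \ref{Brionapplies}. Imposing this constraint, together with the no-type-$N$ requirement and with $\HH'$ being the full connected normalizer, leaves only the three entries shown: $\SSpin_9\supset\SSpin_7$, $\SSp_{2n}\supset\Gm\times\SSp_{2n-2}$, and $\GG_2\supset(\Gm\times\SSL_2)\ltimes k^2$. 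Two points need individual attention here: the $\GG_2$ entry, where Wasserman's normalization of the spherical roots has to be matched by hand against the conventions of \S\ref{ssrootsystem}; and the $\SSpin$ case, where one must single out the ``non-obvious'' embedding $\SSpin_7\hookrightarrow\SSpin_9$ obtained by precomposing $\SSpin_7\hookrightarrow\SSpin_8$ with a triality automorphism, as recorded in the parenthetical remark of the statement.

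The main obstacle is not mathematical depth but the reconciliation of conventions: Akhiezer and Wasserman each use their own normalization of spherical roots and of the eigencharacter lattice $\varchi(\XX')$, and the whole content of the proof is to translate those into the type $G$/$T$/$N$ language of Knop used throughout this paper — in particular to decide correctly, for each tabulated variety, which spherical roots are of type $N$ (hence excluded) and, in the borderline $\SSpin$ and $\GG_2$ cases, which embedding of $\HH'$ is meant. Once that dictionary is fixed, the rest is a mechanical, finite verification against the published tables.
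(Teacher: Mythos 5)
Your overall route — reduce to the published rank-one and rank-two classifications of Akhiezer and Wasserman, then filter by the support condition, the no-type-$N$ condition, and the normalizer condition, translating conventions as needed — is the same as the paper's, which simply cites inspection of the tables of \cite{Wa}. The convention-matching issues you flag (detecting type $N$ from the lattice datum, the triality-twisted $\Spin_7\hookrightarrow\Spin_8$) are exactly the right things to worry about in that inspection.

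However, there is a genuine gap: the theorem asserts a \emph{complete} list subject only to the three bullet points, with no a priori bound on the rank. Your proof inspects only the rank-one and rank-two tables, so as written it does not rule out a variety of rank three or more admitting a spherical root $\gamma$ with $\supp(\gamma)=\Delta(\GG')$. The paper closes this hole by invoking Lemma \ref{rankatmosttwo}: if some spherical root has full support, the rank is at most two. The proof of that lemma is not a tautology — it proceeds by passing to a wonderful compactification, degenerating to a non-open $\GG$-orbit whose associated normal-bundle variety is a rank-two spherical variety with a nontrivial torus of $\GG$-automorphisms and spherical roots $\gamma,\gamma'$, and then observing that Wasserman's tables contain no such variety. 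You cite Lemma \ref{rankatmosttwo} only as a downstream application of the theorem, whereas logically its content (or at least its degeneration argument) must be supplied \emph{before} you may restrict attention to the rank $\le 2$ tables. Adding that reduction step would make your argument complete.
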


\begin{proof}
If we add the extra condition ``the rank of $\XX'$ is at most two'', then it follows by inspection of the tables of \cite{Wa}. As we explained in Lemma \ref{rankatmosttwo}, the same tables imply that for any spherical root $\gamma$ the variety $\XX_\gamma$ cannot have rank larger than two.
\end{proof}

We now come to the proof of Proposition \ref{ranktwo}. Given a spherical variety $\XX$ and a Brion path $\mathfrak g$ for a simple spherical reflection $w_\gamma$, the labels of all edges in $\mathfrak g$ are simple roots belonging to the support of $\gamma$; indeed, this follows from the fact that the decomposition of $w_\gamma$ defined by $\mathfrak g$ is reduced. Therefore, for the proof of Proposition \ref{ranktwo} we may replace $\XX$ by the spherical variety $\mathring \XX \PP_\gamma/\mathcal R(\PP_\gamma)$, which, as we have seen (Lemma \ref{rankatmosttwo}), is of rank at most two. We may also replace the adjoint group of $\LL_\gamma$ by its simply connected cover, and therefore we have reduced the proof of the proposition to the spherical varieties $\XX'$ of the groups $\GG'$ of the above list (when $\GG'$ is a classical group).
Hence, it suffices to check that for each of those varieties, each normalized spherical root $\gamma$ thereof and every Brion path $\mathfrak g'$ corresponding to $w_\gamma$ there is a Brion path $\mathfrak g$ corresponding to $w_\gamma$ with the same first edge as $\mathfrak g'$ which is glued out of the spherical varieties of Proposition \ref{gluedfrom}. We notice that the spherical roots for the examples below can be read off from the tables of \cite{Wa}, and that for a quasi-affine spherical variety $\XX$ the set $\Delta(\XX)$ consists precisely of the simple roots of $\GG$ which are orthogonal to $\varchi(\XX)$ \cite[Lemma 3.1]{KnAs}. There is nothing to check for $\Gm\backslash\SSL_2$ and $\SSL_2\backslash\SSL_2\times\SSL_2$.

\subsection{$\GGL_n\backslash\SSL_{n+1}$, $n\ge 2$.}\label{ssglsl}

The normalized spherical root here is $\gamma=\alpha_1+\alpha_2+\dots+\alpha_n$, we have $\Delta(\XX)=\{\alpha_2,\alpha_3,\dots,\alpha_{n-1}\}$ and a path corresponding to it is the following:

\xymatrix{
*{} & *{} & *{} & \mathring X \ar@{-}[dl]_{\alpha_1,U} \ar@{-}[dr]^{\alpha_n,U} & *{}\\ 
*{} & *{} & \ar@{-}[dl]_{\alpha_2,U} \ar@{-}[dr]^{\alpha_n,U}& *{}& *{}& *{}& *{}\\
*{} & \ar@{-}[dr]^{\alpha_n,U} & *{}& *{}& *{}& *{}\\
Y \ar@{-}@{--}[ur] \ar@{-}@(d,l)[]^{\alpha_n,T}  & *{}& *{}& *{}\\
}\noindent
where the arrows not shown are reflections of type $G$.

Clearly, this path is glued from $\SSL_2\backslash\SSL_3$ (at the bottom) and $\PP_\alpha\backslash\SSL_3$. There is also a similar path starting with $\alpha_n$. We mention that here we have:

\begin{equation}
B_{w_\gamma}(\chi)= - e^{\check\gamma}  \frac{(1-q^{-\frac{n}{2}}e^{-\check\alpha_1})(1-q^{-\frac{n}{2}}e^{-\check\alpha_n})} {(1-q^{-\frac{n}{2}}e^{\check\alpha_1})(1-q^{-\frac{n}{2}}e^{\check\alpha_n})}(\chi) . 
\end{equation}

\subsection{$\SSp_4\backslash\SSL_4$.}

Here $\Delta(\XX)=\{\alpha_1,\alpha_3\}$ and the normalized spherical root is $\gamma=\alpha_1+2\alpha_2+\alpha_3$.
Knop's graph is as follows:

\xymatrix{
*{} & *{} & *{} & \mathring X \ar@{-}@(u,l)[]_{\alpha_1,G} \ar@{-}[d]_{\alpha_2,U} \ar@{-}@(u,r)[]^{\alpha_3,G} \\ 
*{} & *{} & *{} &  \ar@{-}@/_/[d]_{\alpha_1,U}  \ar@{-}@/^/[d]^{\alpha_3,U} & *{} & *{} &  *{}\\
*{} & *{}  & *{} & \ar@{-}@(d,l)[]^{\alpha_2,G} & *{} & *{} & *{} \\
}

Therefore, this path is glued from varieties of the form $\PPGL_2\backslash \PPGL_2\times\PPGL_2$ and $\PP_\alpha\backslash\SSL_3$. Here we have:
\begin{equation}
B_{w_\gamma}(\chi) = -e^{\check\gamma}\frac{1-q^{-2}e^{-\check\gamma}}{1-q^{-2}e^{\check\gamma}}(\chi).
\end{equation}

\subsection{$\SSpin_{2n}\backslash\SSpin_{2n+1}$.}

We denote by $\alpha_1,\dots,\alpha_{n-1}$ the long roots and by $\alpha_n$ the short one. The normalized spherical root is $\gamma=\alpha_1+\alpha_2+\dots+\alpha_n$, and $\Delta(\XX)=\{\alpha_2,\dots,\alpha_n\}$. A Brion path is the following:

\xymatrix{
*{} & *{} & *{} & \mathring X \ar@{-}[dl]_{\alpha_1,U} & *{}\\ 
*{} & *{} & \ar@{-}[dl]_{\alpha_2,U} & *{}& *{}& *{}& *{}\\
*{} & \ar@{-}[dr]_{\alpha_n,U}& *{}& *{}& *{}& *{}\\
Y \ar@{-}@{--}[ur]^{\alpha_{n-1},U} \ar@{-}@(d,l)[]^{\alpha_n,T}  & *{}& *{}& *{}\\
}\noindent

It is glued from $\SSp_2\times\SSp_2\backslash\SSp_4$ (at the bottom) and $\PP_\alpha\backslash\SSL_3$. We have:
\begin{equation}
B_{w_\gamma}(\chi)= - e^{\check\gamma}  \frac{(1-q^{-n}e^{-\frac{\check\gamma}{2}})(1-q^{-\frac{1}{2}}e^{-\frac{\check\gamma}{2}})} {(1-q^{-n}e^{\frac{\check\gamma}{2}})(1-q^{-\frac{1}{2}}e^{\frac{\check\gamma}{2}})}(\chi) . 
\end{equation}

\subsection{$(\SSL_n\times\Gm)\ltimes\wedge^2 k^n\backslash\SSpin_{2n+1}$.}

With notation as above, the normalized spherical root is the same as above (namely, $\gamma=\alpha_1+\dots+\alpha_n$) and $\Delta(\XX)=\{\alpha_2,\dots,\alpha_{n-1}\}$. A Brion path is the following:

\xymatrix{
*{} & *{} & *{} & \mathring X \ar@{-}[dl]_{\alpha_1,U}  \ar@{-}[dr]_{\alpha_n,U}& *{}\\ 
*{} & *{} & \ar@{-}[dl]_{\alpha_2,U}  \ar@{-}[dr]_{\alpha_n,U}& *{}& *{}& *{}& *{}\\
*{} & \ar@{-}[dr]_{\alpha_n,U}& *{}& *{}& *{}& *{}\\
Y \ar@{-}@{--}[ur]^{\alpha_{n-1},U} \ar@{-}@(d,l)[]^{\alpha_n,T}  & *{}& *{}& *{}\\
}\noindent

It is glued from $(\Gm\ltimes\Ga)\times\SSp_2\backslash\SSp_4$ (at the bottom) and $\PP_\alpha\backslash\SSL_3$. The edge $\alpha_n$ on the top-right cannot lead to $w_\gamma$ since $\alpha_n$ is orthogonal to $\gamma$ and hence the length of any such path would be larger than the length of $w_\gamma$.

Here we have:
\begin{equation}
B_{w_\gamma}(\chi)= - e^{\check\gamma}  \frac{(1-q^{-\frac{n}{2}}e^{-\check\alpha_1})(1-q^{-\frac{n}{2}}e^{-\check\alpha_1-\check\alpha_n})} {(1-q^{-\frac{n}{2}}e^{\check\alpha_1})(1-q^{-\frac{n}{2}}e^{\check\alpha_1+\check\alpha_n})}(\chi) . 
\end{equation}

\subsection{$\GG_2\backslash\SSpin_7$.}

The normalized spherical root is $\gamma=\alpha_1+2\alpha_2+3\alpha_3$, and $\Delta(\XX)=\{\alpha_1,\alpha_2\}$. Knop's graph looks as follows:

\xymatrix{
 *{} &*{} &\mathring X \ar@{-}@(u,l)[]_{\alpha_1,G} \ar@{-}@(u,r)[]^{\alpha_2,G} \ar@{-}[d]^{\alpha_3,U} &*{} \\
 *{} &*{} &\ar@{-}@(d,r)[]_{\alpha_1,G} \ar@{-}[d]_{\alpha_2,U} &*{} &*{} \\
 *{} &*{} &\ar@{-}@/_/[d]_{\alpha_1,U}  \ar@{-}@/^/[d]^{\alpha_3,U} & *{} & *{} &  *{}\\
 *{} &*{} &\ar@{-}@(d,l)[]^{\alpha_2,G} & *{} & *{} & *{} 
}\noindent

It is glued from the varieties $\PP_\alpha\backslash\SSp_4$ (where $\alpha$ denotes the long root), $\PP_\alpha\backslash\SSL_3$ and $\PPGL_2\backslash \PPGL_2\times\PPGL_2$. We have:
\begin{equation}
B_{w_\gamma}(\chi) = -e^{\check\gamma}\frac{1-q^{-3}e^{-\check\gamma}}{1-q^{-3}e^{\check\gamma}}(\chi).
\end{equation}

\subsection{$\SSp_2\times\SSp_{2n-2}\backslash\SSp_{2n}$.}

Denote by $\alpha_1,\dots,\alpha_{n-1}$ the short roots and by $\alpha_n$ the long one. Then the normalized spherical root is $\gamma= \alpha_1+2\alpha_2+\dots+2\alpha_{n-1}+\alpha_n$, and $\Delta(\XX)=\{\alpha_1,\alpha_3,\alpha_4,\dots,\alpha_n\}$. A Brion path is as follows:

\xymatrix{
*{} &*{} &*{} & *{} & *{} & \mathring X \ar@{-}[dl]_{\alpha_2,U} & *{}\\ 
*{} &*{} &*{} & *{} &   \ar@{-}[dr]_{\alpha_1,U}& *{}& *{}& *{}& *{}\\
*{} &*{} &*{} & \ar@{-}[dl]_{\alpha_n,U} \ar@{-}@{--}[ur]^{\alpha_{n-1},U} \ar@{-}[dr]_{\alpha_1,U}  & *{}& *{}& *{}\\
*{} &*{} & \ar@{-}[dr]_{\alpha_1,U} &*{} &*{} \\
*{} & \ar@{-}@{--}[ur] \ar@{-}[dl]_{\alpha_2,U} \ar@{-}[dr]_{\alpha_1,U}&*{} &*{} &*{} \\
\ar@{-}@(d,l)[]^{\alpha_1,T}  & *{}& *{}& *{}\\
}\noindent

It is glued from $\GGL_2\backslash\SSL_3$ (at the bottom), $\PP_\alpha\backslash\SSp_4$ (where $\alpha$ is the long root) and $\PP_\alpha\backslash\SSL_3$, unless $n=2$, in which case it coincides with $\SSp_2\times\SSp_2\backslash\SSp_4$. We have here:
\begin{equation}
B_{w_\gamma}(\chi)= - e^{\check\gamma}  \frac{(1-q^{-n+\frac{3}{2}}e^{-\frac{\check\gamma}{2}})(1-q^{-n+\frac{1}{2}}e^{-\frac{\check\gamma}{2}})} {(1-q^{-n+\frac{3}{2}}e^{\frac{\check\gamma}{2}})(1-q^{-n+\frac{1}{2}}e^{\frac{\check\gamma}{2}})}(\chi) . 
\end{equation}

\subsection{$(\Gm\ltimes k)\times\SSp_{2n-2}\backslash\SSp_{2n}$.}

The normalized spherical root is the same as above, and $\Delta(\XX)=\{\alpha_3,\alpha_4,\dots,\alpha_n\}$. A Brion path is the following:

\xymatrix{
*{} &*{} &*{} & *{} & *{} & \mathring X \ar@{-}[dl]_{\alpha_2,U} \ar@{-}[dr]_{\alpha_1,U} & *{}\\ 
*{} &*{} &*{} & *{} &   \ar@{-}[dr]_{\alpha_1,U}& *{}& *{}& *{}& *{}\\
*{} &*{} &*{} & \ar@{-}[dl]_{\alpha_n,U} \ar@{-}@{--}[ur]^{\alpha_{n-1},U} \ar@{-}[dr]_{\alpha_1,U}  & *{}& *{}& *{}\\
*{} &*{} & \ar@{-}[dr]_{\alpha_1,U} &*{} &*{} \\
*{} & \ar@{-}@{--}[ur] \ar@{-}[dl]_{\alpha_2,U} \ar@{-}[dr]_{\alpha_1,U}&*{} &*{} &*{} \\
\ar@{-}@(d,l)[]^{\alpha_1,T}  & *{}& *{}& *{}\\
}\noindent
which, again, is glued from $\GGL_2\backslash\SSL_3$ (at the bottom), $\PP_\alpha\backslash\SSp_4$ (where $\alpha$ is the long root) and $\PP_\alpha\backslash\SSL_3$ (unless $n=2$, a case which was treated before). The edge labelled $\alpha_1$ on the top right cannot lead to $w_\gamma$, since $\alpha_1$ is orthogonal to $\gamma$.

Here we have:
\begin{equation}
B_{w_\gamma}(\chi)= - e^{\check\gamma}  \frac{(1-q^{-n+1}e^{-\check\alpha_2})(1-q^{-n+1}e^{-\check\alpha_1-\check\alpha_2})} {(1-q^{-n+1}e^{\check\alpha_2})(1-q^{-n+1}e^{\check\alpha_1+\check\alpha_2})}(\chi) . 
\end{equation}

\subsection{$\SSpin_{2n-1}\backslash\SSpin_{2n}$.}

Denote the simple roots by $\alpha_1,\alpha_2,\dots,\alpha_n$, where $\alpha_{n-2}$ is the root neighboring with three others ($\alpha_{n-3},\alpha_{n-1}$ and $\alpha_n$). Assume $n\ge 3$, the case $n=3$ having been treated in \S \ref{ssglsl}. The normalized spherical root is $\gamma= 2\alpha_1+\dots+2\alpha_{n-2}+\alpha_{n-1}+\alpha_n$, and $\Delta(\XX)=\{\alpha_2,\dots,\alpha_n\}$. Knop's graph looks as follows (omitting type $G$ reflections):

\xymatrix{
 *{} & \mathring X \ar@{-}[d]_{\alpha_1,U}  & *{}\\ 
 *{} & \ar@{-}[d]_{\alpha_2,U}\\
 *{} & \ar@{-}[d]_{\alpha_n,U}& *{}& *{}& *{}& *{}\\
 *{} &\ar@{-}@/_/[d]_{\alpha_1,U}  \ar@{-}@/^/[d]^{\alpha_3,U} & *{} & *{} &  *{}\\
 *{} &\\
}\noindent

It is glued from $\PPGL_2\backslash\PPGL_2\times\PPGL_2$ and $\PP_\alpha\backslash\SSL_3$.
We have:
\begin{equation}
B_{w_\gamma}(\chi) = -e^{\check\gamma}\frac{1-q^{-n+1}e^{-\check\gamma}}{1-q^{-n+1}e^{\check\gamma}}(\chi).
\end{equation}

\subsection{$\SSpin_7\backslash\SSpin_9$.} 

There are 2 normalized spherical roots, $\gamma=\alpha_1+\alpha_2+\alpha_3+\alpha_4$ and $\alpha_2+2\alpha_3+3\alpha_4$ but, as in \S \ref{sp4modsp2}, we are interested only in $\gamma$. Here $\Delta(\XX)=\{\alpha_2,\alpha_3\}$.

A Brion path for $w_\gamma$ is the following:

\xymatrix{
*{} & *{} & *{} & \mathring X \ar@{-}@(u,l)[]_{\alpha_2,G} \ar@{-}@(u,r)[]^{\alpha_3,G} \ar@{-}[dl]_{\alpha_1,U}  \ar@{-}[dr]_{\alpha_4,U}& *{}\\ 
*{} & *{} & \ar@{-}[dl]_{\alpha_2,U}  & *{}& *{}& *{}& *{}\\
*{} & \ar@{-}[dl]_{\alpha_3,U} & *{}& *{}& *{}& *{}\\
Y \ar@{-}@(d,l)[]^{\alpha_4,T}  & *{}& *{}& *{}\\
}\noindent
where we have only drawn the arrows of interest.

It is glued from $(\Gm\ltimes \Ga)\times\SSp_2\backslash\SSp_4$ (at the bottom) and $\PP_\alpha\backslash\SSL_3$. The arrow labelled $\alpha_4$ on the top cannot lead to $w_\gamma$ since $\alpha_4$ is orthogonal to $w_\gamma$. We have:
\begin{equation}
B_{w_\gamma}(\chi)= - e^{\check\gamma}  \frac{(1-q^{-2}e^{-\check\alpha_1})(1-q^{-2}e^{-\check\alpha_1-\check\alpha_4})} {(1-q^{-2}e^{\check\alpha_1})(1-q^{-2}e^{\check\alpha_1+\check\alpha_4})}(\chi) . 
\end{equation}

\subsection{$\Gm\times\SSp_{2n-2}\backslash\SSp_{2n}$.}

There are 2 normalized spherical roots, $\alpha_1$ and $\gamma=\alpha_1+2\alpha_2+\dots+2\alpha_{n-1}+\alpha_n$ but, as in \S \ref{sp4modsp2}, we are interested only in $\gamma$.
We have $\Delta(\XX)=\{\alpha_3,\alpha_4,\dots,\alpha_n\}$. A Brion path for $w_\gamma$ is the following:

\xymatrix{
*{} &*{} &*{} & *{} & *{} & \mathring X \ar@{-}[dl]_{\alpha_2,U} \ar@{-}@(d,r)_{\alpha_1,T}& *{}\\ 
*{} &*{} &*{} & *{} & \ar@{-}[dl]_{\alpha_3,U}  \ar@{-}[dr]_{\alpha_1,U}& *{}& *{}& *{}& *{}\\
*{} &*{} &*{} &  \ar@{-}[dr]_{\alpha_1,U}& *{}& *{}& *{}& *{}\\
*{} &*{} &\ar@{-}[dl]_{\alpha_n,U} \ar@{-}@{--}[ur]^{\alpha_{n-1},U} \ar@{-}[dr]_{\alpha_1,U}  & *{}& *{}& *{}\\
*{} & \ar@{-}[dr]_{\alpha_1,U} &*{} &*{} \\
\ar@{-}@{--}[ur]^{\alpha_2,U} \ar@{-}@(d,l)[]^{\alpha_1,T}  & *{}& *{}& *{}\\
}\noindent
which is glued from $\SSL_2\backslash\SSL_3$ (at the bottom), $\PP_\alpha\backslash\SSp_4$ (where $\alpha$ is the long root) and $\PP_\alpha\backslash\SSL_3$ (unless $n=2$, which was treated before). We have:
\begin{equation}
B_{w_\gamma}(\chi)= - e^{\check\gamma}  \frac{(1-q^{-n+1}e^{-\check\alpha_2})(1-q^{-n+1}e^{-\check\alpha_1-\check\alpha_2})} {(1-q^{-n+1}e^{\check\alpha_2})(1-q^{-n+1}e^{\check\alpha_1+\check\alpha_2})}(\chi) . 
\end{equation}

%*****************************************************************************

\section{The formula}

The goal of this section is to develop a more useful and explicit formula, in which the cocycles of our previous formula have been substituted by simpler expressions, and where a certain quotient of $L$-values plays a distinguished role. In order to do this, we need to make a combinatorial assumption on the colors of a spherical variety, which is very easy to check in each particular case, but we do not know how to prove in general. This assumption is certainly not true in all cases, but we expect it to be true in the case of affine homogeneous spherical varieties. 

Throughout this section we assume that $\XX$ is a spherical variety without type $N$ roots and such that for every $\gamma\in \Delta_X$ the Levi $\LL_\gamma$ is a classical group; in particular, all normalized spherical roots satisfy Statement \ref{STATEMENT}.

\subsection{Relevant and virtual colors} \label{sscolors}

We call a color $\DD$ of $\XX$ \emph{relevant} if it is the codimension-one orbit in a Brion path corresponding to $w_\gamma$, for $\gamma$ a (normalized) spherical root. In this case we say that $\DD$ \emph{belongs} to $\gamma$. A color may belong to more than one spherical root; that happens only if those spherical roots are of type $T$ split, since in the rest of cases the valuation induced by the color determines the spherical root by Statement \ref{STATEMENT}. Vice versa, 

\begin{lemma}
 If $\XX$ is affine, then all colors are relevant. 
\end{lemma}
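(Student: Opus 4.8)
The plan is to isolate which colors can fail to be relevant, describe these combinatorially, and then exclude them when $\XX$ is affine by a quotient argument.

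\emph{Reduction to "type $b$" colors.} Every color $\DD$ is the reduced closure of a codimension-one $\BB_{\bar k}$-orbit, and it is contained in $\mathring\XX\PP_\alpha$ for at least one simple root $\alpha$ (one that raises its generic orbit into $\mathring\XX$). Inspecting a Brion path $e_1\cdots e_n E_1 E_2 e_n\cdots e_1$ for a simple spherical reflection $w_\gamma$ (Proposition \ref{Brionapplies}), its unique codimension-one orbit is, when $n\ge 1$, the endpoint of $e_1$ other than $\mathring\XX$ — so $(\mathring\XX,\alpha_1)$ is of type $U$ and this orbit is the generic orbit of the unique color $\DD_{\alpha_1}$ moved by $\alpha_1$ — and when $n=0$ it is a codimension-one orbit of $\mathring\XX\PP_\alpha$ with $(\mathring\XX,\alpha)$ of type $T$ or $(U,\psi)$. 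Hence the relevant colors are exactly: the codimension-one $\BB$-orbits of $\mathring\XX\PP_\alpha$ for $\alpha$ of type $T$, $N$ or $(U,\psi)$ at $\mathring\XX$ — in which case $\alpha$ (resp.\ $2\alpha$, resp.\ $\alpha$) is one of the normalized spherical roots by §\ref{ssrootsystem} and genericity of $\Psi$, and these colors are relevant — together with those $\DD_\alpha$, $(\mathring\XX,\alpha)$ of type $U$, such that $\alpha$ occurs as the first label of some Brion path. Since type $G$ roots move no colors, I may therefore assume, fixing a color $\DD$, that $(\mathring\XX,\alpha)$ is of type $U$ for every $\alpha$ with $\DD\subset\mathring\XX\PP_\alpha$; then $\DD=\DD_\alpha$ and $\check v_{\DD}=\check\alpha|_{\varchi(\XX)}$. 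Using the explicit Brion paths drawn in §\ref{secsimplespherical} together with Proposition \ref{ranktwo} (recall $\LL_\gamma$ is classical throughout this section), such a $\DD_\alpha$ is relevant as soon as $\alpha\in\supp\gamma$ for some $\gamma\in\Delta_X$: one checks case by case that the path for $w_\gamma$ can be chosen to begin with $\alpha$. So it remains to prove: if $\XX$ is affine there is no simple root $\alpha$ with $(\mathring\XX,\alpha)$ of type $U$ and $\alpha\notin\bigcup_{\gamma\in\Delta_X}\supp\gamma$ — i.e.\ $\XX$ has no simple root of Luna's "type $b$".

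\emph{Excluding type $b$ roots when $\XX$ is affine.} Suppose such an $\alpha$ exists. Then $\mathring\XX\PP_\alpha/\mathcal R(\PP_\alpha)$ is a rank-one \emph{horospherical} $\PPGL_2$-variety (it has no spherical root, since $\alpha$ — and, as $\UU\backslash\PPGL_2$ is horospherical, also $2\alpha$ — is absent from $\Delta_X$), hence is one of $\BB_2\backslash\PPGL_2\cong\mathbb{P}^1$ or $\UU\backslash\PPGL_2\cong(\mathbb A^2\smallsetminus\{0\})/\{\pm 1\}$; in either case it is a \emph{non-affine} variety. The plan is to realize this variety as a fibre of a $\GG$-equivariant surjection $\XX=\HH\backslash\GG\twoheadrightarrow\HH'\backslash\GG$ with $\HH\subset\HH'$: this is the quotient of $\XX$ by the one-element set of colors $\{\DD_\alpha\}$ in the sense of Knop's and Luna's theory of quotients of spherical homogeneous spaces, and the hypothesis $\alpha\notin\bigcup\supp\gamma$ is precisely what makes $\{\DD_\alpha\}$ a \emph{distinguished} set of colors, with associated morphism whose fibre is the rank-one variety above. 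A closed subscheme of an affine scheme is affine, so the fibres of a morphism out of an affine variety are affine; since here they are not, $\XX$ cannot be affine (consistently with Matsushima's criterion, which forces $\HH$ reductive). This contradiction finishes the proof.

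\emph{Main obstacle.} The crux is the second step: checking that $\{\DD_\alpha\}$ is a distinguished set of colors and that the fibre of the associated quotient morphism is exactly $\mathring\XX\PP_\alpha/\mathcal R(\PP_\alpha)$. This requires unwinding Knop's combinatorial description of $\GG$-morphisms between spherical homogeneous spaces in terms of quotient colored data. An alternative, more self-contained route, closer to the method of §\ref{secsimplespherical}, is to avoid quotient theory and instead reduce to spherical varieties of rank $\le 2$ via Theorem \ref{AW} and verify directly on the affine entries of Wasserman's list that no affine homogeneous spherical variety carries a simple root of type $b$; the delicate point there is that "$\alpha\in\supp\gamma$ for no $\gamma$" is a priori global, so one must first argue that its failure would make $\XX$ fibre over a smaller spherical variety with the non-affine rank-one factor in the $\alpha$-direction as fibre, before the rank-$\le 2$ classification can be applied.
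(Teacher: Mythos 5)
Your reduction in the first paragraph contains a false step, and it is exactly the step where the real content of the lemma lives. You claim that a type-$U$ color $\DD_\alpha$ is relevant as soon as $\alpha\in\supp\gamma$ for some $\gamma\in\Delta_X$, "since one checks case by case that the path for $w_\gamma$ can be chosen to begin with $\alpha$." This is not true. Take $\XX=(\SSL_n\times\Gm)\ltimes\wedge^2 k^n\backslash\SSpin_{2n+1}$: the unique normalized spherical root is $\gamma=\alpha_1+\cdots+\alpha_n$, the pair $(\mathring\XX,\alpha_n)$ is of type $U$, and $\alpha_n\in\supp\gamma$, yet no Brion path for $w_\gamma$ can begin with $\alpha_n$ because $\check\alpha_n\perp\gamma$ (any such path would give a decomposition of $w_\gamma$ that is too long). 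So $\DD_{\alpha_n}$ is an irrelevant color with $\alpha_n$ in the support of a spherical root. The correct criterion for relevance is $\left<\check v_D,\gamma\right>>0$ (Lemma \ref{whichcolorbelongs}), and membership of $\alpha$ in $\supp\gamma$ only forces $\left<\check\alpha,\gamma\right>\geq 0$ after the rank-$\le 2$ analysis — perpendicularity can and does occur. Consequently your dichotomy is incomplete: besides the "type $b$" roots you exclude in your second step, you must also rule out, for affine $\XX$, type-$U$ colors $\DD_\alpha$ with $\alpha$ lying in the support of some spherical root but with $\check\alpha\perp\gamma$ for every such $\gamma$. Your quotient/fibration argument does not apply to these, since $\alpha$ is not of type $b$.

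The way to close the gap (and the route the paper takes) is to observe that in either case one gets $\left<\check\alpha,\gamma\right>\le 0$ for all $\gamma\in\Delta_X$ — equality from the rank-$\le 2$ analysis when $\alpha\in\supp\gamma$, and the standard inequality $\left<\check\alpha,\beta\right>\le 0$ for simple $\beta\ne\alpha$ when $\alpha\notin\supp\gamma$ — so that $\check v_D=\check\alpha$ lies in the valuation cone $\mathcal V$. For affine homogeneous $\XX$ the cone generated by the valuations of colors is strictly separated from $\mathcal V$ by a hyperplane (\cite[\S 6]{KnLV}), which is an immediate contradiction. This single cone argument subsumes both of your cases and avoids the quotient theory entirely; your fibration argument for the type-$b$ case is workable but is doing with heavy machinery what the separation criterion does in one line, and in any event it cannot rescue the perpendicular case.
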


\begin{proof}[Proof (Sketch)]
Let $\DD$ be an irrelevant color with valuation $\check\theta$, and let $\alpha$ be a simple root raising it to $\mathring \XX$. If $(\mathring\XX,\alpha)$ were of type $T$ then $\DD$ would be relevant, so $(\mathring\XX,\alpha)$ is of type $U$, and hence $\check\theta=\check\alpha$. If $\alpha\in\supp(\gamma)$ for some spherical root $\gamma$, then $\XX_\gamma$ is of rank at most two and one sees by a case-by-case analysis of spherical varieties of rank at most two that $\check\alpha\perp \gamma$. On the other hand, if $\alpha\notin \supp(\gamma)$ then $\left<\check\alpha,\gamma\right>\le 0$. Hence $\left<\check\alpha,\gamma\right>\le 0$ for all $\gamma\in\Delta_X$, which means that $\check\theta=\check\alpha\in\mathcal V$. But if $\XX$ is affine homogeneous then \cite[\S 6]{KnLV} the cone generated by valuations of colors is strictly separated from $\mathcal V$ by a hyperplane, a contradiction.
\end{proof}

The following is also established by reduction to a case-by-case analysis:

\begin{lemma} \label{whichcolorbelongs}
 A color $\DD$ with corresponding valuation $\check\theta$ belongs to a simple spherical root $\gamma$ if and only if $\left<\check\theta,\gamma\right>>0$. 
\end{lemma}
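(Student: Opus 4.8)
\textbf{Proof plan for Lemma \ref{whichcolorbelongs}.}

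The plan is to reduce the statement, exactly as in the preceding lemmas (\ref{whichcolorbelongs}'s companion and Lemma on affine colors), to the case-by-case analysis of spherical varieties of rank at most two carried out via Theorem \ref{AW}. First I would note that one implication is essentially formal: if $\DD$ belongs to $\gamma$, then by definition $\DD$ is the codimension-one orbit in a Brion path corresponding to $w_\gamma$, and Statement \ref{STATEMENT} tells us explicitly what $\check\theta$ is in terms of $\check\gamma$ (it equals $\check\gamma$ in type $G$ and $(U,\psi)$, satisfies $\langle\check\theta,\gamma\rangle=1$ in type $T$ split, and equals $\check\gamma/2$ in type $T$ non-split). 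In each of these cases one checks directly from the explicit description that $\langle\check\theta,\gamma\rangle>0$: indeed $\langle\check\gamma,\gamma\rangle=2$, $\langle\check\gamma/2,\gamma\rangle=1$, and the type $T$ split case is built in. So the ``only if'' direction is immediate.

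For the ``if'' direction, suppose $\DD$ is a color with valuation $\check\theta$ and $\langle\check\theta,\gamma\rangle>0$ for some $\gamma\in\Delta_X$; I want to produce a Brion path for $w_\gamma$ whose codimension-one orbit is $\DD$. Every color is contained in $(\mathring\XX\PP_\alpha)_{\bar k}$ for some simple root $\alpha$ (a simple root raising $\DD$ to $\mathring\XX$), so $\check\theta$ is either $\check\alpha$ (if $(\mathring\XX,\alpha)$ is of type $U$) or one of the two coweights $\check v_D,\check v_{D'}$ associated to a type $T$ split reflection, or $\check\gamma_{\mathrm{loc}}/2$ for a type $T$ non-split local picture. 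The key reduction is that $\langle\check\theta,\gamma\rangle>0$ forces $\alpha\in\supp(\gamma)$: if $\alpha\notin\supp(\gamma)$ then $\langle\check\alpha,\gamma\rangle\le 0$, and in the type $T$ case the relevant coweights also pair non-positively with spherical roots not supported on $\alpha$ — here one invokes the combinatorics of the cone $\mathcal V$ and the fact that colors' valuations lie outside $\mathcal V$ (in the affine case, strictly separated, cf.\ \cite[\S 6]{KnLV}), together with $\langle\check\alpha,\gamma'\rangle\le 0$ for distinct spherical roots $\gamma'$. Once $\alpha\in\supp(\gamma)$, we may replace $\XX$ by $\mathring\XX\PP_\gamma/\mathcal R(\PP_\gamma)$, which by Lemma \ref{rankatmosttwo} has rank at most two, and then run through the list of Theorem \ref{AW}: for each such variety and each normalized spherical root, one inspects the (small) set of Brion paths and checks that a color $\DD$ with $\langle\check\theta,\gamma\rangle>0$ does arise as the codimension-one orbit of such a path. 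This is the same bookkeeping already performed for Propositions \ref{gluedfrom} and \ref{ranktwo}, so most of the needed computations are available; one reuses the explicit values of $\check v_D,\check v_{D'}$ and $\check\theta$ computed in \S\ref{secsimplespherical}.

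The main obstacle I anticipate is the ``if'' direction in the type $T$ split case, where a color can belong to more than one spherical root and where $\check\theta$ is not simply $\check\gamma$ or $\check\gamma/2$ but only constrained by $\langle\check\theta,\gamma\rangle=1$. There one must be careful that the hypothesis $\langle\check\theta,\gamma\rangle>0$ really does single out precisely those $\gamma$ to which $\DD$ belongs — in particular one needs $\langle\check\theta,\gamma'\rangle\le 0$ for the spherical roots $\gamma'$ to which $\DD$ does \emph{not} belong. This is where the case-by-case analysis of rank-$\le 2$ spherical varieties is genuinely used rather than being a formality: one verifies in each case that the pairing pattern of a color's valuation against the spherical roots has the claimed sign behavior. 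The rest of the argument is routine once that verification is in hand.
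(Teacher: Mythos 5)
Your ``only if'' direction is fine: reading off $\check\theta$ from Statement \ref{STATEMENT} for each type of spherical root and checking $\left<\check\theta,\gamma\right>>0$ directly is a legitimate (if slightly different) route to what the paper gets by reducing to $\mathring\XX\PP_\gamma/\mathcal R(\PP_\gamma)$ and inspecting. The problem is in the ``if'' direction, at the very step you flag as ``the key reduction'': you need to know that $\left<\check\theta,\gamma\right>>0$ forces $\DD\subset\mathring\XX\cdot\PP_\gamma$ (equivalently, that $\DD$ is not $\PP_\gamma$-stable), because only then does $\DD$ survive as a color of the rank-$\le 2$ variety $\mathring\XX\PP_\gamma/\mathcal R(\PP_\gamma)$ where your case-by-case inspection takes place. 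A $\PP_\gamma$-stable color is invisible in that quotient, so no amount of bookkeeping in Theorem \ref{AW} can rule out that such a color pairs positively with $\gamma$; your argument is circular at exactly this point, since the sign pattern you propose to verify case by case is only accessible after the reduction you are trying to justify.

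The justification you offer for the reduction does not close the gap. For a type-$U$ color the contrapositive via $\left<\check\alpha,\gamma\right>\le 0$ for $\alpha\notin\supp(\gamma)$ works, but for a type-$T$ color $\check\theta$ is not (the image of) $\check\alpha$, and the fact that colors' valuations lie outside $\mathcal V$ only tells you that $\left<\check\theta,\gamma'\right>>0$ for \emph{some} spherical root $\gamma'$ — it gives no upper bound on $\left<\check\theta,\gamma\right>$ for the particular $\gamma$ at hand; moreover the strict separation of colors from $\mathcal V$ is only available for affine $\XX$, while the lemma is stated in the general setting of this section. The paper fills precisely this hole with a separate argument: if $\DD$ were $\PP_\gamma$-stable, its image in $\YY=\XX/\!/\UU_{P_\gamma}=\spec k[\XX]^{\UU_{P_\gamma}}$ would be an $\LL_\gamma$-stable divisor, so the valuation $\check v_D$ it induces lies in the cone of invariant valuations of $\YY$, which is the negative dual of the spherical roots of $\YY$; since $\gamma$ is a spherical root of $\YY$, this forces $\left<\check v_D,\gamma\right>\le 0$, contradicting the hypothesis. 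You need this (or an equivalent substitute) before the reduction to Theorem \ref{AW} is legitimate; once it is in place, the remaining inspection is indeed the routine part.
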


\begin{proof}[Proof (Sketch)]
 If a color $\DD$ belongs to a spherical root $\gamma$ then there exists a simple root $\alpha$ in the support of $\gamma$ that raises $\DD$ to $\mathring\XX$, i.e.\ such that $\mathring\XX\cdot \PP_\alpha \supset \DD$. Vice versa:

\begin{lemma}
 Let $\gamma$ be a spherical root and $\DD$ a color with corresponding valuation $\check v_D$. If $\left<\check v_D,\gamma\right>>0$ then $\mathring\XX\cdot\PP_\gamma\supset \DD$. 
\end{lemma}

\begin{proof}
Assume that this is not the case, hence $\DD$ is $\PP_\gamma$-stable. Let $\YY=\XX/\!/\UU_{P_\gamma}:= \spec k[\XX]^{\UU_{P_\gamma}}$. The image of $\DD$ in $\YY$ is an $\LL_\gamma$-stable divisor, but on the other hand the valuation it induces (still to be denoted by $\check v_D$) satisfies $\left<\check v_D,\gamma\right>>0$, and $\gamma$ is a spherical root of $\YY$. This means that $\check v_D$ does not belong to the cone of invariant valuations for $\YY$ (the negative-dual of the cone of spherical roots), a contradiction.
\end{proof}

Returning to the proof of Lemma \ref{whichcolorbelongs}, we have shown that under either assumption the image of $\DD$ is a color in the rank-one or rank-two spherical variety $\mathring\XX\PP_\gamma/\mathcal R(\PP_\gamma)$, and by inspection of those we see that $\DD$ belongs to $\gamma$ if and only if $\left<\check\theta,\gamma\right>>0$.
\end{proof}

Finally, we notice the following:
\begin{lemma}
 If a color belongs to a spherical root of type $G$, type $T$ non-split, or type $(U,\psi)$, then it belongs only to that spherical root; vice versa, each normalized spherical root $\gamma$ has a unique color belonging to it, unless it is of type $T$, in which case it may have two colors with corresponding valuations satisfying $\check v_D + \check v_{D'} = \gamma$.
\end{lemma}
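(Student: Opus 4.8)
The plan is to reduce this entirely to the local structure theory of rank-one and rank-two spherical varieties, exactly as the preceding lemmas (\ref{whichcolorbelongs}, \ref{rankatmosttwo}, Theorem \ref{AW}) already do. First I would recall that, by Lemma \ref{whichcolorbelongs}, a color $\DD$ with valuation $\check\theta$ belongs to a normalized spherical root $\gamma$ precisely when $\langle\check\theta,\gamma\rangle>0$; so the question becomes purely combinatorial once we fix $\gamma$: which valuations $\check\theta$ coming from colors can be positive on $\gamma$, and how many such colors exist. Since (by the proof of Lemma \ref{whichcolorbelongs}) the image of any such $\DD$ is a color of the rank-one-or-two variety $\XX_\gamma=\mathring\XX\PP_\gamma/\mathcal R(\PP_\gamma)$, and since $\langle\check\theta,\gamma\rangle$ is computed inside $\varchi(\XX_\gamma)^*$, it suffices to carry out the verification for each $\XX'=\HH'\backslash\GG'$ in Wasserman's list (Theorem \ref{AW}), for each normalized spherical root $\gamma$ of type $G$, $T$ split, $T$ non-split, or $(U,\psi)$.

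Next I would organize the case-check by type of $\gamma$. For type $G$: here $\gamma$ is not proportional to any root of $\GG$, and the relevant model is $\PPGL_2\backslash(\PPGL_2\times\PPGL_2)$ (plus its ``thickenings'' appearing as $\XX_\gamma$), where there is a single $\BB$-invariant divisor of codimension one whose valuation $\check\gamma$ pairs positively with $\gamma$; uniqueness follows because Statement \ref{STATEMENT}(\ref{STATEMENTG}) tells us $\check\theta=\check\gamma$ is forced by the type. For type $(U,\psi)$: $\gamma$ is itself a simple root of $\GG$, $\XX_\gamma$ is horospherical of type $(U,\psi)$, and again there is exactly one color belonging, with $\check\theta=\check\gamma$. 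For type $T$ non-split: $\XX_\gamma$ is (up to the usual reduction) $\mathrm{Res}_{E/k}\SSp_2\backslash\SSp_4$ or $\mathcal N(\TT)$-type, where the valuation is $\frac{\check\gamma}{2}$ and there is a unique color belonging --- the key point being that in the non-split case the two ``would-be'' divisors $\DD,\DD'$ of the split picture are interchanged by Galois and become a single $k$-irreducible (but not absolutely irreducible) color, or more precisely, the unique color over $k$ has valuation $\frac{\check\gamma}{2}$ rather than a pair summing to $\check\gamma$. For type $T$ split: $\XX_\gamma$ is $\SSL_2\backslash\SSL_3$, $\SSp_2\times\SSp_2\backslash\SSp_4$, $\GGL_n\backslash\SSL_{n+1}$ truncated, etc., and here one reads off that there are exactly two colors $\DD,\DD'$ of codimension one in $\mathring\XX\PP_\gamma$ with $\check v_D+\check v_{D'}\equiv\check\alpha$, which projects onto $\gamma$ (recall $\check v_D+\check v_{D'}\equiv\check\alpha$ from \S\ref{ssferesult}), giving the asserted relation $\check v_D+\check v_{D'}=\gamma$ in $\varchi(\XX)^*$ after the identifications of \S\ref{ssBrion}.

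For the remaining assertion --- that a color belonging to a type $G$, type $T$ non-split, or type $(U,\psi)$ root belongs to \emph{no other} spherical root --- I would argue as in the remark after Statement \ref{STATEMENT}: in all these cases Statement \ref{STATEMENT} shows the valuation $\check\theta$ of the color determines the spherical root (via $\check\theta=\check\gamma$ in cases $G$ and $(U,\psi)$, and $\check\theta=\frac{\check\gamma}{2}$ in the non-split case), so if $\DD$ also belonged to $\gamma'\ne\gamma$ then by Lemma \ref{whichcolorbelongs} $\langle\check\theta,\gamma'\rangle>0$, and the same determination applied to $\gamma'$ would force $\gamma'=\gamma$, contradiction; whereas in the type $T$ split case the valuation $\check v_D$ does \emph{not} determine $\gamma$ (two distinct type $T$ split roots can share a color), which is exactly why that case is excluded from the uniqueness half of the statement. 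I expect the main obstacle to be purely bookkeeping: making sure the case-by-case inspection of Wasserman's rank-$\le 2$ list is genuinely exhaustive --- in particular handling the ``thickened'' (non-reductive stabilizer) members of the list and the $\GG_2$-cases, and checking that the projection $\varchi(\XX_\gamma)^*\to\varchi(\XX)^*$ (restriction of valuations) is compatible with the pairings in each instance --- rather than any conceptual difficulty. All the required structural input (Brion's Theorem \ref{Brionsdescription}, Statement \ref{STATEMENT}/Theorem \ref{statementtrue}, Lemma \ref{whichcolorbelongs}, Theorem \ref{AW}) is already in place.
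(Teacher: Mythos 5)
Your overall strategy coincides with the paper's: the ``vice versa'' half (the count of colors belonging to each $\gamma$, and the relation between the two valuations in the type $T$ split case) is proved by inspection of the rank-$\le 2$ varieties of Theorem \ref{AW}, and the exclusivity half is deduced from the valuation data of Statement \ref{STATEMENT} together with Lemma \ref{whichcolorbelongs}. However, two points need repair. For exclusivity, your step ``the same determination applied to $\gamma'$ would force $\gamma'=\gamma$'' does not close the case where the second root $\gamma'$ is of type $T$ split: for such a $\gamma'$, Statement \ref{STATEMENT} only yields $\left<\check\theta,\gamma'\right>=1$ and determines nothing — as you yourself concede at the end of your paragraph. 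The paper's argument avoids this entirely: in the three cases at hand $\check\theta$ is a \emph{positive multiple of the simple coroot} $\check\gamma$, and a simple coroot pairs $\le 0$ with every other simple root of $\Phi_X$; since belonging to $\gamma'$ requires $\left<\check\theta,\gamma'\right>>0$ by Lemma \ref{whichcolorbelongs}, the color belongs to no $\gamma'\ne\gamma$, whatever the type of $\gamma'$. You should replace the ``determination'' step by this root-system inequality.

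The second, more substantive issue is your treatment of type $T$ non-split in the uniqueness direction. Colors are by definition $\BB_{\bar k}$-invariant irreducible divisors of $\XX_{\bar k}$, so in the non-split case the two divisors of the geometric picture remain \emph{two} distinct colors — neither defined over $k$, interchanged by Galois, each with valuation $\frac{\check\gamma}{2}$. This is precisely why the construction of virtual weighted colors in \S \ref{sscolors} assigns opposite signs $\sigma=\pm$ to ``two colors with the same valuation'' raised by a root of type $T$ non-split, and why the lemma's exception reads ``unless it is of type $T$'' without restricting to the split case (note $\frac{\check\gamma}{2}+\frac{\check\gamma}{2}=\check\gamma$, consistent with the stated relation). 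Your assertion that Galois descent produces a single color there both contradicts the paper's definition and proves a statement stronger than (and different from) the one asserted. The remaining case analysis is fine and matches the paper's one-line ``by inspection'' proof of the second claim.
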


\begin{proof} The first claim is true because the valuation that the color induces in those cases is proportional to the corresponding spherical coroot, and hence $\le 0$ on all other spherical roots. 
 
The second claim follows by inspection of the spherical varieties of Theorem \ref{AW}.
\end{proof}

Therefore, one can define the \emph{type} of a relevant color as being one among the following: $G$, $T$ split, $T$ non-split or $(U,\psi)$. 

We denote the set of relevant colors by $\mathcal D_R$. We now define the set of \emph{virtual weighted colors} by adding to the set of relevant colors some extra information which is related to Statement \ref{STATEMENT}. The set of virtual weighted colors $\mathcal D_V$ is a set equipped with a map ``$\inv$'' into the set of triples $(\check\theta,\sigma,r)$, where $\check\theta$ is an element of $\varchi(\XX)^*$, $\sigma=\pm$ is a sign and $r\in\frac{1}{2}\mathbb Z$, and minimal with respect to the following properties: 
\begin{enumerate}
\item There is an injection $\phi:\mathcal D_R\to \mathcal D_V$ such that $\inv\circ\phi(\DD)= (\check\theta,\sigma,r)$ where:
\begin{enumerate}
 \item $\check\theta =\check v_D$;
 \item $\sigma = +$, unless $\DD$ is not defined over $k$; the latter happens only when $\DD$ is raised to $\mathring\XX$ by a simple root $\alpha$ of type $T$ non-split, in which case there are two colors with the same valuation in $\mathring\XX\PP_\alpha$, and in that case we set $\sigma=+$ for one of them and $\sigma=-$ for the other;
 \item $r=\left<\check\theta,\rho_{P(X)}\right>$.
\end{enumerate}
 Notice that different relevant colors may give rise to the same triples. If a color belongs to a normalized spherical root $\gamma$, we will also say that the corresponding virtual weighted color \emph{belongs} to that root.
 \item For every normalized spherical root $\gamma$ of type $T$ split and color $\DD$ (with valuation $\check\theta$) belonging to $\gamma$ there is an element $D'\in \mathcal D_V$, distinct from $\phi(\DD)$, such that $\inv(D')=(-{^{w_\gamma}\check\theta},+,\left<\check\rho,\gamma\right>-\left<{\check\theta},\rho_{P(X)}\right>)$. (Notice that this triple may already have been accounted for by a relevant color; however, this will not always be the case -- see, for instance, the examples of \S \ref{sp4modsp2} and \S \ref{sp4modsp2xsp2}.) In that case, we will say that $D'$ (as well as $\phi(\DD)$, by the previous property) \emph{belongs} to $\gamma$.
 \item For every normalized spherical root $\gamma$ of type $T$ non-split and color $\DD$ (with valuation $\check\theta$) belonging to $\gamma$ there is an element $D'\in \mathcal D_V$ such that $\inv(D')=(-{^{w_\gamma}\check\theta},-,\left<\check\rho,\gamma\right>-\left<{\check\theta},\rho_{P(X)}\right>)$. (Again, this may have been accounted for already, though not always -- cf.\ \S \ref{sp4modsp2xsp2}.) In that case, we will say that $D'$ (as well as $\phi(\DD)$, by the first property) \emph{belongs} to $\gamma$.
\end{enumerate}
From now on we will allow ourselves to loosely identify the set $\mathcal D_V$ with a set of triples $(\check\theta,\sigma,r)$ which may be appearing ``with multiplicity''.

The group $W_X$ acts on triples $(\check\theta,\sigma,r)$ as above by acting on the first component. We let $\Theta$ denote the minimal $W_X$-invariant ``set of such triples'' (again, allowing multiplicities) such that:
\begin{itemize}
 \item $\Theta$ contains all virtual weighted colors.
 \item If $({\check\theta},\sigma,r)\in\Theta$ then $(-{\check\theta},\sigma,r)\in\Theta$ (with the same multiplicity).
\end{itemize}
By abuse of notation, we will sometimes write $\check\theta\in\Theta$ and mean a triple as above, in which case we will denote by $\sigma_{\check\theta}$, $r_{\check\theta}$ the corresponding sign and constant. We denote by $\Theta^+$ the set of $\check\theta\in\Theta$ such that $\left<{\check\theta},\eta\right>\ge 0$ for every $\eta\in\varchi(\XX)$ which appears in $k[\XX]^{(\BB)}$. Equivalently, since the regular $\BB$-eigenfunctions are precisely those rational $\BB$-eigenfunctions which don't blow up on any of the colors, $\Theta^+$ consists of all $\check\theta\in\Theta$ such that ${\check\theta}$ lies in the cone generated by the valuations defined by all colors. (This cone is strictly convex since $\XX$ is assumed quasi-affine.) We also write $\check\theta>0$ if $\check\theta\in\Theta^+$, and ${\check\theta}<0$ if $-{\check\theta}\in\Theta^+$.

Our main assumption regarding the set $\Theta$ is the following:

\begin{statement}\label{posneg}
For every ${(\check\theta,\sigma,r)}\in\Theta$ we either have ${\check\theta}>0$ or ${\check\theta}<0$.  For every $\gamma\in \Delta_X$ the set $\{(\check\theta,\sigma,r)\in\Theta^+ |w_\gamma{\check\theta}<0\}$ consists precisely of the virtual weighted colors belonging to $\gamma$.
\end{statement}

This assumption is certainly not true for every spherical variety. However, I conjecture that it is true if $\XX=\HH\backslash \GG$ is (homogeneous and) affine, i.e.\ $\HH$ is reductive. The benefit of this stament is that it is very easy to check in each particular case; its drawback is that it gives no insight into what it might mean. A geometric understanding of this stament may provide a better understanding of the nature of the $L$-functions which are about to appear. 

\subsection{The formula}\label{ssfinalformula}

The final form of our formula is the following:

\begin{theorem} \label{improvedformula}
Assume that all simple spherical roots of $\XX$ satisfy Statement \ref{STATEMENT} and that the set $\Theta$ satisfies Statement \ref{posneg}. Then:
\begin{equation}\label{omegaoverbeta}
\Omega_{\delta_{(X)}^\frac{1}{2}\tilde\chi} (x_0) = \omega(\tilde\chi) \beta({\tilde\chi})
\end{equation}
where
\begin{equation}
 \beta({\tilde\chi}):= \frac{\prod_{\check\gamma\in\check\Phi_X^+} (1-e^{\check\gamma})}{\prod_{\check\theta\in\Theta^+}(1- \sigma_{\check\theta}q^{-r_{\check\theta}} e^{\check\theta})}({\tilde\chi}),
\end{equation}
and $\omega\in \CC[A_X^*]^{W_X}$. If $\XX$ is affine, then $\omega$ is the constant:
\begin{equation} \label{constantc}
 c := \beta(\delta_{P(X)}^\frac{1}{2})^{-1}.
\end{equation}
In either case: 
\begin{equation}\label{Schur}
 \frac{\Omega_{\delta_{(X)}^\frac{1}{2}\tilde\chi} (x_{\check\lambda})}{\beta({\tilde\chi})} =  \delta_{P(X)}^{-\frac{1}{2}}(x_{\check\lambda}) \prod_{\Theta^+} (1- \sigma_{\check\theta}q^{-r_{\check\theta}}T_{\check\theta}) s_{{\check\lambda}}({\tilde\chi})
\end{equation}
where $s_{{\check\lambda}} = \frac{\sum_{W_X} (-1)^w e^{\check\rho - w\check\rho + w{\check\lambda}}}{\prod_{\check\gamma>0} (1-e^{\check\gamma})}$ is the Schur polynomial \emph{indexed by lowest weight} (that is, if ${\check\lambda}$ is anti-dominant then $s_{\check\lambda}$ is the character of the irreducible representation of $\check G_X$ with lowest weight ${\check\lambda}$) and $T_{\check\theta}$ denotes the formal operator: $T_{\check\theta} s_{{\check\lambda}} = s_{{\check\theta}+{\check\lambda}}$.
\end{theorem}

\begin{remark}
 In the case of a pair $(X,\mathcal L_\Psi)$, where $X$ is parabolically induced from an \emph{affine} spherical variety and $\Psi$ is a generic character, then it should be considered that we are in the same case as that of an ``affine variety''. The statements above pertaining to affine varieties hold, except for the one about the value of the constant $c$, due to failure of Lemma \ref{omegatrivial}.
\end{remark}

The proof of the theorem will be completely combinatorial. The only geometric information used is the following fact (cf.\ \cite[\S 6]{KnLV}):
\begin{quote}
The spherical variety $\XX=\HH\backslash\GG$ is affine (equivalently, $\HH$ is reductive) if and only if $\mathcal V$ and $\{\check v_D | \DD \text{ a color}\}$ are separated by a hyperplane, more precisely: there is an element $\mathfrak c$ in $\mathcal Q^*$ (the $\QQ$-dual of the vector space in which these sets live) with $\left<\check v,\mathfrak c\right> \le 0$ for every $\check v\in \mathcal V$ and $\left<\check v_D,\mathfrak c\right>>0$ for every color $\DD$. It is quasi-affine if and only if $\{\check v_D| \DD \text{ a color}\}$ spans a strictly convex cone and does not contain zero.
\end{quote}

\begin{proof}
 The first step is to show that $B_w({\tilde\chi})= \frac{\beta({\tilde\chi})}{\beta({^w}{\tilde\chi})}$. Since the $B_w$ satisfy cocycle relations, it is enough to show this for $w=w_\gamma$, where $\gamma\in \Delta_X$. We compute:
$$\frac{\beta({\tilde\chi})}{\beta({^{w_\gamma}}{\tilde\chi})} = -e^{\check\gamma}({\tilde\chi}) \cdot \prod_{{\check\theta}>0, w_\gamma{\check\theta}<0} \frac{1-\sigma_{\check\theta}q^{-r_{\check\theta}}e^{-{\check\theta}}}{1-\sigma_{\check\theta}q^{-r_{\check\theta}}e^{{\check\theta}}}.$$
By Statements \ref{posneg} and \ref{STATEMENT}, for $w=w_\gamma$ this is equal to $B_{w_\gamma}(\tilde\chi)$. Hence, $$\frac{\Omega_{\delta_{(X)}^\frac{1}{2}\tilde\chi} (x_{\check\lambda})}{\beta({\tilde\chi})} = \delta_{P(X)}^{-\frac{1}{2}}(x_{\check\lambda}) \sum_{W_X} \frac{1}{\beta({^w}{\tilde\chi})}\,{^w\chi(x_{\check\lambda})}.$$

This proves (\ref{Schur}), which in particular implies that $\frac{\Omega_{\delta_{(X)}^\frac{1}{2}\tilde\chi} (x_{\check\lambda})}{\beta({\tilde\chi})}$ is \emph{regular} in ${\tilde\chi}$, for every ${\check\lambda}$. In particular, for  ${\check\lambda}= 0$ we have $\frac{\Omega_{\delta_{(X)}^\frac{1}{2}\tilde\chi} (x_0)}{\beta({\tilde\chi})} = \omega({\tilde\chi})$ where $\omega$ is a $W_X$-invariant \emph{regular} function of ${\tilde\chi}$. Our proof will be complete if we show that in the affine case $\omega$ is a \emph{constant}; the statement about the precise value of $c$ follows from the fact that $\Omega_{\delta^\frac{1}{2}}=1$ (Lemma \ref{omegatrivial}).

We define two partial orders on the set of weights on $A_X^*$: We will write that ${\check\lambda} \succ_1 \check\mu$ if ${\check\lambda}-\check\mu$ is in the non-negative span $\mathcal R$ of $\check\Phi_X^+$; and that ${\check\lambda}\succ_2\check\mu$ if ${\check\lambda}-\check\mu$ is in the non-negative span $\mathcal T$ of the valuations induced by all colors (i.e.\ the cone dual to the cone of characters of $\BB$ on $k[\XX]^{(\BB)}$). Of course, all characters of $\BB$ on $k[\XX]^{(\BB)}$ are dominant, therefore $\mathcal T\supset\mathcal R$ and the second order is weaker than the first. Now we have:

\begin{lemma}
 If $\omega$ is a non-constant, ($W_X$-)symmetric polynomial on $A_X^*$ and ${\check\mu}$ is a minimal weight appearing with non-zero coefficient in $\omega$, then ${\check\mu}$ is also a minimal weight, appearing with the same coefficient, in $\omega \cdot \prod_{\check\gamma\in \check\Phi_X^+} (1-e^{\check\gamma})$.
\end{lemma}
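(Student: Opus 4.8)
The plan is to exploit the partial order $\succ_1$ together with the strict convexity coming from the quasi-affine hypothesis. Write $\omega = \sum_{\check\mu} c_{\check\mu} e^{\check\mu}$, and let $\check\lambda$ be a weight minimal with respect to $\succ_1$ among those with $c_{\check\lambda}\ne 0$. Multiplying out, we have
\begin{equation}
\omega \cdot \prod_{\check\gamma\in\check\Phi_X^+}(1-e^{\check\gamma}) = \sum_{\check\mu} c_{\check\mu} e^{\check\mu} \sum_{S\subseteq \check\Phi_X^+} (-1)^{|S|} e^{\sum_{\check\gamma\in S}\check\gamma},
\end{equation}
so every weight occurring on the right-hand side is of the form $\check\mu + \sum_{\check\gamma\in S}\check\gamma$ with $c_{\check\mu}\ne 0$, hence is $\succeq_1 \check\mu \succeq_1 \check\lambda$. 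Thus $\check\lambda$ is a minimal weight of the product, and its coefficient there is exactly $c_{\check\lambda}$, coming from the term $\check\mu=\check\lambda$, $S=\emptyset$: indeed any other contribution to $e^{\check\lambda}$ would require $\check\mu + \sum_{\check\gamma\in S}\check\gamma = \check\lambda$ with $S\ne\emptyset$, forcing $\check\mu \prec_1 \check\lambda$ and contradicting minimality of $\check\lambda$ in the support of $\omega$.

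The remaining point is that a non-constant $W_X$-symmetric polynomial does have a weight which is minimal with respect to $\succ_1$ and distinct from a vertex fixed by $W_X$ — i.e.\ that the argument is not vacuous. Here one uses that $\succ_1$ is a genuine partial order on the relevant finite set of weights because $\check\Phi_X^+$ spans a strictly convex cone (its dual cone $\mathcal V$ has nonempty interior, as $W_X$ acts faithfully with fundamental domain $\mathcal V$); so among the finitely many weights in the support of $\omega$ a $\succ_1$-minimal one exists, and by $W_X$-symmetry of $\omega$ its $W_X$-orbit lies in the support as well, which — if $\omega$ is non-constant — is incompatible with being simultaneously $\succ_1$-minimal unless one invokes the finer structure; in the intended application $\omega$ will be shown to be a polynomial in the $e^{\check\gamma}$ of negative $\succ_2$-degree, forcing the contradiction. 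The main obstacle, and the reason the lemma is phrased the way it is, is precisely keeping the bookkeeping between the two cones $\mathcal R \subseteq \mathcal T$ straight: one wants the minimal weight of $\omega$ to survive multiplication by $\prod(1-e^{\check\gamma})$ so that, comparing with (\ref{Schur}) and the fact (from the affine hypothesis, via \cite[\S 6]{KnLV}) that $\mathcal T$ and $\mathcal V$ are separated by a hyperplane, one can conclude $\omega$ has no nonzero weight at all, i.e.\ $\omega$ is constant. All of this is elementary once the convexity input is in place; the only subtlety is verifying that no cancellation can occur at the minimal weight, which is exactly what the displayed expansion above settles.
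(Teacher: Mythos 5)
Your first paragraph is correct and is exactly the paper's argument: every weight of $\prod_{\check\gamma\in\check\Phi_X^+}(1-e^{\check\gamma})$ is $\succ_1 0$, with strict inequality except for the constant summand $1$, so the $\succ_1$-minimal weight $\check\lambda$ of $\omega$ survives in the product with the same coefficient and no cancellation. Your second paragraph is unnecessary for the lemma as stated and conflates it with its later application (deducing that $\omega$ is constant via the separation of $\mathcal T$ from $\mathcal V$): the existence of a $\succ_1$-minimal weight in the support of $\omega$ needs only the finiteness of that support and the pointedness of the cone spanned by $\check\Phi_X^+$, and the lemma makes no claim that this minimal weight is not $W_X$-fixed.
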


The validity of this lemma is obvious, since ``minimal weight'' means minimal for the $\succ_1$ ordering and all weights of $\prod_{\check\gamma\in \check\Phi_X^+} (1-e^{\check\gamma})$ are $\succ_1 0$, with strict inequality except for the summand which is equal to the constant $1$.

Given this lemma and (\ref{Schur}), it suffices to prove that $\sum_{W_X} (-1)^w e^{\check\rho - w\check\rho + w{\check\mu}}$ does not contain any nonzero anti-dominant weights when ${\check\mu}=\sum_{{\check\theta}\in I} {\check\theta}$ and $I$ is any subset of $\Theta^+$. Here we use the fact that in the affine case $\mathcal V$ and $\{\check v_D | D \text{ a color}\}$ are separated by a hyperplane, in the sense recalled above.
By Statements \ref{posneg} and \ref{STATEMENT}, this means that:
\begin{quote}
 there is an element $\mathfrak c\in \mathcal Q^*$ with $\left<\mathcal V,\mathfrak c\right>\le 0$ and $\left<\Theta^+,\mathfrak c\right>>0$
\end{quote}
hence $\mathcal T$ is strictly convex and $\mathcal V \cap \mathcal T = \{0\}$.
Since we want to prove that for every $w\in W_X, I\subset\Theta^+$ the weight $\check\rho-w\check\rho + w\sum_{({\check\theta},r)\in I} {\check\theta}$ is not anti-dominant,
it suffices to show that:
\begin{equation}\label{claim}
\check\rho-w\check\rho + w\sum_{({\check\theta},r)\in I} {\check\theta} \succ_2 0.
\end{equation}
For simplicity of notation, in what follows, we denote by $|I|$ the sum $\sum_{({\check\theta},r)\in I} {\check\theta}$, for any $I\subset \Theta$.

Given that $|\Theta^+|-w|\Theta^+|=\check\rho-w\check\rho$, we can write (\ref{claim}) as:
$$ |\Theta^+|- w(|\Theta^+| - |I|) \succ_2 0.$$
Let $\Theta_1 = w\Theta^+\cap \Theta^+$ and $\Theta_2=w\Theta^+\smallsetminus\Theta_1\subset -\Theta^+$. Let $I_1=w(\Theta^+\smallsetminus I)\cap \Theta_1$ and $I_2=w(\Theta^+\smallsetminus I)\cap \Theta_2$. Then (\ref{claim}) can be written:
$$ |\Theta^+|-|I_1| + |-I_2| \succ_2 0 \iff |\Theta^+ \smallsetminus I_1| + |-I_2| \succ_2 0.$$
Since both $\Theta^+\smallsetminus I_1$ and $-I_2$ belong to $\Theta^+$, this holds!
\end{proof}

\begin{remark}
 With minor additions, this proof actually shows that: $$\prod_{\Theta^+} (1- \sigma_{\check\theta}q^{-r_{\check\theta}}T_{\check\theta}) s_{{\check\lambda}}({\tilde\chi}) - \sum_{W_X} e^{w{\check\lambda}} \succ_2 {\check\lambda},$$ where, for a symmetric polynomial $p$, the expression $p\succ_2{\check\lambda}$ means that all weights of $p$ are $\succ_2 {\check\lambda}$; equivalently, since $\succ_2$ is weaker than $\succ_1$, that all anti-dominant weights of $p$ are $\succ_2{\check\lambda}$.
\end{remark}

The theorem leads us to the following definition:

\begin{definition}
 We denote by $L^\frac{1}{2}_X$ the function 
$$\tilde\chi\mapsto c\beta(\tilde\chi)=c\frac{\prod_{\check\gamma\in\check\Phi_X^+}(1-e^{\check\gamma})}{\prod_{\check\theta\in\Theta^+} (1-\sigma_{\check\theta}q^{-r_{\check\theta}}e^{\check\theta})}(\tilde\chi)$$ on $A_X^*$.

We denote by $L_X$ and call ``the $L$-function of $X$'' the function $$L_X^\frac{1}{2}(\tilde\chi)L_X^\frac{1}{2}(\tilde\chi^{-1}) = c^2\frac{\prod_{\check\gamma\in\check\Phi_X}(1-e^{\check\gamma})}{\prod_{\check\theta\in\Theta} (1-\sigma_{\check\theta}q^{-r_{\check\theta}}e^{\check\theta})},$$ which is  $W_X$-invariant on $A_X^*$ and hence can be thought of as a conjugation-invariant function on $\check G_X$.
\end{definition}

\begin{remark}
 The importance of this definition lies in the (mostly conjectural) relationship between period integrals of automorphic forms and the value of $\Omega_{\delta_{(X)}^\frac{1}{2}\tilde\chi}$ in the affine, multiplicity-free case, see section \ref{secEisenstein}. 
\end{remark}

\begin{example}\label{tripleproduct2}
 Let $\XX=\PPGL_2\backslash(\PPGL_2)^3$, the variety of Example \ref{tripleproduct1}. Here the valuations induced by the colors are $\frac{\check\alpha_1+\check\alpha_2-\check\alpha_3}{2}, \frac{\check\alpha_1-\check\alpha_2+\check\alpha_3}{2}, \frac{-\check\alpha_1+\check\alpha_2+\check\alpha_3}{2}$ and the set $\Theta^+$ contains those and also the co-weight $\frac{\check\alpha_1+\check\alpha_2+\check\alpha_3}{2}$ (for simplicity, since $\sigma_{\check\theta}=+$ and $r_{\check\theta}=\frac{1}{2}$ for all $\check\theta$, we only write the value of $\check\theta$). Clearly, $\Theta=\Theta^+ \sqcup (-\Theta^+)$ is $W=W_X$-invariant, and therefore the final formula of Example \ref{tripleproduct1} holds. In this example, $L_X$ is up to zeta-factors equal to the the quotient of the tensor product $L$-function at $\frac{1}{2}$ by the adjoint $L$-function at $0$.
\end{example}

\begin{example}\label{GrossPrasad}
This example is related to the period integral proposed by Gross and Prasad, cf.\ \cite{GP}. Let $\XX=\SSO_n\backslash\SSO_n\times\SSO_{n+1}$ (the stabilizer embedded diagonally in the product of the two groups). Assume all groups are split. One can check that in this case $\check G_X=\check G$, and that the set $\Theta^+$ satisfies Statement \ref{posneg} and is equal to the set of all non-trivial weights $\check \theta$ of the tensor-product representation of $\check G_X$ such that $\left<\check \theta, \rho\right>>0$ (where $\rho$ denotes, as usual, the half-sum of positive roots of $\GG$, and all $\sigma_{\check\theta}=+$ and $r_{\check\theta}=\frac{1}{2}$). Hence, the corresponding $L$-value $L_X$ is, up to zeta factors, equal to: $$ \frac{L(\pi_1\otimes\pi_2,\frac{1}{2})}{L(\pi_1,\operatorname{Ad},0)L(\pi_2,\operatorname{Ad},0)}$$ where we have decomposed an unramified representation $\pi$ of $G$ as $\pi_1\otimes\pi_2$ according to the decomposition $\GG=\SSO_n\times\SSO_{n+1}$. In this case our 
calculation is similar to that performed (in greater generality) by Ichino and Ikeda \cite{II}.

We remark that the valuations associated to colors of type $T$ are usually very easy to compute as follows: One computes the valuation associated to a color in $\mathring\XX\PP_\alpha$ by computing the stabilizer in $\PP_\alpha$ of a point and using the fact that $\left<\check v_D,\alpha\right>=1$. Then one uses the fact that for such a 
divisor $\DD$ we have $\DD\in\mathring\XX\PP_\beta$ if and only if $\left<\check v_D,\beta\right>=1$ and that if $\DD,\DD'$ are the divisors in $\mathring\XX\PP_\beta$ then $\check v_D +\check v_{D'}=\check\beta$. By these rules the calculation of one $\check v_D$ implies many others, in many cases (such as the present one) all of them.
\end{example}

\begin{example}
 Let $\XX=\GGL_n\backslash\SSO_{2n+1}$, where $n$ is an even number. Here we have $\check G_X=\Sp_n\times\Sp_n\subset \Sp_{2n}=\check G$ and the valuations corresponding to colors are all simple short roots of $\check G$ (with $\sigma_{\check\theta}=+, r_{\check\theta}=1$), as well as half the long root of $\check G$ with multiplicity two (and $\sigma_{\check\theta}=+, r_{\check\theta}=\frac{1}{2}$). Therefore, $L_X$ (as a conjugation-invariant function on $\check G_X$) is up to zeta factors equal to:
$$\frac{L(\pi_1,\frac{1}{2})^2 L(\pi_1\otimes\pi_2,1)}{L(\pi_1,\operatorname{Ad},0)L(\pi_2,\operatorname{Ad},0)}.$$
\end{example}

%****************************************************************************************

\part{Applications}

%********************************************************************************************
\section{The Hecke module structure, multiplicity one and good test vectors.}\label{secHecke} \setcounter{subsubsection}{0}

From now on we let $\XX$ be a homogeneous, \emph{affine} spherical variety which satisfies the assumptions of Theorem \ref{improvedformula}. Assume also that $\mathring X$ has a \emph{unique} $B$-orbit. In particular, the strong form of the Cartan decomposition holds: $X/K \simeq \Lambda_X^+$ and $A_X^*\subset A^*$. The ``spherical Hecke algebra'' $\mathcal H(G,K)$ acts on the module  $C_c^\infty(X)^K$ via the quotient $\mathcal H_X$, defined as the image of the restriction map:
$$\mathcal H(G,K)\xrightarrow{\sim} \CC[A^*]^W \to \CC[\delta_{(X)}^\frac{1}{2} A_X^*],$$
where the first map is the Satake isomorphism. Of course, the image of this map lies in invariants under the normalizer of $\delta_{(X)}^\frac{1}{2} A_X^*$ in the Weyl group, but it may not coincide with it, as the following example shows:

\begin{example}\label{niceexample}
 Consider the spherical variety $\XX=\SSL_2\times\SSL_2\times\SSL_2\backslash \SSp_4\times\SSp_4$, where the first and second copies of $\SSL_2$ are embedded as $\SSp_2$ in the first and second copies of $\SSp_4$, while the third copy of $\SSL_2$ is embedded diagonally in both. If $\alpha_1,\alpha_1'$ denote the short roots in the two copies of $\SSp_4$ and $\alpha_2,\alpha_2'$ denote the long ones, the normalized spherical roots of $\XX$ are $\gamma_1=\alpha_1+\alpha_2,\gamma_2=\alpha_1+\alpha_1',\gamma_3=\alpha_1'+\alpha_2'$. The dual group of this variety is the image of the composite: 
$$\SL_2\times\SL_2^\diag\times \SL_2 \to \SL_2\times\SL_2\times \SL_2\times\SL_2 \to \SO_4\times\SO_4\to \check G=\SO_5\times\SO_5,$$
where $\diag$ denotes the diagonal embedding into the second and third copies, and the second arrow corresponds to the isomorphism: $\SO_4=(\SL_2 \times \SL_2)/\{\pm 1\}$.

It is easy to see that, while $W_X$ is equal to the normalizer of $A_X^*$ in $W$ (here $\delta_{(X)}^\frac{1}{2} $ is trivial), the image of $A_X^*$ in $A^*/W$ is not isomorphic to $A_X^*/W_X$: for example, $W_X$-nonconjugate points of $A_X^*$ belonging to the diagonal copy of $\SO_4$ can be conjugate under $W$. Therefore, the restriction map:
$$\mathcal H(G,K)\xrightarrow{\sim} \CC[A^*]^W \to \CC[\delta_{(X)}^\frac{1}{2} A_X^*]^{\mathcal N_W(\delta_{(X)}^\frac{1}{2} A_X^*)}$$
is not surjective in this case.
\end{example}

 It was proven in \cite{Sa2}[Theorem 6.2.1] that $C_c^\infty(X)^K$ is torsion-free over $\mathcal H_X$. Here we determine precisely the $\mathcal H(G,K)$-structure:

\begin{theorem}\label{modulestructure}
 There is a canonical isomorphism: $C_c^\infty(X)^K \simeq \CC[\delta_{(X)}^\frac{1}{2}A_X^*]^{W_X}$, compatible with the $\mathcal H(G,K)$-structure and the Satake isomorphism, under which the element $1_{\XX(\mathfrak o)}$ is mapped to the constant $1$.
\end{theorem}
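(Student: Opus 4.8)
The plan is to exhibit the isomorphism explicitly via the eigenfunction map and then check it carries the indicated Hecke structure. First I would recall that, by Theorem \ref{improvedformula} (and the hypotheses that $\XX$ is affine, multiplicity-free in the sense of a unique open $B$-orbit, and satisfies Statement \ref{STATEMENT} and Statement \ref{posneg}), for almost every $\tilde\chi\in\delta_{(X)}^{\frac12}A_X^*$ the eigenspace of $\mathcal H(G,K)$ in $C^\infty(X)^K$ with parameter $\tilde\chi$ is one-dimensional, spanned by $\Omega_{\delta_{(X)}^{\frac12}\tilde\chi}$. Pairing with this basis, every $\Phi\in C_c^\infty(X)^K$ defines a function $\tilde\chi\mapsto\langle\Phi,\Omega_{\delta_{(X)}^{\frac12}\tilde\chi}\rangle$ on $\delta_{(X)}^{\frac12}A_X^*$, which is $W_X$-invariant because $\Omega_{\delta_{(X)}^{\frac12}\tilde\chi}$ depends only on the $W_X$-orbit of $\tilde\chi$. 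This gives a $\CC$-linear map $C_c^\infty(X)^K\to\CC[\delta_{(X)}^{\frac12}A_X^*]^{W_X}$ once one checks the pairing is a Laurent polynomial: this follows from the Cartan decomposition $X/K\simeq\Lambda_X^+$ together with the explicit formula (\ref{Schur}), which shows $\Omega_{\delta_{(X)}^{\frac12}\tilde\chi}(x_{\check\lambda})/\beta(\tilde\chi)$ is a (symmetric) Laurent polynomial in $\tilde\chi$ — hence $\langle\Phi,\Omega\rangle$ is $\beta(\tilde\chi)$ times a Laurent polynomial, and one must argue the $\beta$-factor is cancelled.

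Second, I would identify the target more intrinsically. The Satake isomorphism identifies $\mathcal H(G,K)$ with $\CC[A^*]^W$, and the quotient $\mathcal H_X$ through which it acts on $C_c^\infty(X)^K$ is, by \cite{Sa2}, the coordinate ring of the image of $\delta_{(X)}^{\frac12}A_X^*$ in $A^*/W$. Since $C_c^\infty(X)^K$ is torsion-free over $\mathcal H_X$ and the support of $V^K$ is all of $\delta_{(X)}^{\frac12}A_X^*/W_X$, the natural candidate for $C_c^\infty(X)^K$ as an $\mathcal H_X$-module is the ring $\CC[\delta_{(X)}^{\frac12}A_X^*]^{W_X}$ itself, which is a finite torsion-free $\mathcal H_X$-module of generic rank one. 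The strategy is thus: show the map above is an $\mathcal H(G,K)$-module homomorphism (the Hecke operators act on eigenfunctions through their Satake parameters, so on the pairing side they act by multiplication by the corresponding symmetric function — this is the standard adjointness computation), show it is injective (if $\langle\Phi,\Omega_{\delta_{(X)}^{\frac12}\tilde\chi}\rangle=0$ for all $\tilde\chi$ then $\Phi$ pairs to zero against a spanning set of the $K$-invariants of all principal series realized in $C^\infty(X)$, hence $\Phi=0$ by a density/separation argument using that the $\Omega$'s span enough of the dual), and show surjectivity by a dimension/degree count on each $\mathcal H_X$-isotypic piece, or equivalently by producing explicit preimages. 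For surjectivity I would use the $\mathcal H(G,K)$-module generators: $1_{X(\mathfrak o)}=\Phi_K$-type element should map to a unit — and by Theorem \ref{improvedformula} with $\XX$ affine, $\Omega_{\delta_{(X)}^{\frac12}\tilde\chi}(x_0)=c\,\beta(\tilde\chi)$, so $\langle 1_{X(\mathfrak o)},\Omega_{\delta_{(X)}^{\frac12}\tilde\chi}\rangle$ is $\Vol(X(\mathfrak o))$ times (a normalization of) $\Omega(x_0)$; after dividing by $\beta$ and the measure normalization $\Vol(x_0J)=1$ from \S\ref{adjoints}, this becomes the constant $1$. That pins down the last assertion of the theorem and, combined with the fact that $1_{X(\mathfrak o)}$ generates $C_c^\infty(X)^K$ over $\mathcal H(G,K)$ (a consequence of the Cartan decomposition and the $\mathcal H_X$-action being via multiplication of symmetric functions), gives surjectivity.

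Concretely the key steps, in order, are: (1) establish from (\ref{Schur}) and the Cartan decomposition that $\Phi\mapsto\big(\tilde\chi\mapsto\langle\Phi,\Omega_{\delta_{(X)}^{\frac12}\tilde\chi}\rangle\big)$ lands in $\CC[\delta_{(X)}^{\frac12}A_X^*]^{W_X}$, with the $\beta$-denominator cancelling because $\Phi$ is compactly supported and the pairing is a finite sum of the regular functions $\Omega(x_{\check\lambda})/\beta$ times Laurent monomials — here one checks that the ``leading'' and ``trailing'' terms conspire so no pole survives, using that $\Omega_{\delta^{1/2}}=1$ (Lemma \ref{omegatrivial}) to control the normalization; (2) verify $\mathcal H(G,K)$-equivariance by the adjoint computation $\langle \phi*\Phi,\Omega_{\delta_{(X)}^{\frac12}\tilde\chi}\rangle=\widehat\phi(\delta_{(X)}^{\frac12}\tilde\chi)\langle\Phi,\Omega_{\delta_{(X)}^{\frac12}\tilde\chi}\rangle$ where $\widehat\phi$ is the Satake transform, using that $\Omega$ is an $\mathcal H$-eigenfunction with the stated eigencharacter; (3) compute $\langle 1_{X(\mathfrak o)},\Omega_{\delta_{(X)}^{\frac12}\tilde\chi}\rangle$ and normalize so it equals $1$; (4) deduce injectivity from the spanning property of the $\Omega$'s (Corollary \ref{composition} plus Theorem \ref{thmmult}: for almost all $\chi$ the $\Delta_{\tilde\chi}$-images exhaust the eigenvectors, so a $\Phi$ orthogonal to all of them is supported on a measure-zero set of $K$-types, hence zero); (5) deduce surjectivity since $1_{X(\mathfrak o)}\mapsto 1$ generates the target over $\mathcal H_X$ and its preimage generates $C_c^\infty(X)^K$. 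The main obstacle I expect is step (1) — rigorously showing the $\beta$-factor cancels and the image is genuinely polynomial (not merely rational), equivalently that no ``boundary'' terms obstruct regularity; this is where one must use affineness of $\XX$ most seriously, exactly as in the proof of Theorem \ref{improvedformula} where $\mathcal V$ and $\mathcal T=\operatorname{cone}\{\check v_D\}$ being separated by a hyperplane was the crucial input, together with the description $\Theta^+\subset\mathcal T$ and Statement \ref{posneg}. A secondary subtlety is handling the ``almost every $\tilde\chi$'' hypothesis in Theorem \ref{maintheorem}/\ref{improvedformula}: one needs that the regular function $\Omega_{\delta_{(X)}^{\frac12}\tilde\chi}(x_{\check\lambda})/\beta(\tilde\chi)$, which is defined for \emph{all} $\tilde\chi$, does compute the eigenfunction pairing at the bad points too — which follows because both sides are regular in $\tilde\chi$ and agree generically.
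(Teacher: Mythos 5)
Your overall strategy is the paper's: use the Cartan decomposition $X/K\simeq\Lambda_X^+$ to reduce to the basis $\{1_{x_{\check\lambda}K}\}$, send $1_{x_{\check\lambda}K}$ to the normalized value $P_{\check\lambda}(\chi\delta_{(X)}^{1/2})=\Omega_{\chi\delta_{(X)}^{1/2}}(x_{\check\lambda})/L_X^{1/2}(\chi)$, and invoke (\ref{Schur}) for polynomiality and the eigenfunction property for Hecke equivariance. Two remarks on your step (1): the ``$\beta$-cancellation'' you worry about is a non-issue, because the map is \emph{defined} by dividing by $L_X^{1/2}=c\beta$, and (\ref{Schur}) already asserts that $\Omega_{\delta_{(X)}^{1/2}\tilde\chi}(x_{\check\lambda})/\beta(\tilde\chi)$ is a finite linear combination of Schur polynomials, hence a genuine $W_X$-invariant Laurent polynomial for \emph{every} $\check\lambda$ --- no regularity argument beyond Theorem \ref{improvedformula} is needed. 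Also, if you define the map by the $L^2$-pairing rather than by evaluation, each basis vector picks up the factor $\Vol(x_{\check\lambda}K)$ and $1_{X(\mathfrak o)}$ lands on $\Vol(X(\mathfrak o))\cdot c=Q$ rather than $1$ (Theorem \ref{measuretheorem}); the paper's map is evaluation, which makes $1_{X(\mathfrak o)}\mapsto P_0=1$ immediate from $\Omega_{\delta_{(X)}^{1/2}\chi}(x_0)=c\beta(\chi)$.

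The genuine gap is your step (5). You deduce surjectivity from ``$1_{X(\mathfrak o)}$ generates $C_c^\infty(X)^K$ over $\mathcal H(G,K)$ and the constant $1$ generates the target over $\mathcal H_X$.'' Both of these statements are equivalent to the surjectivity of the restriction map $\CC[A^*]^W\to\CC[\delta_{(X)}^{1/2}A_X^*]^{W_X}$ of (\ref{Heckerestriction}), which is \emph{not} among the hypotheses of the theorem --- it is the extra hypothesis of the Corollary that follows it, and the example of $\SSL_2^3\backslash(\SSp_4)^2$ at the end of \S\ref{secHecke} shows it can fail. So as written your argument only proves the theorem under an additional assumption. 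The correct argument, which also disposes of your injectivity step (4) without any density or torsion-freeness input, is linear-algebraic: by the Remark following the proof of Theorem \ref{improvedformula}, $P_{\check\lambda}-\sum_{w\in W_X}e^{w\check\lambda}\succ_2\check\lambda$, i.e.\ the transition matrix from $\{P_{\check\lambda}\}_{\check\lambda\in\Lambda_X^+}$ to the monomial symmetric functions $\{\sum_{w\in W_X}e^{w\check\lambda}\}$ is unitriangular for the partial order $\succ_2$; hence the $P_{\check\lambda}$ form a $\CC$-basis of $\CC[\delta_{(X)}^{1/2}A_X^*]^{W_X}$, and the map is bijective.
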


\begin{proof}
 Let $P_{\check\lambda}$ be the polynomial on $\delta_{(X)}^\frac{1}{2}A_X^*$ defined as: 
\begin{equation}P_{\check\lambda}(\chi\delta_{(X)}^\frac{1}{2}) = \frac{\Omega_{\chi\delta_{(X)}^\frac{1}{2}}(x_{\check\lambda})}{L_X^\frac{1}{2}(\chi)}.\end{equation}
In particular, $P_0=1$, and $P_{\check\lambda}$ is, up to the constant $c$ of (\ref{constantc}), equal to the right hand side of \eqref{Schur}. Then the map $1_{x_{\check\lambda} K}\mapsto P_{\check\lambda}$ is $\mathcal H(G,K)$-equivariant with respect to the Satake isomorphism $\mathcal H(G,K)= \CC[A^*]^W$. As remarked after the proof of Theorem \ref{improvedformula}, $\check\lambda$ is the lowest weight of $P_{\check\lambda}$ under the order $\succ_2$ defined by the strictly convex cone $\mathcal T$. Therefore, by the usual inductive argument (which here uses the fact that $\mathcal T\cap \mathcal V = \{0\}$ to commence), the polynomials $P_{\check\lambda}$ generate $\CC[\delta_{(X)}^\frac{1}{2}A_X^*]^{W_X}$, and the result follows.
\end{proof}

\begin{corollary}\label{geomEnd}
 Let $\left(\End_{\mathcal H(G,K)} C_c^\infty(X)^K\right)^\geom$ (where ``$\geom$'' stands for ``geometric'') denote the subring of those $\mathcal H(G,K)$-module endomorphisms of $C_c^\infty(X)^K$ which act by scalars on the morphisms $S_\chi$, i.e.\ if $D$ is such an endomorphism:
$$S_{\chi^{-1}\nu^{-1}} \circ D = c_D(\chi) S_\chi$$ 
for some scalar $c_D(\chi)$. (Recall that $S_{\chi^{-1}\nu^{-1}}$ is adjoint to $\Delta_\chi$.) Under the assumptions of the present section, the map $D\mapsto c_D(\chi)$ gives rise to a canonical isomorphism:
$$\left(\End_{\mathcal H(G,K)} C_c^\infty(X)^K\right)^\geom \simeq \CC[\delta_{(X)}^\frac{1}{2}A_X^*]^{W_X}.$$
\end{corollary}

This was conjectured in \S 6.3 of \cite{Sa2}. The proof if straightforward: clearly, ``multiplication'' by any element of $\CC[\delta_{(X)}^\frac{1}{2}A_X^*]^{W_X}$ (under the isomorphism of Theorem \ref{modulestructure}) is a geometric endomorphism. The converse direction was already proven before the statement of the conjecture in \cite{Sa2}.

\begin{remark}
 Example \ref{niceexample} shows that a correction is due to \cite[Theorem 6.3.2]{Sa2}, namely, the theorem there is proven only in the case where the restriction map surjects onto $\CC[\delta_{(X)}^\frac{1}{2} A_X^*]^{\mathcal N_W(\CC[\delta_{(X)}^\frac{1}{2} A_X^*])}$ (and cases parabolically induced from those), with the rest of the cases remaining conjectural; however, all cases of that theorem falling under our current assumptions are covered by Corollary \ref{geomEnd}.
\end{remark}

We can draw more interesting corollaries from Theorem \ref{modulestructure} if the restriction map: \begin{equation} \label{Heckerestriction}
                                        \CC[A^*]^W\to \CC[\delta_{(X)}^\frac{1}{2} A_X^*]^{W_X}
                                       \end{equation}
 is surjective:

\begin{corollary}
Assume that the restriction map (\ref{Heckerestriction}) is surjective or, equivalently, that $\mathcal H_X\simeq \CC[\delta_{(X)}^\frac{1}{2} A_X^*]^{W_X}$ under restriction of the Satake isomorphism. Then:
\begin{enumerate}
 \item $C_c^\infty(X)^K$ is a principal module under $\mathcal H(G,K)$, generated by the characteristic function of $\XX(\mathfrak o)$.
 \item (Multiplicity one:) For every character of $\mathcal H(G,K)$ belonging to $\spec_M \mathcal H_X$ the corresponding eigenspace in the Hecke module $C^\infty(X)^K$ is one-dimensional. For every irreducible unramified representation $\pi$ we have $$\dim\Hom_G(\pi,C^\infty(X))\le 1.$$ (Of course, a necessary condition for the dimension to be non-zero is that the Hecke eigencharacter of $\pi^K$ belongs to $\spec_M \mathcal H_X$.)
 \item (Good test vectors:) For every irreducible unramified representation $\pi$ and any non-zero $H$-invariant functional $L$ on $\pi$ we have:
\begin{equation}
 L|_{\pi^K}\ne 0.
\end{equation}
\end{enumerate}
\end{corollary}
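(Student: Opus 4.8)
The plan is to deduce all three statements from Theorem \ref{modulestructure} together with the surjectivity of \eqref{Heckerestriction}. First, statement (1): under the hypothesis, Theorem \ref{modulestructure} gives $C_c^\infty(X)^K \simeq \CC[\delta_{(X)}^\frac12 A_X^*]^{W_X}$ as $\mathcal H(G,K)$-modules, with $1_{X(\mathfrak o)}$ mapping to the constant $1$. Since \eqref{Heckerestriction} is surjective, the $\mathcal H(G,K)$-action on $\CC[\delta_{(X)}^\frac12 A_X^*]^{W_X}$ factors through the quotient $\mathcal H_X\simeq \CC[\delta_{(X)}^\frac12 A_X^*]^{W_X}$ and is simply multiplication in that ring; hence the module is free of rank one, generated by $1$, i.e.\ by $1_{X(\mathfrak o)}$. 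This is the easy part, essentially a restatement of Theorem \ref{modulestructure} once one knows the Hecke image is the full invariant ring.

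For statement (2), I would argue as follows. A character $\eta\in\spec_M\mathcal H(G,K)$ factoring through $\mathcal H_X$ corresponds to a $W_X$-orbit $\delta_{(X)}^\frac12\chi$ in $\delta_{(X)}^\frac12 A_X^*$, and the $\eta$-eigenspace in $C^\infty(X)^K$ is $\Hom_{\mathcal H_X}(\CC_\eta, C^\infty(X)^K)^\vee$-type data; concretely, since $C_c^\infty(X)^K$ is free of rank one over $\mathcal H_X$ by (1), dualizing shows the eigenspace in the full (non-compactly-supported) $C^\infty(X)^K$ is one-dimensional — one uses that $C^\infty(X)^K$ is the $\mathcal H_X$-module dual, or more simply that an eigenfunction is determined by its value on the free generator $1_{X(\mathfrak o)}$ up to scalar. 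The consequence for representations: if $\pi$ is irreducible unramified with $\pi^K$ having Hecke eigencharacter $\eta$, then $\Hom_G(\pi, C^\infty(X))$ injects into $\Hom_{\mathcal H(G,K)}(\pi^K, C^\infty(X)^K)$ (because an irreducible unramified $\pi$ is generated by $\pi^K$, so a $G$-morphism is determined by its restriction to $\pi^K$, and conversely $\pi^K$ is one-dimensional), and the latter is the $\eta$-eigenspace, which is at most one-dimensional. I should be a little careful here that the injectivity $\Hom_G(\pi,C^\infty(X))\hookrightarrow\Hom_{\mathcal H(G,K)}(\pi^K,C^\infty(X)^K)$ is valid — this is where one invokes that $\pi$ is generated by its spherical vector (irreducibility plus unramifiedness), so a nonzero $G$-map cannot kill $\pi^K$; this is the main technical point to get right.

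For statement (3), the good-test-vector assertion, I would reason: let $L$ be a nonzero $H$-invariant functional on $\pi$, equivalently (by Frobenius reciprocity for the quasi-affine $X=H\backslash G$, using the eigenmeasure) a nonzero element of $\Hom_G(\pi, C^\infty(X))$ — I will need the precise form of Frobenius reciprocity here, matching $\Hom_H(\pi,\mathbb C)$ with $\Hom_G(\pi, C^\infty(X))$ (possibly after a twist by $\nu$, cf.\ \S\ref{adjoints}). Suppose for contradiction $L|_{\pi^K}=0$. The image of the corresponding map $\pi\to C^\infty(X)$ is a $G$-submodule isomorphic to a quotient of $\pi$, hence (by irreducibility) to $\pi$ itself, and its $K$-invariants would be zero; but $\pi^K\ne 0$ maps into $C^\infty(X)^K$, and the map is $G$-equivariant, so the image's $K$-invariants contain the image of $\pi^K$, which is nonzero since $\pi$ is generated by $\pi^K$ and the map is nonzero. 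Alternatively and more cleanly: by (2) the eigenspace in $C^\infty(X)^K$ for the eigencharacter of $\pi^K$ is one-dimensional, spanned by the spherical function $\Omega_{\delta_{(X)}^\frac12\chi}$; the image of $\pi^K$ under $L$ lands in this eigenspace, and it is nonzero precisely because $\Omega_{\delta_{(X)}^\frac12\chi}$ does not vanish identically — indeed Theorem \ref{improvedformula} gives its value at $x_0$ as $c\,\beta(\chi)\ne 0$ for generic $\chi$, and by the module structure (Theorem \ref{modulestructure}) the nonvanishing of $L$ on $\pi$ forces $L$ restricted to $\pi^K$ to hit this generator. The main obstacle I anticipate is making the last implication airtight for \emph{all} relevant $\chi$ (not just generic ones): one must ensure that the spherical function attached to $\eta$ is genuinely nonzero and that $L|_{\pi^K}$, being an $\mathcal H(G,K)$-equivariant map into the rank-one free module, is forced to be nonzero whenever $L$ itself is — this uses torsion-freeness of $C_c^\infty(X)^K$ over $\mathcal H_X$ from \cite{Sa2} together with the module identification, and I would organize the argument around the pairing between $C_c^\infty(X)^K$ and $C^\infty(X)^K$ so that $L|_{\pi^K}=0$ would make $L$ annihilate a generating set, hence vanish.
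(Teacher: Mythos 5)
The paper itself omits the proof of this corollary (calling it ``straightforward''), so there is nothing to compare against line by line; your parts (1) and (2) are correct and are surely the intended argument: under the surjectivity of (\ref{Heckerestriction}), Theorem \ref{modulestructure} exhibits $C_c^\infty(X)^K$ as the cyclic $\mathcal H(G,K)$-module generated by $1_{\XX(\mathfrak o)}$; an $\mathcal H(G,K)$-eigenvector of $C^\infty(X)^K$ is, via the pairing, an eigen-functional on this cyclic module and hence determined by its value on the generator; and $\Hom_G(\pi,C^\infty(X))$ injects into the eigenspace because an irreducible unramified $\pi$ is generated by $\pi^K$.

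Part (3), however, has a genuine gap in its first version. Write $M:\pi\to C^\infty(X)$, $M(v)(x_0g)=L(\pi(g)v)$, for the intertwiner attached to $L$ by Frobenius reciprocity. The hypothesis $L|_{\pi^K}=0$ says only that $M(v_0)(x_0)=0$ for the spherical vector $v_0$ --- i.e.\ that the spherical function vanishes at the base point --- not that $M(v_0)=0$; so the step ``its $K$-invariants would be zero'' is unjustified and the contradiction you derive is with a claim you have not established. Your second version has the same soft spot in another guise: reducing to $\Omega_{\delta_{(X)}^{1/2}\chi}(x_0)=c\beta(\chi)\ne 0$ only works away from the zeros of $\beta$, and the proposed repair (``$L|_{\pi^K}=0$ would make $L$ annihilate a generating set, hence vanish'') again conflates the $H$-invariant functional $L$ on $\pi$ (for which annihilating a $G$-generating set of $\pi$ implies nothing) with the eigen-functional $M(v_0)$ on $C_c^\infty(X)^K$. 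The clean argument, which you gesture at but do not quite state, is: $M$ is injective by irreducibility, so $M(v_0)$ is a nonzero $\eta$-eigenvector of $C^\infty(X)^K$, hence a nonzero eigen-functional on the cyclic module $C_c^\infty(X)^K=\mathcal H(G,K)\cdot 1_{\XX(\mathfrak o)}$; such a functional cannot annihilate the generator, and since $\XX(\mathfrak o)=x_0K$ here one has $\left<M(v_0),1_{\XX(\mathfrak o)}\right>=|\omega_X|(x_0K)\, M(v_0)(x_0)=|\omega_X|(x_0K)\, L(v_0)$, whence $L(v_0)\ne 0$. This works for \emph{every} point of $\spec_M\mathcal H_X$, and needs neither Theorem \ref{improvedformula} nor the torsion-freeness result you invoke.
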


The last statement also has the following implication when $\GG,\HH$ are defined over a global field $F$ and locally at almost every place satisfy the assumptions of the Corollary: If $\Hom_{H_v}(\pi_v,\CC)\ne 0$ for every place $v$ of $F$, then $\Hom_{\HH(\Ad)} (\pi,\CC)\ne 0$, where $\Ad$ denotes the ring of adeles of $F$.

The proof of the statements in the Corollary is straightforward and will be omitted. We notice that a variety which satisfies the assumptions of the corollary is multiplicity-free not only for unramified representations in general position (in the support of $\mathcal H_X$), but for \emph{every} irreducible unramified representation $\pi$, i.e.:
$$\dim \Hom_H(\pi,\CC)\le 1$$ 
Example \ref{niceexample} shows that without the assumption that (\ref{Heckerestriction}) is surjective one can have generic multiplicity one without having multiplicity one for every unramified representation. Indeed, as we remarked there are distinct points of $\delta_{(X)}^\frac{1}{2} A_X^*/W_X$ which map to the same point of $A^*/W$; at those points the multiplicity of the fiber of $C_c^\infty(X)^K$ over $\mathcal H_X$ is greater than one, even though the generic multiplicity is one.

%**************************************************************************************
\section{Unramified Plancherel formula}\setcounter{subsubsection}{0} \label{secPlancherel} 

We continue to make the same assumptions on $\XX$ as in the previous section. Since $\HH$ is reductive, we may and will assume that the $\GG$-eigenform $\omega_X$ on $\XX$ is $\GG$-invariant, and hence defines a $G$-invariant measure $|\omega_X|$ on $X$. Remember that we have normalized that measure so that $|\omega_X|(x_0 J) =1 $. We will compute the Plancherel formula for unramified functions on $X$. More precisely, it is known that there exists a(n essentially unique) decomposition of $L^2(X)$ as a direct integral of irreducible, unitary representations. Specializing to $K$-invariant elements of $L^2(X)$, we get for every $\Phi\in C_c^\infty(X)^K$ a formula for $\Vert \Phi \Vert^2_{L^2(X)}$ as an integral of $|\left<\Phi,\Omega\right>|^2$ where $\Omega$ ranges over $\mathcal H(G,K)$-eigenfunctions belonging to unitary representations. We will compute this formula; as an application, we will compute the volume of $\XX(\mathfrak o)$ (essentially, its ``Tamagawa volume'').

Since we are keeping the assumptions of the previous section, every Hecke eigenfunction is a multiple of 
$$ \Omega_\chi' := \frac{\Omega_\chi}{L_X^\frac{1}{2}(\chi)}$$
for some $\chi\in \delta_{(X)}^\frac{1}{2}A_X^*$. Notice that $\Omega_\chi'(x_0)=1$ and $\Omega_{^w \chi}' = \Omega_{\chi}'$ for every $w\in W_X$. Therefore, the Plancherel formula will have the form: 
\begin{equation}\label{abstractPlancherel}
 \Vert \Phi\Vert^2 = \int_{\delta_{(X)}^\frac{1}{2}A_X^*/W_X} |\left<\Phi,\Omega_{\chi}'\right>|^2 d\mu(\chi)
\end{equation} for every $\Phi\in C_c^\infty(X)^K$ and a \emph{unique} positive measure on $\delta_{(X)}^\frac{1}{2}A_X^*/W_X$ (supported, of course, on the set of points belonging to unitary representations) which, for brevity, when normalized this way, will be called \emph{the} Plancherel measure for $X$.

\begin{theorem}\label{Planchereltheorem}
The Plancherel measure for the unramified spectrum of $X$ is supported on $\delta_{(X)}^\frac{1}{2}A_X^{*,1}/W_X$, where $A_X^{*,1}$ denotes the maximal compact subgroup of $A_X^*$. For every $\Phi\in C_c^\infty(X)^K$ we have:
\begin{equation} \label{Plancherel}
 \Vert \Phi\Vert^2 =  \frac{1}{Q_{P(X)}\cdot|W_X|} \int_{A_X^{*,1}} \left|\left<\Phi, \Omega'_{\delta_{(X)}^\frac{1}{2}\chi}\right>\right|^2 L_X(\chi) d\chi,
\end{equation}
where $d\chi$ is probability Haar measure on $A_X^{*,1}$ and 
$$Q_{P(X)}=\frac{\Vol(K)}{\Vol(\PP(\XX)^-(\mathfrak o)\PP(\XX)(\mathfrak o))}=\prod_{\check\alpha\in\check\Phi_{P(X)}^+} \frac{1-q^{-1-\left<\check\alpha,\rho\right>}}{1-q^{-\left<\check\alpha,\rho\right>}},$$
where $\PP(\XX)^-$ denotes a parabolic opposite to $\PP(\XX)$ defined over $\mathfrak o$ and $\check\Phi_{P(X)}^+$ is the set of coroots corresponding to roots in the unipotent radical of $\PP(\XX)$.
\end{theorem}

\begin{remark}
 With respect to our original eigenfunctions $\Omega_\chi$ we have:
\begin{equation}\label{Plancherel2} \Vert \Phi\Vert^2 = \frac{1}{Q_{P(X)}\cdot|W_X|} \int_{A_X^{*,1}} \left|\left<\Phi, \Omega_{\delta_{(X)}^\frac{1}{2}\chi}\right>\right|^2  d\chi.\end{equation}
\end{remark}

\begin{proof}
As in the previous section, let $P_{\check\lambda}$ be the polynomial on $\delta_{(X)}^\frac{1}{2}A_X^*$ defined as $P_{\check\lambda}(\chi) = \Omega_\chi'(x_{\check\lambda})$. The fact that $\left<1_{x_0K},1_{x_{\check\lambda} K}\right>_{L^2(X)}=0$ for ${\check\lambda}\ne 0$ implies, via the abstract Plancherel formula (\ref{abstractPlancherel}), that $$\int_{\delta_{(X)}^\frac{1}{2}A_X^*/W_X} P_{\check\lambda}(\chi) d\mu(\chi)=0$$
for every ${\check\lambda}\ne 0$.

Recall that the $P_{\check\lambda}$ span the space $\mathcal P$ of polynomials on $\delta_{(X)}^\frac{1}{2}A_X^*/W_X$. Given a linear functional on this space of polynomials, there exists at most one real-valued measure of bounded support on $\delta_{(X)}^\frac{1}{2}A_X^*/W_X$ which represents this functional. (Indeed, this follows from the density of the functions of the form $\Re P, \Im P$, $P\in\mathcal P$, in the space of continuous functions on any compact domain.) We will show in a combinatorial way that $[1_{x_{\check\lambda} K}, 1_{x_0 K}] = 0$ for ${\check\lambda}\ne 0$, where $[,]$ denotes the hermitian inner product defined by the right hand side of (\ref{Plancherel}) (or (\ref{Plancherel2})). It will then follow that Plancherel measure is a multiple of the measure of (\ref{Plancherel}). 

\begin{lemma}
 For every ${\check\lambda}\ne 0$ we have: $[1_{x_{\check\lambda} K}, 1_{x_0 K}] = 0$, where $[,]$ denotes the hermitian inner product defined by the right hand side of (\ref{Plancherel}).
\end{lemma}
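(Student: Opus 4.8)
The plan is to prove that $[1_{x_{\check\lambda}K}, 1_{x_0K}]=0$ for $\check\lambda\neq 0$ by a direct computation of the right-hand side of (\ref{Plancherel2}). First I would unfold the definition: by Theorem \ref{maintheorem} (in the unramified, unique-open-orbit setting), $\Omega_{\delta_{(X)}^\frac12\chi}(x_{\check\mu})= \delta_{P(X)}^{-\frac12}(x_{\check\mu})\sum_{w\in W_X} B_w(\chi)\,^w\chi(x_{\check\mu})$, and from Lemma \ref{omegatrivial} we have $\langle 1_{x_0K}, \Omega_{\delta_{(X)}^\frac12\chi}\rangle = \Vol(x_0K)\,\overline{\Omega_{\delta_{(X)}^\frac12\chi}(x_0)}$ while $\langle 1_{x_{\check\lambda}K}, \Omega_{\delta_{(X)}^\frac12\chi}\rangle = \Vol(x_{\check\lambda}K)\,\overline{\Omega_{\delta_{(X)}^\frac12\chi}(x_{\check\lambda})}$. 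Hence the integrand of $[1_{x_{\check\lambda}K},1_{x_0K}]$ is (up to the volume constants, which I would express in terms of $q$ and the geometry using Axioms \ref{orbitstheorem}, \ref{Iwahori} and the normalization $|\omega_X|(x_0J)=1$) a constant multiple of $\Omega_{\delta_{(X)}^\frac12\chi}(x_{\check\lambda})\,\overline{\Omega_{\delta_{(X)}^\frac12\chi}(x_0)}\,\overline{L_X(\chi)}/|L_X^\frac12(\chi)|^2$ — and since on $A_X^{*,1}$ we have $\chi^{-1}=\bar\chi$, this simplifies to $\Omega'_{\delta_{(X)}^\frac12\chi}(x_{\check\lambda})\cdot\overline{1}$, i.e.\ to $P_{\check\lambda}(\chi)$ times $L_X(\chi)$ (with $L_X(\chi)\ge 0$ on the unitary locus). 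So the claim reduces to showing $\int_{A_X^{*,1}} P_{\check\lambda}(\chi) L_X(\chi)\, d\chi = 0$ for $\check\lambda\neq 0$.

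Next I would substitute the explicit formula (\ref{Schur}): $P_{\check\lambda}(\chi)=\delta_{P(X)}^{-\frac12}(x_{\check\lambda})\prod_{\Theta^+}(1-\sigma_{\check\theta}q^{-r_{\check\theta}}T_{\check\theta})s_{\check\lambda}(\chi)$, and write $L_X(\chi)=c^2\frac{\prod_{\check\gamma\in\check\Phi_X}(1-e^{\check\gamma})}{\prod_{\check\theta\in\Theta}(1-\sigma_{\check\theta}q^{-r_{\check\theta}}e^{\check\theta})}$. The key observation is that $s_{\check\lambda}$ is a finite $\CC$-linear combination of the characters $e^{\check\mu}$ of $\check G_X$, so $\prod_{\Theta^+}(1-\sigma_{\check\theta}q^{-r_{\check\theta}}T_{\check\theta})s_{\check\lambda}$ is a finite combination $\sum c_{\check\mu} s_{\check\mu}$ with $\check\mu$ running over weights of the form $\check\lambda + \sum_{I\subset\Theta^+}|I|$ shifted, and the $s_{\check\mu}$ are (signed combinations of) Weyl-character–type functions. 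Meanwhile $L_X(\chi)\,d\chi$ is, by construction, the Plancherel measure of $\check G_X$ up to the normalizing constant $\prod_{\check\gamma}(1-e^{\check\gamma})$ cancelling with the denominator of the Schur polynomials; concretely, $\frac{1}{|W_X|}\prod_{\check\gamma\in\check\Phi_X}(1-e^{\check\gamma})\,d\chi$ on $A_X^{*,1}/W_X$ is exactly the measure for which $\{s_{\check\mu}\}_{\check\mu}$ (Weyl characters of $\check G_X$) are orthonormal and orthogonal to the constant $1$ whenever $\check\mu\neq 0$. So I would expand $P_{\check\lambda}(\chi)L_X(\chi)$ into a combination of products $s_{\check\mu}(\chi)\cdot\frac{c^2}{\prod_{\check\theta\in\Theta^+}(1-\sigma_{\check\theta}q^{-r_{\check\theta}}e^{-\check\theta})}$ and use the Weyl integration / Schur orthogonality on $\check G_X$, reducing everything to checking that no constant term survives.

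The real content — and the main obstacle — is the same combinatorial positivity input already isolated in the proof of Theorem \ref{improvedformula}: one must show that for every $\check\lambda\neq 0$, every $w\in W_X$, and every $I\subset\Theta^+$, the weight $\check\rho - w\check\rho + w(\check\lambda+|I|)$ is not $0$ (equivalently, expanding $\prod_{\Theta^+}(1-\sigma_{\check\theta}q^{-r_{\check\theta}}T_{\check\theta})s_{\check\lambda}$ and the geometric series $1/\prod(1-\sigma_{\check\theta}q^{-r_{\check\theta}}e^{-\check\theta})$, no term contributes a nonzero multiple of the trivial character). This is precisely the separation statement ``$\mathcal V$ and $\Theta^+$ are separated by a hyperplane'' established there under Statement \ref{posneg} and the affineness of $\XX$ (via \cite[\S 6]{KnLV}): since $\check\lambda\neq 0$ is the difference of a minimal weight of some $\Phi$ of support in $\Lambda_X^+$, i.e.\ $\check\lambda\in -\mathcal T$ essentially, and the $\succ_2$-ordering argument already in the excerpt shows $\check\rho - w\check\rho + w(\check\lambda+|I|)\succ_2 \check\lambda\neq 0$, hence never zero. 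I would therefore import that argument nearly verbatim, with $\check\lambda$ now playing the role of (a negative of) a dominant coweight coming from the support of $1_{x_{\check\lambda}K}$. Once $[1_{x_{\check\lambda}K},1_{x_0K}]=0$ is known for all $\check\lambda\neq 0$, uniqueness of the representing measure (density of $\Re P,\Im P$, $P\in\mathcal P$, in $C(A_X^{*,1}/W_X)$, already invoked above) forces the abstract Plancherel measure $d\mu$ to be a scalar multiple of the measure in (\ref{Plancherel}); the scalar is then pinned down by testing on $\Phi=1_{x_0K}$ and using $\Vert 1_{x_0K}\Vert^2=\Vol(x_0K)$ together with $\int_{A_X^{*,1}}L_X(\chi)\,d\chi$ computed by the same orthogonality (only the constant term $1$ of $L_X$ survives, giving $c^2$ times a product of local zeta factors that reorganizes into $Q^{-1}$). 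That last normalization computation is routine and I would not grind through it here.
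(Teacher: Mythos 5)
Your overall strategy is the paper's: reduce to the vanishing of $\int_{A_X^{*,1}} P_{\check\lambda}(\delta_{(X)}^\frac{1}{2}\chi)L_X(\chi)\,d\chi$ for ${\check\lambda}\ne 0$ and compute this as the constant term of a Laurent expansion (your ``Schur orthogonality on $\check G_X$'' is the same device). But the key combinatorial step is not correct as you state it. You claim that each weight $\check\rho-w\check\rho+w({\check\lambda}+|I|)$ occurring in the expansion satisfies $\succ_2{\check\lambda}\ne 0$ and is ``hence never zero''; this is a non sequitur. The relation $0\succ_2{\check\lambda}$ means exactly $-{\check\lambda}\in\mathcal T$, and this is \emph{not} excluded for ${\check\lambda}\in\Lambda_X^+\smallsetminus\{0\}$: already in the group case $\mathcal T$ is the cone spanned by the simple coroots and $-{\check\lambda}$ is dominant, hence lies in $\mathcal T$. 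Worse, you assert ``${\check\lambda}\in-\mathcal T$ essentially,'' which is precisely the configuration in which your inequality is compatible with the weight being zero -- so your two assertions together prove nothing. (What is actually true and needed is ${\check\lambda}\in\Lambda_X^+\subset\mathcal V$ together with $\mathcal V\cap\mathcal T=\{0\}$ from affineness; $\mathcal V$ is disjoint from $\mathcal T$ away from the origin, not contained in $-\mathcal T$ in any useful sense.)

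The correct bookkeeping requires grouping the product the other way, using the $W_X$-invariance of $L_X$: one writes
$$P_{\check\lambda}(\delta_{(X)}^\frac{1}{2}\chi)L_X(\chi)=c\sum_{w\in W_X}\left[\frac{\prod_{\check\gamma\in\check\Phi_X^-}(1-e^{\check\gamma})}{\prod_{\check\theta\in\Theta^-}(1-\sigma_{\check\theta}q^{-r_{\check\theta}}e^{\check\theta})}\,e^{\check\lambda}\right]({}^w\chi),$$
so that the bracketed function has all its weights of the form ${\check\lambda}-t$ with $t\in\mathcal T$ (bounded \emph{above} by ${\check\lambda}$ for $\succ_2$, not below). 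Then the constant term of the $w$-summand is nonzero only if $0\in w({\check\lambda}-\mathcal T)$, i.e.\ only if ${\check\lambda}\in\mathcal T$, and $\mathcal T\cap\mathcal V=\{0\}$ forces ${\check\lambda}=0$. Note also that in your grouping the denominator of $L_X$ runs over all of $\Theta$, not just $\Theta^-$, and the operator $\prod_{\Theta^+}(1-\sigma_{\check\theta}q^{-r_{\check\theta}}T_{\check\theta})$ does not literally cancel a factor of a rational function; the cancellation only happens after rewriting via the sum over $W_X$ as above. The remainder of your reduction (unfolding the inner products, uniqueness of the representing measure, the normalization) is fine and matches the paper.
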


It suffices to show that $\int_{A_X^{*,1}} P_{\check\lambda}(\delta_{(X)}^\frac{1}{2}\chi) L_X(\chi) d\chi = 0$ for ${\check\lambda}\ne 0$. The value of this integral is equal to the constant term of the Laurent series expansion of the rational function $P_{\check\lambda}(\delta_{(X)}^\frac{1}{2}\chi) L_X(\chi)$ (see, for instance, \cite{MD}). We have: 
$$P_{\check\lambda}(\delta_{(X)}^\frac{1}{2}\chi) L_X(\chi) = c\frac{\prod_{\check\gamma\in\check\Phi_X}(1-e^{\check\gamma})}{\prod_{{\check\theta}\in\Theta} (1-\sigma_{\check\theta}q^{-r_{\check\theta}}e^{\check\theta})} \sum_{W_X} \frac{\prod_{{\check\theta}\in\Theta^+} (1-\sigma_{\check\theta}q^{-r_{\check\theta}}e^{\check\theta})}{\prod_{\check\gamma\in\check\Phi_X^+}(1-e^{\check\gamma})}e^{\check\lambda}\left({^w\chi}\right)=$$
$$= c \sum_{W_X} \frac{\prod_{\check\gamma\in\check\Phi_X^-}(1-e^{\check\gamma})}{\prod_{{\check\theta}\in\Theta^-} (1-\sigma_{\check\theta}q^{-r_{\check\theta}}e^{\check\theta})}e^{\check\lambda} \left({^w\chi}\right).$$

In the notation of the proof of Theorem \ref{improvedformula}, all weights in the Laurent expansion of $\frac{\prod_{\check\gamma\in\check\Phi_X^-}(1-e^{\check\gamma})}{\prod_{{\check\theta}\in\Theta^-} (1-\sigma_{\check\theta}q^{-r_{\check\theta}}e^{\check\theta})}e^{\check\lambda} $ are $\prec_2 {\check\lambda}$. Therefore, $0$ is a weight of the above expression only if $0 \in w({\check\lambda}-\mathcal T)$ for some $w\in W_X$, equivalently only if ${\check\lambda}\in\mathcal T \iff {\check\lambda} = 0$. This proves the lemma.

Hence the Plancherel formula has to be a multiple of the right-hand-side of (\ref{Plancherel}). On the other hand, a criterion of Bernstein, whose full proof appears in \cite[Theorem 11.3.1]{SV}, implies that the right-hand-side of (\ref{Plancherel}) is precisely the \emph{most continuous part} of the Plancherel formula. Having proven that the Plancherel formula for $L^2(X)^K$ is a multiple of (\ref{Plancherel}), Bernstein's criterion proves that the proportionality constant is $1$, so the theorem follows.

We explicate this criterion with many simplicifactions that apply to our case: Notice that, because of the form of the $\mathcal H(G,K)$-eigenfunctions, the Plancherel formula for $L^2(X)^K$ has the form:
$$\Vert \Phi\Vert^2 = \int \sum_{w,w'} c_{w,w'}(\chi)\hat\Phi({^w\chi}) \overline{\hat \Phi({^{w'}\chi})} \mu(\chi).$$
Here the integral is over representatives in $\delta_{(X)}^\frac{1}{2} A_X^*$ of points in the image of $\delta_{(X)}^\frac{1}{2}A_X^*$ in $A^*/W$, and the sum is over the elements of $W$ which map a given representative $\chi$ into $\delta_{(X)}^\frac{1}{2} A_X^*$. We denote by $\hat \Phi$ the Mellin transform of $\Phi$ considered as a function on $A_X^*$. The factors $c_{w,w'}(\chi)$ are constants which can be read off from our formula, and the measure $\mu(\chi)$ is the (unknown for now) Plancherel measure.

Bernstein's criterion \cite[Theorem 11.3.1]{SV}, in a very explicit form, says that if we ignore the ``cross terms'' of the above formula, i.e.\ the terms with ${^w\chi}^{-1}\ne \overline{{^{w'}\chi}}$ (hence keeping only those with unitary $\chi$ and with $w'=w$), then we should get the Plancherel formula for ``the'' horospherical variety $\XX_\emptyset:= \HH_\emptyset\backslash \GG$. Here we identify $\Phi$ with a function on $X_\emptyset$ via ``the'' Iwasawa decomposition $X_\emptyset/K \simeq \Lambda_X$ and the embedding $\Lambda_X^+\hookrightarrow \Lambda_X$. The definite article ``the'' has been placed in quotation marks here, because the variety $X_\emptyset$ has nontrivial $G$-automorphisms which, in fact, act transitively on $X_\emptyset /K$. The point is, however, that given any choice of such an injection of $K$-orbits $X/K \simeq \Lambda_X^+\hookrightarrow X_\emptyset/K$ there is a (unique, obviously) $G$-eigenmeasure on $X_\emptyset$ such that for elements of $\Lambda_X^+$ far enough from the 
walls of the cone spanned by 
$\Lambda_X^+$ this injection is measure-preserving. Finally, we notice that this measure-preserving identification of orbits close enough to infinity is not particular to $K$-orbits but applies to any open compact subgroup and is compatible with inclusions of any two open compact subgroups as long as we restrict it close enough to infinity (far enough from the walls of $\Lambda_X^+$, in our case). In particular, by comparing the computation of volumes of Iwahori-orbits in Lemma \ref{Iwahorivolume} with the analogous computation for $X_\emptyset$ it is easy to see what measure to put on $X_\emptyset/K$ and to compare Plancherel formulas. 

We remark that there is a more conceptual way to understand the variety $\XX_\emptyset$ and these orbit identifications: the variety $\XX_\emptyset$ is the \emph{open $\GG$-orbit} on the normal bundle to the closed $\GG$-orbit -- call it $\ZZ$ -- in the ``wonderful'' compactification of $\XX$ (or some suitable toroidal compactification); any $p$-adic analytic map from a 
neighborhood of $Z$ in $X_\emptyset$ to $X$ which induces the identity on normal bundles gives rise to such an identification of $K$-orbits close enough to $Z$.

\end{proof}

The Plancherel formula has the following corollary. This corollary does not apply, of course, to the case of non-trivial line bundles $\mathcal L_\Psi$.

\begin{theorem}\label{measuretheorem}
 The measure of $\XX(\mathfrak o)$ is:
\begin{equation}\label{measure}
 |\omega_X|(\XX(\mathfrak o))= Q_{P(X)}\cdot c^{-1} = Q_{P(X)} \frac{\prod_{\check\gamma\in\check\Phi_X^+}(1-e^{\check\gamma})}{\prod_{\check\theta\in\Theta^+} (1-\sigma_{\check\theta}q^{-r_{\check\theta}}e^{\check\theta})}(\delta_{P(X)}^\frac{1}{2}).
\end{equation}
\end{theorem}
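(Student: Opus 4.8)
The plan is to extract $|\omega_X|(\XX(\mathfrak o))$ from the Plancherel formula by evaluating both sides of (\ref{Plancherel}) on the simplest available test function, namely $\Phi = 1_{X(\mathfrak o)} = 1_{x_0 K}$. First I would observe that, with the normalization $|\omega_X|(x_0 J)=1$ and the Cartan decomposition $X/K \simeq \Lambda_X^+$ under which $x_0$ corresponds to the trivial cocharacter, the left-hand side is
\begin{equation}
\Vert 1_{x_0 K}\Vert^2 = |\omega_X|(x_0 K) = (K:J)\cdot|\omega_X|(x_0 J)\cdot\frac{|\omega_X|(x_0 K)}{|\omega_X|(x_0 J)(K:J)}.
\end{equation}
More directly: since $x_0 K$ is a single $K$-orbit and $x_0 K = \XX(\mathfrak o)$ by Axiom \ref{orbitstheorem} (the coset of the identity in $X/K\simeq\Lambda_X^+$), we have $\Vert 1_{x_0 K}\Vert^2 = |\omega_X|(\XX(\mathfrak o))$. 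This is the quantity we want to compute.

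Next I would compute the right-hand side of (\ref{Plancherel}) for $\Phi = 1_{x_0 K}$. By definition $\left<1_{x_0K}, \Omega'_{\delta_{(X)}^\frac12\chi}\right> = \int_X 1_{x_0K}\,\overline{\Omega'_{\delta_{(X)}^\frac12\chi}}\,|\omega_X| = |\omega_X|(x_0K)\cdot\overline{\Omega'_{\delta_{(X)}^\frac12\chi}(x_0)}$, but $\Omega'_{\delta_{(X)}^\frac12\chi}(x_0) = 1$ for all $\chi$ by construction (it is the renormalized eigenfunction with value $1$ at $x_0$). Hence $\left<1_{x_0K},\Omega'_{\delta_{(X)}^\frac12\chi}\right> = |\omega_X|(\XX(\mathfrak o))$ is a constant independent of $\chi$, and (\ref{Plancherel}) gives
\begin{equation}
|\omega_X|(\XX(\mathfrak o)) = \frac{1}{Q\cdot|W_X|}\,|\omega_X|(\XX(\mathfrak o))^2\int_{A_X^{*,1}} L_X(\chi)\,d\chi.
\end{equation}
Dividing by $|\omega_X|(\XX(\mathfrak o))$ (which is positive, hence nonzero) yields $|\omega_X|(\XX(\mathfrak o)) = Q\cdot|W_X|\Big/\int_{A_X^{*,1}} L_X(\chi)\,d\chi$, so everything comes down to evaluating the integral $\int_{A_X^{*,1}} L_X(\chi)\,d\chi$, i.e.\ the constant term of the Laurent expansion of the $W_X$-invariant rational function $L_X = c^2\prod_{\check\gamma\in\check\Phi_X}(1-e^{\check\gamma})\big/\prod_{\check\theta\in\Theta}(1-\sigma_{\check\theta}q^{-r_{\check\theta}}e^{\check\theta})$.

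The main step, then, is the combinatorial identity $\int_{A_X^{*,1}} L_X(\chi)\,d\chi = |W_X|\cdot c$. I would compute this exactly as in the proof of Theorem \ref{Planchereltheorem}: write $L_X = c\sum_{w\in W_X}\frac{\prod_{\check\gamma\in\check\Phi_X^-}(1-e^{\check\gamma})}{\prod_{\check\theta\in\Theta^-}(1-\sigma_{\check\theta}q^{-r_{\check\theta}}e^{\check\theta})}\circ{}^w$, so the integral is $c$ times $|W_X|$ times the constant term of $\frac{\prod_{\check\gamma\in\check\Phi_X^-}(1-e^{\check\gamma})}{\prod_{\check\theta\in\Theta^-}(1-\sigma_{\check\theta}q^{-r_{\check\theta}}e^{\check\theta})}$. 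Expanding this as a Laurent series, the numerator contributes weights $\preceq_1 0$ and the denominator, via geometric series in the $e^{\check\theta}$ with $\check\theta\in\Theta^-$ (i.e.\ $-\check\theta\in\Theta^+$), contributes weights that are non-negative combinations of elements of $\Theta^-$, hence all $\preceq_2 0$ with the unique weight-$0$ contribution coming from taking the constant term $1$ in every factor of both numerator and denominator; since $\mathcal T$ is strictly convex ($\XX$ is quasi-affine), $0$ appears with coefficient exactly $1$. Thus the constant term is $1$, giving $\int_{A_X^{*,1}}L_X\,d\chi = |W_X|\,c$ and therefore $|\omega_X|(\XX(\mathfrak o)) = Q/c = Q\cdot\beta(\delta_{P(X)}^\frac12)$, which is (\ref{measure}) after substituting the definition of $\beta$ and recalling $c = \beta(\delta_{P(X)}^\frac12)^{-1}$. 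I expect the only genuinely delicate point to be bookkeeping the weight inequalities ($\preceq_1$ versus $\preceq_2$) to be sure the constant term of the Laurent expansion is exactly $1$ and not merely a nonzero constant — but this is essentially already carried out in the proof of Theorem \ref{improvedformula} and its concluding remark, so it should go through verbatim.
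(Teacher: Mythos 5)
Your proposal is correct and follows essentially the same route as the paper: evaluate the Plancherel formula on $1_{\XX(\mathfrak o)}=1_{x_0K}$, use $\Omega'_{\delta_{(X)}^{1/2}\chi}(x_0)=1$ to reduce everything to $\int_{A_X^{*,1}}L_X(\chi)\,d\chi$, and compute that integral as the constant term of the Laurent expansion of $c\sum_{w\in W_X}\frac{\prod_{\check\gamma\in\check\Phi_X^-}(1-e^{\check\gamma})}{\prod_{\check\theta\in\Theta^-}(1-\sigma_{\check\theta}q^{-r_{\check\theta}}e^{\check\theta})}\circ w$, which equals $c\cdot|W_X|$ by the same weight-ordering argument ($\preceq_2$ and strict convexity of $\mathcal T$) already used for Theorem \ref{Planchereltheorem}. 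The only cosmetic blemish is your first displayed identity, which is circular and can simply be deleted in favor of the "more directly" argument that follows it.
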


\begin{proof}
 We have: $$|\omega_X|(\XX(\mathfrak o))=\Vert 1_{\XX(\mathfrak o)}\Vert^2 = \frac{1}{Q_{P(X)}\cdot|W_X|} \int_{A_X^{*,1}} \left|\left<1_{\XX(\mathfrak o)}, \Omega'_{\delta_{(X)}^\frac{1}{2}\chi}\right>\right|^2 L_X(\chi) d\chi = $$
$$= \frac{1}{Q_{P(X)}\cdot|W_X|} \int_{A_X^{*,1}} |P_0(\delta_{(X)}^\frac{1}{2}\chi)|^2 \left(|\omega_X|(\XX(\mathfrak o))\right)^2 L_X(\chi) d\chi \Rightarrow$$
$$|\omega_X|(\XX(\mathfrak o)) = Q_{P(X)}\cdot|W_X| \left(\int_{A_X^{*,1}} L_X(\chi) d\chi\right)^{-1}.$$
As before, the integral of $L_X(\chi)=P_0(\delta_{(X)}^\frac{1}{2}\chi) L_X(\chi)$ is equal to the constant term in the Laurent expansion of:
$$c \sum_{W_X} \frac{\prod_{\check\gamma\in\check\Phi_X^-}(1-e^{\check\gamma})}{\prod_{{\check\theta}\in\Theta^-} (1-\sigma_{\check\theta}q^{-r_{\check\theta}}e^{\check\theta})}$$
which in this case is equal to $c\cdot |W_X|$. Therefore:

$|\omega_X|(\XX(\mathfrak o)) = Q_{P(X)}\cdot c^{-1} = Q_{P(X)}\cdot \frac{\prod_{\check\gamma\in\check\Phi_X^+}(1-e^{\check\gamma})}{\prod_{{\check\theta}\in\Theta^+} (1-\sigma_{\check\theta}q^{-r_{\check\theta}}e^{\check\theta})}(\delta^\frac{1}{2})$.	
\end{proof}

\begin{remark}
 It is natural to call the measure $(1-q^{-1})^{{\rm{rk}}(A_X)}\cdot |\omega_X|$ the \emph{Tamagawa measure} on $X$. Indeed, any globally defined invariant volume form on $\XX$ will induce, for almost every place, the same measure on $\mathring X$ as $\mathfrak q^*(a^{-1} da)\wedge du$ (where $\mathfrak q:\mathring \XX \to \AA_X$ is the natural projection based at the point $x_0$). With respect to the latter, we have $\Vol(x_0J=x_0B_0) = a^{-1} da(A_{X0}) = (1-q^{-1})^{{\rm{rk}}(A_X)}$, therefore the ``Tamagawa measure'' $d^{\rm{Tam}}x$ is equal to $(1-q^{-1})^{{\rm{rk}}(A_X)} |\omega_X|$.

Therefore we have shown: \begin{equation}\label{Tamagawa}
d^{\rm{Tam}}x(\XX(\mathfrak o))=Q_{P(X)}\cdot (1-q^{-1})^{{\rm{rk}}(A_X)}\cdot \frac{\prod_{\check\gamma\in\check\Phi_X^+}(1-e^{\check\gamma})}{\prod_{{\check\theta}\in\Theta^+} (1-\sigma_{\check\theta}q^{-r_{\check\theta}}e^{\check\theta})}(\delta^\frac{1}{2}).\end{equation}
\end{remark}

%*********************************************************************************************
\section{Periods of Eisenstein series.}\label{secEisenstein}\setcounter{subsubsection}{0}

Let now $F$ be a number field and $\GG$ a split connected reductive group defined over the ring of integers of $F$, and let $\XX$ be a spherical $\GG$-variety over $F$. We use the same notation: $\BB$, $\AA$, etc.\ as above and assume that $\BB(F)$ has a single orbit on $\mathring \XX(F)$. We also assume that for almost all completions $F_v$ of $F$ the variety $\XX_{F_v}$ satisfies the assumptions of the previous two sections. We will denote by $\AA(\Ad)^1$ (and similarly for other groups than $\AA$) the intersection of the kernels of all homomorphisms: $\AA(\Ad)\to \Gm(\Ad) \to \RR^\times_+$ where the first arrow denotes an algebraic character and the second denotes the absolute value. Hence, $\AA(\Ad)/\AA(\Ad)^1 \simeq (\RR_+^\times)^{\operatorname{rk}(\AA)}$. For every place $v$ we denote the group $\GG(F_v)$ by $G_v$, the group $\GG(\mathfrak o_v)$ (if $v<\infty$) by $K_v$, etc.

Any idele class character of $\AA(\Ad)$ can be twisted by characters of the group $\AA(\Ad)/\AA(\Ad)^1$, and thus lives in a $\operatorname{rk}(\AA)$-dimensional complex manifold of characters. Let $\omega$ denote such a family. For $\chi\in\omega$ we denote by $I(\chi)=\Ind_{\BB(\Ad)}^{\GG(\Ad)}(\chi\delta^\frac{1}{2})$ the normalized principal series of $\GG(\Ad)$, considered as a holomorphic family of vector spaces; that is, we fix a notion of ``holomorphic sections'' by identifying the underlying vector spaces of the representations in the usual way: by restriction to a compact subgroup $K$ satisfying: $G(\Ad) = B(\Ad) K$. Our convention for the archimedean places will be that $\Ind$ denotes the $K_\infty$-finite vectors in that representation, where $K_\infty$ is a maximal compact subgroup of $G_\infty$. For a meromorphic family of sections $\chi\mapsto f_\chi\in I(\chi)$ we have the principal Eisenstein series defined by the convergent sum:
\begin{equation}\label{Es}
 E(f_\chi, g) = \sum_{\gamma \in {\BB}(F)\backslash \GG(F)} f_\chi(\gamma g)
\end{equation}
if $\left<\check\alpha,\Re(\chi)\right>\gg 0$ for all $\alpha\in\Delta$, and by meromorphic continuation to the whole $\omega$.

Let $\HH$ be a spherical subgroup of $\GG$ over $F$. 
We would like to compute the period integral $\int_{\HH(F)\backslash \HH(\Ad)} E(f_\chi,h) dh$. Of course, this integral may not be convergent, therefore we have to understand it as a distribution on the Eisenstein spectrum of $\GG$. 
Our goal is to compute the most continuous part of this distribution. We notice that regularized periods of Eisenstein series abound in the literature (a typical example is \cite{JLR}), and different methods are suitable for different purposes. Our present method is much softer than most, and if nothing else it helps motivate the general philosophy linking periods of automorphic forms with local Plancherel formulas.

For simplicity of notation, we will discuss only the case where $\omega$ consists of the characters of $\AA(\Ad)/\AA(\Ad)^1$; however, the argument and the result hold for any family of idele class characters, with the only modification that the summation in (\ref{periodformula}) is only over orbits of maximal rank for which $\omega$ is trivial on $\BB_\xi(\Ad)^1$ (in the notation that follows).

In what follows, we use Tamagawa measures for all groups. It is convenient to do formal computations with measures given by differential forms, without convergence factors, and interpret the non-convergent formal products in the end as special values of zeta functions or residues thereof (if $\zeta(1)$ appears, where $\zeta$ denotes the Dedekind zeta function of $F$). 
Let $\Phi\in \cind_{\AA(\Ad)^1 \UU(\Ad)}^{\GG(\Ad)}(1)$ (where $\cind$ denotes compact induction), and write $\Phi$ in terms of its Mellin transform with respect to the left-$\AA(\Ad)$-action: 
$$ \Phi(g)= \int_{\widehat{\AA(\Ad)/\AA(\Ad)^1}} f_{\chi\delta^{-\frac{1}{2}}} (g) d\chi$$
where $f_\chi\in I(\chi)$ and $d\chi$ is Haar measure on the unitary dual of $\AA(\Ad)/\AA(\Ad)^1$. Note that the unitary dual of $\AA(\Ad)/\AA(\Ad)^1$ can naturally be identified with the imaginary points $i\mathfrak a_\RR^*$  of the Lie algebra of the dual torus, via an isomorphism which we will denote by $\exp$, i.e.\ $\widehat{\AA(\Ad)/\AA(\Ad)^1}=\exp(i\mathfrak a_\RR^*)$. Notice also the shift $\delta^{-\frac{1}{2}}$ in the above formula because $f_\chi$ has been defined with respect to the \emph{normalized} action of $\AA(\Ad)$. However, by our assumption that $\Phi$ is compactly supported modulo $\AA(\Ad)^1 \UU(\Ad)$, its Mellin transform is entire in $\chi$ and rapidly decaying on vertical strips, and hence we can shift the contour of integration and write:
$$ \Phi(g)= \int_{\exp(\kappa+ i\mathfrak a_\RR^*)} f_{\chi} (g) d\chi$$
for any $\kappa\in \mathfrak a_\CC^*$. In particular, we can shift to the domain of convergence of the Eisenstein sum (\ref{Es}) and then we will have: 
$$ \sum_{\gamma \in {\BB}(F)\backslash \GG(F)} \Phi(\gamma g) = \int_{\exp(\kappa+i\mathfrak a_\RR^*)} E(f_\chi,g) d\chi,$$
a function of rapid decay on the automorphic quotient $\GG(F)\backslash\GG(\Ad)$ (cf.\ \cite[II.1.11]{MW}).

We integrate over $\HH(F)\backslash \HH(\Ad)$:
$$\int_{\HH(F)\backslash \HH(\Ad)} \sum_{\gamma \in {\BB}(F)\backslash \GG(F)} \Phi(\gamma h) dh = $$
\begin{eqnarray}\label{periodcomp}
=\sum_{\xi\in [{\BB}(F)\backslash \GG(F)/\HH(F)]} \int_{\HH_\xi(\Ad)\backslash \HH(\Ad)} \int_{\HH_\xi(F)\backslash \HH_\xi(\Ad)} \int_{\exp(\kappa+i\mathfrak a_\RR^*)} f_\chi(\xi a h) d\chi da dh. \nonumber \\
\end{eqnarray}
Here $[{\BB}(F)\backslash \GG(F)/\HH(F)]$ denotes a (finite) set of representatives in $\GG(F)$ for the $(\BB(F),\HH(F))$-double cosets, and $\HH_\xi:= \HH\cap \xi^{-1}\BB\xi$. Similarly we will denote: $\BB_\xi:=\BB\cap \xi\HH\xi^{-1}$, and we will let $\YY$ denote the $\BB$-orbit of $\xi\HH$ on $\GG/\HH$. Here the measure $da$ is a (Tamagawa) $\HH_\xi(\Ad)$-eigenmeasure, with eigencharacter the inverse of the modular character of $\HH_\xi$, so that the integral over $\HH$ admits a factorization as above. We will denote this character by $\eta_\xi$, and its $\xi$-conjugate -- which is a character of $\BB_\xi(\Ad)$ -- by $\eta_Y$. Hence the two inner integrals are valued in the line bundle over $\HH_\xi(\Ad)\backslash \HH(\Ad)$ defined by $\eta_\xi^{-1}$, and $dh$ is an $\HH(\Ad)$-invariant measure valued in the dual of that line bundle. 

For a fixed $h\in\HH(\Ad)$ the two inner integrals:
$$\int_{\HH_\xi(F)\backslash \HH_\xi(\Ad)}\int_{\exp(\kappa+i\mathfrak a_\RR^*)} f_\chi(\xi a h) d\chi da=\int_{\BB_\xi(F)\backslash \BB_\xi(\Ad)}\int_{\exp(\kappa+i\mathfrak a_\RR^*)} f_\chi(a \xi h) d\chi da$$
are equal to: 
$$\Vol(\BB_\xi(F)\backslash \BB_\xi(\Ad)^1) \cdot \int_{\BB_\xi(\Ad)^1\backslash \BB_\xi(\Ad)} \int_{\exp(\kappa+i\mathfrak a_\RR^*)} f_\chi(a\xi  h) d\chi da. $$
The Tamagawa volume that appears here\footnote{ERRATUM: This is not quite true in the case $\AA_X\ne \AA$ -- i.e.\ if $\BB_\xi$ contains a multiplicative part -- since the standard convention for Tamagawa measures involves the residue of the Dedekind zeta function in this case. In any case, the treatment of this case is missing a thorough discussion of measures: apart from that residue, one needs to normalize the measure on $\mathfrak a_{X,\RR}^*$ appropriately for an application of Fourier inversion.} is equal to $1$, since $\BB_\xi$ is a connected, split, solvable group. By abelian Fourier analysis the last expression is equal to:
$$\int_{\delta^{-\frac{1}{2}} \eta_Y^{-1} \exp(i\mathfrak a_{Y,\RR}^*)} f_\chi(\xi h) d\chi,$$
where we have taken into account that $\exp(\mathfrak a_Y^*)$, where $\mathfrak a_Y^*$ is the Lie algebra of $A_Y^*$, considered as a subgroup of the group of characters $\exp(\mathfrak a^*)$ of $\AA(\Ad)/\AA(\Ad)^1$, is the orthogonal complement of the image of $\BB_\xi(\Ad)$.

To determine the most continuous part of the $\HH$-period integral, hence, it is enough to consider those $\xi$ which correspond to orbits $\YY$ of maximal rank. Again, we can move the contour of integration, this time to $\exp(\kappa_Y+i\mathfrak a_{Y,\RR}^*)$, where $\kappa_Y$ is deep in the region where the morphisms $\Delta^Y_{\chi,v}$ introduced in the present paper (but now with an index $v$ to indicate the place of $F$) are convergent. Though we have not discussed archimedean places here, all definitions and properties of meromorphic -- not rational now -- continuation can easily be established in a similar manner. Notice also that the convergence, for $\Re(\chi)$ in a certain cone, of the product over all places $v$ of the operators $\Delta_{\chi,v}^Y$, considered as an $\HH(\Ad)$-invariant functional on $C_c^\infty(\UU\backslash\GG (\Ad))$, is established by the same argument as in the local case.

Returning to (\ref{periodcomp}), we can interchange the order of integration to express the 
contribution of the orbit $\YY$ as: 
$$\int_{\exp(\kappa_Y + i\mathfrak a_{Y,\RR}^*)} \int_{\HH_\xi(\Ad)\backslash \HH(\Ad)} f_\chi(\xi h) dh d\chi$$ 
and the new inner integral is equal to $\prod_v \Delta^{Y,{\Tam}}_{\chi,v}$ (where the exponent $^{\Tam}$ stands to show that we are using Tamagawa measures, rather than the measures used throughout the paper). Interchanging the order of integration is justified as follows: The function
$$\Phi'(a):= \exp(\kappa_Y)(a) \int_{\HH_\xi(\Ad)^1\backslash \HH(\Ad)} \Phi(a\xi h)dh,$$
on $\AA_Y(\Ad)$ is a Schwartz--Harish-Chandra function if $\kappa_Y$ is sufficiently deep in the domain of convergence of the morphisms $\Delta_{\chi,v}^Y$. Therefore, its value at $1$ (which is represented by the iterated integral of (\ref{periodcomp}) before changing the order of integration) is equal to the integral of its Mellin transforms. But those are given by the double integral of  $\Phi \in C_c^\infty(\UU(\Ad)\AA(\Ad)^1\backslash \GG(\Ad))$ over the corresponding orbit of the group $\AA(\Ad)/\AA(\Ad)^1\times\HH(\Ad)$ and against a character of that subgroup; for characters of the form $\exp(\kappa_Y + i\mathfrak a_{Y,\RR}^*)$ similar considerations as in the local case imply that this double integral is absolutely convergent, and therefore equal to:
$$ \int_{\HH_\xi(\Ad)\backslash\HH(\Ad)} f_\chi(\xi h) dh.$$

Fix a finite set of places $S$, including the infinite ones and those finite places where our assumptions on the spherical variety $\XX=\HH\backslash \GG$ do not hold, such that we have a factorization: $f_\chi = \prod_v f_{\chi,v}$ with 
$f_{\chi,v}$ being the ``standard'' $K_v$-invariant function: $f_{\chi,v}^0(bk)=\chi\delta^\frac{1}{2}(b)$ (denoted by $\phi_{K,\chi}$ in \S \ref{ssnormalization}) for every $v\notin S$. For each orbit $\YY$ of maximal rank, choose an element $w\in W$ with $\YY=\,{^w\mathring \XX}$ and the length of $w$ equal to the codimension of $\YY$ (in the notation of \S \ref{eigenforms}: $w\in W(\YY)^{-1}$). We can then write $f_{\chi,v}^0$ as $j_{w,v}^{-1}(^{w^{-1}}\chi) T_w f_{^{w^{-1}}\chi,v}^0$, where: $$j_{w,v}(\chi):=\prod_{\check\alpha>0,w\check\alpha<0}\frac{1-q^{-1}e^{\check\alpha}(\chi_v)}{1-e^{\check\alpha}(\chi_v)}$$ at the finite place with residual degree $q$ (we are using the standard intertwining operators $T_w$ here, cf.\ (\ref{Twphi})). We will also set: $$\tilde j_{w,v}(\chi):=\prod_{\check\alpha>0,w\check\alpha>0}\frac{1-q^{-1}e^{\check\alpha}(\chi_v)}{1-e^{\check\alpha}(\chi_v)}.$$ Using the fact that $\Delta_{\chi,v}^{Y,{\Tam}}\circ T_{w}= \Delta^{{\Tam}}_{^{w^{-1}}\chi,v}$ 
(Lemma \ref{composeStypeU}), we can express the contribution of 
the orbit $\YY$ to (\ref{periodcomp}) as:
$$ \int_{\exp(\kappa_Y+i\mathfrak a_{Y,\RR}^*)} j_{w}^S(^{w^{-1}}\chi)^{-1} \prod_{v\notin S} \Delta^{\Tam}_{^{w^{-1}}\chi,v} (f_{^{w^{-1}}\chi,v}^0) \prod_{v\in S} \Delta_{\chi,v}^{Y,{\Tam}} (f_{\chi,v}) d\chi.$$ 

Notice that the domain of convergence of $\Delta_{\chi,v}$ contains a translate of the cone of regular eigenfunctions, which is the cone $\mathcal T^\vee\subset\varchi(\XX)\otimes\mathbb R$ dual to the cone $\mathcal T$ spanned by the valuations $\check v_D$ of all colors. Since the cone of colors contains the images of the positive co-roots of $\GG$, we have simultaneous convergence for $\Delta_{\chi,v}$ and $T_w$ (acting on $I(\chi_v)$). It is easy to argue, based on the fact that the map $\BB w^{-1} \BB \times^{\BB} \YY \to \XX$ is birational \cite[Lemma 5]{BrO} that whenever $\Delta_{\chi,v}$ and $T_w$ (acting on $I(\chi)$) converge simultaneously, $\Delta_{^w\chi,v}^Y$ also converges. Therefore, if we substitute $\chi$ by ${^w\chi}$, the domain of integration now can be taken to be $\exp(\kappa+i\mathfrak a_{X,\RR}^*)$, where $\kappa\in \rho_{(X)}+\mathfrak a_{X,\CC}^*$ is deep in the domain of convergence of the integral for $\Delta_{\chi,v}$. The Eisenstein sum also converges in that region. 

We discuss how $\Delta_{\chi,v}^{\Tam}$ and $\Delta_{\chi,v}$ are related:
\begin{lemma} We have:
\begin{equation}
 \Delta^\Tam_{\chi,v} = Q_v \Delta_{\chi,v}.
\end{equation}
\end{lemma}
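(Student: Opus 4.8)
The statement to prove is the relation $\Delta^{\Tam}_{\chi,v} = Q_v \Delta_{\chi,v}$ between the adjoint morphism $\Delta_{\chi,v}$ defined via the \emph{standard} measures of the paper (normalized by $|\omega_X|(x_0J)=1$, $|\omega_{U\backslash G}|(U\backslash UK)=1$, $|\omega_G|(K)=1$) and the same morphism $\Delta^{\Tam}_{\chi,v}$ computed with \emph{Tamagawa} measures on all groups and on $X$. The plan is to trace through the definition of $\Delta_\chi$ (Lemma~\ref{lemmaadj}) and observe that the only place where a normalization of measures enters is through (a) the $G$-eigenmeasure $|\omega_X|$ on $X$, which sets up the duality (\ref{duality}), and (b) the invariant measure on $U\backslash G$ used to identify $M_c^\infty(U\backslash G)$ with $C_c^\infty(U\backslash G)$ and to set up the pairing between $I(\chi)$ and $I(\chi^{-1})$; equivalently, the measure on $K$ (with $\Vol(K)=1$) used in the pairing $(f_1,f_2)\mapsto\int_K f_1 f_2\,dk$. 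So I would first reduce the comparison to: how does the Tamagawa measure on $U\backslash G$ (equivalently, on $G$ modulo a fixed measure on $U$) compare with the standard one $|\omega_{U\backslash G}|$ normalized by $\Vol(U\backslash UK)=1$?

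First I would recall that the Tamagawa measure on $\UU\backslash\GG$ (with $\UU$ split unipotent, hence of volume $1$ for its canonical measure over any local field as there are no convergence factors, and $\GG$ split reductive) differs from the measure giving $\Vol(U\backslash UK)=1$ precisely by the local factor of the partial zeta-type product coming from the Gindikin--Karpelevich / Macdonald computation; concretely, by the Satake/Gindikin--Karpelevich formula the ratio is $\prod_{\check\alpha>0}\frac{1-q^{-1-\langle\check\alpha,\rho\rangle}}{1-q^{-\langle\check\alpha,\rho\rangle}}$ at a finite place $v$ with residue field of size $q$, which is exactly the constant $Q=Q_v$ introduced in \S\ref{ssnormalization} and again in Theorem~\ref{Planchereltheorem} as $Q=\Vol(K)/\Vol(Jw_lJ)=\prod_{\check\alpha\in\check\Phi^+}\frac{1-q^{-1-\langle\check\alpha,\rho\rangle}}{1-q^{-\langle\check\alpha,\rho\rangle}}$. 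The cleanest way to see this is to compute $\int_{U\backslash G}\phi^0_{K,\delta^{1/2}}\,d^{\Tam}x$ for the basic $K$-invariant section and compare with the same integral against $|\omega_{U\backslash G}|$: the former is $\prod_v Q_v$ up to the global constant (residue of $\zeta$) that is absorbed into the global Tamagawa normalization, while the latter is $1$ by definition. This is where the honest content is, and I would present it by citing \cite{BK} (who work out exactly these normalizations for $\mathcal S(U\backslash G)$) together with the classical Macdonald/Casselman computation already invoked in \S\ref{ssnormalization}, equations (\ref{sphvector}) and (\ref{Twphi}).

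Next, I would check that the other measures do not contribute: the $G$-eigenmeasure $|\omega_X|$ is, in the affine case, $G$-\emph{invariant}, and both the standard and the Tamagawa normalizations of $|\omega_X|$ appear symmetrically on \emph{both} sides of the defining duality pairing (\ref{duality}) that is used to pass from $S_{\chi^{-1}\nu^{-1}}$ to its adjoint $\Delta_\chi$ --- in the expression $\langle\Phi,\Delta_\chi^Y\phi_2\rangle_X = \langle S^Y_{\chi^{-1}\nu^{-1}}\Phi,\phi_2\rangle_{U\backslash G}$ of the proof of Lemma~\ref{lemmaadj}, rescaling $|\omega_X|$ rescales the left pairing and the measure $S^Y_{\chi^{-1}\nu^{-1}}$ is defined against by reciprocal amounts, so the definition of $\Delta_\chi$ as a map $I(\chi)\to C^\infty(X)$ (a map into \emph{functions}, not measures) is independent of that choice; cf.\ the Remark after the definition of $\tilde S_{\tilde\chi^{-1}\nu^{-1}}$. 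Likewise the measure on $U$ used to write $\omega_G=\omega_U\wedge o_U^*\omega_{U\backslash G}$ cancels because $\UU$ has Tamagawa volume $1$ at $v$. Hence the only discrepancy is the factor by which the invariant measure on $U\backslash G$ (equivalently on $K$) changes, and that factor is $Q_v$, with the adjoint picking it up once.

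The main obstacle --- and the one point requiring care rather than bookkeeping --- is pinning down \emph{on which side} the factor $Q_v$ lands and that it is $Q_v$ rather than $Q_v^{-1}$: since $\Delta_{\chi,v}$ is the \emph{adjoint} of $S_{\chi^{-1}\nu^{-1},v}$ with respect to the pairing normalized by $\Vol(K)=1$, enlarging the measure on $U\backslash G$ (to the Tamagawa one, which is \emph{larger} by $Q_v$ since $\Vol(U\backslash UK)=1$ corresponds to the smaller measure) enlarges the functional $\Delta$ by the same factor $Q_v$, giving $\Delta^{\Tam}_{\chi,v}=Q_v\,\Delta_{\chi,v}$. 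I would verify the direction of the inequality $\Vol^{\Tam}(U\backslash UK)=Q_v>1$ directly against the Gindikin--Karpelevich integral $\int_{\overline{U}(F_v)\cap K}1\,d\bar u$ (or equivalently via $\Vol(Jw_lJ)/\Vol(K)$ as in Theorem~\ref{Planchereltheorem}), and then the relation follows. I do not expect any analytic subtlety: everything here is an identity of measures at a single unramified place, and convergence of $\Delta_{\chi,v}$ in the relevant cone has already been established.
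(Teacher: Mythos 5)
Your final identity is correct, but the argument as written contains two errors of exactly reciprocal size that happen to cancel, so the proof does not stand as given. First, your key quantitative claim --- that the Tamagawa measure on $U\backslash G$ differs from the measure normalized by $\Vol(U\backslash UK)=1$ exactly by the factor $Q_v$ --- is false: one has $\Vol^{\Tam}(U\backslash UK)=\Vol^{\Tam}(K)/\Vol^{\Tam}(U_0)=q^{-\dim G}\,|\GG(\FF_q)|=(1-q^{-1})^{\rk(A)}\,Q_v$, not $Q_v$. (Check $\SL_2$: $\Vol^{\Tam}(K)=1-q^{-2}=(1-q^{-1})(1+q^{-1})$, while $Q_v=1+q^{-1}$.) Your proposed verification via $\int_{U\backslash G}\phi^0_{K,\delta^{1/2}}\,d^{\Tam}x$ is not available either, since that integral diverges for every principal series parameter; the clean computation is the one just given, or the one the paper actually uses, namely $d^{\Tam}x(U\backslash Uw_lJ)=\Vol^{\Tam}(B_0)=(1-q^{-1})^{\rk(A)}$ combined with the measure-independent ratio $\Vol(U\backslash UK)/\Vol(U\backslash Uw_lJ)=Q_v$.

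Second, and this is the conceptual gap rather than an arithmetic slip: it is not true that the invariant measure on $U\backslash G$ is the only normalization separating $\Delta^{\Tam}_{\chi,v}$ from $\Delta_{\chi,v}$. The functional $\Delta^{\Tam}_{\chi,v}$ integrates $f\in I(\chi)$ over $H_\xi\backslash H$ against a Tamagawa eigenmeasure, whereas $\Delta_{\chi,v}$ is pinned down concretely (Lemma \ref{lemmaadj}) as an integral over the open $A\times G_\xi$-orbit in $U\backslash G$; to compare the two you must factor the latter through the surjection $\mathcal S(U\backslash G)\to I(\chi)$, which is integration over $A$ against the measure normalized by $da(A_0)=1$. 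Since $d^{\Tam}a(A_0)=(1-q^{-1})^{\rk(A)}$, this step contributes a factor $(1-q^{-1})^{-\rk(A)}$ that you have omitted. It is precisely this factor that cancels the $(1-q^{-1})^{\rk(A)}$ you lost in the first step, and the paper's proof consists exactly of exhibiting these two factors and cancelling them. Your observations that $|\omega_X|$ drops out of the duality and that $\Vol^{\Tam}(U_0)=1$ are fine; the repair needed is to restore the $A$-normalization and correct the volume of $U\backslash UK$.
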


\begin{proof}
 As we did in the case of $\Delta_{\chi,v}$, we compose the morphism $\Delta_{\chi,v}^\Tam$ with the map $\mathcal S(U\backslash G)\to I(\chi)$ (integration against an $A$-eigenmeasure with $da(A_0)=1$) to get a morphism: $\mathcal S(U\backslash G)\to C^\infty(X)$. Notice that $da = (1-q^{-1})^{-\rk (A)} d^\Tam a$. Therefore, the integral expression for $\Delta^\Tam_{\chi,v}$ on $U\backslash G$ is an integral, over the open $A\times H$-orbit, against $(1-q^{-1})^{-\rk(A)}$ times a Tamagawa eigenmeasure. On the other hand, the morphisms $\Delta_{\chi,v}$ were defined using an $A\times H$-eigenfunction times the $G$-invariant measure $dx$ on $U\backslash G$ such that: $dx(U\backslash UK)=1$. Therefore, if $dx^\Tam $ denotes ``Tamagawa'' measure on $U\backslash G$ then we have:
$$\Delta^\Tam_{\chi,v} = (1-q^{-1})^{-\rk(A)} \frac{dx^\Tam }{dx} \Delta_{\chi,v}.$$

We compute: $dx^\Tam (U\backslash Uw_lJ)=dx^\Tam  (B_0) = (1-q^{-1})^{\rk(A)}$. Therefore:
$$\Delta^\Tam_{\chi,v} = \frac{1}{dx(U\backslash Uw_l J)} \Delta_{\chi,v} = Q_v \Delta_{\chi,v}.$$
\end{proof}

Therefore:
$$\Delta_{\chi,v}^{\Tam}(f^0_{\chi,v})= Q_v \Delta_{\chi,v}(f^0_{\chi,v}) =   \prod_{\check\alpha>0}\frac{1-q^{-1}e^{\check\alpha}}{1-e^{\check\alpha}}(\chi)\Omega_{\chi,v}(x_0) = $$ $$=  \prod_{\check\alpha>0}\frac{1-q^{-1}e^{\check\alpha}}{1-e^{\check\alpha}}(\chi) \cdot L_{\chi,v}^\frac{1}{2}$$

Recall that $L_{X,v}^\frac{1}{2}(\chi)=c_v\cdot \beta_v(\chi)$, where $c_v$ is a quotient of products of local values for the Dedekind zeta function of $F$ and $\beta_v$ is a quotient of products of Dirichlet $L$-values which depend on $\chi$. If we consider the product $\prod_{v\notin S} c_v$ it may not converge in general. However, we can make sense of it by considering the leading term of its Laurent expansion, when considered as a specialization of a product/quotient of translates of $\zeta^S$. We will denote this number by $\left({c^S}\right)^*$. The standard definition of Tamagawa measures implies that when computing the product over all $v\notin S$ in the expression above, we should use $\left({c^S}\right)^*$ wherever ${c^S}$ formally appears in the product. Similarly, we will denote: $\left(L_X^{\frac{1}{2},S}\right)^*=(c^S)^*\cdot \prod_{v\notin S}\beta_v(\chi)$.
Therefore we get:

\begin{theorem}\label{period}
 The period integral of:
\begin{equation}
 \sum_{\gamma \in {\BB}(F)\backslash \GG(F)} \Phi(\gamma g) = \int_{\exp(\kappa+i\mathfrak a_\RR^*)} E(f_\chi,g) d\chi
\end{equation}
over $\HH(F)\backslash\HH(\Ad)$ is equal to:
\begin{equation}\label{periodformula}\int_{\exp(\kappa+i\mathfrak a_{X,\RR}^*)} \left(L_X^{\frac{1}{2},S}(\chi)\right)^*   \sum_{\left[W/W_{(X)}\right]} \left(\tilde j_{w}^S(\chi) \prod_{v\in S} \Delta_{{^{w}\chi},v}^{Y,{\Tam}} (f_{{^{w}\chi},v})\right) d\chi
\end{equation}
plus terms which depend on the restriction of $f_\chi$, as a function of $\chi$, to a subvariety of smaller dimension. Here $[W/W_{(X)}]$ denotes a set of representatives of minimal length for $W/W_{(X)}$-cosets, $\kappa\in \rho_{(X)}+\mathfrak a_{X,\CC}^*$ is deep in the domain of convergence of $\Delta_\chi$, and $f_\chi$, the Mellin transform of $\Phi$ with respect to the normalized left $\AA(\Ad)/\AA(\Ad)^1$-action, is assumed to be factorizable with factors $f_{\chi,v}^0$ for $v\notin S$.
\end{theorem}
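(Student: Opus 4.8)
The plan is to follow an unfolding strategy for the period integral, reducing the adelic computation to a sum over $\BB(F)$-orbits of maximal rank and then invoking the local formula of Theorem \ref{improvedformula} at almost every place, while keeping the ramified places $S$ as an unevaluated product of local intertwining-type operators. First I would write the pseudo-Eisenstein series $\sum_{\BB(F)\backslash\GG(F)}\Phi(\gamma g)$ as $\int_{\exp(\kappa+i\mathfrak a_\RR^*)} E(f_\chi,g)\,d\chi$, using that the Mellin transform of a function compactly supported modulo $\AA(\Ad)^1\UU(\Ad)$ is entire, so the contour may be moved into the domain of convergence of the Eisenstein sum; the resulting function on $\GG(F)\backslash\GG(\Ad)$ is of rapid decay, which legitimizes integrating it over $\HH(F)\backslash\HH(\Ad)$.

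Next I would unfold the period integral along the $(\BB(F),\HH(F))$-double cosets, writing the integral as a sum over $\xi\in[\BB(F)\backslash\GG(F)/\HH(F)]$ of iterated integrals over $\HH_\xi(\Ad)\backslash\HH(\Ad)$, $\HH_\xi(F)\backslash\HH_\xi(\Ad)$, and the contour. The inner double integral collapses, by abelian Fourier analysis on the split solvable group $\BB_\xi$ (whose Tamagawa volume is $1$), to an integral over a torus $\delta^{-\frac12}\eta_Y^{-1}\exp(i\mathfrak a_{Y,\RR}^*)$; this shows that only orbits $\YY$ of maximal rank contribute to the most continuous part. After moving contours into the common domain of convergence of the operators $\Delta^Y_{\chi,v}$ and the standard intertwining operators $T_w$ — here one uses that the cone of convergence of $\Delta_{\chi,v}$ contains a translate of $\mathcal T^\vee$, hence contains the positive Weyl chamber shift, plus the birationality of $\BB w^{-1}\BB\times^{\BB}\YY\to\XX$ (\cite[Lemma 5]{BrO}) to transfer convergence of $T_w$ to convergence of $\Delta^Y_{{^w\chi},v}$ — I would interchange the order of integration, justified by observing that the relevant function on $\AA_Y(\Ad)$ is a Schwartz--Harish-Chandra function in the region in question, so its value at $1$ equals the integral of its Mellin transforms. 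The inner integral over $\HH_\xi(\Ad)\backslash\HH(\Ad)$ then becomes the Euler product $\prod_v\Delta^{Y,\Tam}_{\chi,v}(f_{\chi,v})$.

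Then I would split off the places in $S$: for $v\notin S$ we write $f^0_{\chi,v}=j_{w,v}^{-1}({^{w^{-1}}\chi})\,T_w f^0_{{^{w^{-1}}\chi},v}$ and use $\Delta^{Y,\Tam}_{\chi,v}\circ T_w=\Delta^{\Tam}_{{^{w^{-1}}\chi},v}$, and substitute $\chi\mapsto{^w\chi}$ so that the contour becomes $\exp(\kappa+i\mathfrak a_{X,\RR}^*)$ with $\kappa\in\rho_{(X)}+\mathfrak a_{X,\CC}^*$. At the unramified places the key input is the local computation: by the Lemma relating $\Delta^\Tam_{\chi,v}=Q_v\Delta_{\chi,v}$ and the spherical-vector normalization (\ref{sphvector}), together with $\Omega_{\delta_{(X)}^{1/2}\chi}(x_0)=c_v\beta_v(\chi)=L^{1/2}_{X,v}(\chi)$ from Theorem \ref{improvedformula}, we get $\Delta^{\Tam}_{\chi,v}(f^0_{\chi,v})=\prod_{\check\alpha>0}\frac{1-q^{-1}e^{\check\alpha}}{1-e^{\check\alpha}}(\chi)\cdot L^{1/2}_{X,v}(\chi)$; collecting the $\prod_{\check\alpha>0}$ factors with the $j_{w,v}^{-1}$ factors produces exactly $\tilde j_{w}^S(\chi)$, and the product of the $c_v$'s is interpreted as the leading Laurent coefficient $(c^S)^*$ of a product of translates of $\zeta^S$, giving $\bigl(L^{\frac12,S}_X(\chi)\bigr)^*$. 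Finally I would identify the indexing set: orbits of maximal rank are a torsor under Knop's action, whose stabilizer of the open orbit is $W_{(X)}$, and the contribution only depends on the coset $wW_{(X)}$ (different choices of $w\in W(\YY)^{-1}$ give the same term, and $W_{P(X)}$-translates are absorbed since $T_{w_2}$ for $w_2\in W_{P(X)}$ kills $S_\chi$ — more precisely the relevant orbits of maximal rank are parametrized by $[W/W_{(X)}]$), yielding the stated sum over $[W/W_{(X)}]$. The main obstacle I expect is the careful justification of the interchanges of summation and integration and of the contour shifts — ensuring the Schwartz--Harish-Chandra property holds in the chosen cone, that the Euler product $\prod_v\Delta^\Tam_{\chi,v}$ converges there by the same majorization as in the local case, and that the ``terms of smaller dimension'' really are the only error — rather than the essentially algebraic bookkeeping of the $L$-factors, which is a direct consequence of the already-established local formula.
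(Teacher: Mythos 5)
Your proposal is correct and follows essentially the same route as the paper's own argument: unfolding over $(\BB(F),\HH(F))$-double cosets, collapsing the inner integrals by abelian Fourier analysis on the split solvable $\BB_\xi$ (Tamagawa volume $1$), isolating the maximal-rank orbits, justifying the contour shifts and interchange of integration via the Schwartz--Harish-Chandra property and the birationality of $\BB w^{-1}\BB\times^{\BB}\YY\to\XX$, and then evaluating the unramified Euler factors through $\Delta^{\Tam}_{\chi,v}=Q_v\Delta_{\chi,v}$ and Theorem \ref{improvedformula}, with the divergent constant interpreted as $(c^S)^*$. The bookkeeping combining $j_{w,v}^{-1}$ with the $\prod_{\check\alpha>0}$ factor into $\tilde j_w^S$ after the substitution $\chi\mapsto{^w\chi}$ is exactly as in the paper.
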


\begin{remarks}
 \begin{enumerate}
 \item It appears as if the above expression depends on the choice of representatives $w\in W(\YY)^{-1}$. However, the factors $c_{w}^S(\chi)$, for $\chi\in A_X^*$, do not depend on the choice of $w$. This is easily seen in the case of $\PPGL_2\backslash (\PPGL_2\times\PPGL_2)$, and the general case can be reduced to that by Brion's analysis of the weak order graph (cf.\ Proposition \ref{associates}).
 \item For the special case of the spherical variety $\HH\backslash (\HH\times\HH)$, where all $\BB$-orbits are of maximal rank and hence the above formula is precise, compare with the calculation of the scalar product of two pseudo-Eisenstein series in \cite[II.2.1]{MW}.
 \item One can continue along these lines and give a new argument relating the Tamagawa numbers of $\GG$ and $\HH$, which would specialize to the argument of \cite{La} in the group case.
\end{enumerate}
\end{remarks}

 What about period integrals of cusp forms? Based on the work of Waldspurger \cite{Wal}, Ichino and Ikeda \cite{II}, we formulate in \cite{SV} a conjecture which links the period integral of cusp forms to the local Plancherel formula. More precisely, the ``canonical'' global functional: ``$\HH$-period on $\pi$ times $\HH$-period on $\tilde\pi$'' (where $\pi$ denotes the space of a cuspidal automorphic representation and $\tilde\pi$ its dual) is conjectured to be related to the Plancherel measure on $\XX$ and hence, through the computation of \S \ref{secPlancherel}, to the quotient of $L$-values $L_X$.

%****************************************************************************
\appendix

\section{Table of examples}

We give a table of spherical varieties $\XX$, the associated parabolics $\PP(\XX)$, the dual groups $\check G_X$ and the invariants $L_X$, expressed as quotients of unramified $L$-values for the group $\check G_X$. All of the examples below satisfy the assumptions of Theorem \ref{improvedformula}. The answer is expressed up to zeta-factors, which can be computed from Lemma \ref{omegatrivial}. For example, if the dual group of a variety is $\Sp_{2n}$ then the (local unramified) $L$-function associated to the second fundamental representation of $\Sp_{2n}$ is not distinguished in the table below from the exterior-square $L$-function. Notice that we multiply by the adjoint $L$-factor $L(\pi,\Adj,0)$ (for $\check G_X$); for the reader interested in global periods, the relevant quotient of $L$-values is the quotient:
$$ \frac{L(\pi,\Adj,0)}{L(\pi,\Adj,1)} \cdot L_X.$$
An $L$-function written as $L(\pi,s)$ implies the standard representation of $\check G_X$. If the dual group is a direct product, we write $\pi_1,\pi_2$ in order to indicate an $L$-function associated to its first, resp.\ second factor. Notice that in cases of multiplicity (e.g.\ in the examples of $P(\Sp_{2n}\times\Gm)\backslash P\Sp_{2n+2}$ and $\GL_n\backslash \SO_{2n+1}$ below) it is ambiguous how to consider a given distinguished unramified representation as a semisimple conjugacy class in $\check G_X$; in these cases, we do not know what the (conjectural) significance of $L_X$ is for period integrals of automorphic forms. The parabolic $\PP(\XX)$ is denoted by indicating the isomorphism class of its Levi quotient -- and in the case of $\GGL_n$ we indicate a partition of $n$ instead of the isomorphism class of the Levi. Finally, whenever the abelianization of the group $\HH/\HH\cap \mathcal Z(\GG)$ is non-trivial (in which case it will be, for all examples, isomorphic to $\Gm$), we write the $L$-value 
corresponding to the line bundle over $\XX$ defined by the character $|\eta|^s$ of $\HH$, where $\eta$ denotes a primitive character of $\HH/\HH\cap \mathcal Z(\GG)$ -- differently stated, for the $L$-value (only) we consider the $\Gm\times\GG$ variety $\XX_0=\HH_0\backslash \GG$, where $\HH_0$ is the kernel of all characters of $\HH$ which are trivial on $\HH\cap \mathcal Z(\GG)$.

Our source for these examples are tables 4 and 5 of \cite{KnVS}, where all ``primitive'' spherical pairs of Lie algebras $\mathfrak g, \mathfrak h$ with $\mathfrak g, \mathfrak h$ reductive appear (over the algebraic closure). Moving from the algebraic closure to $k$ and from Lie algebras to groups, we take all groups $\GG, \HH$ in the examples below to be split, and make convenient choices for the centers of the groups (in particular, so that the assumptions of Theorem \ref{improvedformula} are satisfied.) The examples included in the table represent all cases of table 4 of \emph{loc.cit.\ }which do not have spherical roots of type $N$ and where $\mathfrak g$ is classical, and also a few cases of table 5. The tables of \cite{BP} have also been useful in computing this table. We use regular, instead of boldface, font for the groups.

\begin{landscape}
\thispagestyle{empty}

\begin{equation*}
 \begin{array}{|c|c|c|c|c|}
\hline
& X & P(X) & \check G_X & L(\pi,\Adj,0) \cdot L_X \text{(up to }\zeta\text{-factors)}\\
 & & & & \\
\hline
1& \GL_m\times\GL_n\backslash\GL_{m+n}, m\ge n  &P_{1,1,\dots,m-n,1,\dots,1} &  \Sp_{2n} & L(\pi,\frac{1+m-n}{2}+s) L(\pi,\frac{1+m-n}{2}-s) L(\pi,\wedge^2,1) \text{ if }n>1\\
\hline
 & & & & L(\pi,\frac{1+m-n}{2}+s) L(\pi,\frac{1+m-n}{2}-s) \text{ if }n=1\\
\hline
2& \Sp_{2n}\backslash\GL_{2n} &P_{2,2,\dots,2} & \GL_n & L(\pi,\Adj,2)\\
\hline
3& \Sp_{2n}\backslash \GL_{2n+1} & B & \GL_n\times \GL_{n+1} & L(\pi_1\otimes\pi_2,1)L(\tilde\pi_1\otimes \tilde\pi_2,1) \\
\hline
4& \Sp_{2m}\times \Sp_{2n}\backslash \Sp_{2m+2n}, m\ge n & P_{\GL_2^n\times\Sp_{2m-2n}} & \Sp_{2n} & L(\pi,\frac{1}{2}) L(\pi,\frac{3}{2}) L(\pi,\wedge^2,1)  \text{ if }n>1 \\
\hline 
 & & & & L(\pi,\frac{1}{2}) L(\pi,\frac{3}{2}) \text{ if } n=1\\ 
\hline
5& \operatorname{GSp}_{2n}\backslash \operatorname{PSp}_{2n+2} &P_{\Gm^2\times \Sp_{2n-2}}& \Spin_4\simeq \SL_2\times\SL_2 & L(\pi_1,\frac{1}{2}+s)L(\pi_1,\frac{1}{2}-s) L(\pi_1\otimes\pi_2, n)\\
\hline
% \GL_n\backslash \Sp_{2n} &&& \\ is of type N
% \SO_{m}\times\SO_{n}\backslash \SO_{m+n} & P_{\Gm^n \times \SO_{m-n}} & & \\ of type N
6& \GL_n\backslash \SO_{2n}, n\ge 4 & P_{(\GL_2)^{\lfloor \frac{n}{2}\rfloor}} & \Sp_{2\lfloor\frac{n}{2}\rfloor}& L(\pi, \wedge^2, 1) L(\pi,  \frac{1}{2}+s) L(\pi,  \frac{1}{2}-s) \text{ if }n\text{ is even}\\
\hline
 &&&& L(\pi, \wedge^2, 1) L(\pi,  \frac{3}{2}+s)L(\pi,  \frac{3}{2}-s) \text{ if }n \text{ is odd}\\
\hline
7& \GL_n\backslash \SO_{2n+1}, n\ge 2 & B & \Sp_{2\lceil\frac{n}{2}\rceil}\times \Sp_{2\lfloor\frac{n}{2}\rfloor}& L(\pi_1,\frac{1}{2}+s)L(\pi_1,\frac{1}{2}-s) L(\pi_1\otimes\pi_2,1) \\
\hline
8& G_2\backslash\SO_7 & P_{\GL_3} & \SL_2 & L(\pi,\Adj,3)\\
\hline
9& G_2\backslash \operatorname{PSO}_8 & P_{\Gm^2\times \GL_2} & \SL_2\times\SL_2\times\SL_2 & L(\pi_1\otimes\pi_2\otimes \pi_3, \frac{3}{2})\\
\hline
10 &\Spin_7\backslash \SO_9 & P_{\Gm\times\GL_3} & \SL_2\times\SL_2 & L(\pi_1\otimes\pi_2, 2) L(\pi_2,\Adj,3) \\
\hline
11 &\operatorname{GSpin}_7\backslash \operatorname{PSO}_{10} & P_{\Gm^3\times \GL_2} & \SL_4 \times \SL_2 & L(\pi_1, 4+s) L(\tilde\pi_1, -3-s) L(\wedge^2 \pi_1\otimes  \pi_2, \frac{3}{2} ) \\
\hline
% &\GL_3\backslash G_2 \\
%\hline
% &\GL_2\times \GL_2\backslash G_2 \\
%\hline
12 & H\backslash H\times H & B & \check H^{\iota-\diag} & L(\pi,\Adj,1)\\
\hline
13 &\GL_n\backslash \GL_n\times\GL_{n+1} & B & \check G & L(\pi_1\otimes\pi_2,\frac{1}{2}+s) L(\tilde\pi_1\otimes\tilde\pi_2,\frac{1}{2}-s)\\
\hline
14 &\SO_n\backslash \SO_n\times \SO_{n+1} &B & \check G & L(\pi_1\otimes\pi_2,\frac{1}{2})\\
\hline
% &\Sp_{2n}\times\Sp_4\backslash \Sp_{2n+4}\times \Sp_4 \\
%\hline
% &\SL_m\times \SL_2\times \Sp_{2n} \backslash \SL_{m+2}\times \Sp_{2n+2}\\
%\hline
% &\Sp_{2m}\times\SL_2\times \Sp_{2n}\backslash \Sp_{2m+2}\times \Sp_{2n+2} \\
%\hline
% &\Sp_{2m}\times \SL_2\times\SL_2\times \Sp_{2n}\backslash \Sp_{2m+2}\times \Sp_4\times\Sp_{2n+2} \\
%\hline
% &\Sp_{2l}\times\Sp_{2m}\times\Sp_{2n}\times\SL_2\backslash \Sp_{2l+2}\times\Sp_{2m+2}\times\Sp_{2n+2}\\
%\hline
\end{array}
\end{equation*}

\begin{remark}
 The exponent $^{\iota-\diag}$ denotes the set of elements $(h,h^\iota)$ where $\iota$ is the Chevalley involution -- which takes Langlands parameters of a representation to Langlands parameters of its dual.
\end{remark}

\end{landscape}

\end{document}